\documentclass{amsart}

\usepackage{enumerate, amsmath, amsfonts, amssymb, amsthm, wasysym, graphics, graphicx, xcolor, url, hyperref, hypcap, a4wide, pdflscape, multido, xargs, colortbl, multicol, multirow, calc, shuffle, ifthen, hvfloat, accents}
\hypersetup{colorlinks=true, citecolor=blue, linkcolor=blue}
\usepackage{tikz}\usetikzlibrary{trees,snakes,shapes,arrows,matrix,calc}
\graphicspath{{figures/}{figures/exmTwist/}}
\makeatletter
\def\input@path{{figures/}}
\makeatother


\title{Brick polytopes, lattice quotients, and Hopf algebras}

\author{Vincent Pilaud} 
\address{CNRS \& LIX, \'Ecole Polytechnique, Palaiseau}
\email{vincent.pilaud@lix.polytechnique.fr}
\urladdr{http://www.lix.polytechnique.fr/~pilaud/}
\thanks{Partially supported by the Spanish MICINN grant MTM2011-22792 and by the French ANR grants EGOS~(12\,JS02\,002\,01) and SC3A~(15\,CE40\,0004\,01).}


\newtheorem{theorem}{Theorem}
\newtheorem{corollary}[theorem]{Corollary}
\newtheorem{proposition}[theorem]{Proposition}
\newtheorem{lemma}[theorem]{Lemma}
\newtheorem{definition}[theorem]{Definition}

\theoremstyle{definition}
\newtheorem{example}[theorem]{Example}
\newtheorem{remark}[theorem]{Remark}
\newtheorem{question}[theorem]{Question}

\newcommand{\R}{\mathbb{R}} 
\newcommand{\N}{\mathbb{N}} 
\newcommand{\Z}{\mathbb{Z}} 
\newcommand{\K}{\mathbb{K}} 
\newcommand{\fS}{\mathfrak{S}} 
\newcommand{\fP}{\mathfrak{P}} 

\newcommand{\set}[2]{\left\{ #1 \;\middle|\; #2 \right\}} 
\newcommand{\bigset}[2]{\big\{ #1 \;|\; #2 \big\}} 
\newcommand{\ssm}{\smallsetminus} 
\newcommand{\one}{{1\!\!1}} 
\newcommand{\eqdef}{\mbox{\,\raisebox{0.2ex}{\scriptsize\ensuremath{\mathrm:}}\ensuremath{=}\,}} 
\newcommand{\polar}{^\lozenge} 
\newcommand{\blackPolar}{^\blacklozenge} 

\DeclareMathOperator{\conv}{conv} 
\DeclareMathOperator{\cone}{cone} 
\DeclareMathOperator{\coinv}{coinv} 

\newcommand{\boxsize}{.34}
\newlength{\verticalOffset}
\setlength{\verticalOffset}{.3cm}
\newlength{\verticalShift}
\setlength{\verticalShift}{-.15cm}

\newcounter{length}
\newcommand{\length}[1]{%
	\setcounter{length}{0}%
	\foreach \x in {#1} {%
		\stepcounter{length}%
	}%
}

\newcommand{\pipeDreamMonoColor}[3]{
	\length{#3}%
	\begin{tikzpicture}[baseline = \value{length}*\verticalShift+\verticalOffset, scale=1]
		\coordinate (origin) at (0,0);
		\newcount{\y} \y=0
		\newcount{\x}
		\foreach \line in {#3} {
			\x=0
			\foreach \t in \line {
				\coordinate (W) at ($ (origin) + ( \boxsize * \x , -\boxsize * \y ) + ( 0      , \boxsize / 2 ) $);
				\coordinate (E) at ($ (origin) + ( \boxsize * \x , -\boxsize * \y ) + ( \boxsize     , \boxsize / 2 ) $);
				\coordinate (N) at ($ (origin) + ( \boxsize * \x , -\boxsize * \y ) + ( \boxsize / 2 , \boxsize     ) $);
				\coordinate (S) at ($ (origin) + ( \boxsize * \x , -\boxsize * \y ) + ( \boxsize / 2 , 0 ) $);
				\coordinate (C) at ($ (origin) + ( \boxsize * \x , -\boxsize * \y ) + ( \boxsize / 2 , \boxsize / 2 ) $);
				\ifthenelse{\equal{\t}{e}}{
					\draw[rounded corners=\boxsize * 8, color=#1, thick] (W) -- (C) -- (N);
					\draw[rounded corners=\boxsize * 8, color=#1, thick] (S) -- (C) -- (E);			
				}{}
				\ifthenelse{\equal{\t}{c}}{
					\draw[color=#1, thick] (W) -- (E);
					\draw[color=#1, thick] (S) -- (N);
				}{}
				\ifthenelse{\equal{\t}{t}}{
					\draw[rounded corners=\boxsize * 8, color=#2] (W) -- (C) -- (N);
					\draw[rounded corners=\boxsize * 8, color=#1, thick] (S) -- (C) -- (E);			
				}{}
				\ifthenelse{\equal{\t}{b}}{
					\draw[rounded corners=\boxsize * 8, color=#1, thick] (W) -- (C) -- (N);
					\draw[rounded corners=\boxsize * 8, color=#2] (S) -- (C) -- (E);			
				}{}
				\ifthenelse{\equal{\t}{tb}}{
					\draw[rounded corners=\boxsize * 8, color=#2] (W) -- (C) -- (N);
					\draw[rounded corners=\boxsize * 8, color=#2] (S) -- (C) -- (E);			
				}{}
				\global\advance\x by 1
			}
			\global\advance\y by 1
		}
	\end{tikzpicture}%
}

\newcommand{\pipeDreamBiColor}[4]{
	\length{#4}%
	\begin{tikzpicture}[baseline = \value{length}*\verticalShift+\verticalOffset, scale=1]
		\coordinate (origin) at (0,0);
		\newcount{\y} \y=0
		\newcount{\x}
		\foreach \line in {#4} {
			\x=0
			\foreach \t/\colorW/\colorS in \line {
				\coordinate (W) at ($ (origin) + ( \boxsize * \x , -\boxsize * \y ) + ( 0      , \boxsize / 2 ) $);
				\coordinate (E) at ($ (origin) + ( \boxsize * \x , -\boxsize * \y ) + ( \boxsize     , \boxsize / 2 ) $);
				\coordinate (N) at ($ (origin) + ( \boxsize * \x , -\boxsize * \y ) + ( \boxsize / 2 , \boxsize     ) $);
				\coordinate (S) at ($ (origin) + ( \boxsize * \x , -\boxsize * \y ) + ( \boxsize / 2 , 0 ) $);
				\coordinate (C) at ($ (origin) + ( \boxsize * \x , -\boxsize * \y ) + ( \boxsize / 2 , \boxsize / 2 ) $);
				\ifthenelse{\equal{\t}{e}}{
					\ifthenelse{\equal{\colorW}{l}}{\draw[rounded corners=\boxsize * 8, color=#1, thick] (W) -- (C) -- (N);}{}
					\ifthenelse{\equal{\colorW}{r}}{\draw[rounded corners=\boxsize * 8, color=#2, thick] (W) -- (C) -- (N);}{}
					\ifthenelse{\equal{\colorW}{b}}{\draw[rounded corners=\boxsize * 8, color=#3] (W) -- (C) -- (N);}{}
					\ifthenelse{\equal{\colorS}{l}}{\draw[rounded corners=\boxsize * 8, color=#1, thick] (S) -- (C) -- (E);}{}
					\ifthenelse{\equal{\colorS}{r}}{\draw[rounded corners=\boxsize * 8, color=#2, thick] (S) -- (C) -- (E);}{}
					\ifthenelse{\equal{\colorS}{b}}{\draw[rounded corners=\boxsize * 8, color=#3] (S) -- (C) -- (E);}{}
				}{}
				\ifthenelse{\equal{\t}{c}}{
					\ifthenelse{\equal{\colorW}{l}}{\draw[color=#1, thick] (W) -- (E);}{}
					\ifthenelse{\equal{\colorW}{r}}{\draw[color=#2, thick] (W) -- (E);}{}
					\ifthenelse{\equal{\colorS}{l}}{\draw[color=#1, thick] (S) -- (N);}{}
					\ifthenelse{\equal{\colorS}{r}}{\draw[color=#2, thick] (S) -- (N);}{}
				}{}
				\global\advance\x by 1
			}
			\global\advance\y by 1
		}
	\end{tikzpicture}%
}

\newcommand{\pipeDreamTriColor}[5]{
	\length{#5}%
	\begin{tikzpicture}[baseline = \value{length}*\verticalShift+\verticalOffset, scale=1]
		\coordinate (origin) at (0,0);
		\newcount{\y} \y=0
		\newcount{\x}
		\foreach \line in {#5} {
			\x=0
			\foreach \t/\colorW/\colorS in \line {
				\coordinate (W) at ($ (origin) + ( \boxsize * \x , -\boxsize * \y ) + ( 0      , \boxsize / 2 ) $);
				\coordinate (E) at ($ (origin) + ( \boxsize * \x , -\boxsize * \y ) + ( \boxsize     , \boxsize / 2 ) $);
				\coordinate (N) at ($ (origin) + ( \boxsize * \x , -\boxsize * \y ) + ( \boxsize / 2 , \boxsize     ) $);
				\coordinate (S) at ($ (origin) + ( \boxsize * \x , -\boxsize * \y ) + ( \boxsize / 2 , 0 ) $);
				\coordinate (C) at ($ (origin) + ( \boxsize * \x , -\boxsize * \y ) + ( \boxsize / 2 , \boxsize / 2 ) $);
				\ifthenelse{\equal{\t}{e}}{
					\ifthenelse{\equal{\colorW}{l}}{\draw[rounded corners=\boxsize * 8, color=#1, thick] (W) -- (C) -- (N);}{}
					\ifthenelse{\equal{\colorW}{m}}{\draw[rounded corners=\boxsize * 8, color=#2, thick] (W) -- (C) -- (N);}{}
					\ifthenelse{\equal{\colorW}{r}}{\draw[rounded corners=\boxsize * 8, color=#3, thick] (W) -- (C) -- (N);}{}
					\ifthenelse{\equal{\colorW}{b}}{\draw[rounded corners=\boxsize * 8, color=#4] (W) -- (C) -- (N);}{}
					\ifthenelse{\equal{\colorS}{l}}{\draw[rounded corners=\boxsize * 8, color=#1, thick] (S) -- (C) -- (E);}{}
					\ifthenelse{\equal{\colorS}{m}}{\draw[rounded corners=\boxsize * 8, color=#2, thick] (S) -- (C) -- (E);}{}
					\ifthenelse{\equal{\colorS}{r}}{\draw[rounded corners=\boxsize * 8, color=#3, thick] (S) -- (C) -- (E);}{}
					\ifthenelse{\equal{\colorS}{b}}{\draw[rounded corners=\boxsize * 8, color=#4] (S) -- (C) -- (E);}{}
				}{}
				\ifthenelse{\equal{\t}{c}}{
					\ifthenelse{\equal{\colorW}{l}}{\draw[color=#1, thick] (W) -- (E);}{}
					\ifthenelse{\equal{\colorW}{m}}{\draw[color=#2, thick] (W) -- (E);}{}
					\ifthenelse{\equal{\colorW}{r}}{\draw[color=#3, thick] (W) -- (E);}{}
					\ifthenelse{\equal{\colorS}{l}}{\draw[color=#1, thick] (S) -- (N);}{}
					\ifthenelse{\equal{\colorS}{m}}{\draw[color=#2, thick] (S) -- (N);}{}
					\ifthenelse{\equal{\colorS}{r}}{\draw[color=#3, thick] (S) -- (N);}{}
				}{}
				\global\advance\x by 1
			}
			\global\advance\y by 1
		}
	\end{tikzpicture}%
}

\newcommand{\cross}[1][black]{\raisebox{.1cm}{\pipeDreamMonoColor{#1}{black}{c}}}
\newcommand{\crossBiColor}[2]{\raisebox{.1cm}{\pipeDreamBiColor{#1}{#2}{black}{c/l/r}}}
\newcommand{\elbow}[1][black]{\raisebox{.1cm}{\pipeDreamMonoColor{#1}{black}{e}}}

\newcommand{\pic}[1][black]{\raisebox{.15cm}{\!\!\!\pipeDreamBiColor{white}{#1}{black}{e/l/r}}}
\newcommand{\valley}[1][black]{\pipeDreamBiColor{#1}{white}{black}{e/l/r}\!\!}
\newcommandx{\pipe}[1][1 = p]{\mathrm{#1}} 
\newcommandx{\elb}[1][1 = e]{\mathrm{#1}} 
\newcommand{\SE}{\textsc{se}} 
\newcommand{\WN}{\textsc{wn}} 
\newcommand{\WE}{\textsc{we}} 
\newcommand{\SN}{\textsc{sn}} 
\newcommand{\subwordComplex}{\mathcal{SC}} 
\newcommand{\Q}{\mathrm{Q}} 
\newcommand{\sqc}{\mathrm{c}} 
\newcommand{\deletion}[2]{\ensuremath{#1\,\rotatebox{90}{\tiny $\ll$}\, #2}} 
\newcommand{\insertion}[2]{\ensuremath{#1\,\rotatebox{90}{\tiny $\gg$}\, #2}} 

\newcommand{\meet}{\wedge} 
\newcommand{\join}{\vee} 
\newcommand{\less}{\vartriangleleft} 
\newcommand{\more}{\vartriangleright} 
\newcommand{\contactLess}[1]{\less_{#1}} 
\newcommand{\contactMore}[1]{\more_{#1}} 
\newcommand{\projDown}{\pi_\downarrow} 
\newcommand{\projUp}{\pi^\uparrow} 

\newcommand{\contact}{^\#} 
\newcommand{\duality}{^\star} 
\newcommandx{\surjectionPermBrick}[1][1=k]{\mathsf{ins}^{#1}} 
\newcommandx{\surjectionBrickZono}[1][1=k]{\mathsf{can}^{#1}} 
\newcommandx{\surjectionPermZono}[1][1=k]{\mathsf{rec}^{#1}} 
\newcommandx{\restrictionBrick}[1][1=k\to\ell]{\mathsf{res}^{#1}} 
\newcommandx{\restrictionZono}[1][1=k\to\ell]{\mathsf{res}^{#1}} 
\newcommand{\linearExtensions}{\mathcal{L}} 

\newcommandx{\twist}[1][1=T]{\mathrm{#1}} 
\newcommand{\tree}{\mathrm{T}} 
\newcommand{\triangulation}{\mathrm{T}} 
\newcommandx{\orientation}[1][1=O]{\mathrm{#1}} 
\newcommandx{\Twists}[1][1 = k]{\mathcal{T}^{#1}} 
\newcommandx{\AcyclicTwists}[1][1 = k]{\mathcal{AT}^{#1}} 
\newcommandx{\AcyclicHyperTwists}[1][1 = k]{\mathcal{AHT}^{#1}} 
\newcommandx{\IndecomposableAcyclicTwists}[1][1 = k]{\mathcal{IAT}^{#1}} 
\newcommandx{\BaxterAcyclicTwists}[1][1 = k]{\mathcal{BAT}^{#1}} 
\newcommandx{\AcyclicOrientations}[1][1 = k]{\mathcal{AO}^{#1}} 
\newcommandx{\AcyclicPartialOrientations}[1][1 = k]{\mathcal{APO}^{#1}} 
\newcommandx{\TwistTuples}[1][1 = k]{\mathcal{TT}^{#1}} 

\newcommandx{\Perm}[2][1=k, 2=n]{\mathsf{Perm}^{#1}(#2)} 
\newcommandx{\Brick}[2][1=k, 2=n]{\mathsf{Brick}^{#1}(#2)} 
\newcommandx{\SummandBrick}[3][1=k, 2=n, 3=b]{\mathsf{Brick}^{#1}_{#3}(#2)} 
\newcommandx{\Zono}[2][1=k, 2=n]{\mathsf{Zono}^{#1}(#2)} 
\newcommandx{\polygon}[1][1=\signature]{\mathrm{P}^{#1}} 
\newcommand{\Cone}{\mathrm{C}} 
\newcommand{\HH}{\mathbb{H}} 
\newcommand{\Hyp}{\b{H}^=} 
\newcommand{\HS}{\b{H}^\ge} 
\newcommandx{\Gkn}[2][1=k, 2=n]{\graphG^{#1}(#2)} 
\newcommand{\graphG}{\mathrm{G}} 
\newcommand{\integerPointTransform}{\mathbb{Z}} 
\newcommand{\fan}{\mathcal{F}} 

\newcommand{\product}{\cdot} 
\newcommand{\coproduct}{\triangle} 
\newcommand{\shiftedShuffle}{\,\bar\shuffle\,} 
\newcommand{\convolution}{\star} 
\newcommand{\mirror}[1]{{#1}^{\bullet}} 
\newcommand{\FQSym}{\mathsf{FQSym}} 
\newcommand{\PBT}{\mathsf{PBT}} 
\newcommandx{\Camb}{\mathsf{Camb}} 
\newcommand{\OrdPart}{\mathsf{OrdPart}} 
\newcommandx{\DSym}[1][1 = k]{\mathsf{DSym}^{#1}} 
\newcommandx{\Rec}[1][1 = k]{\mathsf{Rec}^{#1}} 
\newcommandx{\Twist}[1][1 = k]{\mathsf{Twist}^{#1}} 
\newcommandx{\HyperTwist}[1][1 = k]{\mathsf{HyperTwist}^{#1}} 
\newcommandx{\HyperRec}[1][1 = k]{\mathsf{HyperRec}^{#1}} 
\newcommand{\F}{\mathbb{F}} 
\newcommand{\G}{\mathbb{G}} 
\newcommand{\EFQSym}{\mathbb{E}} 
\newcommand{\HFQSym}{\mathbb{H}} 
\newcommand{\PTwist}{\mathbb{P}} 
\newcommand{\QTwist}{\mathbb{Q}} 
\newcommand{\HTwist}{\mathbb{H}} 
\newcommand{\ETwist}{\mathbb{E}} 
\newcommand{\XRec}{\mathbb{X}} 
\newcommand{\underprod}[2]{{#1}\backslash{#2}} 
\newcommand{\overprod}[2]{{#1}\slash{#2}} 
\newcommand{\edgecut}[2]{\left( #1 \;\middle\|\; #2 \right)} 

\newcommand{\op}[1]{
	\foreach \letter in {#1} {
		\ifthenelse{\equal{\letter}{l}}{{\prec}}{}
		\ifthenelse{\equal{\letter}{m}}{{\prec\hspace*{-.27cm}\succ}}{} 
		\ifthenelse{\equal{\letter}{r}}{{\succ}}{}
	}
}
\newcommand{\coop}[1]{
	\foreach \letter in {#1} {
		\ifthenelse{\equal{\letter}{l}}{\rotatebox{90}{\hspace*{-.03cm}$\prec$}}{}
		\ifthenelse{\equal{\letter}{m}}{\raisebox{-.03cm}{\hspace*{.03cm}\rotatebox{90}{$\prec\hspace*{-.27cm}\succ$}\hspace*{.03cm}}}{}
		\ifthenelse{\equal{\letter}{r}}{\rotatebox{90}{\hspace*{-.03cm}$\succ$}}{}
	}
}
\newcommandx{\operation}[1][1=b]{\mathfrak{#1}} 
\newcommand{\Operations}{\mathfrak{B}} 
\newcommand{\algebra}{\mathsf{Alg}} 
\newcommandx{\cooperation}[1][1=d]{\mathfrak{#1}} 
\newcommand{\sfx}{\mathsf{x}} 
\newcommand{\sfy}{\mathsf{y}} 
\newcommand{\sfz}{\mathsf{z}} 

\newcommand{\signature}{\varepsilon} 
\newcommand{\signatures}{\mathcal{E}} 
\newcommandx{\shape}[2][1=k, 2=\signature]{\mathsf{Sh}_{#2}^{#1}} 
\newcommand{\psignature}{\signature_p} 
\newcommand{\vsignature}{\signature_v} 
\newcommand{\up}[1]{\overline{#1}} 
\newcommand{\upr}[1]{{\red \overline{#1}}} 
\newcommand{\upb}[1]{{\blue \overline{#1}}} 
\newcommand{\upw}[1]{\underaccent{\phantom{.}}{\up{#1}}}
\newcommand{\uptilde}[1]{\accentset{\vspace{-.03cm}\sim}{#1}}
\newcommand{\down}[1]{\underline{#1}} 
\newcommand{\downr}[1]{{\red \underline{#1}}} 
\newcommand{\downb}[1]{{\blue \underline{#1}}} 
\newcommand{\downw}[1]{\accentset{\phantom{.}}{\down{#1}}}
\newcommand{\downtilde}[1]{\underaccent{\,\sim}{#1}}
\newcommand{\simtilde}[1]{\accentset{\vspace{-.01cm}\sim}{#1}}
\newcommand{\tuple}{\mathbf{T}} 
\newcommand{\sep}{|} 

\newcommand{\fref}[1]{Figure~\ref{#1}} 
\newcommand{\ie}{\textit{i.e.}~} 
\newcommand{\eg}{\textit{e.g.}~} 
\newcommand{\aka}{\textit{aka.}~} 
\newcommand{\apriori}{\textit{a priori}} 
\newcommand{\para}[1]{\vspace{.3cm}\noindent\framebox{\textsc{#1}}} 
\newcommand{\defn}[1]{\emph{\textsf{\color{blue} #1}}} 
\definecolor{green}{RGB}{57,181,74} 
\newcommand{\blue}{\color{blue}} 
\newcommand{\red}{\color{red}} 
\renewcommand{\b}[1]{\mathbf{#1}} 
\newcommand{\dash}{\,\text{--}\,}

\makeatletter
\def\part{\@startsection{part}{1}%
\z@{.7\linespacing\@plus\linespacing}{.5\linespacing}%
{\LARGE\sffamily\centering}}
\@addtoreset{section}{part}
\makeatother

\makeatletter
\def\l@part{\@tocline{1}{6pt}{0pc}{}{}}
\def\l@section{\@tocline{1}{4pt}{0pc}{}{}}
\makeatother
\let\oldtocpart=\tocpart
\renewcommand{\tocpart}[2]{\hspace{0em}\bf\large\oldtocpart{#1}{#2}}
\let\oldtocsection=\tocsection
\renewcommand{\tocsection}[2]{\hspace{0em}\bf\oldtocsection{#1}{#2}}


\begin{document}

\begin{abstract}
This paper is motivated by the interplay between the Tamari lattice, J.-L.~Loday's realization of the associahedron, and J.-L.~Loday and M.~Ronco's Hopf algebra on binary trees. We show that these constructions extend in the world of acyclic $k$-triangulations, which were already considered as the vertices of V.~Pilaud and F.~Santos' brick polytopes. We describe combinatorially a natural surjection from the permutations to the acyclic $k$-triangulations. We show that the fibers of this surjection are the classes of the congruence~$\equiv^k$ on~$\fS_n$ defined as the transitive closure of the rewriting rule~$U ac V_1 b_1 \cdots V_k b_k W \equiv^k U ca V_1 b_1 \cdots V_k b_k W$ for letters~${a < b_1, \dots, b_k < c}$ and words~$U, V_1, \dots, V_k, W$ on~$[n]$. We then show that the increasing flip order on $k$-triangulations is the lattice quotient of the weak order by this congruence. Moreover, we use this surjection to define a Hopf subalgebra of C.~Malvenuto and C.~Reutenauer's Hopf algebra on permutations, indexed by acyclic $k$-triangulations, and to describe the product and coproduct in this algebra and its dual in term of combinatorial operations on acyclic $k$-triangulations. Finally, we extend our results in three directions, describing a Cambrian, a tuple, and a Schr\"oder version of these constructions.
\end{abstract}

\vspace*{-1cm}
\maketitle
\vspace*{-.5cm}
\enlargethispage{-.4cm}
\tableofcontents
\vspace*{-.7cm}
\enlargethispage{.3cm}


\section*{Introduction}

The motivation of this paper comes from relevant combinatorial, geometric, and algebraic structures on permutations, binary trees and binary sequences. Classical surjections from permutations to binary trees (BST insertion) and from binary trees to binary sequences (canopy) yield:
\begin{itemize}
\item lattice morphisms from the weak order, via the Tamari lattice, to the boolean lattice;
\item normal fan coarsenings from the permutahedron, via J.-L.~Loday's associahedron~\cite{Loday}, to the parallelepiped generated by the simple roots~$\b{e}_{i+1} - \b{e}_i$;
\item Hopf algebra refinements from C.~Malvenuto and C.~Reutenauer's algebra~\cite{MalvenutoReutenauer}, via \mbox{J.-L.~Loday} and M.~Ronco's algebra~\cite{LodayRonco}, to the descent algebra of~\cite{GelfandKrobLascouxLeclercRetakhThibon}.
\end{itemize}

These fascinating connections were widely extended by N.~Reading in his work on ``Lattice congruences, fans and Hopf algebras''~\cite{Reading-HopfAlgebras}. In particular, he proves that any lattice congruence~$\equiv$ of the weak order on the permutations of~$\fS_n$ (see Section~\ref{subsec:latticeCongruences} for proper definitions) defines a complete simplicial fan~$\fan_\equiv$ refined by the Coxeter fan, and he characterizes in terms of simple rewriting rules the families~$(\equiv_n)_{n \in \N}$ of lattice congruences of the weak orders on~$(\fS_n)_{n \in \N}$ which yield Hopf subalgebras of C.~Malvenuto and C.~Reutenauer's algebra on permutations. His work opens two natural questions. On the geometric side, it is not clear which of the fans~$\fan_\equiv$ are actually normal fans of polytopes, as in the previous example of the associahedron. On the algebraic side, this construction produces a combinatorial Hopf algebra whose basis is indexed by the congruence classes of~$(\equiv_n)_{n \in \N}$. However, the product and coproduct in this Hopf algebra are performed extrinsically: the algebra is embedded in C.~Malvenuto and C.~Reutenauer's algebra on permutations and the computations are performed at that level. The remaining challenge is to realize the resulting Hopf algebra intrinsically by attaching a combinatorial object to each congruence class of~$(\equiv_n)_{n \in \N}$ and working out the rules for product and coproduct directly on these combinatorial objects. The present paper answers these two questions for a relevant family of lattice congruences of the weak order, generalizing the classical sylvester congruence~\cite{HivertNovelliThibon-algebraBinarySearchTrees}. 

Our starting point is the world of acyclic multitriangulations. A \defn{$k$-triangulation} of a convex $(n+2k)$-gon is a maximal set of diagonals such that no $k+1$ of them are pairwise crossing. Multitriangulations were introduced by V.~Capoyleas and J.~Pach~\cite{CapoyleasPach} in the context of extremal theory for geometric graphs and studied for their rich combinatorial properties~\cite{Nakamigawa, DressKoolenMoulton, Jonsson, PilaudSantos-multitriangulations}. Using classical point-line duality, V.~Pilaud and M.~Pocchiola interpreted $k$-triangulations of the $(n+2k)$-gon as pseudoline arrangements on $n$-level sorting networks~\cite{PilaudPocchiola}, which can also be seen more combinatorially as beam arrangements in a trapezoidal shape~\cite[Section~4.1.4]{Pilaud-these}. In this paper, we call these specific arrangements \defn{$(k,n)$-twists}. As observed in~\cite{Stump, SerranoStump}, this connects multitriangulations to (specific) pipe dreams studied in~\cite{BergeronBilley, KnutsonMiller-GroebnerGeometry}. Motivated by possible polytopal realizations of the simplicial complex of $(k+1)$-crossing-free sets of diagonals of a convex $(n+2k)$-gon, V.~Pilaud and F.~Santos defined in~\cite{PilaudSantos-brickPolytope} the brick polytope of a sorting network, whose vertices correspond to certain \defn{acyclic} pseudoline arrangements on the network. When~$k = 1$, the pseudoline arrangements on the trapezoidal network correspond to the triangulations of the $(n+2)$-gon. They are all acyclic and the brick polytope coincides with J.-L.~Loday's associahedron~\cite{Loday}. The goal of this paper is to explore further properties of the acyclic $(k,n)$-twists for arbitrary~$k$ and~$n$.

The first part of this paper studies the combinatorial, geometric, and algebraic structure of the set of acyclic $(k,n)$-twists.
In Section~\ref{sec:combinatorics}, we present a purely combinatorial description of the natural map~$\surjectionPermBrick$ from the permutations of~$\fS_n$ to the acyclic $(k,n)$-twists in terms of successive insertions in a $k$-twist. Extending the sylvester congruence of~\cite{HivertNovelliThibon-algebraBinarySearchTrees}, we show that the fibers of this map are the classes of a congruence~$\equiv^k$ defined as the transitive closure of the rewriting rule~${U a c V_1 b_1 V_2 b_2 \cdots V_k b_k W \equiv^k U c a V_1 b_1 V_2 b_2 \cdots V_k b_k W}$ where $a, b_1, \dots, b_k, c \in [n]$ are such that~$a < b_i < c$ for all~$i \in [k]$ and $U, V_1, \dots, V_k, W$ are words on~$[n]$. This congruence is a lattice congruence of the weak order, so that the increasing flip order on the acyclic \mbox{$(k,n)$-twists} defines a lattice, generalizing the Tamari lattice. We also define a canopy map~$\surjectionBrickZono$ from the acyclic $(k,n)$-twists to the acyclic orientations of the graph~${\Gkn = ([n], \set{\{i,j\}}{|i-j| \le k})}$. For~$\tau \in \fS_n$, the composition~$\surjectionPermZono(\tau) = \surjectionBrickZono \big( \surjectionPermBrick(\tau) \big)$ records the relative positions in~$\tau$ of any entries~$i$ and~$j$ such that~$|i-j| \le k$, thus generalizing the recoils of the permutation~$\tau$. Note that this generalization of recoils was already considered by J.-C.~Novelli, C.~Reutenauer and J.-Y.~Thibon in~\cite{NovelliReutenauerThibon} with a slightly different presentation. To sum up, at the combinatorial level, we obtain a commutative triangle of lattice homomorphisms from the weak order, via the increasing flip order on the acyclic $(k,n)$-twists, to the lattice on acyclic orientations of~$\Gkn$. 

In Section~\ref{sec:geometry}, we survey and revisit the geometric aspects of these combinatorial maps. We recall the definitions of the classical permutahedron~$\Perm$, of the brick polytope~$\Brick$ of the $n$-level trapezoidal network~\cite{PilaudSantos-brickPolytope}, and of the zonotope~$\Zono$ of the graph~$\Gkn$. Their vertices correspond to the permutations of~$\fS_n$, to the acyclic $(k,n)$-twists, and to the acyclic orientations of~$\Gkn$, respectively. When oriented in the direction~$(n-1, n-3, \dots, 1-n)$, their $1$-skeleta are the Hasse diagrams of the weak order on permutations of~$\fS_n$, of the increasing flip lattice on acyclic $(k,n)$-twists, and of the lattice on acyclic orientations of~$\Gkn$. The maps~$\surjectionPermBrick$, $\surjectionPermZono$ and~$\surjectionBrickZono$ can be read as inclusions of normal cones of vertices of~$\Perm$, $\Brick$ and~$\Zono$. The reader can already glance at \fref{fig:PermBrickZono} on page~\pageref{fig:PermBrickZono} for an illustration of the geometric situation. Although most results in this section are not new, they provide the geometric side of the picture.

In Section~\ref{sec:algebra}, we present the new algebraic construction which motivated this paper. We consider C.~Malvenuto and C.~Reutenauer's Hopf algebra on permutations~\cite{MalvenutoReutenauer}, that we denote by~$\FQSym$. We consider the subspace~$\Twist$ generated by the sums of the elements of~$\FQSym$ over the fibers of~$\surjectionPermBrick$. Since these fibers are classes of a congruence~$\equiv^k$ which satisfy standard compatibility conditions with the shuffle and the standardization~\cite{Reading-HopfAlgebras, Hivert-habilitation, HivertNzeutchap, Priez}, this subspace automatically defines a Hopf subalgebra of~$\FQSym$. Our approach with $k$-twists provides a combinatorial interpretation for this subalgebra, and it is interesting to describe the product and the coproduct directly on acyclic $k$-twists. Note that our Hopf algebra~$\Twist$ on acyclic $k$-twists is sandwiched in between C.~Malvenuto and C.~Reutenauer's Hopf algebra~$\FQSym$ on permutations~\cite{MalvenutoReutenauer} and J.-C.~Novelli, C.~Reutenauer and J.-Y.~Thibon's $k$-recoil Hopf algebra~$\Rec$ on acyclic orientations of~$\Gkn$~\cite{NovelliReutenauerThibon}. We finally briefly study further algebraic properties of~$\Twist$: we define multiplicative bases and study their indecomposable elements, we connect it to integer point transforms of the normal cones of the brick polytope, and we define a natural extension of dendriform structures in the context of $k$-twists.

The second part of this paper is devoted to three independent extensions of these results. In Section~\ref{sec:Cambrianization}, we show that our constructions can be parametrized by a sequence of signs, following the same direction as~\cite{Reading-CambrianLattices, HohlwegLange, PilaudSantos-brickPolytope, ChatelPilaud}. We obtain generalizations of the Cambrian lattices, mention their connections to certain well-chosen brick polytopes, and construct a Cambrian Hopf algebra on acyclic $k$-twists with similar ideas as in~\cite[Part~1]{ChatelPilaud}. In Section~\ref{sec:tuplization}, we extend the Cambrian tuple algebra of G.~Chatel and V.~Pilaud \cite[Part~2]{ChatelPilaud}, which was motivated by the Hopf algebras on diagonal rectangulations described by S.~Law and N.~Reading~\cite{LawReading} and on twin binary trees described by S.~Giraudo~\cite{Giraudo}. Finally in Section~\ref{sec:Schroderization}, we study Hopf algebras on all faces of the brick polytope, motivated by the work of F.~Chapoton~\cite{Chapoton}, and its extension to the Cambrian Schr\"oder algebra in~\cite[Part~3]{ChatelPilaud}.

\newpage
\part{Acyclic twists: combinatorics, geometry, and algebra}
\label{part:acyclicTwists}

\medskip
\section{Combinatorics of twists}
\label{sec:combinatorics}

\subsection{Pipe dreams and twists}
\label{subsec:twists}

A \defn{pipe dream} is a filling of a triangular shape with crosses~\cross{} and elbows~\elbow{} so that all pipes entering on the left side exit on the top side. These objects were studied in the literature, under different names including ``pipe dreams''~\cite{KnutsonMiller-GroebnerGeometry}, ``RC-graphs''~\cite{BergeronBilley}, ``beam arrangements''~\cite{Pilaud-these,PilaudSantos-brickPolytope}. This paper is mainly concerned with the following specific family of pipe dreams which already appeared under different names in~\cite{Pilaud-these, PilaudPocchiola, Stump}.

\begin{definition}[\cite{Pilaud-these, PilaudPocchiola, Stump}]
\label{def:twist}
For~$k, n \in \N$, a \defn{$(k,n)$-twist} (we also use just \defn{$k$-twist}, or even just \defn{twist}) is a pipe dream with~$n+2k$ pipes such~that
\begin{itemize}
\item no two pipes cross twice (the pipe dream is reduced),
\item the pipe which enters in row~$i$ exits in column~$i$ if~$k \le i \le n+k$, and in column~${n+2k+1-i}$ otherwise.~Here and throughout the paper, rows are indexed from bottom to top and columns are indexed from left to right.
\end{itemize}
Besides the first $k$ and last $k$ trivial pipes, a $(k,n)$-twist has $n$ relevant pipes, labeled by~$[n]$ from bottom to top, or equivalently from left to right. In other words, the~$p$th pipe enters in row~$p+k$ and exits at column~$p+k$ of the $(n+2k) \times (n+2k)$-triangular shape. We denote by~$\Twists(n)$ the set of $(k,n)$-twists.
\end{definition}

\begin{definition}[\cite{PilaudSantos-brickPolytope}]
\label{def:contactGraph}
The \defn{contact graph} of a $(k,n)$-twist~$\twist$ is the directed multigraph~$\twist\contact$ with vertex set~$[n]$ and with an arc from the \SE-pipe to the \WN-pipe of each elbow in~$\twist$ involving two relevant pipes. We say that a twist~$\twist$ is \defn{acyclic} if its contact graph~$\twist\contact$ is (no oriented cycle), and we then let~$\contactLess{T}$ be the transitive closure of~$\twist\contact$. We denote by~$\AcyclicTwists(n)$ the set of \mbox{acyclic~$(k,n)$-twists}.
\end{definition}

\fref{fig:twists} illustrates examples of $(k,5)$-twists and their contact graphs for~$k = 0, 1, 2, 3$. The first two are acyclic, the last two are not. Except in Figures~\ref{fig:1twistsTriangulations} and~\ref{fig:ktwistsktriangulations}, we only represent the~$n$ relevant pipes of the $(k,n)$-twists and hide the other~$2k$ trivial pipes (the first~$k$ and last~$k$ pipes).

\begin{figure}[h]
	\centerline{\includegraphics[scale=1.4]{twists}}
	\caption{$(k,5)$-twists (top) and their contact graphs (bottom) for~$k = 0, 1, 2, 3$.}
	\label{fig:twists}
\end{figure}

\begin{remark}[Subword complexes]
\label{rem:subwordComplexes}
Although we prefer a more visual presentation to develop an intuition which will be crucial later on, twists and their contact graphs can be defined in more algebraic terms. Namely, a $(k,n)$-twist could be equivalently defined as a reduced expression of the longest permutation~$w_\circ \eqdef [n, n-1, \dots, 2, 1]$ in the word~$\Q \eqdef (s_1 \cdots s_{n-1})^{k+1} (s_1 \cdots s_{n-2}) \cdots (s_1 s_2) s_1$ where~$s_i$ denotes the simple transposition~$(i \;\, i+1)$. In other words, it is a facet in the subword complex~$\subwordComplex(\Q,w_\circ)$, see~\cite{KnutsonMiller-subwordComplex}. The contact graph of the twist is then given by the root configuration of this facet~\cite{CeballosLabbeStump, PilaudStump-brickPolytope}. The twist is acyclic when this root configuration is pointed.
\end{remark}

\begin{example}[$0$-twists]
For any~$n \in \N$, there is a unique $(0,n)$-twist. It is the pipe dream where all boxes on the diagonal~$\set{(i,i)}{i \in [n]}$ are filled with elbows~\elbow{} while all other boxes are filled with crossings~\cross{}, see \fref{fig:twists}\,(left). Its contact graph is therefore an independent graph.
\end{example}

\begin{example}[$1$-twists, triangulations and binary trees]
\label{exm:1twistsTriangulations}
Following~\cite{Woo, Pilaud-these, PilaudPocchiola, Stump}, we observe that $(1,n)$-twists are in bijective correspondence with triangulations of a convex $(n+2)$-gon. This bijection is illustrated in \fref{fig:1twistsTriangulations}. To describe it explicitly, consider:
\begin{itemize}
\item the $(n+2) \times (n+2)$ triangular shape where rows (resp.~columns) are labeled by~$\{0, \dots, n+1\}$ from bottom to top (resp.~from left to right),
\item the convex $(n+2)$-gon where the vertices are labeled by~$\{0, \dots, n+1\}$ from left to right and where all vertices lie below the top boundary edge~$[0,n+1]$,
\item the map which sends an elbow in row~$i$ and column~$j$ of the triangular shape to the diagonal~$[i,j]$ of the~$(n+2)$-gon.
\end{itemize}
As illustrated in \fref{fig:1twistsTriangulations}, this map sends a $(1,n)$-twist~$\twist$ to a triangulation~$\twist\duality$ of the $(n+2)$-gon:
\begin{itemize}
\item the $p$th relevant pipe of~$\twist$ corresponds to the $p$th triangle of~$\twist\duality$ (with central vertex~$p$),
\item the elbows of~$\twist$ correspond to the diagonals of~$\twist\duality$,
\item the crossings of~$\twist$ correspond to the common bisectors between triangles of~$\twist\duality$,
\item the contact graph of~$\twist$ corresponds to the dual binary tree of~$\twist\duality$,
\item elbow flips in~$\twist$ (see \fref{fig:1twistsTriangulations} and Section~\ref{subsec:flips}) corresponds to diagonal flips in~$\twist\duality$.
\end{itemize}
\end{example}

\begin{figure}[h]
	\centerline{\includegraphics[scale=1.4]{dualiteTriangulation}}
	\caption{The bijection between $(1,n)$-twists~$\twist$ (left) and triangulations~$\twist\duality$ of the $(n+2)$-gon (right) sends the pipes of~$\twist$ to the triangles of~$\twist\duality$, the elbows of~$\twist$ to the diagonals of~$\twist\duality$, the contact graph of~$\twist$ to the dual binary tree of~$\twist\duality$ (middle), and the elbow flips in~$\twist$ to the diagonal flips in~$\twist\duality$. See also Section~\ref{sec:Cambrianization}.}
	\label{fig:1twistsTriangulations}
	\vspace*{-.4cm}
\end{figure}

\subsection{Twists and multitriangulations}
\label{subsec:ktriangulations}

\enlargethispage{.2cm}
Following Example~\ref{exm:1twistsTriangulations}, we now recall the bijective correspondence between $(k,n)$-twists and $k$-triangulations of a convex $(n+2k)$-gon. This correspondence is not strictly required for our constructions, but was at the initial motivation of our study of $k$-twists. The reader familiar with multitriangulations can jump directly to Section~\ref{subsec:elementaryPropertiesPipes}.

Multitriangulations appeared in the context of extremal theory for geometric graphs~\cite{CapoyleasPach} and were studied for their combinatorial structure \cite{Nakamigawa, DressKoolenMoulton, Jonsson}. We refer to~\cite{PilaudSantos-multitriangulations} for a recent overview of their properties.

\begin{definition}
A \defn{$k$-triangulation} of a convex $(n+2k)$-gon is a maximal set of diagonals of the $(n+2k)$-gon such that no $k+1$ of them are pairwise crossing.
\end{definition}

\fref{fig:ktriangulations} illustrates this definition with examples of $k$-triangulations for $k = 0, 1,2,3$. Observe from the definition that $1$-triangulations are just classical triangulations. Note also that a diagonal of the $(n+2k)$-gon can appear in a $(k+1)$-crossing only if it has at least $k$ vertices of the $(n+2k)$-gon on each side. We say that those diagonals are \defn{$k$-relevant diagonals}, that the diagonals separating $k-1$ vertices of the $(n+2k)$-gon from the others are \defn{$k$-boundary diagonals}, and that the other ones are \defn{$k$-irrelevant diagonals}. To connect $k$-triangulations to $k$-twists, we need their decompositions into $k$-stars, introduced in~\cite{PilaudSantos-multitriangulations}. See \fref{fig:ktriangulations} for illustrations.

\begin{figure}
	\centerline{\includegraphics[width=\textwidth]{ktriangulations}}
	\caption{$k$-triangulations of the $(5+2k)$-gon with a colored $k$-star, for~$k = 0, 1, 2, 3$.}
	\label{fig:ktriangulations}
\end{figure}

\begin{definition}
A \defn{$k$-star} is a star $(2k+1)$-gon, given by $2k+1$ vertices~$s_0, \dots, s_{2k}$ cyclically ordered connected by the $2k+1$ edges~$[s_i,s_{i+k}]$ (indices are taken modulo~$2k+1$).
\end{definition}

The $k$-stars were used in~\cite{PilaudSantos-multitriangulations} to show by double counting that the number of diagonals in a $k$-triangulation of the $(n+2k)$-gon only depends on~$k$ and~$n$, a property known in~\cite{Nakamigawa, DressKoolenMoulton}.

\begin{theorem}[\protect{\cite[Thm.~1.4\,(1\,\&\,2)]{PilaudSantos-multitriangulations}}]
A $k$-triangulation of the $(n+2k)$-gon contains precisely $n$ $k$-stars and $k(2n+2k-1)$ diagonals, $k(n-1)$ of which are $k$-relevant. Moreover, each $k$-relevant (resp.~$k$-boundary, resp.~$k$-irrelevant) diagonal of a $k$-triangulation~$\triangulation$ is contained in~$2$ (resp.~$1$, resp.~$0$) $k$-stars of~$\triangulation$.
\end{theorem}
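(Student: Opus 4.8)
The statement splits into two parts: (1) a $k$-triangulation of the $(n+2k)$-gon contains exactly $n$ $k$-stars, $k(2n+2k-1)$ diagonals total, and $k(n-1)$ of them $k$-relevant; and (2) each $k$-relevant (resp. $k$-boundary, resp. $k$-irrelevant) diagonal lies in $2$ (resp. $1$, resp. $0$) $k$-stars. The plan is to prove part (2) first by a local geometric argument, and then deduce part (1) by a double-counting identity on incidences between diagonals and $k$-stars.

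First I would establish the structural fact that in any $k$-triangulation $\triangulation$, each maximal set of $2k+1$ "consecutively crossing" diagonals forms the edge set of a $k$-star, and conversely every diagonal of $\triangulation$ together with the diagonals crossing it is organized into one or two such $k$-stars depending on how many vertices it separates. Concretely: take a $k$-relevant diagonal $\delta = [a,b]$ with at least $k$ vertices on each side. By maximality of $\triangulation$ and the $(k+1)$-crossing-free condition, on each side of $\delta$ there is a uniquely determined "innermost" configuration of $k$ further diagonals of $\triangulation$ that pairwise cross $\delta$ and cross each other in the prescribed cyclic pattern; these $2k+1$ vertices are cyclically ordered and the $2k+1$ diagonals $[s_i, s_{i+k}]$ all belong to $\triangulation$, giving a $k$-star on each side. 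For a $k$-boundary diagonal (exactly $k-1$ vertices on one side) only one side admits such a completion, so it lies in exactly one $k$-star; for a $k$-irrelevant diagonal neither side does, so it lies in none. The main obstacle is precisely this local analysis: one must argue carefully that the completion exists (using maximality of $\triangulation$) and is unique (using the $(k+1)$-crossing-free condition), and that the $2k+1$ diagonals so produced are exactly the edges of a single $k$-star rather than some larger crossing configuration. This is where the combinatorics of multitriangulations is genuinely delicate.

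Granting part (2), part (1) follows by double counting. Let $S$ be the number of $k$-stars in $\triangulation$ and let $D, D_{\mathrm{rel}}, D_{\mathrm{bd}}, D_{\mathrm{irr}}$ denote the numbers of diagonals of each type, so $D = D_{\mathrm{rel}} + D_{\mathrm{bd}} + D_{\mathrm{irr}}$. Counting incidences (diagonal, $k$-star) in two ways: each $k$-star has $2k+1$ edges, so the incidence count is $(2k+1)S$; on the other hand by part (2) it equals $2 D_{\mathrm{rel}} + D_{\mathrm{bd}} + 0 \cdot D_{\mathrm{irr}}$. The numbers $D_{\mathrm{bd}}$ and $D_{\mathrm{irr}}$ are determined purely by $n$ and $k$ independently of $\triangulation$: the $k$-boundary diagonals are the $n+2k$ diagonals separating exactly $k-1$ vertices, and the $k$-irrelevant diagonals (including the sides of the polygon if one counts them, or excluding them as appropriate) are likewise a fixed count; a short bookkeeping gives $D_{\mathrm{bd}} = n + 2k$ and, together with the edges of the polygon, pins down the rest. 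Combining $(2k+1)S = 2 D_{\mathrm{rel}} + D_{\mathrm{bd}}$ with one more relation — e.g. counting the total area/triangulation refinement, or equivalently noting that refining each $k$-star into $k$ triangles and each remaining region appropriately yields a triangulation of the polygon with $n+2k-2$ triangles — yields two linear equations in $S$ and $D_{\mathrm{rel}}$ whose solution is $S = n$ and $D_{\mathrm{rel}} = k(n-1)$, whence $D = k(n-1) + (n+2k) + D_{\mathrm{irr}} = k(2n+2k-1)$ after substituting the known value of $D_{\mathrm{irr}}$.

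An alternative, perhaps cleaner route for part (1) that I would keep in reserve: induct on $n$, using that a $k$-triangulation always contains an "ear" — a $k$-star incident to $k$ consecutive boundary diagonals — whose removal (flattening the corresponding vertex) produces a $k$-triangulation of the $(n-1+2k)$-gon. Deleting this ear removes exactly one $k$-star and a controlled number of diagonals ($2k$ of them, after accounting for which become irrelevant), so the counts $S \mapsto S-1$ and the diagonal count drop by the right amount, and the base case $n=1$ (a single $k$-star, $2k+1$ diagonals, none relevant since $k(n-1)=0$) closes the induction. The existence of such an ear is itself a lemma from \cite{PilaudSantos-multitriangulations}, so logically this just pushes the difficulty one step back; the honest hard kernel of the whole theorem remains the local claim in part (2) that crossing diagonals assemble into $k$-stars.
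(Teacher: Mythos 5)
The paper does not prove this statement; it is imported directly from \cite[Thm.~1.4]{PilaudSantos-multitriangulations}, and the only hint it gives is the preceding remark that the diagonal count follows ``by double counting'' with $k$-stars. Your overall architecture --- establish the diagonal/$k$-star incidence numbers of part~(2) first, then double-count incidences for part~(1) --- matches that hint and is indeed how Pilaud--Santos organize their argument, so at the level of strategy you are on the right track.

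Two genuine gaps remain. First, you correctly flag part~(2) as the hard kernel, but the flag is not a proof: the assertion that ``on each side of $\delta$ there is a uniquely determined innermost configuration of $k$ further diagonals'' assembling into a $k$-star is precisely the content of the theorem, and invoking ``maximality'' and ``$(k+1)$-crossing-free'' names hypotheses without showing how they yield the $k$-star or its uniqueness; this is where Pilaud--Santos spend most of their effort, via a careful analysis of angles of the crossing diagram. Second, your incidence count $(2k+1)S = 2D_{\mathrm{rel}} + D_{\mathrm{bd}}$ is one linear relation in the two unknowns $S$ and $D_{\mathrm{rel}}$; the ``one more relation'' you invoke is never written down, and the sketch offered for it --- refine each $k$-star into $k$ triangles to get a triangulation of the polygon --- does not make sense for $k\ge 2$, where a $k$-star is a self-intersecting star polygon rather than a cell of the crossing diagram, so there is no induced polygonal subdivision to count. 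A second relation that does work (and is essentially what Pilaud--Santos use) comes from the same angle analysis: every angle of the arrangement belongs to exactly one $k$-star and each $k$-star has $2k+1$ angles, so summing angles vertex by vertex yields a second linear relation that, together with the incidence count and the fixed values $D_{\mathrm{bd}}=n+2k$ and $D_{\mathrm{irr}}=(k-1)(n+2k)$, determines $S=n$ and $D_{\mathrm{rel}}=k(n-1)$. Your fallback ear-flattening induction is sound in spirit but, as you concede, only relocates the difficulty into the existence-of-ear lemma, and it still needs part~(2) to control which diagonals the flattening removes.
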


Call \defn{dual graph} of a $k$-triangulation~$\triangulation$ the graph~$\triangulation\contact$ with one vertex for each $k$-star of~$\triangulation$ and an edge for each $k$-relevant diagonal~$\delta$ of~$\triangulation$ connecting the two $k$-stars of~$\triangulation$ containing~$\delta$. This graph has~$n$ nodes and~$k(n-1)$ edges, and it is know to be a \defn{$k$-arborescence}: it can be partitioned into $k$ edge-disjoint spanning trees~\cite[Prop.~4.20]{Pilaud-these}.

The $k$-stars were also used in~\cite{PilaudSantos-multitriangulations} to provide a local combinatorial description of the notion of flip in $k$-triangulations introduced in~\cite{Nakamigawa, DressKoolenMoulton, Jonsson}. In the next statement, a \defn{bisector} of a $k$-star~$S$ is a line passing through one of its vertices and with $k$ vertices of~$S$ on each side.

\begin{theorem}[\protect{\cite[Thm.~3]{Nakamigawa}, \cite[Thm.~1.4\,(3)]{PilaudSantos-multitriangulations}}]
Consider a $k$-relevant diagonal~$\delta$ in a \mbox{$k$-triangulation~$\triangulation$} of the $(n+2k)$-gon. Then there exists a unique other $k$-triangulation~$\triangulation'$ of the $(n+2k)$-gon such that~$\triangulation \ssm \{\delta\} = \triangulation' \ssm \{\delta'\}$ for some diagonal~$\delta' \in \triangulation'$. Moreover, $\delta'$ is the unique common bisector of the two $k$-stars of~$\triangulation$ containing~$\delta$.
\end{theorem}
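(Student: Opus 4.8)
The plan is to localize the whole statement around the two $k$-stars containing~$\delta$ and then invoke a purely combinatorial ``star-flip'' lemma. By the structure theorem recalled above, the $k$-relevant diagonal~$\delta = [a,b]$ of~$\triangulation$ lies in exactly two $k$-stars~$R$ and~$S$. I would first dispose of the easy bookkeeping: since~$\triangulation \ssm \{\delta\}$ has~$|\triangulation| - 1$ diagonals and, by the count recalled above, every $(k+1)$-crossing-free family of~$|\triangulation|$ diagonals is already maximal, a $k$-triangulation of the form~$(\triangulation \ssm \{\delta\}) \cup \{\gamma\}$ is precisely the datum of a diagonal~$\gamma \notin \triangulation \ssm \{\delta\}$ for which~$(\triangulation \ssm \{\delta\}) \cup \{\gamma\}$ is $(k+1)$-crossing-free. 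So it remains to show that the only such~$\gamma$ other than~$\delta$ itself is the common bisector~$\delta'$ of~$R$ and~$S$, and that this~$\delta'$ does the job.

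The core is the following local lemma, which I would prove by induction on~$k$ (or by a direct analysis of the bisector lines of~$R$ and~$S$): \emph{two $k$-stars~$R$ and~$S$ sharing a single edge~$\delta$ have a unique common bisector~$\delta'$, this~$\delta'$ is a $k$-relevant diagonal distinct from~$\delta$, and~$\bigl( (E(R) \cup E(S)) \ssm \{\delta\} \bigr) \cup \{\delta'\}$ is the edge set of two $k$-stars~$R'$ and~$S'$ sharing the single edge~$\delta'$ and spanning the same vertices as~$R \cup S$; moreover~$\delta$ and~$\delta'$ are the \emph{only} two diagonals that complete~$(E(R) \cup E(S)) \ssm \{\delta\}$ to a union of two $k$-stars.} The base case~$k = 1$ is exactly the quadrilateral flip: $R$ and~$S$ are two triangles glued along~$\delta$ into a convex quadrilateral, and~$\delta'$ is its other diagonal. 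For the inductive step I would locate a pair of consecutive vertices on the convex hull of~$R \cup S$ incident only to edges of one of the two stars, peel them off to obtain a configuration of the same type for~$k-1$, apply the induction hypothesis, and glue the peeled part back; keeping track of which edges land in which recombined star is routine, but \emph{proving that a common bisector exists at all} --- that the ``second'' way of resolving~$R \cup S$ into two stars really occurs --- is the genuine geometric content and the \textbf{main obstacle}.

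Granting the lemma, I would conclude as follows. For existence, set~$\triangulation' \eqdef (\triangulation \ssm \{\delta\}) \cup \{\delta'\}$. It is $(k+1)$-crossing-free: any $(k+1)$-crossing in it must involve~$\delta'$, because~$\triangulation' \ssm \{\delta'\} \subseteq \triangulation$ is already $(k+1)$-crossing-free; so~$\delta'$ would pairwise-cross~$k$ diagonals of~$\triangulation \ssm \{\delta\}$, necessarily among~$E(R) \cup E(S)$ (a diagonal of~$\triangulation$ disjoint from the region of~$R \cup S$ cannot cross~$\delta'$), and there the lemma says the only $(k+1)$-crossings are the ones using \emph{both}~$\delta$ and~$\delta'$ --- none of which survives in~$\triangulation'$. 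Together with the cardinality remark, $\triangulation'$ is a $k$-triangulation, and it is distinct from~$\triangulation$ since~$\delta' \notin \triangulation$. For uniqueness, let~$\gamma \notin \triangulation$ with~$(\triangulation \ssm \{\delta\}) \cup \{\gamma\}$ $(k+1)$-crossing-free. Maximality of~$\triangulation$ produces a $(k+1)$-crossing inside~$\triangulation \cup \{\gamma\}$, which must use~$\delta$ (otherwise it already sits inside~$(\triangulation \ssm \{\delta\}) \cup \{\gamma\}$). Hence every ``conflict'' of~$\gamma$ with~$\triangulation$ runs through~$\delta$, which confines~$\gamma$ to the region governed by~$R$ and~$S$; the uniqueness clause of the lemma (only~$\delta$ and~$\delta'$ complete~$(E(R) \cup E(S)) \ssm \{\delta\}$ appropriately) then forces~$\gamma = \delta'$. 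This gives the unique~$\triangulation'$ and, along the way, identifies~$\delta'$ as the common bisector of the two $k$-stars of~$\triangulation$ containing~$\delta$.
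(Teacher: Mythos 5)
The paper does not prove this theorem: it is quoted from~\cite{Nakamigawa} and~\cite{PilaudSantos-multitriangulations} without an in-text argument, so there is no internal proof to compare against. Your outline matches the cited approach: localize to the two $k$-stars~$R,S$ of~$\triangulation$ sharing~$\delta$, prove a ``star-flip lemma'' for this local configuration, and pass back to the global statement via the counting theorem to recognize maximality. You are also right that the crux is that local lemma (existence and uniqueness of the common bisector, together with the claim that only~$\delta$ and~$\delta'$ complete~$(E(R)\cup E(S))\ssm\{\delta\}$ to a pair of $k$-stars), and you honestly flag its proof as the main obstacle. However, the peel-two-vertices-and-reattach induction you sketch is not convincing as written: you do not argue that a suitable pair of peelable vertices always exists, nor that peeling returns a configuration of the same form (two $(k-1)$-stars sharing a single edge), and the cited sources prove this instead by a direct analysis of bisectors of star polygons (e.g.\ the fact that a bisector of a $k$-star crosses all $2k+1$ of its edges).

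Two further steps in your deduction from the lemma are passed over too quickly. First, the bridge from ``$(\triangulation\ssm\{\delta\})\cup\{\gamma\}$ is a $k$-triangulation'' to ``$\gamma$ completes~$(E(R)\cup E(S))\ssm\{\delta\}$ to a union of two $k$-stars'' is not an immediate consequence of ``every conflict of~$\gamma$ runs through~$\delta$''; the vague phrase ``confines~$\gamma$ to the region governed by~$R$ and~$S$'' hides the real work. You need to argue that the $n-2$ $k$-stars of~$\triangulation$ avoiding~$\delta$ persist as $k$-stars of the new $k$-triangulation and that, by the uniqueness of the $k$-star decomposition and the count of~$n$ stars, the two remaining stars carry precisely the edges~$(E(R)\cup E(S))\ssm\{\delta\}\cup\{\gamma\}$; only then does the uniqueness clause of the lemma apply. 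Second, both your cardinality argument and the assertion~$\triangulation'\ne\triangulation$ silently use~$\delta'\notin\triangulation$, but the lemma as stated only yields~$\delta'\ne\delta$; the missing fact that a common bisector of two $k$-stars of a $k$-triangulation is never a diagonal of that $k$-triangulation requires its own justification, which is again found in~\cite{PilaudSantos-multitriangulations}.
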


We are now ready to recall the correspondence between $k$-twists and $k$-triangulations. Besides the special case~$k = 1$ treated in~\cite{Woo}, this correspondence first appeared for arbitrary values of~$k$ in the work of V.~Pilaud and M.~Pocchiola~\cite[Sections~3.2.3 and~4.1.4]{Pilaud-these}\cite{PilaudPocchiola}. C.~Stump~\cite{Stump} and L.~Serrano and C.~Stump~\cite{SerranoStump} rediscovered this connection and used it to get enumerative and bijective results discussed in Section~\ref{subsec:numerology}. This correspondence is illustrated in Figures~\ref{fig:1twistsTriangulations} and~\ref{fig:ktwistsktriangulations}. The $k$-triangulations of \fref{fig:ktriangulations} also correspond to the $k$-twists of \fref{fig:twists}.

\begin{theorem}[\protect{\cite[Thm.~23]{PilaudPocchiola}}]
\label{theo:ktwistsktriangulations}
Consider the map that sends an elbow in row~$i$ and column~$j$ of the $(n+2k) \times (n+2k)$ triangular shape to the diagonal~$[i,j]$ of a convex $(n+2k)$-gon. It maps a $(k,n)$-twist~$\twist$ to a $k$-triangulation~$\twist\duality$ of the $(n+2k)$-gon:
\begin{itemize}
\item the relevant pipes of~$\twist$ correspond to the $k$-stars of~$\twist\duality$,
\item the (relevant) elbows of~$\twist$ correspond to the (relevant) diagonals in~$\twist\duality$,
\item the crossings of~$\twist$ correspond to the common bisectors between $k$-stars of~$\twist\duality$,
\item the contact graph of~$\twist$ correspond to the dual graph of~$\twist\duality$ (its incidence graph of $k$-stars),
\item elbow flips in~$\twist$ (see \fref{fig:ktwistsktriangulations} and Section~\ref{subsec:flips}) correspond to diagonal flips in~$\twist\duality$.
\end{itemize}
Moreover, this map defines a bijective correspondence between the $(k,n)$-twists and the $k$-triangula\-tions of the $(n+2k)$-gon.
\end{theorem}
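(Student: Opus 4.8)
The plan is to prove Theorem~\ref{theo:ktwistsktriangulations} by exploiting the duality between pipe dreams (beam arrangements) and wiring diagrams that underlies the whole subject, and in particular the fact—already developed in~\cite{PilaudPocchiola, Pilaud-these}—that $k$-triangulations of the $(n+2k)$-gon are in bijection with pseudoline arrangements on the $n$-level trapezoidal sorting network. Since Remark~\ref{rem:subwordComplexes} identifies $(k,n)$-twists with the facets of the subword complex $\subwordComplex(\Q,w_\circ)$, where $\Q = (s_1 \cdots s_{n-1})^{k+1}(s_1 \cdots s_{n-2}) \cdots s_1$, the trapezoidal network of~\cite{PilaudPocchiola} and the triangular pipe-dream shape of Definition~\ref{def:twist} carry the same combinatorial data: a box of the triangular shape corresponds to a contact (elbow) or crossing of two pipes, exactly as a commutator of the network carries a contact or a crossing of two pseudolines. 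First I would set up this dictionary precisely: fix the geometric convex $(n+2k)$-gon with vertices $0, \dots, n+2k-1$ placed below the edge $[0, n+2k-1]$, and fix the point-line duality of~\cite{PilaudPocchiola} sending the $i$-th vertex of the polygon to a line and a diagonal $[i,j]$ to the crossing point of the dual lines; under this duality a $k$-star becomes a pseudoline, and the $k$-triangulation becomes a greedy pseudoline arrangement supported on the trapezoidal network. Checking that the greedy flip / sorting-network formalism of~\cite{PilaudPocchiola} literally matches Definition~\ref{def:twist} (reducedness $\leftrightarrow$ no two pseudolines cross twice; the prescribed entry/exit rows and columns $\leftrightarrow$ the trapezoidal level structure) is the crux of the translation, and it is where I would spend the most care.

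Once the dictionary is in place, the five bulleted correspondences follow by transport of structure. The map $\twist \mapsto \twist\duality$ is defined box-by-box: an elbow in row~$i$, column~$j$ maps to the diagonal $[i,j]$. I would first check that the image is $(k+1)$-crossing-free and maximal, hence a $k$-triangulation; this is exactly the statement that the pseudoline arrangement dual to $\twist$ has the correct number of pseudolines ($n$, one per relevant pipe) and contacts, which by Remark~\ref{rem:subwordComplexes} is the count of a facet of $\subwordComplex(\Q,w_\circ)$, i.e. $n$ pipes and $\binom{n+2k}{2} - \ell(w_\circ) = $ the number of crossings forced by $w_\circ$, leaving precisely $k(n-1)$ relevant elbows—matching the count of $k$-relevant diagonals in~\cite[Thm.~1.4]{PilaudSantos-multitriangulations}. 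The correspondence ``relevant pipes $\leftrightarrow$ $k$-stars'' is then read off the duality: the $p$-th pipe enters in row $p+k$ and exits in column $p+k$, so it sweeps exactly the diagonals incident to the $k$-star with ``central'' vertex $p$, each $k$-relevant diagonal lying between two consecutive pipes and hence incident to two $k$-stars (Theorem~\cite[Thm.~1.4(2)]{PilaudSantos-multitriangulations}). The ``crossings $\leftrightarrow$ common bisectors'' statement is the dual of the flip description (Theorem~\cite[Thm.~3]{Nakamigawa}): where two pipes cross, the corresponding $k$-stars share the bisector that is the flip partner; and ``contact graph $\leftrightarrow$ dual graph'' is immediate once pipes $\leftrightarrow$ $k$-stars and relevant elbows $\leftrightarrow$ relevant diagonals are established, since an elbow involving pipes $p$ and $q$ is a contact of $k$-stars $p$ and $q$ along the diagonal it maps to. Compatibility with flips follows because an elbow flip in $\twist$ exchanges the unique elbow/crossing pattern in a $\square$-shaped region, which under the box-to-diagonal map is exactly the local exchange $\triangulation \ssm \{\delta\} = \triangulation' \ssm \{\delta'\}$ of the multitriangulation flip.

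For bijectivity I would argue by a dimension/counting sandwich together with surjectivity. Injectivity is clear: $\twist$ is recovered from $\twist\duality$ since the positions of its elbows are the diagonals of $\twist\duality$ (the map on boxes is injective, $[i,j]$ determines $(i,j)$). For surjectivity, given a $k$-triangulation $\triangulation$ I would place its $k(2n+2k-1)$ diagonals into the triangular shape via $[i,j] \mapsto$ box $(i,j)$, fill the remaining boxes with crossings, and verify the result is a $(k,n)$-twist: the $(k+1)$-crossing-free condition on $\triangulation$ translates, via the duality of~\cite{PilaudPocchiola}, into the reducedness condition (no two pipes cross twice), and maximality of $\triangulation$ forces the prescribed entry/exit data. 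Alternatively—and this is perhaps the cleanest route—I would invoke the already-established equivalence between $(k,n)$-twists and facets of $\subwordComplex(\Q,w_\circ)$ from Remark~\ref{rem:subwordComplexes} and the known identification in~\cite{PilaudPocchiola, Pilaud-these} of those facets with $k$-triangulations, so that Theorem~\ref{theo:ktwistsktriangulations} becomes a matter of checking that the explicit box-to-diagonal map realizes that abstract bijection—which reduces once more to the indexing bookkeeping of rows, columns, and polygon vertices. The main obstacle, then, is not any deep argument but the careful normalization of three coordinate systems (triangular-shape boxes, trapezoidal-network commutators, polygon diagonals) so that ``row $i$, column $j$'' on one side is unambiguously ``diagonal $[i,j]$'' on the other; everything else is transport of structure along the point-line duality.
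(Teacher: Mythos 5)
The paper does not prove Theorem~\ref{theo:ktwistsktriangulations}: it is stated as a citation to~\cite[Thm.~23]{PilaudPocchiola}, and Section~\ref{subsec:ktriangulations} explicitly recalls it without argument (the whole subsection ``is not strictly required for our constructions''). So there is no in-paper proof to compare against; what you have done is attempt to reconstruct the proof that lives in the cited reference. Your reconstruction correctly identifies the three ingredients used there: the point-line duality sending $k$-stars to pseudolines and diagonals to crossing points, the subword-complex reformulation of Remark~\ref{rem:subwordComplexes}, and the counting of diagonals from~\cite[Thm.~1.4]{PilaudSantos-multitriangulations}. That is the right skeleton.

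Two concrete issues. First, the counting line is garbled: you write ``$n$ pipes and $\binom{n+2k}{2}-\ell(w_\circ)=$ the number of crossings \dots leaving precisely $k(n-1)$ relevant elbows,'' but $\ell(w_\circ)=\binom{n}{2}$ here and $\binom{n+2k}{2}-\binom{n}{2}$ is not the number of crossings nor does it leave $k(n-1)$. The correct bookkeeping is Lemma~\ref{lem:pipeProperties}\,(i): a $(k,n)$-twist has $\binom{n}{2}$ crosses and $kn$ elbows incident to relevant pipes, of which $k(n-1)$ involve two relevant pipes (these are the ones matching the $k(n-1)$ $k$-relevant diagonals; the remaining $k$ elbows touch a trivial pipe and correspond to $k$-boundary diagonals). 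Your final numbers are right, but the derivation as written does not produce them. Second, and more substantively, your surjectivity argument is circular as a self-contained proof: both the ``translate $(k+1)$-crossing-freeness into reducedness via the duality of~\cite{PilaudPocchiola}'' route and the ``invoke the known identification of facets of $\subwordComplex(\Q,w_\circ)$ with $k$-triangulations from~\cite{PilaudPocchiola}'' route are citing the very statement being proved, only in a different guise. To make the argument genuinely blind you would need to actually carry out the duality computation: show that $2k+1$ cyclically consecutive polygon vertices in $k$-star position dualize to a concave/convex chain that can be perturbed into a pseudoline of the trapezoidal arrangement, and that the $(k+1)$-crossing-free condition on diagonals becomes the condition that no two dual pseudolines touch twice. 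That is the content of~\cite{PilaudPocchiola} that your sketch gestures at but does not supply.
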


\begin{figure}[h]
	\centerline{\includegraphics[scale=1.4]{ktwistsktriangulations}}
	\caption{The bijection between $(k,n)$-twists~$\twist$ (left) and $k$-triangulations~$\twist\duality$ of the $(n+2k)$-gon (right) sends the pipes of~$\twist$ to the $k$-stars of~$\twist\duality$, the elbows of~$\twist$ to the diagonals of~$\twist\duality$, the contact graph of~$\twist$ to the dual graph of~$\twist\duality$, and the elbow flips in~$\twist$ to the diagonal flips in~$\twist\duality$. See also Section~\ref{sec:Cambrianization}.}
	\label{fig:ktwistsktriangulations}
\end{figure}

\subsection{Numerology}
\label{subsec:numerology}

J.~Jonsson~\cite{Jonsson} proved that $k$-triangulations are counted by an Hankel determinant of Catalan numbers, which was known to count fans of $k$ non-crossing Dyck path by the celebrated non-intersecting path enumeration technique of I.~Gessel and X.~Viennot~\cite{GesselViennot}. The correspondence between $k$-triangulations and $k$-twists in Theorem~\ref{theo:ktwistsktriangulations} thus shows that the $k$-twists are also counted by this Hankel determinant. L.~Serrano and C.~Stump revisited independently this correspondence to $k$-twists and gave a remarkable explicit bijection between $k$-triangulations and $k$-tuples of non-crossing Dyck paths~\cite{SerranoStump}. We summarize these numerology results in the following statement, and Table~\ref{table:numbersAllTwists} gathers the numbers of $(k,n)$-twists for~$k, n \le 9$.

\begin{theorem}[\protect{\cite[Thms.~14 \,\&\,15]{Jonsson}, \cite[Thm.~23]{PilaudPocchiola}, \cite[Thm.~1.1]{SerranoStump}}]
\label{theo:numberktriangulations}
The three combinatorial families consisting in
\begin{itemize}
\item the $k$-twists with $n$ relevant pipes,
\item the $k$-triangulations of the $(n+2k)$-gon,
\item the fans of $k$ non-crossing Dyck paths of length~$2n$,
\end{itemize}
are equinumerous and counted by the Hankel determinant~$\det(C_{n+2k-i-j})_{i,j \in [k]}$ of Catalan numbers~$C_m \eqdef \frac{1}{m+1}\binom{2m}{m}$. See \href{https://oeis.org/A000108}{\cite[A000108]{OEIS}} and \href{https://oeis.org/A078920}{\cite[A078920]{OEIS}}.
\end{theorem}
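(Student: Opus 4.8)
The equivalence of the first two families is not something we need to reprove: Theorem~\ref{theo:ktwistsktriangulations} already provides the explicit bijection~$\twist \mapsto \twist\duality$ between the $(k,n)$-twists and the $k$-triangulations of the $(n+2k)$-gon. So the plan is to connect the $k$-triangulations to the fans of $k$ non-crossing Dyck paths of length~$2n$, and then to evaluate the count of the latter.

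For the bijection with path fans I would follow L.~Serrano and C.~Stump~\cite{SerranoStump} and use the $k$-star decomposition. Starting from a $k$-triangulation~$\triangulation$ of the $(n+2k)$-gon, one reads its $k(n-1)$ $k$-relevant diagonals in $k$ successive layers, the layering being dictated by the $k$-arborescence structure of the dual graph~$\triangulation\contact$ (its partition into $k$ edge-disjoint spanning trees, \cite[Prop.~4.20]{Pilaud-these}). Unfolding a single layer along the boundary of the polygon turns it into a monotone lattice path from~$(0,0)$ to~$(n,n)$ that stays weakly on one side of the diagonal, i.e.\ a Dyck path of length~$2n$; and the absence of a $(k+1)$-crossing in~$\triangulation$ becomes exactly the statement that the $k$ paths obtained are pairwise non-crossing, hence form a fan. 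Proving that this assignment is well defined and invertible is, in my view, the technical heart of the theorem: it is here that one really uses the combinatorics of $k$-stars and of their common bisectors (the theorems recalled above).

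It then remains to count the fans of $k$ non-crossing Dyck paths of length~$2n$, and this is the classical non-intersecting lattice path computation of I.~Gessel and X.~Viennot~\cite{GesselViennot}, carried out in this context by J.~Jonsson~\cite{Jonsson}. View the fan as a $k$-tuple of Dyck paths and apply a staircase shift: translate the $i$-th path so that its source and sink are moved by an amount depending on~$i$, in such a way that an arbitrary fan becomes a genuinely vertex-disjoint tuple of monotone lattice paths. The Lindström--Gessel--Viennot lemma then expresses the number of such tuples as the determinant of the $k \times k$ matrix whose $(i,j)$ entry counts the single lattice paths from the $i$-th (shifted) source to the $j$-th (shifted) sink staying on the correct side of the diagonal. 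Each such entry is a ballot number that reduces to the Catalan number~$C_{n+2k-i-j}$, so the count is~$\det\big(C_{n+2k-i-j}\big)_{i,j \in [k]}$, which is the announced Hankel determinant (sequences~\href{https://oeis.org/A000108}{\cite[A000108]{OEIS}} and~\href{https://oeis.org/A078920}{\cite[A078920]{OEIS}}).

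The main obstacle is squarely the middle step: matching the $(k+1)$-crossing-freeness of the multitriangulation with the non-crossing condition on the Dyck path fan, and establishing that the correspondence is a bijection. Once the correct ``flattening'' of the $k$-star layers is pinned down, the determinant evaluation is a routine application of Lindström--Gessel--Viennot plus the standard reindexing of ballot numbers into Catalan numbers, and the remaining assertion follows at once from Theorem~\ref{theo:ktwistsktriangulations}.
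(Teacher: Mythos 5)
This theorem is not proved in the paper; it is quoted from Jonsson, Pilaud--Pocchiola, and Serrano--Stump, and the paragraph just before the statement spells out the historical division of labour: Jonsson proved the Hankel determinant formula for $k$-triangulations by a direct argument (not through Dyck paths), the Gessel--Viennot machinery independently counts fans of non-crossing Dyck paths by the same determinant, and only later did Serrano--Stump produce an explicit bijection between the two families. Your plan collapses these two historical routes into a single proof, going through an explicit bijection and then applying Lindstr\"om--Gessel--Viennot to count the fans; that is a legitimate strategy, but it is the strategy of Serrano--Stump, not of Jonsson, so attributing the determinant evaluation to Jonsson while using the bijective route muddles the references.

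The genuine gap is in your middle step, which you rightly flag as the technical heart. The construction you sketch --- layering the $k(n-1)$ $k$-relevant diagonals ``according to the $k$-arborescence structure'' of the dual graph~$\triangulation\contact$, then unfolding each layer into a Dyck path --- is not well defined: the decomposition of a $k$-arborescence into $k$ edge-disjoint spanning trees is generally far from unique, so there is no canonical layering, and nothing in your sketch pins one down. Nor is it clear why any spanning tree of the dual graph should unfold into a monotone lattice path, or why pairwise non-crossingness of the resulting $k$ paths would be equivalent to the $(k+1)$-crossing-freeness of~$\triangulation$. This is also not what Serrano--Stump actually do: their bijection operates on the pipe-dream side of Theorem~\ref{theo:ktwistsktriangulations}, reading the elbows of the $k$-twist off the triangular shape directly, rather than through $k$-star layers or spanning-tree decompositions. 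So the specific recipe you describe would break down before you even reach the counting step. The Gessel--Viennot finish you describe (staircase shift producing vertex-disjoint paths, a $k\times k$ ballot-number determinant reindexing to~$\det(C_{n+2k-i-j})_{i,j\in[k]}$) is standard and correct once a valid bijection is in hand.
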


To our knowledge, there is no enumerative results on acyclic twists. Table~\ref{table:numbersAcyclicTwists} gathers the numbers of acyclic $(k,n)$-twists for~$k < n \le 10$ (we will see later that the number of acyclic $(k,n)$-twists is always~$n!$ when~$k \ge n$). See \href{https://oeis.org/A263791}{\cite[A263791]{OEIS}}. The reader is invited to compare Tables~\ref{table:numbersAllTwists} and~\ref{table:numbersAcyclicTwists}. We leave open the general question of finding a counting formula for $(k,n)$-twists:

\begin{question}
\label{qu:numerology}
Is there a nice closed formula for the number of acyclic $(k,n)$-twists?
\end{question}

\begin{table}[h]
  \centerline{$
  	\begin{array}{l|rrrrrrrrr}
	\raisebox{-.1cm}{$k$} \backslash \, \raisebox{.1cm}{$n$}
	  & 1 & 2 & 3 & 4 & 5 & 6 & 7 & 8 & 9 \\[.1cm]
	\hline
	0 & 1 &  1 &   1 &     1 &       1 &          1 &             1 &                1 &                   1 \\
	1 & 1 &  2 &   5 &    14 &      42 &        132 &           429 &             1430 &                4862 \\
	2 & 1 &  3 &  14 &    84 &     594 &       4719 &         40898 &           379236 &             3711916 \\
	3 & 1 &  4 &  30 &   330 &    4719 &      81796 &       1643356 &         37119160 &           922268360 \\
	4 & 1 &  5 &  55 &  1001 &   26026 &     884884 &      37119160 &       1844536720 &        105408179176 \\
	5 & 1 &  6 &  91 &  2548 &  111384 &    6852768 &     553361016 &      55804330152 &       6774025632340 \\
	6 & 1 &  7 & 140 &  5712 &  395352 &   41314284 &    6018114036 &    1153471856900 &     278563453441350 \\
	7 & 1 &  8 & 204 & 11628 & 1215126 &  204951252 &   51067020290 &   17618122000050 &    8012241391386375 \\
	8 & 1 &  9 & 285 & 21945 & 3331251 &  869562265 &  354544250775 &  210309203300625 &  171879976152056250 \\
	9 & 1 & 10 & 385 & 38962 & 8321170 & 3245256300 & 2085445951875 & 2045253720802500 & 2885318087540733000
    \end{array}
  $}
  \caption{The number~$|\Twists(n)| = \det(C_{n+2k-i-j})_{i,j \in [k]}$ of $(k,n)$-twists for~$k, n \le 9$.}
  \label{table:numbersAllTwists}
  \vspace{-.4cm}
\end{table}
\begin{table}[h]
  \centerline{$
  	\begin{array}{l|rrrrrrrrrr}
	\raisebox{-.1cm}{$k$} \backslash \, \raisebox{.1cm}{$n$}
	  & 1 & 2 & 3 & 4 & 5 & 6 & 7 & 8 & 9 & 10 \\[.1cm]
	\hline
	0 & 1 & 1 & 1 &  1 &   1 &   1 &    1 &     1 &      1 &       1 \\
	1 & . & 2 & 5 & 14 &  42 & 132 &  429 &  1430 &   4862 &   16796 \\
	2 & . & . & 6 & 22 &  92 & 420 & 2042 & 10404 &  54954 &  298648 \\
	3 & . & . & . & 24 & 114 & 612 & 3600 & 22680 & 150732 & 1045440 \\
	4 & . & . & . &  . & 120 & 696 & 4512 & 31920 & 242160 & 1942800 \\
	5 & . & . & . &  . &   . & 720 & 4920 & 37200 & 305280 & 2680800 \\
	6 & . & . & . &  . &   . &   . & 5040 & 39600 & 341280 & 3175200 \\
	7 & . & . & . &  . &   . &   . &    . & 40320 & 357840 & 3457440 \\
	8 & . & . & . &  . &   . &   . &    . &     . & 362880 & 3588480 \\
	9 & . & . & . &  . &   . &   . &    . &     . &      . & 3628800
	\end{array}
  $}
  \caption{The number~$|\AcyclicTwists(n)|$ of acyclic $(k,n)$-twists for~$k < n \le 10$. Dots indicate that the value remains constant (equal to~$n!$) in the column.}
  \label{table:numbersAcyclicTwists}
  \vspace{-.4cm}
\end{table}

\subsection{Elementary properties of pipes and twists}
\label{subsec:elementaryPropertiesPipes}

We now give some elementary properties of pipes and twists needed in the next sections. For a given pipe in a twist, we~call:
\begin{itemize}
\item \defn{\WE-crosses} its horizontal crosses~$\crossBiColor{red}{lightgray}$, and \defn{\SN-crosses} its vertical crosses~$\crossBiColor{lightgray}{red}$,
\item \defn{\SE-elbows} its elbows~$\pic[red]$ (\aka ``peaks'') and \defn{\WN-elbows} its elbows~$\valley[red]$ (\aka ``valleys''),
\item \defn{internal steps} the segments between two consecutive elbows and \defn{external steps} the first and last steps of the pipe.
\end{itemize}

The following statement gathers some elementary properties of pipes in $k$-twists.

\begin{lemma}
\label{lem:pipeProperties}
The following properties hold for any~$k,n \in \N$:
\begin{enumerate}[(i)]
\item A $(k,n)$-twist has $\binom{n}{2}$ crosses~$\cross{}$ and~$kn$ elbows~$\elbow{}$.
\label{item:totalNumberCrosses}
\item The $p$th pipe of a $(k,n)$-twist has 
	\begin{itemize}
	\item $n-1$ crosses: $p-1$ \WE-crosses~$\crossBiColor{red}{lightgray}$ and $n-p$ \SN-crosses~$\crossBiColor{lightgray}{red}$,
	\item $2k+1$ elbows: $k$ \SE-elbows~$\pic[red]$ and $k+1$ \WN-elbows~$\valley[red]$,
	\item $2k+2$ steps: $k+1$ vertical steps and $k+1$ horizontal steps.
	\end{itemize}
\label{item:numberElbows}
\item The interior of the rectangle defined by two consecutive steps of a pipe contains no elbow.
\label{item:emptyRectangle}
\end{enumerate}
\end{lemma}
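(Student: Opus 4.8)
I would prove the three parts separately, with (ii) as the core, (i) a short consequence by double counting, and (iii) a self-contained extremal argument. For (ii), the starting point is that in a pipe dream every pipe is a monotone lattice path that only moves right and up: at a cross it goes straight, and at an elbow it turns from West to North or from South to East. So the $p$th pipe is a monotone path from the cell at which it enters (on the left side, in row $p+k$) to the cell at which it exits (on the top side, in column $p+k$), entering horizontally and leaving vertically, and a cross on it is exactly a crossing with another pipe. Since the twist is reduced, any two pipes cross at most once, so the only point to settle is which pipes cross the $p$th one. Reading the entry rows along the left side of the shape and the exit columns along the top side, two pipes cross if and only if their four endpoints alternate around the boundary; a direct case analysis on the entry/exit pattern of Definition~\ref{def:twist} then shows that the $p$th pipe crosses each of the $n-1$ other relevant pipes exactly once and none of the $2k$ trivial pipes. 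Hence it has $n-1$ crosses, and since the number of cells on its path is fixed by its endpoints, subtracting the $n-1$ crosses leaves $2k+1$ elbows.

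The refined counts follow from two local facts. First, at a crossing of two pipes the one that entered in a lower row runs vertically and the one that entered higher runs horizontally --- otherwise the two strands would already have crossed before this cell, contradicting reducedness. Since the $q$th pipe enters in row $q+k$, the crossing of the $p$th pipe with the $q$th is a \WE-cross when $q<p$ and a \SN-cross when $q>p$, which gives $p-1$ of the former and $n-p$ of the latter. Second, the $p$th pipe begins with a horizontal external step and ends with a vertical one, and consecutive steps meet at an elbow, so along the pipe the turns alternate West-to-North and South-to-East, beginning and ending with West-to-North; thus among its $2k+1$ elbows there are $k+1$ \WN-elbows and $k$ \SE-elbows, and its $2k+2$ steps split into $k+1$ horizontal and $k+1$ vertical. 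Part (i) is then immediate: each cross lies on two relevant pipes and each internal elbow lies on two relevant pipes, so summing the per-pipe counts of (ii) over the $n$ relevant pipes and halving gives $\binom{n}{2}$ crosses and $kn$ elbows.

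For (iii), take two consecutive steps of a pipe $\pi$, say a horizontal step followed by a vertical step (the other case is symmetric, exchanging the roles of rows and columns); these are the bottom and right sides of the rectangle $R$, and $\pi$ turns at their common corner. If some interior cell of $R$ carried an elbow, choose such a cell $x$ with row minimal and then with column maximal, and follow the South-to-East pipe $\delta$ through $x$ (which is distinct from $\pi$, since $\pi$ meets $R$ only along its boundary). Going down from $x$, minimality of the row forbids any elbow of $\delta$ strictly inside $R$ below $x$, so $\delta$ runs straight down its column until it hits the bottom side of $R$, where it crosses $\pi$; going right from $x$, maximality of the column likewise forces $\delta$ to run straight along its row until it reaches the right side of $R$, where it crosses $\pi$ again. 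Two crossings of $\delta$ and $\pi$ contradict reducedness, so no interior elbow exists.

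The part that requires genuine care is the geometric bookkeeping in (ii): one must fix once and for all the orientation of the triangular shape and then read off correctly, from the cyclic order of the entry and exit points on its boundary, which pairs of pipes cross and which pipe is horizontal at each crossing. Once that is set up, everything else is a routine count; the extremal argument for (iii) is short but does combine monotonicity of pipes with reducedness, and the bookkeeping distinguishing internal from external elbows in the passage from (ii) to (i) should be made explicit.
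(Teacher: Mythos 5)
Your parts (ii) and (iii) are essentially sound, but your derivation of (i) from (ii) does not go through as written, and the paper deliberately avoids it.

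For (ii) your local analysis of crossings (the lower-entering pipe is vertical at the crossing, by reducedness) is a correct and complete argument for the $p-1$ versus $n-p$ split, and your alternating-turn count of \WN{} versus \SE{} elbows is fine; modulo the unproven assertion that the path length is $n+2k$ (which the paper also simply states), this is the same approach as the paper's. For (iii), your argument is genuinely different from the paper's: you pick the interior elbow minimal in row, then maximal in column, and trace the \SE-pipe through it to force two crossings with $\pi$; the paper instead observes that every pipe entering the west side of $R$ must exit on its east side. Both rest on the impossibility of a double crossing; your extremal version is arguably more self-contained, since it isolates a single offending pipe rather than reasoning about a whole family.

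The problem is (i). You propose to sum the per-pipe counts of (ii) over the $n$ relevant pipes and halve, ``since each internal elbow lies on two relevant pipes.'' This works for crossings, giving $\binom{n}{2}$. But each relevant pipe has $2k+1$ elbows, so the halved sum is $\tfrac{n(2k+1)}{2} = kn + \tfrac{n}{2}$, which is not $kn$ (and is not even an integer for odd $n$). The discrepancy is precisely because some elbows of a relevant pipe sit at boxes where the second pipe through that box is one of the hidden trivial pipes, so they are incident to only one relevant pipe and must not be halved. You flag this distinction at the end, but the naive halving you actually carry out is numerically wrong, and repairing it requires an independent geometric input (how many of the $2k+1$ elbows of each pipe are ``boundary'' elbows), which is no easier than counting boxes directly. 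The paper sidesteps all of this by proving (i) first and independently: in the trapezoidal region where the relevant pipes live there are $kn+\binom{n}{2}$ boxes, any two relevant pipes cross exactly once giving $\binom{n}{2}$ crossings, and the remaining $kn$ boxes are elbows. So for (i) you should abandon the double-counting route and count boxes as the paper does.
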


\begin{proof}
For~\eqref{item:totalNumberCrosses}, observe that any two pipes cross exactly once and that the number of relevant boxes in the triangular shape is~$kn+\binom{n}{2}$. Point~\eqref{item:numberElbows}, follows from the fact that a pipe~$\pipe$ has a \SN-cross for each pipe~$\pipe' < \pipe$ and a \WE-cross for each pipe~$\pipe' > \pipe$, and that the length of each pipe is~$n+2k$. (Note that the $2k+1$ elbows of pipe~$\pipe$ correspond to the $2k+1$ edges of the dual $k$-star~$\pipe\duality$.) To see~\eqref{item:emptyRectangle}, consider a rectangle~$R$ defined by two consecutive steps of a pipe~$\pipe$ and assume that~$R$ is located below~$\pipe$. Observe that the pipes entering on the west side of~$R$ have to exit on its east side of~$R$ (otherwise, they would exit on its north side, and would thus cross~$\pipe$ twice). This prevents an elbow inside~$R$. The argument is similar if the rectangle~$R$ is above~$\pipe$.
\end{proof}

The next statement provides a sufficient condition for the existence of a path between two pipes in the contact graph of a $k$-twist. The converse statement holds when~$k = 1$, but not~in~general.

\begin{lemma}
\label{lem:comparable}
Consider two pipes~$\pipe, \pipe'$ in a twist~$\twist$. If there is an elbow of~$\pipe$ located weakly south-east of an elbow of~$\pipe'$, then there is a path from~$\pipe$ to~$\pipe'$ in the contact graph~$\twist\contact$ of~$\twist$.
\end{lemma}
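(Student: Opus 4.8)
The natural strategy is to reduce the general statement to a single ``elementary move'' and then iterate. Suppose an elbow $e$ of $\pipe$ lies weakly south-east of an elbow $e'$ of $\pipe'$; write $e = (i,j)$ and $e' = (i',j')$ with $i \le i'$ and $j \ge j'$ (rows indexed bottom-to-top, columns left-to-right, so ``south-east'' means smaller or equal row and larger or equal column). If $\pipe = \pipe'$ there is nothing to prove, so assume $\pipe \ne \pipe'$. The plan is to show that either $e$ and $e'$ are the \emph{same} box (in which case the elbow at that box is a contact between $\pipe$ and $\pipe'$, giving the arc directly from the \SE-pipe to the \WN-pipe, hence a path of length one in one direction or the other), or there is a third pipe $\pipe''$ with an elbow strictly between them in the south-east order, \emph{and} a direct contact arc relating $\pipe$ to $\pipe''$ on one side; then one finishes by induction on a suitable quantity (say the $L^1$-distance $|i - i'| + |j - j'|$ between the two elbows, or the number of boxes in the rectangle they span).

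\textbf{Key steps, in order.} First I would handle the base case: if $e = e'$ is a single box containing an elbow, the two relevant pipes through it are exactly $\pipe$ and $\pipe'$, so by definition of the contact graph there is an arc between them and we are done. Second, I would set up the rectangle $R$ with south-east corner at (or near) $e$ and north-west corner at (or near) $e'$, and track which pipe occupies the south and west boundary of $R$. Using Lemma~\ref{lem:pipeProperties}\eqref{item:emptyRectangle} and the reducedness of the twist, one argues that the pipe passing through $e$ must, after the elbow $e$, travel north and/or west and therefore enter the interior or boundary of $R$; a pipe cannot cross $\pipe$ twice, so it must leave $R$ through the north or west side. Third, I would locate the ``next'' elbow encountered: following $\pipe$ from $e$, the first elbow it reaches (it has $2k+1$ of them by Lemma~\ref{lem:pipeProperties}\eqref{item:numberElbows}, and the rectangle between consecutive steps is elbow-free) is a contact with some pipe $\pipe''$, and that contact elbow is still weakly south-east of $e'$ but strictly closer to it. This produces the arc $\pipe \to \pipe''$ (or $\pipe'' \to \pipe$, depending on which of the two is the \SE-pipe there) and reduces the distance, so induction applies to the pair $(\pipe'', \pipe')$ and splices onto the path.

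\textbf{Main obstacle.} The delicate point is the bookkeeping in the induction step: one must be sure that the ``next elbow'' of $\pipe$ after $e$ is genuinely inside the closed rectangle spanned by $e$ and $e'$ (so that the strict decrease of $|i-i'| + |j-j'|$ is real) and that the resulting contact with $\pipe''$ is oriented so as to give a path \emph{from} $\pipe$, not a dead end. This requires carefully distinguishing whether $\pipe$ runs below or above each pipe it meets, and using that a pipe exiting the rectangle through the north side would be forced to cross $\pipe'$ (contradicting either reducedness or the position of $e'$). A clean way to organize this is to argue along the monotone lattice path traced by $\pipe$ from the elbow $e$: at each elbow it turns, it creates a contact with a pipe to its south-east or north-west, and the sequence of such contacts walks the elbow monotonically toward $e'$. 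The rest — that this walk is finite and lands on an elbow of $\pipe'$ sharing a box — follows from Lemma~\ref{lem:pipeProperties} and the fact that $\pipe'$ itself occupies the relevant boundary box once the walk reaches row $i'$ or column $j'$. I expect the write-up to be short once the correct invariant and the right choice of ``next contact'' are pinned down; getting those right is the whole difficulty.
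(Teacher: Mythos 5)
Your overall strategy---a chain of elbows walked from $e$ toward $e'$, controlled by Lemma~\ref{lem:pipeProperties}\eqref{item:emptyRectangle} and a decreasing distance---matches the paper's. But the orientation issue you flag as ``the whole difficulty'' is left unresolved, and the ``clean way'' you sketch at the end does not fix it. The elbows of a single pipe alternate between pics (\SE-elbows, where the contact-graph arc \emph{leaves} $\pipe$) and valleys (\WN-elbows, where the arc \emph{enters} $\pipe$), so following $\pipe$ from elbow to elbow produces arcs that point alternately into and out of $\pipe$---not a directed path. Your inductive step only closes in the case ``arc is $\pipe\to\pipe''$'', which you cannot guarantee from your setup, and you explicitly leave the other case open.

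The two missing ingredients are exactly what the paper's proof supplies. First, a WLOG: up to prepending or appending one arc, assume $e$ is a \WN-elbow of $\pipe$ and $e'$ a \SE-elbow of $\pipe'$. Second, \emph{switch pipes at every step}: at the current elbow $\elb_i$, take the \WN-pipe $\pipe_{i+1}$ there and move to whichever of its two adjacent elbows remains weakly south-east of $e'$ (one does, by the empty-rectangle lemma). Since adjacent elbows of a pipe alternate valley/pic, $\elb_{i+1}$ is automatically a pic of $\pipe_{i+1}$, making $\pipe_{i+1}$ the \SE-pipe at $\elb_{i+1}$, so the arc there goes $\pipe_{i+1}\to\pipe_{i+2}$. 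Combined with the WLOG this forces $\pipe_1=\pipe$ and $\pipe_\ell=\pipe'$ and yields a genuine directed path $\pipe_1\to\cdots\to\pipe_\ell$. Staying on $\pipe$ along its ``monotone lattice path,'' as you propose, is precisely what breaks. (Your base case $e=e'$ is likewise imprecise: the single arc there has a definite direction determined by which pipe is the \SE-pipe, and ``in one direction or the other'' does not suffice; the WLOG is what pins it down.)
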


\begin{proof}
Up to adding one more edge at the begining or at the end of the path form~$\pipe$ to~$\pipe'$, we can assume without loss of generality that there is a \WN-elbow~$\elb$ of~$\pipe$ located south-east of a \SE-elbow~$\elb'$ of~$\pipe'$. We claim that there exists a sequence~$\elb = \elb_0, \elb_1, \dots, \elb_\ell = \elb'$ of elbows such that each elbow~$\elb_i$ is located weakly south-east of~$\elb'$ and the elbows~$\elb_i$ and~$\elb_{i+1}$ are connected by a pipe~$\pipe_{i+1}$ of~$\twist$. Indeed, assume that we have constructed~$\elb = \elb_0, \elb_1, \dots, \elb_i$, and consider the \WN-pipe~$\pipe_{i+1}$ at~$\elb_i$. By Lemma~\ref{lem:pipeProperties}\,\eqref{item:emptyRectangle}, either the previous or the next elbow along~$\pipe_{i+1}$ is still located weakly south-east of~$\elb'$. Choose~$\elb_{i+1}$ accordingly (pick arbitrarily one if the two options are possible). The process ends at~$\elb'$ since the distance to~$\elb'$ decreases at each step. Finally, the sequence of pipes~$\pipe = \pipe_1, \dots, \pipe_\ell = \pipe'$ gives a path from~$\pipe$ to~$\pipe'$ in the contact graph~$\twist\contact$.
\end{proof}

\begin{corollary}
\label{coro:comparable}
For two pipes~$\pipe, \pipe'$, write~$\pipe < \pipe'$ when~$\pipe$ starts below and ends to the left of~$\pipe'$.
\begin{enumerate}[(i)]
\item If~$\pipe < \pipe'$ are two pipes of a twist~$\twist$ such that the crossing between~$\pipe$ and~$\pipe'$ is on an internal step of~$\pipe'$, then there is a path from~$\pipe$ to~$\pipe'$ in the contact graph~$\twist\contact$ of~$\twist$.
\item If~$\pipe < \pipe'$ are two pipes of a twist~$\twist$ and are incomparable in the contact graph~$\twist\contact$ of~$\twist$, then the last vertical step of~$\pipe$ crosses the first horizontal step~of~$\pipe'$.
\label{item:comparable}
\item If a twist~$\twist$ is acyclic, then no two pipes of~$\twist$ cross at internal steps of both.
\end{enumerate}
\end{corollary}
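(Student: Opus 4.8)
The plan is to prove item~(i) via Lemma~\ref{lem:comparable}, note that the symmetric statement (obtained by reversing the roles of $\pipe$ and $\pipe'$) holds by the same argument, and then read off items~(ii) and~(iii) as formal consequences. First, the setup: any pipe of a twist enters on the west side and exits on the north side, so its steps alternate between horizontal and vertical, its first step is horizontal, and its last step is vertical (these two being its external steps). Consequently, every vertical step of a pipe is preceded by a horizontal step, hence carries a \WN-elbow of the pipe at its bottom end; every horizontal step is followed by a vertical step, hence carries a \WN-elbow of the pipe at its right end; every \emph{internal} horizontal step is moreover preceded by a vertical step, hence carries a \SE-elbow of the pipe at its left end; and every \emph{internal} vertical step is moreover followed by a horizontal step, hence carries a \SE-elbow of the pipe at its top end. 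Now fix two pipes $\pipe < \pipe'$ and let $b$ be the unique box where they cross. By Lemma~\ref{lem:pipeProperties}\,\eqref{item:numberElbows} (each pipe crosses every larger pipe vertically and every smaller one horizontally), $b$ is a \SN-cross of $\pipe$ and a \WE-cross of $\pipe'$; hence $b$ lies on a vertical step $V$ of $\pipe$ and on a horizontal step $H$ of $\pipe'$, and, being a cross, $b$ is not an elbow of either pipe, so it lies strictly between the endpoints of $V$ and strictly between the endpoints of $H$.

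For~(i), suppose $b$ lies on an internal step of $\pipe'$. Since $b$ is a \WE-cross of $\pipe'$, that step is $H$, so $H$ is internal and carries a \SE-elbow of $\pipe'$ at its left end, which lies in the row of $b$ and strictly west of $b$. Meanwhile $V$ carries a \WN-elbow of $\pipe$ at its bottom end, which lies in the column of $b$ and strictly south of $b$. Therefore this \WN-elbow of $\pipe$ lies strictly south-east of that \SE-elbow of $\pipe'$, and Lemma~\ref{lem:comparable} yields a path from $\pipe$ to $\pipe'$ in $\twist\contact$. Exchanging the roles of rows and columns (hence of horizontal and vertical steps, and of \SE- and \WN-elbows) gives the symmetric statement: if $b$ lies on an internal step of $\pipe$, then that step is $V$, so $V$ is internal and carries a \SE-elbow of $\pipe$ at its top end, lying in the column of $b$ and strictly north of $b$, while $H$ carries a \WN-elbow of $\pipe'$ at its right end, lying in the row of $b$ and strictly east of $b$; the latter elbow lies strictly south-east of the former, so Lemma~\ref{lem:comparable} yields a path from $\pipe'$ to $\pipe$ in $\twist\contact$.

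Items~(ii) and~(iii) follow formally. For~(ii): if $\pipe$ and $\pipe'$ are incomparable in $\twist\contact$, then neither of the two paths above exists, so $H$ is not an internal step and $V$ is not an internal step; being a horizontal step, $H$ is then the first step of $\pipe'$, and being a vertical step, $V$ is then the last step of $\pipe$, so the last vertical step of $\pipe$ crosses the first horizontal step of $\pipe'$, as claimed. For~(iii): if in some twist $\twist$ two pipes $\pipe < \pipe'$ crossed at a box $b$ lying on an internal step of each of them, then $b$ would lie on an internal step of $\pipe'$ and on an internal step of $\pipe$, so the two paths of~(i) would both exist, producing an oriented cycle through $\pipe$ and $\pipe'$ in $\twist\contact$ and contradicting the acyclicity of $\twist$.

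The only point requiring care is the bookkeeping in the second paragraph: one must check that the flanking elbows fed into Lemma~\ref{lem:comparable} genuinely exist, which is exactly where the internal/external dichotomy is used (an external horizontal step has no \SE-elbow at its left end, and an external vertical step has no \SE-elbow at its top end), and one must use that the cross $b$ lies strictly inside the steps $V$ and $H$, so that the relevant elbows lie strictly west, south, north, or east of $b$ and the south-east comparisons are legitimate. Apart from fixing the row/column orientation conventions, I do not anticipate a substantive obstacle.
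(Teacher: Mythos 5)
Your proof is correct, and it takes exactly the route the paper intends: the corollary is stated immediately after Lemma~\ref{lem:comparable} and is meant to follow by applying that lemma, with the only work being to identify the right pair of elbows flanking the cross. Your bookkeeping is accurate (the cross is an \SN-cross of~$\pipe$ and a \WE-cross of~$\pipe'$, the flanking elbows exist precisely because the step is internal, and the resulting south-east comparison is strict, hence weak), and items~(ii) and~(iii) do indeed fall out formally as you describe.
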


When~$\twist$ is acyclic, we denote by~$\contactLess{\twist}$ the transitive closure of its contact graph~$\twist\contact$. Note the difference between our notations~$\pipe < \pipe'$ (meaning that~$\pipe$ starts below and ends to the left of~$\pipe'$) and $\pipe \contactLess{\twist} \pipe'$ (meaning that~$\pipe$ is smaller than~$\pipe'$ in the contact graph~$\twist\contact$). 

\subsection{Pipe insertion and deletion}
\label{subsec:pipeInsertionDeletion}

We now define the pipe insertion and deletion, two reverse operations on $k$-twists: an insertion transforms a $(k,n)$-twist into a $(k,n+1)$-twist by inserting a single pipe while a deletion transforms a $(k,n+1)$-twist into a $(k,n)$-twist by deleting a single pipe. Insertions are always possible (see Definition~\ref{def:insertion}), while only certain pipes are allowed to be deleted (see Definition~\ref{def:deletion}). We start with pipe insertions.

\begin{definition}
\label{def:insertion}
Consider a $(k,n)$-twist~$\twist$ with an increasing relabeling~$\lambda:[n] \to \N$ of its relevant pipes and an integer~$q \in \N$. Let~$p \in \{0, \dots, n\}$ be such that~$\lambda(p) \le q < \lambda(p+1)$, where we set the convention~$\lambda(0) = -\infty$ and~$\lambda(n+1) = +\infty$. The \defn{pipe insertion} of~$q$ in the relabeled $(k,n)$-twist~$\twist$ produces the relabeled $(k,n+1)$-twist~$\twist \insertion{} q$ obtained by:
\begin{itemize}
\item inserting a row and a column in the triangular shape between~$p+k-1$ and~$p+k$,
\item filling in with elbows~\elbow{} the new boxes~$(p+k, p), (p+k+1, p+1), \dots, (p+2k, p+k)$,
\item filling in with crosses~\cross{} all other new boxes~$(p+k, j)$ for~$j < p$ and~$(i, p+k)$ for~$i > p+2k$,
\item relabeling the $r$th relevant pipe of the resulting twist by~$\lambda(r)$ if~$r < p$, by~$q$ if~$r = p$, and by~$\lambda(r-1)$~if~$r > p$.
\end{itemize}
\end{definition}

\fref{fig:pipeInsertion} illustrates the insertion of~$4$ in the $(k,5)$-twists of \fref{fig:twists} relabeled by~$[2,3,6,8,9]$. The following statement is an immediate consequence of the definition of pipe insertion.

\begin{figure}
	\centerline{\includegraphics[scale=1.4]{pipeInsertion}}
	\caption{Inserting~$4$ in the $(k,5)$-twists of \fref{fig:twists}. The inserted pipe is in bold red.}
	\label{fig:pipeInsertion}
\end{figure}

\begin{lemma}
\label{lem:source}
The contact graph~$(\twist \insertion{} q)\contact$ is obtained from~$\twist\contact$ by connecting to some existing nodes the new node corresponding to the inserted pipe~$q$. In particular, the node corresponding to the inserted pipe is a source of the contact graph~$(\twist \insertion{} q)\contact$. 
\end{lemma}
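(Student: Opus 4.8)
The plan is to argue directly from Definition~\ref{def:insertion}. The key observation is that the insertion procedure is \emph{local}: it inserts one new row $p+k$ and one new column $p+k$ into the triangular shape, and every box that already existed in $\twist$ keeps its content (cross or elbow) in $\twist \insertion{} q$. Hence every elbow of $\twist$ between two relevant pipes survives as an elbow between the corresponding two relevant pipes of $\twist \insertion{} q$ (their labels change by the relabeling but their relative order, and in particular which is the \SE-pipe and which the \WN-pipe, is unchanged), and conversely every elbow of $\twist \insertion{} q$ involving two old pipes comes from an elbow of $\twist$. Therefore the restriction of the contact graph $(\twist \insertion{} q)\contact$ to the old nodes is exactly $\twist\contact$, and the only new arcs in $(\twist \insertion{} q)\contact$ are those incident to the new node $q$. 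This proves the first sentence.

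For the second sentence, I must check that no arc of $(\twist \insertion{} q)\contact$ \emph{enters} the node $q$, i.e.\ that the pipe $q$ is never the \WN-pipe of an elbow involving another relevant pipe. First I would determine the elbows lying on the pipe $q$: by construction these are precisely the $k+1$ elbows in the new boxes $(p+k,p), (p+k+1,p+1), \dots, (p+2k,p+k)$, together with any elbows the pipe $q$ picks up while traversing the old boxes. But the new pipe enters in row $p+k$ (the new row) and its trajectory through the old columns $j < p$ consists of crosses only (those boxes were filled with crosses), so within the triangular shape the pipe $q$ occupies the new row and the new column and nothing else: it goes straight west-to-east along the new row until the box $(p+k,p)$, then makes its first elbow there, and similarly its portion in the new column consists of crosses above the box $(p+2k,p+k)$. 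In every one of its elbows, the new row/new column geometry forces $q$ to be the \SE-pipe rather than the \WN-pipe: at each of the boxes $(p+k+i, p+i)$ the pipe entering from the south (in column $p+i$) or from the west (in row $p+k+i$) and turning is an old pipe, while $q$ is the one that continues. Concretely, $q$ starts as the westernmost/southernmost relevant pipe among those in its ``bundle'' and by Lemma~\ref{lem:comparable} any path in the contact graph runs towards pipes that are (weakly) to the north-west; since $q$'s elbows are all at the extreme south-east of the inserted cells, $q$ can only be the tail, never the head, of an arc.

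The cleanest way to phrase the last step is to invoke the \SE/\WN orientation convention of Definition~\ref{def:contactGraph} together with the explicit description of the boxes of $q$ from Definition~\ref{def:insertion}: at the elbow in box $(p+k+i, p+i)$ the pipe $q$ comes in from the south and leaves to the east for $i=0$ (it is the \SE-pipe at its first elbow) and more generally alternates, but at \emph{every} elbow that $q$ shares with a genuinely distinct relevant pipe, the other pipe enters the shared box from a direction that makes it the \WN-pipe, because that other pipe must have started strictly below or ended strictly to the left of $q$ is impossible — rather, the other pipe occupies the ``inner'' corner. I would spell this out by a one-line case check on the two possible local pictures of an elbow shared by $q$ and an old pipe. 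So the plan in order is: (1) note boxes are preserved, hence the induced subgraph on old nodes is $\twist\contact$; (2) list the boxes occupied by $q$ and observe $q$ is straight (all crosses) outside the $k+1$ inserted elbow-boxes; (3) check that in each elbow on $q$, the pipe $q$ is the \SE-pipe, so all new arcs point out of $q$, i.e.\ $q$ is a source.

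The main obstacle is step (3): making the ``$q$ is always the \SE-pipe of its elbows'' claim airtight without drawing a picture, since it rests on the precise indexing of rows and columns and on the orientation convention for arcs in the contact graph. Everything else (steps (1) and (2)) is a direct, essentially bookkeeping, consequence of Definition~\ref{def:insertion}.
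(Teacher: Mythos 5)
The paper does not actually supply a proof of this lemma — it just asserts that it is ``an immediate consequence of the definition of pipe insertion'' — so there is nothing to compare line by line, but your argument is not a valid substitute: the key step~(3), as you yourself flag, is not just unfinished but built on an incorrect picture of the inserted pipe.

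The decisive error is the assertion that ``the pipe $q$ occupies the new row and the new column and nothing else.''  Trace $q$ more carefully.  After entering at the left of the new row $p+k$ and crossing the new boxes $(p+k,1),\dots,(p+k,p-1)$, the pipe $q$ reaches $(p+k,p)$ \emph{from the west} and turns north there; so $q$ is the \WN-pipe, not the \SE-pipe, at this box.  It then heads north into $(p+k+1,p)$, which is an \emph{old} box (it is the box $(p+k,p)$ of~$\twist$ before the row/column shift).  In fact $q$ zigzags: it alternates between the $k+1$ new boxes $(p+k+i,p+i)$, $0\le i\le k$ (its \WN-elbows), and $k$ old boxes $(p+k+i+1,p+i)$, $0\le i\le k-1$ (its \SE-elbows), matching the count $k$ \SE{} + $(k+1)$ \WN{} of Lemma~\ref{lem:pipeProperties}\,\eqref{item:numberElbows}.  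So the claim that ``in every one of its elbows\dots $q$ is the \SE-pipe'' is false, and the sentence ``the pipe\dots turning is an old pipe, while $q$ is the one that continues'' is incoherent: at an elbow, both pipes turn.

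What actually makes the lemma work (and is ``immediate'' once the picture is right) is the opposite split.  The $k+1$ new diagonal boxes $(p+k+i,p+i)$ lie on the south-east boundary of the enlarged shape, so they have no box to their south or east and therefore carry no \SE-pipe at all; $q$'s \WN-elbows there contribute no arcs.  The $k$ arcs incident to $q$ arise at the $k$ old boxes in the zigzag, where $q$ enters from the south (having just turned north at the preceding new boundary elbow) and leaves to the east, so $q$ \emph{is} the \SE-pipe there and the arc is directed out of~$q$.  Combined with your step~(1) — the content of every old box is unchanged, so the induced subgraph on the old nodes is~$\twist\contact$ — this gives the lemma; but your step~(3) as written proves the wrong thing about the wrong boxes.
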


We now define the deletion, which just erases a pipe from a $(k,n+1)$-twist.

\begin{definition}
\label{def:deletion}
Consider a $(k,n+1)$-twist~$\twist$ with an increasing relabeling~$\lambda:[n+1] \to \N$ of its relevant pipes. Assume that the $p$th pipe of~$\twist$, labeled by~$\lambda(p) = q$, is a source of the  contact graph~$\twist\contact$. Then the \defn{pipe deletion} of~$q$ in the relabeled $(k,n+1)$-twist~$\twist$ produces the relabeled $(k,n)$-twist~$\twist \deletion{} q$ obtained by:
\begin{itemize}
\item deleting the $(p+k)$th row and column of~$\twist$,
\item relabeling the $r$th relevant pipe of the resulting twist by~$\lambda(r)$ if~$r<p$ and by~$\lambda(r+1)$~if~$r \ge p$.
\end{itemize}
\end{definition}

The following statement is an immediate consequence of the definition of pipe deletion.

\begin{lemma}
The contact graph~$(\twist \deletion{} q)\contact$ is obtained from~$\twist\contact$ by deleting the node corresponding to the deleted pipe~$q$.
\end{lemma}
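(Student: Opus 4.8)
The cleanest route is to observe that pipe insertion and pipe deletion are mutually inverse on sources, and then to invoke Lemma~\ref{lem:source}. Keep the notation $p$, $q$, $\lambda$ of Definition~\ref{def:deletion} and set $\sigma \eqdef \twist \deletion{} q$. The plan is first to establish that $\twist = \sigma \insertion{} q$ as relabeled twists, that is, deleting the $(p+k)$th row and column of $\twist$ and then reinserting a pipe labeled $q$ in the corresponding slot gives back $\twist$. Granting this, the lemma is immediate: applying Lemma~\ref{lem:source} to $\sigma$ shows that $(\sigma \insertion{} q)\contact = \twist\contact$ is obtained from $\sigma\contact$ by adding the single new node $q$ as a source, together with some arcs leaving $q$; hence $\sigma\contact = (\twist \deletion{} q)\contact$ is $\twist\contact$ with the node $q$ removed and with all arcs leaving $q$ removed. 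Since $q$ is a source of $\twist\contact$, it has no incoming arcs there, so removing ``$q$ and its outgoing arcs'' is the same as removing ``$q$ and all its incident arcs'', i.e.\ deleting the node $q$. This is exactly the statement.

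It remains to prove $\twist = \sigma \insertion{} q$. Equivalently, one must check that pipe $q$ in $\twist$ occupies precisely the staircase of cells that Definition~\ref{def:insertion} fills when inserting $q$ into $\sigma$, and that every cell of $\twist$ in row $p+k$ or column $p+k$ not belonging to $q$ is a cross carried by a pipe passing straight through; for then deleting row and column $p+k$ only shortens steps of the remaining pipes, the triangular shape reglues into the correct $(n+2k) \times (n+2k)$ shape with every surviving cross and elbow preserved, and reinserting $q$ restores the staircase. This is where the source hypothesis enters. By Lemma~\ref{lem:pipeProperties}\,\eqref{item:numberElbows} pipe $q$ is a monotone path with $k+1$ \WN-elbows and $k$ \SE-elbows, alternating; since $q$ is a source, at every \WN-elbow of $q$ that meets a second pipe that pipe cannot be relevant (otherwise it would be the \SE-pipe of an elbow whose \WN-pipe is $q$, producing an arc into $q$), so it is one of the $2k$ trivial pipes, and in fact one of the first $k$ of them because pipes only move up and right and the trivial pipes entering above row $p+k$ stay above it. Feeding this into Lemma~\ref{lem:pipeProperties}\,\eqref{item:emptyRectangle} and Corollary~\ref{coro:comparable} pins down the trajectory of $q$ cell by cell and identifies it with the staircase of Definition~\ref{def:insertion}; in particular no pipe other than $q$ has an elbow in row $p+k$ or column $p+k$, which is precisely the cleanness needed above.

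The main obstacle is exactly this last geometric bookkeeping: proving that a source pipe has the minimal staircase shape, so that its row and column can be deleted cleanly. Once that is in hand the rest of the argument is formal --- the contact-graph statement is then a one-line consequence of Lemma~\ref{lem:source}.
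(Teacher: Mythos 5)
Your overall strategy is sound: if one grants that $\twist = (\twist\deletion{}q)\insertion{}q$ (which is exactly the nontrivial half of Lemma~\ref{lem:insertionDeletion}, stated in the paper immediately afterwards), then the claim does follow from Lemma~\ref{lem:source} by the formal manipulation you give in the first paragraph. That part is correct. However, this inverts the paper's logical order --- you are essentially proving Lemma~\ref{lem:insertionDeletion} first and then deducing the present lemma from it --- whereas the paper presents both statements as direct, parallel observations about the definition. The cleaner route the paper has in mind is to observe directly that the source hypothesis forces pipe~$q$ to occupy exactly the staircase prescribed by Definition~\ref{def:insertion}, so that deleting row and column~$p+k$ removes precisely the cells of pipe~$q$, leaving every other pipe and every other elbow intact; the contact graph then loses exactly node~$q$ and (since $q$ is a source and has only outgoing arcs) nothing else.

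The real issue is in your second paragraph, where you try to pin down the staircase shape. The step ``the second pipe at every \WN-elbow of $q$ is one of the \emph{first} $k$ trivial pipes'' is not adequately justified and is suspect as stated. First, pipe~$q$ has $k+1$ \WN-elbows by Lemma~\ref{lem:pipeProperties}\,\eqref{item:numberElbows}, but there are only $k$ trivial pipes entering below~$q$, so you would need at least one of them to supply two of the \WN-elbows --- this needs an argument. Second, your dismissal of the last~$k$ trivial pipes (``entering above row $p+k$ stay above it'') conflates the entry row of a pipe with its position relative to~$q$ along its whole trajectory; pipes cross, and for large~$k$ the last \WN-elbow of~$q$ can lie high enough in the shape that a pipe entering above row~$p+k$ could in principle occupy the cell just south of it. You flag the geometric bookkeeping as ``the main obstacle'' and leave it unresolved; but this is exactly the content of the statement, so the gap is substantial rather than cosmetic. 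What is actually needed --- and what makes both this lemma and Lemma~\ref{lem:insertionDeletion} ``immediate'' in the paper's sense --- is the observation that the source condition on~$q$ together with the reducedness of the pipe dream forces $q$'s elbows into the fixed boundary positions $(p+k+m,p+m)$, $m=0,\dots,k$, so that removing row and column~$p+k$ cleanly excises~$q$. That observation is not established by the argument you give.
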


Moreover, the insertion and deletion are clearly reverse to each other.

\begin{lemma}
\label{lem:insertionDeletion}
For any $(k,n)$-twist~$\twist$ relabeled by~$\lambda:[n] \to \N$ and any integer~$q \in \N$, we have
\begin{itemize}
\item $(\twist \insertion{} q) \deletion{} q = \twist$, and
\item $(\twist \deletion{} q) \insertion{} q = \twist$ as soon as $q  \in \lambda([n])$ labels a source of~$\twist$.
\end{itemize}
\end{lemma}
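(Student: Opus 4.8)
The plan is to verify both identities by unravelling the explicit recipes of Definitions~\ref{def:insertion} and~\ref{def:deletion}; the only external input is Lemma~\ref{lem:source}. Throughout I keep track separately of the filled triangular shape and of the relabeling of the relevant pipes, using repeatedly that a deletion is designed precisely to cancel the row-and-column insertion performed by an insertion at the same slot.

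\emph{First identity.} Let $p$ be the index with $\lambda(p) \le q < \lambda(p+1)$, so that $\twist \insertion{} q$ is obtained from $\twist$ by inserting one new row and one new column at the slot determined by $p$, filling the new boxes with the prescribed staircase of elbows and the complementary crosses, and shifting the relabeling so that the inserted pipe carries the label $q$. By Lemma~\ref{lem:source} the pipe labeled $q$ is a source of $(\twist \insertion{} q)\contact$, so the deletion of $q$ is legitimate. Now the position of the pipe labeled $q$ among the relevant pipes of $\twist \insertion{} q$ is exactly the slot into which it was inserted, so the deletion removes precisely the row and column that the insertion had created; since the insertion touched no other box of $\twist$, and since its relabeling rule is inverted termwise by that of the deletion, we recover $(\twist, \lambda)$, that is, $(\twist \insertion{} q) \deletion{} q = \twist$.

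\emph{Second identity.} Suppose $q = \lambda(p) \in \lambda([n])$ and that the $p$th pipe of $\twist$ is a source of $\twist\contact$. The deletion $\twist \deletion{} q$ erases the row and column occupied by this pipe and contracts $\lambda$ to the relabeling $\mu$ obtained by omitting the value $q$. When we perform the insertion of $q$ on $\twist \deletion{} q$, the slot located by Definition~\ref{def:insertion} is exactly the one from which $q$ was just removed, because in $\mu$ the two $\lambda$-neighbours of $q$ have become consecutive values, one just below and one just above $q$; hence the new row and column are inserted back in place, $\mu$ is restored to $\lambda$, and the new pipe occupies the position it had in $\twist$. It remains to check that the canonical filling of the new boxes prescribed by the insertion coincides with the row and column of $\twist$ that were erased. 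This is the only place where the source hypothesis is used: since the $p$th pipe of $\twist$ has no incoming arc in $\twist\contact$, at every elbow it shares with another relevant pipe it is the \SE-pipe, and tracing its trajectory from the row where it enters forces it to run east through its \WE-crosses and then climb along the prescribed staircase of elbows, all other boxes in that row and column being crosses of the remaining pipes. Therefore $(\twist \deletion{} q) \insertion{} q = \twist$.

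I expect the bookkeeping with the row and column indices and the relabeling maps to be entirely routine. The one genuinely substantive point is the last one: recognising that a pipe which is a source of the contact graph has exactly the shape of a freshly inserted pipe; equivalently, that the deletion is injective on the set of twists in which the pipe being deleted is a source.
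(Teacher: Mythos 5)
The paper itself offers no proof of this lemma---it simply records it after the observation ``Moreover, the insertion and deletion are clearly reverse to each other.'' Your proposal therefore cannot be compared to a written argument; what I can say is that it is a reasonable unpacking of the same observation, and that the first identity is handled correctly and completely.

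For the second identity, you rightly identify the single substantive point: when pipe~$q$ is a source, the boxes removed by the deletion must carry exactly the canonical filling that the insertion would reproduce, which amounts to saying that a source pipe occupies the position and shape of a freshly inserted pipe. Two remarks on your sketch of this point. First, your phrase ``all other boxes in that row and column being crosses'' slightly misstates the geometry for $k\ge 2$: the $k-1$ interior elbows $(p+k+1,p+1),\dots,(p+2k-1,p+k-1)$ of the staircase lie in neither row~$p+k$ nor column~$p+k$, and one must notice that these boxes are nonetheless created and removed with the row and column because the triangular shape of Definition~\ref{def:twist} grows and shrinks along its south-east boundary as $n$ changes (they are indeed ``new boxes'' in the sense of Definition~\ref{def:insertion}). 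Second, the central claim that the source hypothesis forces the entire staircase is only asserted via ``tracing its trajectory \dots{} forces it to \dots{} climb along the prescribed staircase.'' The cleanest justification available with the paper's tools is via Lemma~\ref{lem:comparable}: if pipe~$q$ had an elbow strictly north-west of the staircase, some other relevant pipe would have an elbow weakly south-east of an elbow of~$q$, giving an incoming arc to~$q$ and contradicting the source hypothesis; then Lemma~\ref{lem:pipeProperties}\,\eqref{item:numberElbows} (counting the $2k+1$ elbows and $p-1$ \WE-crosses of pipe~$q$) pins the trajectory to the staircase. Your argument gestures at this but leaves the reader to supply it. Given that the paper supplies nothing at all, this is acceptable, but it is the one spot where the reasoning needs to be tightened rather than merely asserted.
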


\begin{example}[Insertion in $1$-twists, triangulations and binary search trees]
\label{exm:bijectionsInsertions}
The following operations are equivalent under the bijections between $1$-twists, triangulations, and binary search trees (see Example~\ref{exm:1twistsTriangulations} and \fref{fig:1twistsTriangulations}):
\begin{itemize}
\item the pipe insertion of~$q$ in the (relabeled) $1$-twist~$\twist$,
\item the triangle insertion of~$q$ in the (relabeled) triangulation~$\twist\duality$,
\item the node insertion~$q$ in the (relabeled) binary search tree~$\twist\contact$.
\end{itemize}
\end{example}

\begin{example}[Pipe insertion in $k$-twists and $k$-crossing inflation in $k$-triangulations]
\label{exm:(k+1)crossinginflation}
As in Example~\ref{exm:bijectionsInsertions}, a pipe insertion in a $k$-twist~$\twist$ corresponds to the inflation into a $k$-star of a $k$-crossing formed by $k$ consecutive $k$-boundary edges in the $k$-triangulation~$\twist\duality$. Similarly, a pipe deletion in a $k$-twist~$\twist$ corresponds to the flattening into a $k$-crossing of a $k$-star incident to $k$ $k$-boundary edges. This $k$-crossing inflation and $k$-star flattening operations on $k$-triangulations were considered in~\cite{Nakamigawa, Jonsson, PilaudSantos-multitriangulations}, even in more generality. The detailed description of these operations is not needed here and can be found in~\cite{PilaudSantos-multitriangulations}.
\end{example}

\subsection{$k$-twist correspondence}
\label{subsec:twistCorrespondence}

We now present a natural surjection from permutations to acyclic $k$-twists. It relies on an insertion operation on pipe dreams similar to the insertion in binary search trees (see Example~\ref{exm:BSTinsertion} for details). It is motivated by the geometry of the normal fan of the corresponding brick polytope (see Section~\ref{sec:geometry} and~\cite{PilaudSantos-brickPolytope}).

We now describe this algorithm. From a permutation~$\tau \eqdef [\tau_1, \dots, \tau_n]$ (written in one-line notation), we construct a $(k,n)$-twist~$\surjectionPermBrick(\tau)$ obtain from the $(k,0)$-twist by successive pipe insertions of the entries~$\tau_n, \dots, \tau_1$ of~$\tau$ read from right to left. Equivalently, starting from the empty triangular shape, we insert the pipes~$\tau_n, \dots, \tau_1$ of the twist such that each new pipe is as northwest as possible in the space left by the pipes already inserted. This procedure is illustrated in \fref{fig:insertionAlgorithm} for the permutation~$31542$ and different values of~$k$.

\hvFloat[floatPos=p, capWidth=h, capPos=r, capAngle=90, objectAngle=90, capVPos=c, objectPos=c]{figure}
{\includegraphics[scale=1.08]{insertion}}
{Insertion of the permutation~$31542$ in $k$-twists for~$k = 0,1,2,3$.}
{fig:insertionAlgorithm}

\begin{proposition}
\label{prop:ktwistInsertion}
For any $(k,n)$-twist~$\twist$, the permutations~$\tau \in \fS_n$ such that~$\surjectionPermBrick(\tau) = \twist$ are precisely the linear extensions of the contact graph of~$\twist$. In particular, $\surjectionPermBrick$ is a surjection from the permutations of~$\fS_n$ to the acyclic $(k,n)$-twists.
\end{proposition}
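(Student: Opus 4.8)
The plan is to prove both halves of the statement together by characterizing, for a fixed $(k,n)$-twist $\twist$, exactly which permutations $\tau \in \fS_n$ satisfy $\surjectionPermBrick(\tau) = \twist$, and showing this set is nonempty precisely when $\twist$ is acyclic. The natural approach is induction on $n$, peeling off the first entry $\tau_1$ of the permutation, since $\surjectionPermBrick$ is defined by inserting $\tau_n, \dots, \tau_1$ from right to left, so $\tau_1$ is the \emph{last} pipe inserted. By Lemma~\ref{lem:source}, the last inserted pipe is a source of the contact graph $\twist\contact$; conversely, by Lemma~\ref{lem:insertionDeletion}, deleting a source $q$ of $\twist\contact$ and reinserting it recovers $\twist$. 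This is exactly the recursive structure that matches "linear extensions": a linear extension of a poset (or acyclic graph) is obtained by choosing a source, placing it first, and then linearly extending what remains.

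First I would set up the induction. For $n = 0$ (or $n=1$) the statement is trivial: there is a unique twist and a unique permutation. For the inductive step, fix a $(k,n)$-twist $\twist$ and a permutation $\tau = [\tau_1, \dots, \tau_n]$. Suppose $\surjectionPermBrick(\tau) = \twist$. Writing $\tau' = [\tau_2, \dots, \tau_n]$ for the permutation obtained by removing $\tau_1$ (standardized, or kept on its original letters — since insertion only depends on relative order this is harmless), we have $\surjectionPermBrick(\tau) = \surjectionPermBrick(\tau') \insertion{} \tau_1$. By Lemma~\ref{lem:source}, $\tau_1$ labels a source of $\twist\contact$, and by Lemma~\ref{lem:insertionDeletion} we get $\surjectionPermBrick(\tau') = \twist \deletion{} \tau_1$. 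By the induction hypothesis applied to the $(k,n-1)$-twist $\twist \deletion{} \tau_1$ (which is acyclic if $\twist$ is, its contact graph being $\twist\contact$ minus the node $\tau_1$ by the lemma preceding Lemma~\ref{lem:insertionDeletion}), the word $\tau'$ is a linear extension of $(\twist \deletion{} \tau_1)\contact$. Prepending the source $\tau_1$ to a linear extension of $\twist\contact$ with $\tau_1$ removed yields a linear extension of $\twist\contact$. Conversely, given a linear extension $\tau$ of $\twist\contact$, its first entry $\tau_1$ is a source, $[\tau_2,\dots,\tau_n]$ is a linear extension of $\twist\contact$ minus $\tau_1$, which equals $(\twist \deletion{} \tau_1)\contact$; by induction $\surjectionPermBrick([\tau_2,\dots,\tau_n]) = \twist \deletion{} \tau_1$, and inserting $\tau_1$ back (a source, so Lemma~\ref{lem:insertionDeletion} applies) gives $\surjectionPermBrick(\tau) = (\twist \deletion{} \tau_1) \insertion{} \tau_1 = \twist$. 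This also settles surjectivity onto the acyclic twists: an acyclic graph always has a source and hence at least one linear extension, so the fiber is nonempty; and a cyclic twist has empty fiber since by Lemma~\ref{lem:source} every permutation in the image maps to an acyclic twist.

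The main obstacle I anticipate is purely bookkeeping rather than conceptual: making precise the interplay between the relabeling $\lambda$ in Definitions~\ref{def:insertion} and~\ref{def:deletion} and the standardization needed to view $[\tau_2,\dots,\tau_n]$ as an honest permutation in $\fS_{n-1}$, so that the induction hypothesis applies verbatim. One must check that the "northwest as possible" description and the label-shifting conventions ($\lambda(0) = -\infty$, $\lambda(n+1) = +\infty$, and the re-indexing of pipes $r \mapsto r-1$ past the insertion point) are compatible with removing the first letter, i.e. that $\surjectionPermBrick$ genuinely commutes with deletion of the last-inserted pipe. Once that compatibility is nailed down — and it is essentially the content of Lemmas~\ref{lem:source} and~\ref{lem:insertionDeletion} — the induction runs smoothly. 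An alternative, less inductive route would be to directly observe that the insertion algorithm never creates a crossing that would reverse an arc already present in the contact graph, so that $\tau$ being compatible with all contact arcs is both necessary (each insertion of $\tau_i$ after $\tau_{i+1}, \dots, \tau_n$ places $\tau_i$ as a source relative to the already-inserted pipes) and sufficient; but the recursive argument above is cleaner and reuses the lemmas already established.
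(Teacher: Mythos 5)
Your proof is correct and follows essentially the same route as the paper's: induction on $n$, peeling off $\tau_1$ as the last-inserted pipe, and invoking Lemma~\ref{lem:source} together with Lemma~\ref{lem:insertionDeletion} for the two directions. You add a slightly more explicit discussion of the surjectivity conclusion and of the relabeling/standardization bookkeeping, but the core argument is identical.
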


\begin{proof}
We prove the result by induction on~$n$. Consider a permutation~$\tau = [\tau_1,\dots,\tau_n] \in \fS_n$, and let~$\tau' = [\tau_2,\dots,\tau_n]$. By definition, we have~$\surjectionPermBrick(\tau) = \surjectionPermBrick(\tau') \insertion{} \tau_1$. By induction hypothesis, $\tau'$ is a linear extension of the contact graph~$\surjectionPermBrick(\tau')\contact$ and Lemma~\ref{lem:source} ensures that~$\tau_1$ is a source of~$\surjectionPermBrick(\tau)\contact = (\surjectionPermBrick(\tau') \insertion{} \tau_1)\contact$. It follows that~$\tau$ is a linear extension of the contact graph~$\surjectionPermBrick(\tau)\contact$. Conversely, assume that~$\tau$ is a linear extension of the contact graph of a $(k,n)$-twist~$\twist$. Since~$\tau_1$ is a source of~$\twist\contact$, the twist~$\twist \deletion{} \tau_1$ is well-defined, and $\tau'$ is a linear extension of~$(\twist \deletion{} \tau_1)\contact$. By induction hypothesis, it follows that~$\surjectionPermBrick(\tau') = \twist \deletion{} \tau_1$ and thus that~$\surjectionPermBrick(\tau) = \surjectionPermBrick(\tau') \insertion{} \tau_1 = (\twist \deletion{} \tau_1) \insertion{} \tau_1 = \twist$ by Lemma~\ref{lem:insertionDeletion}.
\end{proof}

\begin{remark}[$k$-twist correspondence]
Remembering additionally the insertion order, $\surjectionPermBrick$ defines a bijection between permutations of~$\fS_n$ and \defn{leveled acyclic $(k,n)$-twists}, \ie $(k,n)$-twists whose relevant pipes have been relabeled by~${\lambda: [n] \to [n]}$ so that the label of the \SE-pipe is larger than the label of the \WN-pipe at each elbow. We call this bijection the \defn{$k$-twist correspondence}.
\end{remark}

\begin{example}[$1$-twist correspondence and sylvester correspondence]
\label{exm:BSTinsertion}
According to Example~\ref{exm:bijectionsInsertions}, the contact graph of the $1$-twist~$\surjectionPermBrick[1](\tau)$ is the binary search tree obtained by the successive insertions of the entries of~$\tau$ read from right to left. If we additionally remember the insertion order, we obtain the sylvester correspondence~\cite{HivertNovelliThibon-algebraBinarySearchTrees} between permutations and leveled binary~trees.
\end{example}

\subsection{$k$-twist congruence}
\label{subsec:twistCongruence}

We now characterize the fibers of~$\surjectionPermBrick$ as classes of a congruence~$\equiv^k$ defined by a simple rewriting rule, similar to the sylvester congruence~\cite{HivertNovelliThibon-algebraBinarySearchTrees}.

\begin{definition}
\label{def:ktwistCongruence}
Write the permutations of~$\fS_n$ as words in one-line notation. The \defn{$k$-twist congruence} is the equivalence relation~$\equiv^k$ on~$\fS_n$ defined as the transitive closure of the rewriting~rule
\[
U ac V_1 b_1 V_2 b_2 \cdots V_k b_k W \equiv^k U ca V_1 b_1 V_2 b_2 \cdots V_k b_k W \qquad\text{if } a < b_i < c \text{ for all } i \in [k],
\]
where~$a, b_1, \dots, b_k, c$ are elements of~$[n]$ while~$U, V_1, \dots, V_k, W$ are (possibly empty) words on~$[n]$. We say that~$b_1, \dots, b_k$ are \defn{$k$-twist congruence witnesses} for the exchange of~$a$ and~$c$.
\end{definition}

The congruence classes of~$\equiv^k$ for~$k \in \{1,2\}$ and~$n = 4$ are represented in \fref{fig:fibersTwistCongruence}, where the underlying graph is the Hasse diagram of the (right) weak order (see Section~\ref{subsec:twistClasses}).

\begin{figure}[b]
	\centerline{\includegraphics[width=\textwidth]{fibersTwistCongruence}}
	\caption{The $k$-twist congruence classes on~$\fS_4$ for~$k = 1$ (left) and~$k = 2$ (right).}
	\label{fig:fibersTwistCongruence}
\end{figure}

\begin{proposition}
\label{prop:ktwistCongruence}
For any~$\tau, \tau' \in \fS_n$, we have~$\tau \equiv^k \tau' \iff \surjectionPermBrick(\tau) = \surjectionPermBrick(\tau')$. In other words, the fibers of~$\surjectionPermBrick$ are precisely the $k$-twist congruence classes.
\end{proposition}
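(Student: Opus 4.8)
The plan is to show that the rewriting rule defining $\equiv^k$ exactly captures when two permutations differ by a transposition of consecutive entries that does not change the output of $\surjectionPermBrick$. Since $\equiv^k$ is generated by such local moves, it suffices to prove: (a) if $\tau$ and $\tau'$ differ by one application of the rewriting rule, then $\surjectionPermBrick(\tau) = \surjectionPermBrick(\tau')$; and conversely (b) any two permutations with the same image under $\surjectionPermBrick$ are connected by a sequence of such rewritings. For (b), the cleanest route is to use Proposition~\ref{prop:ktwistInsertion}: the fiber of a twist $\twist$ is the set of linear extensions of its contact graph $\twist\contact$, and any two linear extensions of a fixed acyclic graph are connected by transpositions of adjacent incomparable elements. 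So I would reduce (b) to showing that whenever $\tau = U\,x\,y\,W$ with $x,y$ adjacent and incomparable in $\twist\contact$ (where $\twist = \surjectionPermBrick(\tau)$), the entries $\{x,y\}$ (say $x > y$, writing $c = x$, $a = y$) admit $k$ witnesses $b_1,\dots,b_k$ with $a < b_i < c$ appearing in $W$ in the required pattern — i.e.\ $\tau = U\,c\,a\,V_1 b_1 \cdots V_k b_k W'$.

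For step (a), I would argue by induction on the length of the prefix $U$, as in the proof of Proposition~\ref{prop:ktwistInsertion}. When $U$ is empty, the first few insertions are those of the tail (from the right): the insertions of $W$ produce a common twist $\twist_0$, then inserting $b_k, V_k, b_{k-1}, \dots, b_1, V_1$ produces a common twist $\twist_1$, and the claim is that inserting $a$ then $c$ into $\twist_1$ gives the same twist as inserting $c$ then $a$. This is the heart of step (a) and should be analyzed directly on pipe dreams: one checks that because the $k$ witnesses $b_1, \dots, b_k$ with $a < b_i < c$ are already present in $\twist_1$, the pipe $c$ inserted as far northwest as possible must cross pipe $a$ at an \emph{external} step of $a$ (its last vertical step or first horizontal step), so $a$ and $c$ are incomparable in the resulting contact graph; and crucially the region occupied by $a$ and $c$ together — hence by every later pipe — is the same regardless of the insertion order. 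Here the presence of exactly $k$ separating witnesses is what forces $c$ to slide past $a$ rather than stack on top of it, using Lemma~\ref{lem:pipeProperties}\,\eqref{item:numberElbows} (each pipe has exactly $k+1$ valleys). The inductive step then propagates this equality through the insertions of the letters of $U$: inserting the same letter into two equal twists yields equal twists.

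For the converse direction of (b), once $x > y = c > a$ are adjacent incomparable entries of $\tau$ with twist $\twist$, I would use Corollary~\ref{coro:comparable}\,\eqref{item:comparable}: incomparability of the pipes $a < c$ forces the last vertical step of pipe $a$ to cross the first horizontal step of pipe $c$. I then need to extract $k$ pipes $b$ with $a < b < c$ whose labels occur (in $\tau$) to the right of the position of $a$ and $c$, arranged so the rewriting pattern $c\,a\,V_1 b_1 \cdots V_k b_k W'$ is matched — equivalently, $k$ pipes strictly between $a$ and $c$ that are each inserted before both $a$ and $c$ in the right-to-left reading, i.e.\ that are sources ``below'' $a$ and $c$ in an appropriate sense. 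The existence of these $k$ pipes should follow from a careful look at the geometry near the crossing of $a$ and $c$: since pipe $c$ has $k$ valleys below the crossing point and pipe $a$ has $k$ pics above it, counting the pipes that must pass between them in the triangular region bounded by $a$, $c$, and the boundary yields at least $k$ relevant pipes with label strictly between $a$ and $c$, each of which is already in place when $a$ and $c$ are inserted. This counting argument — matching the combinatorics of valleys/pics to the existence of exactly $k$ witnesses — is the main obstacle, and it is where the special structure of $k$-twists (as opposed to arbitrary pipe dreams) is essential; the case $k=1$ recovers the classical sylvester argument of~\cite{HivertNovelliThibon-algebraBinarySearchTrees}.
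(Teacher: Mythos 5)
Your plan matches the paper's proof in all essential respects: reduce to adjacent transpositions via linear extensions (as you do via Proposition~\ref{prop:ktwistInsertion}), then analyze the insertion of the two swapped letters $a<c$ after the common suffix $V$ has already been inserted. The paper then makes one compact observation that does all the work: in $\twist_1 = \surjectionPermBrick(V)$, the \emph{positions} at which $a$ and $c$ are inserted differ by exactly the number of letters $b \in V$ with $a<b<c$, and two pipe insertions at positions separated by at least $k$ (resp.\ fewer than $k$) produce pipes which do not (resp.\ do) share an elbow, hence commute (resp.\ do not). This single position count gives both directions of the iff at once.

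Your argument differs in the way you try to verify this central fact. For direction (a) you speak of the witnesses ``forcing $c$ to slide past $a$''; but pipe insertion (Definition~\ref{def:insertion}) inserts a fresh row and column for the new pipe, so nothing slides — the relevant phenomenon is that the two fresh staircases of elbows either overlap (when the position gap is $<k$) or not (when it is $\geq k$). This is a conceptual looseness rather than an error, but it makes the mechanism harder to see.

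The genuine risk is in your converse direction (b). You propose to read off the $\geq k$ witnesses geometrically in the \emph{final} twist $\twist = \surjectionPermBrick(\tau)$ by counting pipes in a region near the external crossing of $a$ and $c$. But a pipe $b$ with $a < b < c$ that lies geometrically in that region may still have been inserted \emph{after} $a$ and $c$ (i.e.\ $b$ could belong to $U$, not to the suffix $W$ where the rewriting rule requires the witnesses). You assert ``each of which is already in place when $a$ and $c$ are inserted'' without justification — this is precisely the point that needs an argument, and it is what your ``main obstacle'' amounts to. The paper's insertion-position version avoids this issue by construction, because it counts labels present in $\surjectionPermBrick(V)$, where only suffix pipes exist. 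Your geometric version is likely repairable (one would show that any $b \in U$ with $a<b<c$ necessarily passes weakly northwest of the $a,c$ crossing and so is excluded from the count), but as it stands it is a gap rather than a complete argument.
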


\begin{proof}
\enlargethispage{.6cm}
From Proposition~\ref{prop:ktwistInsertion}, each fiber of~$\surjectionPermBrick$ gathers the linear extensions of a $k$-twist. Since the set of linear extensions of a poset is connected by simple transpositions, we just need to show that~$\tau \equiv^k \tau' \iff \surjectionPermBrick(\tau) = \surjectionPermBrick(\tau')$ for any two permutations~$\tau = U ac V$ and~$\tau' = U ca V$ of~$\fS_n$ which differ by the inversion of two consecutive values.

Let~$\twist = \surjectionPermBrick(V)$ denote the $k$-twist obtained after the insertion of~$V$. The positions where~$a$ and~$c$ will be inserted in~$\twist$ are separated by the letters~$b$ in~$V$ such that~$a < b < c$. Therefore, if there exists at least~$k$ such letters, the pipes~$a$ and~$c$ are not comparable in~${(\twist \insertion{} c) \insertion{} a = (\twist \insertion{} a) \insertion{} c}$ and we have~$\surjectionPermBrick(\tau) = \surjectionPermBrick(\tau')$. Conversely, if there are strictly less than~$k$ such letters, then~$a$ is below~$c$ in~$(\twist \insertion{} c) \insertion{} a$ while~$a$ is above~$c$ in~$(\twist \insertion{} a) \insertion{} c$, and thus we get~$\surjectionPermBrick(\tau) \ne \surjectionPermBrick(\tau')$.
\end{proof}

\begin{example}[$1$-twist congruence and sylvester congruence]
The $1$-twist congruence coincides with the sylvester congruence defined in~\cite{HivertNovelliThibon-algebraBinarySearchTrees} as the transitive closure of the rewriting rule ${UacVbW \equiv UcaVbW}$ for~$a < b < c$ elements of~$[n]$ while~$U,V,W$ are (possibly empty) words~on~$[n]$.
\end{example}

\begin{remark}[Numerology again]
\label{rem:numerologyAgain}
The number of $k$-twist congruence classes of~$\fS_n$ is the number of acyclic $(k,n)$-twists. We thus obtain that
\[
|\AcyclicTwists(n)| = n! \quad \text{if } n \le k+1
\qquad\text{and}\qquad
|\AcyclicTwists(k+2)| = (k+2)!-k!
\]
since there is no possible rewriting when~$n \le k+1$, and the only non-trivial $k$-twist congruence classes on~$\fS_{k+2}$ are the pairs~$\{1(k+2)(\sigma_1+1)\cdots(\sigma_k+1), (k+2)1(\sigma_1+1)\cdots(\sigma_k+1)\}$ for~$\sigma \in \fS_k$. The other numbers in Table~\ref{table:numbersAcyclicTwists} remain mysterious for us.
\end{remark}

\subsection{$k$-twist congruence classes and WOIPs}
\label{subsec:twistClasses}

We now take a moment to study an essential property of the $k$-twist congruence classes with respect to the weak order on permutations. Remember that the (right) weak order on~$\fS_n$ is defined as the inclusion order of coinversions, where a coinversion of~$\tau \in \fS_n$ is a pair of values~$i, j \in \N$ such that~$i < j$ while~$\tau^{-1}(i) > \tau^{-1}(j)$. For example, the Hasse diagram of the weak order on~$\fS_4$ is the underlying graph of \fref{fig:fibersTwistCongruence}. The following definitions of weak order intervals and weak order interval posets are connected in the Proposition~\ref{prop:WOIP} proved by A.~Bjorner and M.~Wachs~\cite{BjornerWachs}.

\begin{definition}
\label{def:WOIP}
A \defn{weak order interval (WOI)} is a subset~$[\tau, \tau']$ of~$\fS_n$ consisting of all permutations~$\sigma \in \fS_n$ such that~$\tau \le \sigma \le \tau'$ in weak order. A \defn{weak order interval poset (WOIP)} is a poset~$\less$ on~$[n]$ such that for all~$a < b < c$,
\[
a \less c \implies a \less b \text{ or } b \less c 
\qquad\text{and}\qquad
a \more c \implies a \more b \text{ or } b \more c.
\]
\end{definition}

\begin{proposition}[\protect{\cite[Thm.~6.8]{BjornerWachs}}]
\label{prop:WOIP}
The WOIs are the sets of linear extensions of WOIPs:
\begin{enumerate}[(i)]
\item the WOI~$[\tau, \tau']$ is the set of linear extensions of the WOIP obtained as the transitive closure of the relations~$i \less j$ for all non coinversions~$i,j$ of~$\tau$ and~$i \more j$ for all coinversions~$i,j$~of~$\tau'$;
\item the linear extensions of a WOIP~$\less$ is the WOI~$[\tau,\tau']$, where~$\tau$ and~$\tau'$ are respectively the greedy and antigreedy linear extensions of~$\less$.
\end{enumerate}
\end{proposition}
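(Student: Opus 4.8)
The plan is to show that $\less \mapsto \linearExtensions(\less)$, sending a poset on $[n]$ to its set of linear extensions, restricts to a bijection between WOIPs and WOIs realizing the two stated inverse descriptions. Since a finite poset is recovered as the intersection of its linear extensions this map is injective on all posets, so it suffices to establish the two displayed formulas. Two elementary observations dispose of the routine half of~(ii). First, for \emph{any} poset $\less$ the set $\linearExtensions(\less)$ is order-convex in the weak order: if $\mu \le \sigma \le \nu$ with $\mu,\nu \in \linearExtensions(\less)$ and $\sigma$ violated a relation $i \less j$, then the inclusion $\coinv(\sigma)\subseteq\coinv(\nu)$ (if $i<j$) or $\coinv(\mu)\subseteq\coinv(\sigma)$ (if $i>j$) would force $i\less j$ to be violated by $\nu$, resp.\ $\mu$ — impossible; hence $[\mu,\nu]\subseteq\linearExtensions(\less)$ for any two linear extensions $\mu\le\nu$. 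Second, for any poset the greedy (lexicographically least) linear extension $\tau$ is \emph{locally minimal} — every descent $\tau_p>\tau_{p+1}$ satisfies $\tau_p\less\tau_{p+1}$, since otherwise $\tau_{p+1}$ would already have been placeable at step $p$ and the greedy rule would have chosen it then — and dually the antigreedy extension $\tau'$ is locally maximal.

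The heart of~(ii) is to upgrade ``locally minimal'' to ``minimum'' using the WOIP axiom; concretely, I would prove that if $\less$ is a WOIP then its greedy linear extension $\tau$ is the \emph{unique} locally-minimal element of $\linearExtensions(\less)$. Suppose $\sigma\in\linearExtensions(\less)$ is locally minimal with $\sigma\ne\tau$, and let $p$ be the first position where they differ. The greedy choice at step $p$ depends only on the (agreeing) positions before $p$, so $v\eqdef\tau_p$ is the least value placeable at step $p$ and $\sigma_p>v$; let $q>p$ be the position of $v$ in $\sigma$. Since $v$ is placeable at step $p$, all its $\less$-predecessors lie in positions $<p$ of $\sigma$, so none of $\sigma_p,\dots,\sigma_{q-1}$ is a $\less$-predecessor of $v$. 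A downward induction on $s$ from $q-1$ to $p$ then gives $\sigma_s<v$: if instead $\sigma_s>v$, then (comparing with $\sigma_{s+1}$, using $\sigma_q=v$ and $\sigma_{s+1}<v$ for $s<q-1$) position $s$ is a descent of $\sigma$, so $\sigma_s\less\sigma_{s+1}$ by local minimality, and the WOIP implication ``$a\more c\Rightarrow a\more b$ or $b\more c$'' applied to $a=\sigma_{s+1}<b=v<c=\sigma_s$ yields $v\less\sigma_{s+1}$ or $\sigma_s\less v$; the latter is excluded by the previous sentence, and the former contradicts that $v$ occupies position $q\ge s+1$ in the linear extension $\sigma$. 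Taking $s=p$ gives $\sigma_p<v$, contradicting $\sigma_p>v$. Dually $\tau'$ is the unique locally-maximal element. In a finite, nonempty, order-convex subset of the weak order a unique locally-minimal (resp.\ locally-maximal) element is the minimum $\tau$ (resp.\ maximum $\tau'$), and then order-convexity forces $\linearExtensions(\less)=[\tau,\tau']$, which is~(ii).

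For~(i), given a WOI $[\tau,\tau']$ (so $\tau\le\tau'$) one forms $\less$ by imposing on each pair of values the order on which $\tau$ and $\tau'$ agree and closing transitively; since $\coinv(\tau)\subseteq\coinv(\tau')$ this is the intersection of the two total orders $\tau$ and $\tau'$, and is described by the generators in the statement (non-coinversions of one endpoint, coinversions of the other). This is automatically a partial order; one checks the two WOIP implications on a triple $a<b<c$ by a short case analysis over which of these generators are present, using $\coinv(\tau)\subseteq\coinv(\sigma)\subseteq\coinv(\tau')$ for $\sigma\in[\tau,\tau']$; and one verifies that $\tau$ is the greedy and $\tau'$ the antigreedy linear extension of $\less$ — e.g.\ a value strictly below $\tau_p$ placed at position $p$ by some linear extension of $\less$ would create a coinversion lying in $\coinv(\tau)\setminus\coinv(\tau')$, which is empty. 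Then~(i) is~(ii) applied to this $\less$.

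The step I expect to be the main obstacle is the upgrade in the second paragraph: showing that the WOIP condition is \emph{exactly} what prevents $\linearExtensions(\less)$ from splitting into several order-convex pieces with distinct minimal elements. The downward-induction/WOIP argument above is the crux; an alternative is to peel off the last coordinate, writing $\linearExtensions(\less)$ as the disjoint union, over the $\less$-maximal elements $m$, of the sets of linear extensions ending at $m$ (each an interval by induction on $n$), and then using the WOIP axiom to glue these into a single interval — but this carries the same combinatorial core.
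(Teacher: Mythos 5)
The paper offers no proof of this proposition — it is cited directly from Björner and Wachs \cite[Thm.~6.8]{BjornerWachs} — so there is no argument in the source to compare against; your proposal is a self-contained substitute for the citation.

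Your argument is correct. The order-convexity of $\linearExtensions(\less)$ in the weak order (for an arbitrary poset $\less$) and the local-minimality of the greedy extension are both clean, and the reduction at the end — unique locally-minimal element in a finite order-convex set is the minimum, hence order-convexity gives $\linearExtensions(\less)=[\tau,\tau']$ — is sound once one notes that order-convexity makes ``locally minimal'' and ``minimal'' agree, and that a unique minimal element of a finite poset is its minimum. The crux is your downward induction: assuming $\sigma_{s+1}<v<\sigma_s$, local minimality at the descent gives $\sigma_s\less\sigma_{s+1}$, and the WOIP implication applied to $\sigma_{s+1}<v<\sigma_s$ leaves only $v\less\sigma_{s+1}$ or $\sigma_s\less v$, both of which you correctly rule out. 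That is exactly where the WOIP axiom does its work, and it does close the gap. Part~(i) also goes through; one can even shortcut your verification that $\tau,\tau'$ are the extreme linear extensions by observing directly that for $\less$ the intersection of the total orders of $\tau$ and $\tau'$, the condition ``$\sigma$ extends $\less$'' splits literally into ``$\coinv(\tau)\subseteq\coinv(\sigma)$'' (for pairs $i>j$) and ``$\coinv(\sigma)\subseteq\coinv(\tau')$'' (for pairs $i<j$).

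One cautionary remark, independent of your argument: with the paper's conventions (weak order by inclusion of coinversions; $\sigma$ a linear extension of $\less$ iff $p\less q$ implies $p$ appears before $q$ in the one-line notation of $\sigma$), item~(i) as printed appears to have $\tau$ and $\tau'$ interchanged — the generators ought to be non-coinversions of $\tau'$ and coinversions of $\tau$. Test on $[\tau,\tau']=[123,132]$: the statement as printed would impose both $2\less 3$ (non-coinversion of $\tau=123$) and $3\less 2$ (coinversion of $\tau'=132$), which has no linear extensions, whereas the intersection poset $\{1\less 2,\ 1\less 3\}$ has linear extensions $\{123,132\}=[\tau,\tau']$ as it should. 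Your intersection-of-total-orders description is the correct one and is the one your proof actually uses, so nothing in your argument is affected; it is just worth being aware that ``described by the generators in the statement'' should not be taken at face value.
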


We use this characterization to prove the following statement, fundamental for the paper.

\begin{proposition}
\label{prop:intervals}
The $k$-twist congruence classes are intervals of the weak order.
\end{proposition}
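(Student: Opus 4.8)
The strategy is to use the characterization of weak-order intervals given in Proposition~\ref{prop:WOIP}: a subset of $\fS_n$ is a WOI if and only if it is the set of linear extensions of a WOIP, that is, of a poset $\less$ on $[n]$ satisfying the two implications $a \less c \implies a \less b$ or $b \less c$, and dually, for all $a<b<c$. By Proposition~\ref{prop:ktwistInsertion}, a $k$-twist congruence class is exactly the set of linear extensions of the contact graph $\contactLess{\twist}$ of an acyclic $(k,n)$-twist $\twist$; so it suffices to show that the transitive closure $\contactLess{\twist}$ of the contact graph of any acyclic $k$-twist is a WOIP. I would establish both implications; by the symmetry of a twist under the central rotation (which swaps ``below'' and ``left'', reverses arcs of the contact graph, and exchanges $\less$ with $\more$), one implication will follow from the other, so I would concentrate on showing: if $a < b < c$ in value and $a \contactLess{\twist} c$, then $a \contactLess{\twist} b$ or $b \contactLess{\twist} c$.

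The first step is to reduce to the covering case: since $a \contactLess{\twist} c$ is a transitive closure of arcs of $\twist\contact$, it suffices to treat a directed path $a = p_0 \to p_1 \to \cdots \to p_\ell = c$ in the contact graph and argue by induction on $\ell$, inserting the intermediate value $b$ somewhere along the path. The key geometric input is Lemma~\ref{lem:comparable} together with Lemma~\ref{lem:pipeProperties}\,\eqref{item:emptyRectangle}: an arc from $\pipe$ to $\pipe'$ in $\twist\contact$ means there is an elbow of $\pipe$ (the \SE-pipe) weakly south-east of an elbow of $\pipe'$ (the \WN-pipe), and more precisely the two pipes share an elbow. The plan is to locate where pipe $b$ sits relative to the elbows realizing the path from $a$ to $c$. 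Since $a < b < c$ as labels, pipe $b$ enters in a row strictly between those of $a$ and $c$ and exits in a column strictly between theirs; intuitively pipe $b$ must ``separate'' the entry/exit data of $a$ and $c$, so any monotone staircase of elbows going from an elbow of $a$ up to an elbow of $c$ must cross the track of pipe $b$, forcing either an elbow of $a$ weakly south-east of an elbow of $b$ (giving $a \contactLess{\twist} b$ by Lemma~\ref{lem:comparable}) or an elbow of $b$ weakly south-east of an elbow of $c$ (giving $b \contactLess{\twist} c$).

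A cleaner route, which I would pursue first, avoids the geometry entirely and uses the rewriting rule directly. Pick the greedy linear extension $\tau$ of $\contactLess{\twist}$, i.e.\ the minimal element of the congruence class in weak order; I claim it suffices to show the class equals the WOI $[\tau, \tau']$ where $\tau'$ is the maximal element. One shows that every linear extension of $\contactLess{\twist}$ lies weakly above $\tau$ and weakly below $\tau'$ in weak order — this is immediate — and conversely that every permutation $\sigma$ with $\tau \le \sigma \le \tau'$ is congruent to $\tau$. For the converse I would take a saturated chain in weak order from $\tau$ to $\sigma$ and show inductively that each step stays inside the class: if $\sigma$ covers a class element $\rho$ by swapping adjacent values $a<c$ into position $\cdots ca \cdots$, and $\sigma$ still satisfies $\sigma \le \tau'$, then $\rho$ and $\sigma$ differ by a swap $ac \leftrightarrow ca$ and I must produce $k$ witnesses $b$ with $a<b<c$ lying between them; the existence of these witnesses is forced because $\rho$ and $\tau'$ agree on the relative order of $a$ and $c$ being the ``wrong way'' only via intermediate values (this is exactly where the WOIP condition on $\tau'$, combined with Proposition~\ref{prop:ktwistCongruence}'s description of when a swap changes the twist, enters). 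I expect the main obstacle to be precisely this step: controlling that when we move up in weak order while staying below $\tau'$, every adjacent transposition we perform has its $k$ required $k$-twist congruence witnesses present — equivalently, that $\contactLess{\twist}$ being a WOIP is not just necessary but is the exact obstruction. I would handle it by proving the WOIP property of $\contactLess{\twist}$ via the geometric argument of the previous paragraph and then invoking Proposition~\ref{prop:WOIP}\,(ii) to conclude that the linear extensions form the interval $[\tau,\tau']$, so the two halves of the plan dovetail.
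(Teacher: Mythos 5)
Your first route is essentially the paper's own argument: reduce to showing that the transitive closure $\contactLess{\twist}$ of the contact graph is a WOIP (via Propositions~\ref{prop:ktwistInsertion} and~\ref{prop:WOIP}), then prove the WOIP condition by a geometric barrier argument using Lemma~\ref{lem:comparable}. However, your stated conclusion of the geometric step is too strong and would need to be repaired. You claim the crossing of pipe~$b$'s ``track'' forces \emph{an elbow of $a$ weakly south-east of an elbow of $b$, or an elbow of $b$ weakly south-east of an elbow of $c$}. What the argument actually produces is weaker: some elbow~$\elb$ of the geometric path $\pi$ from $a$'s entry to $c$'s exit lies in the region north-west or south-east of an elbow of $b$, and this $\elb$ belongs to an \emph{intermediate} pipe $q$ on the contact-graph path (with $a \contactLess{\twist} q \contactLess{\twist} c$), not necessarily to $a$ or to $c$. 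Lemma~\ref{lem:comparable} then gives $q \contactLess{\twist} b$ or $b \contactLess{\twist} q$, and one must compose with $a \contactLess{\twist} q$ resp.~$q \contactLess{\twist} c$ to conclude. (Your proposed induction on path length can indeed reduce to the direct-arc base case where $q \in \{a,c\}$, but even then you must allow both the ``SE'' and the ``NW'' sides, since an elbow of $a$ lying \emph{north-west} of an elbow of $b$ gives $b \contactLess{\twist} a \contactLess{\twist} c$, not $a \contactLess{\twist} b$.)

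Your ``cleaner route'' is not in fact an independent alternative: to know that the class has a unique weak-order minimum~$\tau$ and maximum~$\tau'$, and that every permutation in $[\tau,\tau']$ lies in the class, is exactly the statement that $\contactLess{\twist}$ is a WOIP, which you then propose to prove by the same geometric argument. So the two halves of your plan collapse into one, which is the paper's proof; the ``dovetailing'' you describe is circularity, and you correctly anticipated this. Finally, the symmetry you invoke to dispatch the second WOIP implication deserves care: the triangular shape has a reflection symmetry across its hypotenuse (swapping rows and columns, hence reversing the contact graph), not a central rotation, and you should check that this reflection really exchanges the two WOIP conditions; the paper simply asserts the other case is ``similar.''
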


\begin{proof}
By Propositions~\ref{prop:ktwistInsertion} and~\ref{prop:ktwistCongruence}, a $k$-twist congruence class is the insertion fiber of a $k$-twist~$\twist$, and thus the set of linear extensions of the transitive closure~$\contactLess{T}$ of the contact graph~$\twist\contact$. We therefore just need to show that~$\contactLess{T}$ satisfies the WOIP conditions of Definition~\ref{def:WOIP}.

Assume that~$a < b < c$ are such that~$a \contactLess{T} c$. Decompose the triangular shape into three regions: the region~$A$ of all points located south-west of the first elbow of the $b$th pipe of~$\twist$, the region~$B$ of all points located north-west or south-east of an elbow of the $b$th pipe of~$\twist$, and the region~$C$ of all points located north-east of the last elbow of the $b$th pipe of~$\twist$. Since~$a \contactLess{T} c$, there is a path~$\pi$ form the entering point of the $a$th pipe of~$\twist$ to the exiting point of the $c$th pipe of~$\twist$ which travels along the pipes and the elbows of~$\twist$. Since~$a < b < c$, the path~$\pi$ starts in region~$A$ and ends in region~$C$, so that it necessarily passes from region~$A$ to region~$C$. Since the north-east corner of~$A$ is located south-west of the south-west corner of~$C$, this forces an elbow~$\elb$ of~$\pi$ to lie in region~$B$. Lemma~\ref{lem:comparable} then ensures that either~$a \contactLess{T} b$ (if~$\elb$ is above the $b$th pipe), or~$b \contactLess{T} c$ (if~$\elb$ is below the~$b$th~pipe). The proof is similar if~$a \contactMore{T} c$.
\end{proof}

\begin{remark}[Interval property for arbitrary subword complexes]
\label{rem:notIntervalBrickPolytopes}
Surprisingly, Proposition~\ref{prop:intervals} fails for arbitrary subword complexes, even in type~$A$: it is shown in~\cite[Section~5.3]{PilaudStump-brickPolytope} that fibers are always closed by intervals, but not necessarily intervals themself. See \eg \cite[Figure~9]{PilaudStump-brickPolytope}.
\end{remark}

\enlargethispage{.5cm}
Let us sum up the last two sections by the following  bijective statement. 

\begin{corollary}
The following combinatorial objects are in explicit bijection:
\begin{itemize}
\item acyclic $(k,n)$-twists,
\item $k$-twist congruence classes of~$\fS_n$,
\item permutations of~$\fS_n$ avoiding~$1(k+2) \dash (\sigma_1+1) \dash \dots \dash (\sigma_k+1)$ for all~$\sigma \in \fS_k$ (maximums),
\item permutations of~$\fS_n$ avoiding~$(k+2)1 \dash (\sigma_1+1) \dash \dots \dash (\sigma_k+1)$ for all~$\sigma \in \fS_k$ (minimums).
\end{itemize}
\end{corollary}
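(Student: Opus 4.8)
The plan is to deduce the four-fold bijection as a chain of three correspondences. The bijection between acyclic $(k,n)$-twists and $k$-twist congruence classes of $\fS_n$ is already available: Proposition~\ref{prop:ktwistInsertion} gives the surjection $\surjectionPermBrick$ from $\fS_n$ onto the acyclic $(k,n)$-twists whose fibers are the sets of linear extensions of the contact graphs, and Proposition~\ref{prop:ktwistCongruence} identifies these fibers with the $k$-twist congruence classes. Next I would invoke Proposition~\ref{prop:intervals}: since each $k$-twist congruence class $C$ is an interval $[\min C, \max C]$ of the weak order, it has a unique maximum and a unique minimum, so $C \mapsto \max C$ and $C \mapsto \min C$ are injections from the congruence classes into $\fS_n$. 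It then remains to identify their images with the two pattern-avoiding families (3) and (4).

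To characterize the maxima, I would first record two elementary observations. First, the rewriting move $U ac V_1 b_1 \cdots V_k b_k W \to U ca V_1 b_1 \cdots V_k b_k W$ swaps $a$ and $c$ in adjacent positions with $a < c$, hence creates the single coinversion $(a,c)$ and changes no other; so it strictly increases a permutation in the weak order, while its reverse strictly decreases it. Second, this move applies to $\tau$ at adjacent positions $(p,p+1)$ with $(\tau_p,\tau_{p+1}) = (a,c)$ and $a<c$ exactly when at least $k$ of the values $\tau_{p+2},\dots,\tau_n$ lie strictly between $a$ and $c$ (from any such $k$ positions $q_1 < \dots < q_k$ one simply reads off the witnesses $b_i \eqdef \tau_{q_i}$ and the blocks $V_i$, $W$ as the intervening subwords). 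I would then note that this applicability condition is precisely the presence in $\tau$ of an occurrence of the vincular pattern $1(k+2) \dash (\sigma_1+1) \dash \dots \dash (\sigma_k+1)$ for some $\sigma \in \fS_k$: positions $p < p+1 < q_1 < \dots < q_k$ on which $\tau_p$ is the smallest and $\tau_{p+1}$ the largest of the selected $k+2$ values, the relative order of $\tau_{q_1},\dots,\tau_{q_k}$ being unconstrained, which is why one takes the union over all $\sigma \in \fS_k$.

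With this dictionary in hand, I would argue both directions. If $\tau$ contains such a pattern occurrence, the increasing move produces a permutation $\tau' > \tau$ in the same class, so $\tau$ is not the maximum of its class. Conversely, if $\tau$ is not the maximum of its class $C = [\min C, \max C]$, then $[\tau, \max C]$ is a nontrivial subinterval of $C$, so $\tau$ has a weak-order cover $\rho$ lying in $C$; that cover swaps two values $a < c$ that are adjacent in $\tau$ with $a$ before $c$, and $\tau \equiv^k \rho$. But the argument in the proof of Proposition~\ref{prop:ktwistCongruence} shows that $U a c V \equiv^k U c a V$ forces $V$ to contain at least $k$ values strictly between $a$ and $c$, so $\tau$ does contain the forbidden pattern. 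This identifies the image of $C \mapsto \max C$ with the family (3). The minimum case is symmetric: applying the same reasoning to the reverse (decreasing) move and to $\min C$ shows that $\tau = \min C$ iff $\tau$ has no adjacent $c$-before-$a$ with $c > a$ carrying at least $k$ values of the interval $(a,c)$ to its right, iff $\tau$ avoids every pattern $(k+2)1 \dash (\sigma_1+1) \dash \dots \dash (\sigma_k+1)$, $\sigma \in \fS_k$ (now $\tau_p$ is the largest and $\tau_{p+1}$ the smallest of the $k+2$ selected values).

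The step I expect to be the main obstacle is the converse direction of the maxima characterization: upgrading ``$\tau$ admits no increasing rewriting move'' to ``$\tau$ is the top of its class'', \ie ruling out a merely local maximum. This is exactly where I must combine the interval property (Proposition~\ref{prop:intervals}), the graded structure of the weak order (a nontrivial interval always has a cover above its bottom element), and the converse half of Proposition~\ref{prop:ktwistCongruence}, which guarantees that any class-preserving adjacent transposition is a genuine instance of the rewriting rule. Everything else amounts to a bookkeeping translation between the rewriting rule and the two vincular patterns.
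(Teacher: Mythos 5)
Your proof is correct and tracks exactly the argument the paper leaves implicit when it says ``Let us sum up the last two sections'': Propositions~\ref{prop:ktwistInsertion} and~\ref{prop:ktwistCongruence} give the first bijection, Proposition~\ref{prop:intervals} provides the unique maximum and minimum of each congruence class, and the identification with the two vincular pattern families is precisely the translation you carry out. You also correctly spot and resolve the only subtle point — that ``admits no increasing rewriting move'' must be upgraded to ``is the top of its class'' via the interval property plus the gradedness of the weak order plus the converse direction in the proof of Proposition~\ref{prop:ktwistCongruence}.
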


When~$k = 1$, one easily sees that $13 \dash 2$ avoiding permutations are enumerated by Catalan numbers (as are $31 \dash 2$ avoiding ones). As mentioned in Question~\ref{qu:numerology}, it would be interesting to obtain closed formulas for the enumeration of acyclic $(k,n)$-twists for general~$k$.

\begin{remark}[$13 \dash 2$ versus $1 \dash 3 \dash 2$ avoiding permutations]
It is easy to see that a permutation avoids $13 \dash 2$ if and only if it avoids $1 \dash 3 \dash 2$. This property fails for larger values of~$k$. For example, the permutation~$13524$ avoids $14 \dash 2 \dash 3$ but not~$1 \dash 4 \dash 2 \dash 3$. Here, we deal with permutations avoiding the pattern~$1(k+2) \dash (\sigma_1+1) \dash \dots \dash (\sigma_k+1)$, where~$1$ and~$(k+2)$ are~consecutive.
\end{remark}

\subsection{Lattice congruences of the weak order}
\label{subsec:latticeCongruences}

Based on the result of the previous section, we now show that the $k$-twist congruence~$\equiv^k$ is a lattice congruence of the weak order. This follows as well from the work of N.~Reading~\cite{Reading-HopfAlgebras}, but we prefer to remind the short proof as we will use similar ideas in Part~\ref{part:extensions}. We first remind the reader with the definition of these congruences and refer to~\cite{Reading-LatticeCongruences, Reading-CambrianLattices} for further details.

\begin{definition}
An \defn{order congruence} is an equivalence relation~$\equiv$ on a poset~$P$ such that:
\begin{enumerate}[(i)]
\item Every equivalence class under~$\equiv$ is an interval of~$P$.
\item The projection~$\projDown : P \to P$ (resp.~$\projUp : P \to P$), which maps an element of~$P$ to the minimal (resp.~maximal) element of its equivalence class, is order preserving.
\end{enumerate}
The \defn{quotient}~$P/{\equiv}$ is a poset on the equivalence classes of~$\equiv$, where the order relation is defined by~$X \le Y$ in~$P/{\equiv}$ iff there exists representatives~$x \in X$ and~$y \in Y$ such that~$x \le y$ in~$P$. The quotient~$P/{\equiv}$ is isomorphic to the subposet of~$P$ induced by~$\projDown(P)$ (or equivalently by~$\projUp(P)$).

If moreover~$P$ is a finite lattice, then~$\equiv$ is a automatically \defn{lattice congruence}, meaning that it is compatible with meets and joins: for any~$x \equiv x'$ and~$y \equiv y'$, we have~$x \meet y \equiv x' \meet y'$ and~$x \join y \equiv x' \join y'$. The poset quotient~$P/{\equiv}$ then inherits a lattice structure where the meet~$X \meet Y$ (resp.~the join~$X \join Y$) of two congruence classes~$X$ and~$Y$ is the congruence class of~$x \meet y$ (resp.~of~$x \join y$) for arbitrary representatives~$x \in X$ and~$y \in Y$.
\end{definition}

\begin{theorem}
\label{theo:ktwistLatticeCongruence}
The $k$-twist congruence~$\equiv^k$ is a lattice congruence of the weak order.
\end{theorem}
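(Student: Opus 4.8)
The plan is to verify the two conditions in the definition of a lattice congruence. Condition~(i), that every $\equiv^k$-class is an interval of the weak order, is already established in Proposition~\ref{prop:intervals}. So the work is concentrated on condition~(ii): that the two projection maps $\projDown$ and $\projUp$ are order preserving. By a standard reduction (see~\cite{Reading-LatticeCongruences}), since the weak order on $\fS_n$ is a lattice and the covering relations are the elementary transpositions, it suffices to check that $\projDown$ and $\projUp$ do not ``reverse'' a cover: if $\sigma \lessdot \sigma'$ is a weak order cover (so $\sigma' = \sigma \cdot s_i$ for some adjacent transposition applied on the right, exchanging two values in consecutive positions), then $\projDown(\sigma) \le \projDown(\sigma')$ and $\projUp(\sigma) \le \projUp(\sigma')$ in the weak order. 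Equivalently, using the fiber description, I would show that the map $\surjectionPermBrick$ itself is ``monotone on covers'' in the sense that the minimum (resp.\ maximum) of the class of $\twist = \surjectionPermBrick(\sigma)$ is weakly below (resp.\ above) the minimum (resp.\ maximum) of the class of $\surjectionPermBrick(\sigma')$.

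First I would set up notation: write $\sigma = U\,ac\,V$ and $\sigma' = U\,ca\,V$ where $a < c$ are the exchanged consecutive-position values (here ``$ac$'' and ``$ca$'' occupy two fixed adjacent positions). There are two cases according to whether $c$ covers $a$ within the pattern, i.e.\ whether there are fewer than $k$ letters $b$ with $a < b < c$ appearing in $V$, in which case $\sigma \not\equiv^k \sigma'$ and $\surjectionPermBrick(\sigma) \ne \surjectionPermBrick(\sigma')$; or there are at least $k$ such letters, in which case $\sigma \equiv^k \sigma'$, both lie in the same class, and there is nothing to prove for that cover. So the substantive case is the first one. In that case $\surjectionPermBrick(\sigma)$ and $\surjectionPermBrick(\sigma')$ are obtained from the common twist $\twist'' = \surjectionPermBrick(V)$ by inserting $a$ and $c$ in the two possible orders; in one the $a$-pipe ends up below the $c$-pipe in the contact graph, in the other above, and otherwise the two contact graphs are ``the same up to this one relation.'' I would make this precise by comparing the transitive closures $\contactLess{\twist}$ and $\contactLess{\twist'}$ and checking they differ exactly by the covering relation coming from the $ac$/$ca$ swap, together possibly with forced consequences; the key input here is Lemma~\ref{lem:source} (the inserted pipe is a source) and the local analysis already used in the proof of Proposition~\ref{prop:ktwistCongruence}.

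Given that comparison of contact graphs, the monotonicity of $\projDown$ reduces to a statement about greedy linear extensions, exploiting Proposition~\ref{prop:WOIP}: $\projDown(\sigma)$ is the greedy (lexicographically smallest) linear extension of $\contactLess{\twist}$, and adding one extra order relation (passing from $\twist$ to $\twist'$) can only make the greedy linear extension lexicographically larger or keep it the same. More carefully, since $\contactLess{\twist'} \supseteq \contactLess{\twist}$ as relations (after identifying the two vertex sets in the obvious way) — or more precisely $\contactLess{\twist}$ and $\contactLess{\twist'}$ have a common refinement that is consistent with the claimed inequality — every linear extension of $\contactLess{\twist'}$ is a linear extension of $\contactLess{\twist}$, so the class of $\twist'$ is contained in the class of $\twist$ (thought of as sets of permutations), whence its minimum is weakly above the minimum of the class of $\twist$; the dual statement gives $\projUp(\sigma) \le \projUp(\sigma')$. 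Assembling: across the cover $\sigma \lessdot \sigma'$ we get $\projDown(\sigma) \le \projDown(\sigma')$ and $\projUp(\sigma) \le \projUp(\sigma')$, which is exactly condition~(ii), and with Proposition~\ref{prop:intervals} this proves that $\equiv^k$ is an order congruence, hence (the weak order being a lattice) a lattice congruence.

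The main obstacle I anticipate is the precise comparison of the transitive closures $\contactLess{\twist}$ and $\contactLess{\twist'}$ in the substantive case — i.e.\ showing that inserting $a$ and $c$ in the two orders changes the contact poset in a ``monotone and minimal'' way rather than in some more complicated fashion that could, a priori, reverse other relations. The subtlety is that Lemma~\ref{lem:comparable} is only a sufficient (not necessary) condition for comparability when $k \ge 2$, so one cannot just read off the poset from elbow positions; I expect to need the geometric ``three-region'' argument of Proposition~\ref{prop:intervals} again, or an induction on $n$ paralleling the proof of Proposition~\ref{prop:ktwistInsertion}, to control exactly which pairs become comparable after the swap. Once that local structural lemma is in hand, the monotonicity of the projections is a routine consequence of the greedy/antigreedy characterization in Proposition~\ref{prop:WOIP}.
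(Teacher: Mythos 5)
The overall plan — verify the two conditions in the definition of an order/lattice congruence, invoke Proposition~\ref{prop:intervals} for condition~(i), and reduce condition~(ii) to monotonicity across weak-order covers — is the same skeleton the paper uses. But the key step you offer for the substantive case does not hold up, and this is precisely where the real work is.

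You claim that, for a cover $\sigma = U\,ac\,V \lessdot \sigma' = U\,ca\,V$ with $\twist = \surjectionPermBrick(\sigma) \neq \twist' = \surjectionPermBrick(\sigma')$, one has $\contactLess{\twist'} \supseteq \contactLess{\twist}$ and hence that every linear extension of $\contactLess{\twist'}$ is one of $\contactLess{\twist}$, so that the fiber of $\twist'$ is contained in that of $\twist$. This is false on its face: the fibers of $\surjectionPermBrick$ partition $\fS_n$ (Propositions~\ref{prop:ktwistInsertion} and~\ref{prop:ktwistCongruence}), so two distinct fibers are disjoint, never nested. Concretely, since $\sigma = U\,ac\,V$ is a linear extension of $\twist\contact$ one has $a \contactLess{\twist} c$, while $\sigma' = U\,ca\,V$ being a linear extension of $\twist'\contact$ forces $c \contactLess{\twist'} a$; the two posets disagree on the pair $\{a,c\}$, so neither contains the other. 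The hedge ``or more precisely $\contactLess{\twist}$ and $\contactLess{\twist'}$ have a common refinement consistent with the claimed inequality'' is not an argument and does not repair this: what needs to be shown is that the two disjoint intervals $[\min C, \max C]$ and $[\min C', \max C']$ are positioned so that $\min C \le \min C'$ and $\max C \le \max C'$, and this cannot be read off from any containment of contact posets because there is none.

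The paper's proof sidesteps this by a completely different mechanism. It fixes a cover $\tau \lessdot \tau' = \tau s_i$ with $\tau \in C$, $\tau' \in C'$, $C \neq C'$, and proceeds by induction on the weak-order distance from $\tau$ to $\max(C)$: it picks a cover $\sigma = \tau s_j$ of $\tau$ inside $C$, constructs a specific $\sigma'$ (a short product of $s_i$ and $s_j$ applied to $\tau$) with $\sigma < \sigma'$, and verifies $\sigma' \equiv^k \tau'$ by exhibiting the $k$-twist congruence witnesses explicitly in each of four cases according to $|i-j|$. The crucial point — and the one your sketch never addresses — is tracking how the witnesses $b_1,\dots,b_k$ survive the auxiliary transposition $s_j$; that is what makes $\sigma' \equiv^k \tau'$ and lets the induction close. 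Your proposal would need to supply a substitute for this case analysis, and the contact-poset containment you propose cannot be that substitute.
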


\begin{proof}
\enlargethispage{.4cm}
We already observed in Proposition~\ref{prop:intervals} that the $k$-twist congruence classes are intervals of the weak order. We therefore just need to show that~$\tau < \tau'$ implies~$\min(C) < \min(C')$ and $\max(C) < \max(C')$ for any two permutations~$\tau, \tau' \in \fS_n$ from distinct $k$-twist congruence classes~$C,C'$. We only prove the result for the maximums, the proof for the minimums being similar. We first observe that we can assume that~$\tau' = \tau s_i$ for some simple transposition~${s_i = (i \;\, i+1)}$ which is not a descent of~$\tau$. The proof then works by induction on the weak order distance between~$\tau$ and~$\max(C)$. If~$\tau = \max(C)$, the result is immediate as~$\max(C) = \tau < \tau' \le \max(C')$. Otherwise, consider a permutation~$\sigma$ in~$C$ covering~$\tau$ in weak order. We write~$\sigma = \tau s_j$, and $s_j$ is not a descent of~$\tau$. We now distinguish four cases, according to the relative positions of~$i$ and~$j$:
\begin{enumerate}[(1)]
\item If~$j > i+1$, we have~$\tau = UabVcdW$, $\tau' = UbaVcdW$ and~$\sigma = UabVdcW$ for some letters~$a < b$, $c < d$ and words~$U,V,W$. Define~$\sigma' \eqdef \tau s_i s_j = \tau' s_j = \sigma s_i = UbaVdcW$. Since~$\tau \equiv^k \sigma$, there exist $k$-twist congruence witnesses~$c < w_1, \dots, w_k < d$ which all appear in~$W$. These letters are also $k$-twist congruence witnesses for the equivalence~$\tau' \equiv^k \sigma'$. Moreover, $\sigma < \sigma'$ since~$c < d$.
\item If~$j = i+1$, we have~$\tau = UabcW$, $\tau' = UbacW$ and~$\sigma = UacbW$ for some letters~$a < b < c$ and words~$U,W$. Define~$\sigma' \eqdef \tau s_i s_{i+1} s_i = \tau' s_{i+1} s_i = \sigma s_i s_{i+1} = UcbaW$. Since~$\tau \equiv^k \sigma$, there exist $k$-twist congruence witnesses~$b < w_1, \dots, w_k < c$ which all appear in~$W$. Since~$a < b$, these letters are thus also $k$-congruence witnesses for the equivalences~$\tau' \equiv^k \tau' s_{i+1} \equiv^k \tau' s_{i+1} s_i = \sigma'$. Moreover, since~$a < b < c$, we have~$\sigma < \sigma'$.
\item If~$j = i-1$, we proceed similarly as in Situation~(2).
\item If~$j < i-1$, we proceed similarly as in Situation~(1).
\end{enumerate}
In all cases, we have~$\tau \equiv^k \sigma < \sigma' \equiv^k \tau'$. Since~$\sigma$ is closer to~$\max(C)$ than~$\tau$, we obtain that ${\max(C) < \max(C')}$ by induction hypothesis.
\end{proof}

\subsection{Increasing flip lattice}
\label{subsec:flips}

We now recall the notion of flips in pipe dreams and study the graph of increasing flips in acyclic $k$-twists. A detailed treatment of increasing flips in finite type subword complexes can be found in~\cite{PilaudStump-ELlabelings}. However, we are not aware that the restriction of this graph to acyclic pipe dreams was ever studied, besides the undirected version in connection to brick polytopes~\cite{PilaudSantos-brickPolytope, PilaudStump-brickPolytope}.

\begin{definition}
An \defn{elbow flip} (or just \defn{flip}) in a $k$-twist is the exchange of an elbow \elbow{} between two relevant pipes~$\pipe, \pipe'$ with the unique crossing \cross{} between~$\pipe$ and~$\pipe'$. The flip is \defn{increasing} if the initial elbow is located (largely) south-west of the final elbow.
\end{definition}

\begin{remark}[Flips in $k$-triangulations]
As already mentioned in Example~\ref{exm:1twistsTriangulations} and Theorem~\ref{theo:ktwistsktriangulations}, an elbow flip in a $k$-twist~$\twist$ corresponds to a diagonal flip in the dual $k$-triangulation~$\twist\duality$. The $(m+2k)$-gon was chosen so that a flip is increasing if and only if the slope increases between the initial and the final diagonals. See Figures~\ref{fig:1twistsTriangulations} and~\ref{fig:ktwistsktriangulations} for illustrations.
\end{remark}

We are interested in the graph of increasing flips, restricted to the acyclic $(k,n)$-twists. See \fref{fig:increasingFlipLattices} for an illustration. Note that although the graph of flips is regular, its restriction to acyclic twists is not anymore regular in general: the first example appears for~$k = 2$ and~$n = 5$. It is known (see \eg via subword complexes in~\cite{PilaudStump-ELlabelings} or via multitriangulations in~\cite{PilaudSantos-multitriangulations}) that this graph is acyclic and it has a unique source (resp.~sink) given by the $(k,n)$-twist where all relevant elbows are in the first~$k$ columns (resp.~last~$k$ rows) while all crosses are on the last~$n$ columns (resp.~first $n$ rows).

We call \defn{increasing flip order} the transitive closure of the increasing flip graph on acyclic $k$-twists. Be aware that it is strictly contained in the restriction to acyclic $k$-twists of the transitive closure of the increasing flip graph on all $k$-twists: namely, there are pairs of acyclic $k$-twists so that any path of increasing flips between them passes through a cyclic $k$-twist.

\hvFloat[floatPos=p, capWidth=h, capPos=r, capAngle=90, objectAngle=90, capVPos=c, objectPos=c]{figure}
{\includegraphics[scale=.82]{increasingFlipLattices}}
{The increasing flip lattice on $(k,4)$-twists for~$k = 1$ (left) and~$k = 2$ (right).}
{fig:increasingFlipLattices}

\begin{proposition}
\label{prop:latticeQuotient}
The following posets are all isomorphic:
\begin{itemize}
\item the increasing flip order on acyclic $k$-twists,
\item the quotient lattice of the weak order by the $k$-twist congruence~$\equiv^k$,
\item the subposet of the weak order induced by the permutations of~$\fS_n$ avoiding the pattern ${1(k+2) \dash (\sigma_1 + 1) \dash \cdots \dash (\sigma_k + 1)}$ for all~$\sigma \in \fS_k$,
\item the subposet of the weak order induced by the permutations of~$\fS_n$ avoiding the pattern ${(k+2)1 \dash (\sigma_1 + 1) \dash \cdots \dash (\sigma_k + 1)}$ for all~$\sigma \in \fS_k$.
\end{itemize}
\end{proposition}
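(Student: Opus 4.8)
The plan is to deduce Proposition~\ref{prop:latticeQuotient} from the machinery already assembled, reducing everything to the abstract yoga of lattice quotients. By Theorem~\ref{theo:ktwistLatticeCongruence}, the $k$-twist congruence~$\equiv^k$ is a lattice congruence of the weak order on~$\fS_n$, so the quotient~${\fS_n/\!\equiv^k}$ is a lattice, and by the general discussion of order congruences it is isomorphic to the sublattice of the weak order induced by~$\projDown(\fS_n)$ (the minimal representatives) and also to the one induced by~$\projUp(\fS_n)$ (the maximal representatives). Combining this with the Corollary at the end of Section~\ref{subsec:twistClasses}, which identifies the minimal (resp.\ maximal) representatives of the $k$-twist congruence classes with the permutations of~$\fS_n$ avoiding $(k+2)1 \dash (\sigma_1+1) \dash \cdots \dash (\sigma_k+1)$ (resp.\ $1(k+2) \dash (\sigma_1+1) \dash \cdots \dash (\sigma_k+1)$) for all~$\sigma \in \fS_k$, the last two bullets of the proposition follow immediately, and they reduce the whole statement to proving that the increasing flip order on acyclic $(k,n)$-twists is isomorphic to~${\fS_n/\!\equiv^k}$.

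For that last identification, I would compare the two posets via the surjection~$\surjectionPermBrick \colon \fS_n \to \AcyclicTwists(n)$, whose fibers are exactly the $k$-twist congruence classes by Proposition~\ref{prop:ktwistCongruence}. So~$\surjectionPermBrick$ induces a bijection between the elements of~${\fS_n/\!\equiv^k}$ and the acyclic $(k,n)$-twists; it remains to check that this bijection is an order isomorphism when the target carries the increasing flip order. First I would argue that~$\surjectionPermBrick$ is order preserving from the weak order to the increasing flip order: if~$\tau' = \tau s_i$ covers~$\tau$ in weak order and the two are in different congruence classes, then passing from~$\surjectionPermBrick(\tau)$ to~$\surjectionPermBrick(\tau')$ corresponds to reordering the insertion of the two consecutive values~$\tau_i < \tau_{i+1}$, which (since they lie in different fibers, i.e.\ the pipes become comparable in one order and not the other) realizes exactly an elbow flip, and the geometry of the insertion algorithm (the new pipe being placed as northwest as possible, cf.\ Definition~\ref{def:insertion} and Lemma~\ref{lem:source}) forces this flip to be increasing. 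Conversely, any increasing flip between two acyclic $k$-twists lifts to a weak-order-increasing step between suitable linear extensions of their contact graphs: choosing a linear extension of the source twist in which the two flipped pipes appear consecutively, the corresponding adjacent transposition stays a valid permutation and, reading off the insertion, lands in the fiber of the target twist. This shows that covers in~${\fS_n/\!\equiv^k}$ map onto covers in the increasing flip graph and vice versa, hence the two transitive closures agree.

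The main obstacle I anticipate is this last comparison of cover relations, specifically the claim that an increasing flip can always be \emph{lifted} to a single weak-order cover between linear extensions. The subtlety is the phenomenon flagged in the text right before the proposition: the increasing flip order is strictly smaller than the restriction of the ``all twists'' increasing flip order, because some pairs of acyclic twists are only joined by flip paths through cyclic twists. So I must be careful that the lift of a single \emph{increasing} flip between \emph{acyclic} twists stays within the weak order and does not require detours — this is where I would lean on Lemma~\ref{lem:comparable} and Corollary~\ref{coro:comparable} to control, in the source twist, exactly which pipes are comparable in the contact graph, guaranteeing that the two flipped pipes~$\pipe < \pipe'$ can be made adjacent in some linear extension with~$\pipe$ immediately before~$\pipe'$, and that after the transposition the resulting word is the correct fiber (using Proposition~\ref{prop:ktwistInsertion} to recognize it as a linear extension of the flipped twist's contact graph). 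Once cover relations are matched in both directions, the isomorphism of the transitive closures — hence of the lattices, the quotient structure being already guaranteed by Theorem~\ref{theo:ktwistLatticeCongruence} — is automatic.
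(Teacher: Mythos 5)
Your proposal is correct and follows essentially the same route as the paper: match cover relations in the quotient lattice $\fS_n/\!\equiv^k$ (adjacent transpositions crossing a congruence class boundary) with increasing elbow flips via $\surjectionPermBrick$, in both directions, and then derive the two sublattice characterizations from the lattice-congruence theory together with the pattern-avoidance description of the minimal and maximal class representatives. One small observation: where you assert that \emph{any} increasing flip between acyclic twists lifts to a weak-order cover, the paper phrases its converse more narrowly in terms of the flip of the first/last elbow; your version is in fact justified by the same underlying observation you allude to (if the flipped arc $i\to j$ were not a cover of $\contactLess{\twist}$, removing it would not change the transitive closure, contradicting that $\twist\neq\twist'$), so the two formulations coincide.
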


\begin{proof}
Consider two distinct $k$-twists~$\twist, \twist'$ and their $k$-twist congruence~classes~$C, C'$. If there exist representatives~$\tau = UijV \in C$ and~$\tau' = UjiV \in C'$ which are adjacent in weak order, then~$\twist = \surjectionPermBrick(\tau)$ and~$\twist' = \surjectionPermBrick(\tau')$ differ by the flip of the first/last elbow between the $i$th and $j$th pipes, by definition of the map~$\surjectionPermBrick$. Conversely, if~$\twist$ and~$\twist'$ differ by the flip of the first/last elbow between the $i$th and $j$th pipes, then $i$ and~$j$ are connected in the contact graph~$\twist\contact$, so that there exists a linear extension~$\tau = UijV$ of~$\twist\contact$ where~$i$ and~$j$ are consecutive. Let~$\tau' = UjiV$ be the permutation obtained by the switch of~$i$ and~$j$ in~$\tau$. By definition of the map~$\surjectionPermBrick$, the twist~$\surjectionPermBrick(\tau')$ is obtained by flipping the first elbow between the $i$th and $j$th pipes in~$\surjectionPermBrick(\tau)$. The representatives~$\tau \in C$ and~$\tau' \in C'$ are thus adjacent in weak order. This proves that~$\surjectionPermBrick$~induces an isomorphism from the quotient lattice of the weak order by the $k$-twist congruence to the increasing flip order on acyclic $k$-twists. In turn, since the $k$-twist congruence is a lattice congruence, this quotient lattice is isomorphic to the subposet of the weak order induced by the minimal (resp.~maximal) elements of the classes. See~\cite{Reading-LatticeCongruences, Reading-CambrianLattices} for further details on quotient lattices. 
\end{proof}

\begin{example}[Tamari lattice]
When~$k = 1$, the increasing flip lattice is the classical Tamari lattice~\cite{TamariFestschrift}.
See \fref{fig:increasingFlipLattices}\,(left).
\end{example}

\begin{remark}[Increasing flip orders for arbitrary subword complexes]
\label{rem:notLatticesBrickPolytopes}
Again, let us warn the reader that Proposition~\ref{prop:latticeQuotient} does not hold for arbitrary type~$A$ subword complexes. First, as already mentioned in Remark~\ref{rem:notIntervalBrickPolytopes}, the fibers of the surjection map are not always intervals~\cite[Figure~9]{PilaudStump-brickPolytope}. But in fact, there are root-independent subword complexes for which the increasing flip order on facets is not even a lattice, see \eg \cite[Example~5.12]{PilaudStump-brickPolytope}.
\end{remark}

\subsection{$k$-recoil schemes}
\label{subsec:krecoils}

\enlargethispage{.3cm}
To prepare the definition of the $k$-canopy of an acyclic $k$-twist, we now recall the notion of $k$-recoil schemes of permutations, which was already defined by J.-C.~Novelli, C.~Reutenauer and J.-Y.~Thibon in~\cite{NovelliReutenauerThibon}. We use a description in terms of acyclic orientations of a certain graph as it is closer to the description of the vertices of the zonotope that we will use later in Section~\ref{sec:geometry}. For the convenience of the reader, we provide proof sketchs in terms of acyclic orientations of the results of~\cite{NovelliReutenauerThibon} needed later in this paper.

A \defn{recoil} in a permutation~$\tau \in \fS_n$ is a position~$i \in [n-1]$ such that~$\tau^{-1}(i) > \tau^{-1}(i+1)$ (in other words, it is a descent of the inverse of~$\tau$). The \defn{recoil scheme} of~$\tau \in \fS_n$ is the sign vector~$\surjectionPermZono[](\tau) \in \{-,+\}^{n-1}$ defined by~$\surjectionPermZono[](\tau)_i = -$ if~$i$ is a recoil of~$\tau$ and~$\surjectionPermZono[](\tau)_i = +$ otherwise.

To extend this definition to general~$k$, we consider the graph~$\Gkn$ with vertex set~$[n]$ and edge set~$\set{\{i,j\} \in [n]^2}{i < j \le i+k}$. For example, when~$k = 1$, the graph~$G^1(n)$ is just the $n$-path. We denote by~$\AcyclicOrientations(n)$ the set of acyclic orientations of~$\Gkn$ (\ie with no oriented~cycle).

\begin{proposition}[\protect{\cite[Prop.~2.1]{NovelliReutenauerThibon}}]
The number of acyclic orientations of~$\Gkn$ is
\[
|\AcyclicOrientations(n)| = \begin{cases} n! & \text{if } n \le k, \\ k! \, (k+1)^{n-k} & \text{if } n \ge k. \end{cases}
\]
\end{proposition}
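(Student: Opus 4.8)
The formula has a clean bijective/recursive proof, so my plan is to count acyclic orientations of $\Gkn$ by introducing the vertices one at a time. First I would handle the base case: when $n \le k$, the graph $\Gkn$ is the complete graph $K_n$ (since any two of the $n$ vertices are within distance $k$), and a complete graph has exactly $n!$ acyclic orientations, one for each linear order of the vertices — indeed, an acyclic orientation of $K_n$ is precisely a total order. This matches the first branch.

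For the inductive step, I would add the vertex $n$ to $\Gkn$ to build $G^k(n)$ from $G^k(n-1)$, assuming $n \ge k+1$. The new vertex $n$ is adjacent exactly to $n-1, n-2, \dots, n-k$, which form a clique in $G^k(n-1)$ (all pairwise within distance $k$, and all within distance $k$ of each other since they span an interval of length $k-1$). The key claim is: given any acyclic orientation $\orientation$ of $G^k(n-1)$, the number of ways to orient the $k$ new edges incident to $n$ so that the result is acyclic is exactly $k+1$. To see this, note that in the restriction of $\orientation$ to the clique $\{n-k, \dots, n-1\}$ the vertices are totally ordered, say $v_1 \to v_2 \to \cdots \to v_k$ along this order; an orientation of the edges $\{v_i, n\}$ avoids creating a cycle through $n$ (the only possible new cycles) if and only if the set of $v_i$ pointing toward $n$ is a down-set and the set pointing away is an up-set in this chain — equivalently, $n$ is inserted into one of the $k+1$ "gaps" of the chain $v_1 < \cdots < v_k$. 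One must also check no longer cycle is created, but any cycle using $n$ must enter and leave $n$ through two of its neighbours, all of which lie in the clique, and the down-set/up-set condition rules this out precisely. Hence $|\AcyclicOrientations(n)| = (k+1)\cdot|\AcyclicOrientations(n-1)|$ for $n \ge k+1$. Starting from $|\AcyclicOrientations(k)| = k!$ and iterating $n-k$ times gives $k!\,(k+1)^{n-k}$, as claimed (and the two cases agree at $n=k$).

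The main obstacle is the careful verification of the "$k+1$ extensions" claim — specifically, arguing that requiring acyclicity on the new vertex $n$ reduces exactly to the down-set/up-set condition on its clique of neighbours, and that no cycle avoiding the clique structure can sneak in. This is where I would be most careful: since $n$ has degree $k$ and all its neighbours are mutually adjacent, any directed cycle through $n$ has the form $n \to v_i \to \cdots \to v_j \to n$ with $v_i, v_j$ neighbours of $n$; using that $\orientation$ restricted to the clique is a total order, one shows the forbidden configurations are exactly those where some neighbour below (in the clique order) a neighbour pointing to $n$ points away from $n$. I would phrase this as inserting $n$ into the linear order induced on $\{n-k,\dots,n-1\}$, which makes the count transparently $k+1$. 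Everything else is routine bookkeeping.
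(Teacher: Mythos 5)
Your proposal is correct and takes essentially the same approach as the paper: both treat the case $n \le k$ by observing $\Gkn$ is complete, and both establish the recursion $|\AcyclicOrientations(n)| = (k+1)\,|\AcyclicOrientations(n-1)|$ for $n > k$ by noting that vertex $n$ can be ``inserted'' into one of the $k+1$ gaps of the linear order that the existing acyclic orientation induces on the clique $\{n-k,\dots,n-1\}$. The paper states this recursion in one sentence; you supply the justification it leaves implicit (that the neighbours of $n$ form a clique totally ordered by the orientation, that the acyclic extensions correspond exactly to insertions into this chain, and that any directed cycle through $n$ reduces to a $3$-cycle through two of its clique neighbours, so checking the chain condition suffices). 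That last reduction --- from arbitrary cycles through $n$ to $3$-cycles via the clique --- is the one point you rightly flag as needing care, and your sketch of it is sound.
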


\begin{proof}
When~$n \le k$, the graph~$\Gkn$ is complete, so an acyclic orientation of~$\Gkn$ is a permutation of~$[n]$. The case~$n \ge k$ then follows by induction on~$n$. Indeed, an acyclic orientation on~$\Gkn$ corresponds to an acyclic orientation on~$G^k(n-1)$ together with a choice among the $k+1$ possibilities of the position of~$n$ with respect to~$\{n-k, \dots, n-1\}$.
\end{proof}

\enlargethispage{.3cm}
Table~\ref{table:numbersAcyclicOrientations} gathers the numbers of acyclic orientations of~$G^k(n)$ for~$k < n \le 10$.

\begin{table}[h]
  \[
  	\begin{array}{l|rrrrrrrrrr}
	\raisebox{-.1cm}{$k$} \backslash \, \raisebox{.1cm}{$n$}
	  & 1 & 2 & 3 & 4 & 5 & 6 & 7 & 8 & 9 & 10 \\[.1cm]
	\hline
	0 & 1 & 1 & 1 &  1 &   1 &   1 &    1 &     1 &      1 &       1 \\
	1 & . & 2 & 4 &  8 &  16 &  32 &   64 &   128 &    256 &     512 \\
	2 & . & . & 6 & 18 &  54 & 162 &  486 &  1458 &   4374 &   13122 \\
	3 & . & . & . & 24 &  96 & 384 & 1536 &  6144 &  24576 &   98304 \\
	4 & . & . & . &  . & 120 & 600 & 3000 & 15000 &  75000 &  375000 \\
	5 & . & . & . &  . &   . & 720 & 4320 & 25920 & 155520 &  933120 \\
	6 & . & . & . &  . &   . &   . & 5040 & 35280 & 246960 & 1728720 \\
	7 & . & . & . &  . &   . &   . &    . & 40320 & 322560 & 2580480 \\
	8 & . & . & . &  . &   . &   . &    . &     . & 362880 & 3265920 \\
	9 & . & . & . &  . &   . &   . &    . &     . &      . & 3628800
	\end{array}
  \]
  \caption{The numbers~$|\AcyclicOrientations(n)| = k!\,(k+1)^{n-k}$ of acyclic orientations of the graph~$\Gkn$ for~$k < n \le 10$. Dots indicate that the value remains constant (equal to~$n!$) in the column.}
  \label{table:numbersAcyclicOrientations}
  \vspace{-.4cm}
\end{table}

We use these acyclic orientations to define the $k$-recoil scheme of a permutation and the corresponding $k$-recoil congruence.

\begin{definition}
The \defn{$k$-recoil scheme} of a permutation~$\tau \in \fS_n$ is the orientation~$\surjectionPermZono(\tau) \in \AcyclicOrientations(n)$ with an edge~$i \to j$ for all~$i,j \in [n]$ such that~$|i-j| \le k$ and~$\tau^{-1}(i) < \tau^{-1}(j)$. We call \defn{$k$-recoil map} the map~$\surjectionPermZono: \fS_n \to \AcyclicOrientations(n)$.
\end{definition}

The following statement is immediate and left to the reader.

\begin{proposition}
\label{prop:krecoilsFibers}
For~$\orientation \in \AcyclicOrientations(n)$, the fiber of~$\orientation$ by the $k$-recoil map is the set of linear extensions of the transitive closure of~$\orientation$.
\end{proposition}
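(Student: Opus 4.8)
The plan is to unwind the definitions and observe that, once a permutation is viewed as a total order, the statement is essentially a tautology. First I would fix the dictionary: identify a permutation $\tau \in \fS_n$ with the total order $\prec_\tau$ on $[n]$ defined by $i \prec_\tau j \iff \tau^{-1}(i) < \tau^{-1}(j)$ (that is, $i$ appears before $j$ in the one-line notation of~$\tau$). Under this identification, for any binary relation $\less$ on $[n]$ whose transitive closure is a partial order, the permutation $\tau$ is a linear extension of $\less$ precisely when $i \less j$ implies $i \prec_\tau j$; in particular $\tau$ is a linear extension of $\orientation$ if and only if it is a linear extension of the transitive closure of $\orientation$, so we may pass freely between the two.

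Next I would compare $\orientation$ and $\surjectionPermZono(\tau)$ edge by edge. Both are orientations of the graph $\Gkn$ whose edge set is exactly $\set{\{i,j\}}{|i-j| \le k}$, and by definition $\surjectionPermZono(\tau)$ orients the edge $\{i,j\}$ as $i \to j$ exactly when $i \prec_\tau j$. Hence $\surjectionPermZono(\tau) = \orientation$ if and only if the two orientations agree on every edge of $\Gkn$, if and only if every arc $i \to j$ of $\orientation$ satisfies $i \prec_\tau j$, if and only if $\tau$ is a linear extension of $\orientation$, if and only if (by the previous paragraph) $\tau$ is a linear extension of the transitive closure of $\orientation$. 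This identifies the fiber of $\orientation$ under $\surjectionPermZono$ with the set of linear extensions of the transitive closure of $\orientation$, as claimed.

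The only points deserving a word of care — and they are the closest thing to an obstacle — are the well-definedness issues that make both sides meaningful. On one hand, $\surjectionPermZono(\tau)$ is always acyclic: its arcs are all compatible with the total order $\prec_\tau$, so its transitive closure is contained in $\prec_\tau$ and contains no directed cycle; this is why $\surjectionPermZono$ takes values in $\AcyclicOrientations(n)$. On the other hand, since $\orientation \in \AcyclicOrientations(n)$, its transitive closure is a genuine partial order on $[n]$, so "linear extension" makes sense and the fiber is nonempty (a finite poset always admits a linear extension). None of this requires real work, in keeping with the statement being left to the reader.
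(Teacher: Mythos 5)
Your proof is correct, and it is exactly the definitional unwinding the paper has in mind: the paper explicitly states that this proposition ``is immediate and left to the reader,'' and your argument supplies that routine verification cleanly, including the two minor well-definedness checks.
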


\begin{definition}
The \defn{$k$-recoil congruence}~$\approx^k$ on~$\fS_n$ is the transitive closure of the rewriting rule
\[
UijV \approx^k UjiV \qquad\text{if } i + k < j,
\]
where~$i,j$ are elements of~$[n]$ while~$U,V$ are (possibly empty) words on~$[n]$.
\end{definition}

The congruence classes of~$\approx^k$ for~$k \in \{1,2\}$ and~$n = 4$ are represented in \fref{fig:fibersRecoilCongruence}, where the underlying graph is the Hasse diagram of the (right) weak order (see Section~\ref{subsec:twistClasses}).

\begin{figure}[t]
	\centerline{\includegraphics[width=\textwidth]{fibersRecoilCongruence}}
	\caption{The $k$-recoil congruence classes on~$\fS_4$ for~$k = 1$ (left) and~$k = 2$ (right).}
	\label{fig:fibersRecoilCongruence}
\end{figure}

\begin{proposition}[\protect{\cite[Prop.~2.2]{NovelliReutenauerThibon}}]
For any~$\tau, \tau' \in \fS_n$, we have~${\tau \approx^k \tau' \iff \surjectionPermZono(\tau) = \surjectionPermZono(\tau')}$. In other words, the fibers of~$\surjectionPermZono$ are precisely the $k$-recoil congruence classes.
\end{proposition}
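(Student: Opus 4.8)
The plan is to mirror the proof of Proposition~\ref{prop:ktwistCongruence}, combining Proposition~\ref{prop:krecoilsFibers} with the classical fact (already invoked in this excerpt) that the set of linear extensions of a finite poset is connected under transpositions of values sitting in consecutive positions.

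First I would handle the implication $\tau \approx^k \tau' \implies \surjectionPermZono(\tau) = \surjectionPermZono(\tau')$. By transitivity of $\approx^k$ it suffices to treat a single rewriting step $\tau = UijV \approx^k UjiV = \tau'$ with $i + k < j$. For any pair $i', j' \in [n]$ with $|i'-j'| \le k$ we have $\{i',j'\} \ne \{i,j\}$, so $i'$ and $j'$ occupy the same relative positions in $\tau$ and in $\tau'$; hence $\tau^{-1}(i') < \tau^{-1}(j') \iff (\tau')^{-1}(i') < (\tau')^{-1}(j')$, and the edge between $i'$ and $j'$ is oriented the same way in $\surjectionPermZono(\tau)$ and $\surjectionPermZono(\tau')$. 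Therefore $\surjectionPermZono(\tau) = \surjectionPermZono(\tau')$.

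For the converse, suppose $\surjectionPermZono(\tau) = \surjectionPermZono(\tau') = \orientation$. By Proposition~\ref{prop:krecoilsFibers}, both $\tau$ and $\tau'$ are linear extensions of the transitive closure $\bar\orientation$ of $\orientation$. Connectivity of the set of linear extensions then yields a chain $\tau = \sigma_0, \sigma_1, \dots, \sigma_m = \tau'$ of linear extensions of $\bar\orientation$ in which each $\sigma_{t+1}$ is obtained from $\sigma_t$ by swapping two values $i, j$ lying in consecutive positions. Such a swap between two linear extensions forces $i$ and $j$ to be incomparable in $\bar\orientation$, which in turn forces $\{i,j\}$ not to be an edge of $\Gkn$ (an edge of $\Gkn$ would be oriented in $\orientation$, hence its endpoints comparable in $\bar\orientation$), that is $|i-j| > k$. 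Ordering the pair so that $i + k < j$, the step $\sigma_t \to \sigma_{t+1}$ is exactly an instance of the rewriting rule defining $\approx^k$, so $\tau = \sigma_0 \approx^k \sigma_1 \approx^k \cdots \approx^k \sigma_m = \tau'$.

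I do not expect a genuine obstacle. The only point to state carefully is the equivalence ``$i$ and $j$ incomparable in $\bar\orientation$'' $\iff$ ``$|i-j| > k$'', since this is simultaneously what certifies that every elementary transposition along the connecting chain is a legal $\approx^k$ move and what makes a single $\approx^k$ move invisible to the map $\surjectionPermZono$.
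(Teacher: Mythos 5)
Your proof is correct and follows essentially the same route as the paper: reduce via Proposition~\ref{prop:krecoilsFibers} and connectivity of linear extensions to the single elementary transposition case, then observe that the orientation changes precisely when $\{i,j\}$ is an edge of~$\Gkn$, i.e.\ when $|i-j|\le k$. Your writeup simply expands the paper's terse final sentence by spelling out both directions of that equivalence.
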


\begin{proof}
By Proposition~\ref{prop:krecoilsFibers}, each fiber of~$\surjectionPermZono$ gathers the linear extensions of the transitive closure of an acyclic orientation~$\orientation$ of~$\Gkn$. Since the set of linear extensions of a poset is connected by simple transpositions, if suffices to show that~$\tau \approx^k \tau' \iff \surjectionPermZono(\tau) = \surjectionPermZono(\tau')$ for any two permutations~$\tau = U ij V$ and~$\tau' = U ji V$ of~$\fS_n$ which differ by the inversion of two consecutive~values~$i < j$. Indeed, the orientations~$\surjectionPermZono(\tau)$ and~$\surjectionPermZono(\tau')$ differ if and only if $\Gkn$ has an edge between~$i$ and~$j$, \ie if and only if~$i + k \ge j$. 
\end{proof}

\begin{proposition}[\protect{\cite[Prop.~2.4]{NovelliReutenauerThibon}}]
The $k$-recoil congruence classes are intervals of the weak order.
\end{proposition}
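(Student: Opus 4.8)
The plan is to follow the proof of Proposition~\ref{prop:intervals} and again invoke the Bjorner--Wachs characterization of weak order intervals (Proposition~\ref{prop:WOIP}). By Proposition~\ref{prop:krecoilsFibers}, each fiber of the $k$-recoil map~$\surjectionPermZono$ is the set of linear extensions of the transitive closure~$\contactLess{\orientation}$ of an acyclic orientation~$\orientation$ of~$\Gkn$. So, by Proposition~\ref{prop:WOIP}, it suffices to check that~$\contactLess{\orientation}$ satisfies the two WOIP conditions of Definition~\ref{def:WOIP}: for all integers $a < b < c$, that $a \contactLess{\orientation} c$ implies $a \contactLess{\orientation} b$ or $b \contactLess{\orientation} c$, and that $a \contactMore{\orientation} c$ implies $a \contactMore{\orientation} b$ or $b \contactMore{\orientation} c$.

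To establish the first implication I would fix $a < b < c$ with $a \contactLess{\orientation} c$, choose a directed path $a = v_0 \to v_1 \to \cdots \to v_\ell = c$ in~$\orientation$, and run a discrete intermediate value argument. If $b$ is one of the~$v_i$ then both conclusions hold at once, so assume not; then, since the sequence of truth values of ``$v_j < b$'' passes from true at $j=0$ to false at $j=\ell$, some consecutive pair satisfies $v_i < b < v_{i+1}$. The crucial local fact is that $\{v_i, v_{i+1}\}$ is an edge of~$\Gkn$, hence $v_{i+1} - v_i \le k$, so that $b - v_i \le k-1$ and $v_{i+1} - b \le k-1$; thus $v_i$, $b$, $v_{i+1}$ are pairwise at distance at most~$k$ and span a triangle in~$\Gkn$. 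Since~$\orientation$ is acyclic, its restriction to this triangle is a linear order, and as it contains the arc $v_i \to v_{i+1}$ it must also contain $v_i \to b$ or $b \to v_{i+1}$. In the former case $a = v_0 \to \cdots \to v_i \to b$ witnesses $a \contactLess{\orientation} b$; in the latter $b \to v_{i+1} \to \cdots \to v_\ell = c$ witnesses $b \contactLess{\orientation} c$. The second implication follows by applying this argument to the reversed orientation~$\orientation^{\mathrm{op}}$ --- which is again an acyclic orientation of~$\Gkn$ --- or equivalently by arguing along a directed path from~$c$ to~$a$.

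I do not expect a real obstacle here: this is strictly easier than Proposition~\ref{prop:intervals}, because the relevant ``witness'' structure is now the explicit, purely local graph~$\Gkn$ rather than the geometry of a twist (and this is presumably why~\cite{NovelliReutenauerThibon} can state it as their Proposition~2.4). The only points requiring a little care are the distance bookkeeping --- that an integer lying strictly between the two endpoints of an edge of~$\Gkn$ is adjacent to both of them --- and the use of acyclicity of~$\orientation$ to rule out the directed triangle $v_i \to v_{i+1} \to b \to v_i$, equivalently the observation that an acyclic orientation of a triangle is a linear order.
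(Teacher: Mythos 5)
Your proposal is correct and takes essentially the same route as the paper, which simply declares the result ``immediate'' from the WOIP characterization of weak-order intervals given in Definition~\ref{def:WOIP} and Proposition~\ref{prop:WOIP}. Your intermediate-value argument along a directed path, together with the observation that an integer strictly between the endpoints of an edge of~$\Gkn$ is adjacent to both and that acyclicity forces a linear order on the resulting triangle, supplies exactly the local verification the paper elides.
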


\begin{proof}
Immediate from the characterization of WOIs given in Definition~\ref{def:WOIP} and Proposition~\ref{prop:WOIP}.
\end{proof}

\begin{definition}
A \defn{direction flip} (or just \defn{flip}) in an acyclic orientation is the switch of the direction of an edge of~$\Gkn$. The flip is \defn{increasing} if the initial direction was increasing. Define the \defn{increasing flip order} on~$\AcyclicOrientations(n)$ to be the transitive closure of the increasing flip graph on~$\AcyclicOrientations(n)$.
\end{definition}

\begin{theorem}
The $k$-recoil congruence~$\approx^k$ is a lattice congruence of the weak order. The $k$-recoil map~$\surjectionPermZono$ defines an isomorphism from the quotient lattice of the weak order by the $k$-recoil congruence~$\approx^k$ to the increasing flip lattice on the acyclic orientations of~$\Gkn$.
\end{theorem}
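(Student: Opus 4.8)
The plan is to mirror the proof of Theorem~\ref{theo:ktwistLatticeCongruence} for the first sentence and of Proposition~\ref{prop:latticeQuotient} for the second, the arguments being lighter here because the $k$-recoil rewriting rule involves only the two letters that are exchanged. We already know (preceding proposition) that the $k$-recoil congruence classes are intervals of the weak order, that they are exactly the fibers of~$\surjectionPermZono$, and (Proposition~\ref{prop:krecoilsFibers}) that the fiber of~$\orientation \in \AcyclicOrientations(n)$ is the set of linear extensions of the transitive closure of~$\orientation$. Hence, for the first sentence, it only remains to prove that the projections~$\projDown$ and~$\projUp$ to the minimum, respectively maximum, of a class are order preserving; since the weak order is a lattice, this makes~$\approx^k$ a lattice congruence automatically.

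To prove that~$\projUp$ is order preserving (the case of~$\projDown$ being symmetric) I would show that~$\projUp(\tau) \le \projUp(\tau s_i)$ whenever~$s_i$ is not a descent of~$\tau$, by downward induction on the length~$\ell(\tau)$ (the base case~$\tau = w_\circ$ being vacuous). Write~$C$ for the class of~$\tau$ and~$C'$ for the class of~$\tau s_i$. If~$\tau \approx^k \tau s_i$ then~$C = C'$ and there is nothing to prove; if~$\tau = \max(C)$ then~$\projUp(\tau) = \tau \lessdot \tau s_i \le \max(C')$; otherwise choose~$\sigma \in C$ covering~$\tau$ in the weak order, say~$\sigma = \tau s_j$. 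Distinguishing the four relative positions of~$i$ and~$j$ exactly as in the proof of Theorem~\ref{theo:ktwistLatticeCongruence}, I would exhibit in each case a permutation~$\sigma'$ obtained from~$\sigma$ by a short chain of weak order covers, with~$\sigma < \sigma'$ and~$\sigma' \in C'$. The one verification needed is that each~$\approx^k$-step connecting~$\tau s_i$ to~$\sigma'$ exchanges two values differing by more than~$k$; but these exchanges are the very ones used to pass from~$\tau$ to~$\sigma$ (plus, in two of the cases, one further exchange whose value gap is bounded below by one of those and so also exceeds~$k$), so the condition is inherited for free. Since~$\ell(\sigma) = \ell(\tau)+1$ and every permutation appearing in the chain from~$\sigma$ to~$\sigma'$ has length~$> \ell(\tau)$, the induction hypothesis applies to each cover in that chain and yields~$\projUp(\sigma) \le \projUp(\sigma')$; as~$\sigma \in C$ and~$\sigma' \in C'$, this is~$\projUp(\tau) = \max(C) \le \max(C') = \projUp(\tau s_i)$, closing the induction.

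For the second sentence, the first part turns~${\fS_n/\!\approx^k}$ into a lattice, and~$\surjectionPermZono$ induces a bijection from its classes to~$\AcyclicOrientations(n)$. Since both the quotient lattice and the increasing flip order are the transitive closures of their respective Hasse and flip graphs, it suffices to match cover relations in both directions. If~$\orientation, \orientation' \in \AcyclicOrientations(n)$ differ by an increasing flip of an edge~$\{i,j\}$ with~$i < j \le i+k$ (so the arc~$i \to j$ of~$\orientation$ is reversed into~$j \to i$ in~$\orientation'$), then the acyclicity of~$\orientation'$ forces~$i \lessdot j$ in the transitive closure of~$\orientation$, since any directed path~$i \to \cdots \to j$ of length~$\ge 2$ would create an oriented cycle through~$j \to i$ after the flip; hence there is a linear extension~$\tau = UijV$ of that transitive closure in which~$i$ is immediately followed by~$j$. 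Then~$\tau' = UjiV$ is a linear extension of the transitive closure of~$\orientation'$, one has~$\tau \lessdot \tau'$ in the weak order, and~$\surjectionPermZono(\tau) = \orientation$, $\surjectionPermZono(\tau') = \orientation'$, since only the relative order of the single pair~$\{i,j\}$ — an edge of~$\Gkn$ — has changed. Conversely, let~$C \lessdot C'$ be a cover of the quotient and set~$a = \min(C)$,~$b = \min(C')$, so that~$a \lessdot b$ in the subposet of~$\fS_n$ induced by~$\projDown(\fS_n)$. Any~$\rho$ with~$a < \rho < b$ has~$\projDown(\rho) \in \{a,b\}$ — as~$\projDown(\rho)$ is a class minimum with~$a \le \projDown(\rho) \le b$ and~$a \lessdot b$ in that subposet — hence~$\rho \in C$ or~$\rho \in C'$; thus the weak order interval~$[a,b]$ is the disjoint union of the order-convex sets~$[a,b] \cap C \ni a$ and~$[a,b] \cap C' \ni b$, and a maximal chain from~$a$ to~$b$ passes exactly once from the first to the second, yielding a weak order cover~$\tau \lessdot \tau'$ with~$\tau \in C$ and~$\tau' \in C'$ (this is the standard description of the Hasse diagram of a quotient lattice, see~\cite{Reading-LatticeCongruences, Reading-CambrianLattices}). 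Writing~$\tau = UijV$ and~$\tau' = UjiV$ with~$i < j$ consecutive, the inequality~$C \ne C'$ forces~$j - i \le k$, so~$\{i,j\}$ is an edge of~$\Gkn$ and~$\surjectionPermZono(\tau) \to \surjectionPermZono(\tau')$ is exactly the increasing flip of that edge. This gives the announced isomorphism and, in particular, shows that the increasing flip order on~$\AcyclicOrientations(n)$ is a lattice.

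The main obstacle is the four-case analysis in the first sentence, just as for Theorem~\ref{theo:ktwistLatticeCongruence}; here it is genuinely simpler, since the only invariant to carry through the cases is the numerical gap~$> k$ between exchanged values, with no congruence witnesses to keep track of inside the words.
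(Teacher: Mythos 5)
Your proof is correct and takes exactly the route the paper intends: the paper's own proof is just the one-line remark ``Similar to that of Theorem~\ref{theo:ktwistLatticeCongruence} and Proposition~\ref{prop:latticeQuotient}'', and you have carried out that mirroring in full. The four-case analysis for $\projUp$ works as you describe (in the two cases $j = i \pm 1$ the extra swap exchanges the smallest and largest of the three letters, whose gap dominates the original gap $> k$), and the matching of covers between the quotient lattice and the flip graph on $\AcyclicOrientations(n)$ is the standard argument. The only cosmetic difference is that you induct downward on $\ell(\tau)$ rather than on the weak order distance from $\tau$ to $\max(C)$ as in the paper's model proof, which if anything makes the chaining of covers from $\sigma$ to $\sigma'$ in the two-step cases a little cleaner to justify.
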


\begin{proof}
Similar to that of Theorem~\ref{theo:ktwistLatticeCongruence} and Proposition~\ref{prop:latticeQuotient}.
\end{proof}

\subsection{$k$-canopy schemes}
\label{subsec:canopy}

We recall that the \defn{canopy} of a binary tree~$\tree$ with~$n$ nodes is the sign vector~$\surjectionBrickZono[](\tree) \in \{-,+\}^{n-1}$ defined by~$\surjectionBrickZono[](\tree)_i = -$ if the node~$i$ of~$\tree$ is above the node~$i+1$ of~$\tree$ and~$\surjectionBrickZono[](\tree)_i = +$ otherwise. This map was already used by J.-L.~Loday in~\cite{LodayRonco, Loday}, but the name ``canopy'' was coined by X.~Viennot~\cite{Viennot}. The binary search tree insertion map and the canopy map factorize the recoil map: $\surjectionBrickZono[] \circ \surjectionPermBrick[] = \surjectionPermZono[]$. This combinatorial fact can also be understood on the geometry of the normal fans of the permutahedron, the associahedron and the cube, see Section~\ref{subsec:normalFans}.

We now define an equivalent of the canopy map for general~$k$. To ensure that Definition~\ref{def:canopy} is valid, we need the following simple observation on comparisons of closed pipes in a $k$-twist.

\begin{lemma}
\label{lem:canopy}
If~$|i-j| \le k$, the $i$th and $j$th pipes in an acyclic $k$-twist~$\twist$ are comparable~for~$\contactLess{T}$.
\end{lemma}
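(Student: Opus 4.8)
The plan is to show that whenever two relevant pipes $\pipe_i, \pipe_j$ of an acyclic $k$-twist~$\twist$ satisfy $|i-j| \le k$, one of them has an elbow weakly south-east of an elbow of the other, so that Lemma~\ref{lem:comparable} produces a path between them in~$\twist\contact$, hence $i \contactLess{T} j$ or $j \contactLess{T} i$. Assume without loss of generality that $i < j \le i+k$, and consider the crossing between~$\pipe_i$ and~$\pipe_j$ (there is exactly one, since the twist is reduced). If this crossing lies on an internal step of~$\pipe_j$, then Corollary~\ref{coro:comparable}(i) immediately gives a path from~$\pipe_i$ to~$\pipe_j$ and we are done. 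So the only case to treat is when the crossing lies on an \emph{external} step of~$\pipe_j$ — that is, on its first horizontal step or its last vertical step.

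First I would analyze these external cases using the explicit combinatorics of pipes from Lemma~\ref{lem:pipeProperties}. By part~\eqref{item:numberElbows}, pipe~$\pipe_j$ has exactly $j-1$ \WE-crosses (with pipes~$\pipe' < \pipe_j$, i.e.\ with smaller index) and $n-j$ \SN-crosses. The $j-1$ \WE-crosses of~$\pipe_j$ occur on its $k+1$ horizontal steps, and among those the first horizontal step is the external one. Here the key quantitative point is that $i \ge j-k$, so the pipe~$\pipe_i$ is among the $j-1$ smaller-indexed pipes crossing~$\pipe_j$ horizontally, and in fact its crossing with~$\pipe_j$ is forced to be ``early'' along~$\pipe_j$: by Lemma~\ref{lem:pipeProperties}\,\eqref{item:emptyRectangle} the horizontal crosses of~$\pipe_j$ on a given horizontal step are consecutive in index, and counting the crosses on the first horizontal step against the bound $j-1-(j-k-1) = k$ shows that the crossing between~$\pipe_i$ and~$\pipe_j$ must sit on a horizontal step of~$\pipe_j$ that is not too far from the start — precisely, close enough that the relevant elbow of~$\pipe_j$ (the \WN-elbow just before that crossing, which exists unless the crossing is on the very first external step) lies weakly north-west of a \WN-elbow of~$\pipe_i$. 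When the crossing genuinely is on the first external horizontal step of~$\pipe_j$, the symmetric argument applied to~$\pipe_i$ at its last vertical step (using Corollary~\ref{coro:comparable}\,\eqref{item:comparable}, which says incomparable pipes cross at the last vertical step of the smaller and first horizontal step of the larger) together with acyclicity (Corollary~\ref{coro:comparable}(iii)) rules out that both pipes meet at external steps of both, so one of the two ``internal step'' conclusions applies.

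The cleanest route, and the one I would actually write, is therefore: if $\pipe_i$ and $\pipe_j$ were incomparable in~$\twist\contact$, then by Corollary~\ref{coro:comparable}\,\eqref{item:comparable} the last vertical step of~$\pipe_i$ crosses the first horizontal step of~$\pipe_j$; so the crossing of~$\pipe_i$ and~$\pipe_j$ is external on both pipes. Now I would count, along the last vertical step of~$\pipe_i$, how many pipes can cross~$\pipe_i$ there: these are pipes with index larger than~$i$, and by the empty-rectangle property they form a contiguous block of indices $i+1, i+2, \dots$; but the last vertical step can accommodate at most the pipes that have not yet crossed~$\pipe_i$ on its $k$ earlier vertical steps, and a careful count (again via Lemma~\ref{lem:pipeProperties}) shows that the first horizontal step of~$\pipe_j$ can only be crossed by pipes of index at least $j-k$; the contradiction arises because $i \ge j-k$ forces the crossing to occur earlier, on an internal step of one of the two pipes, contradicting externality on both. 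Hence $\pipe_i$ and $\pipe_j$ are comparable for~$\contactLess{T}$.

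\textbf{Main obstacle.} The delicate point is making the counting argument on the external steps fully rigorous: one must track precisely, for a pipe of index~$j$, which of its $k+1$ horizontal steps carries the crossing with a given smaller pipe, and translate the index condition $j - i \le k$ into a statement about positions of elbows. The geometry is intuitively clear from the figures, but the bookkeeping — correctly handling the boundary sub-cases where a pipe's external step is also adjacent to its extremal elbow, and invoking acyclicity (Corollary~\ref{coro:comparable}(iii)) to exclude the degenerate configuration — is where the real work lies. I expect a proof by contradiction via Corollary~\ref{coro:comparable}\,\eqref{item:comparable} to be the shortest, since it isolates exactly the one bad configuration (external crossing on both pipes) and then derives a contradiction from the hypothesis $|i-j| \le k$.
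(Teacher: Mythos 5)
Your ``cleanest route'' is exactly the paper's proof: invoke Corollary~\ref{coro:comparable}\,\eqref{item:comparable} to place the crossing of incomparable pipes~$i<j$ on the last vertical step of pipe~$i$ (at column~$i+k$) and the first horizontal step of pipe~$j$ (at row~$j+k$), then count via Lemma~\ref{lem:pipeProperties}\,\eqref{item:numberElbows} to conclude $j>i+k$, contradicting~$|i-j|\le k$. The bookkeeping you flag as the main obstacle is in fact immediate --- the $k+1$ \WN-elbows of pipe~$i$ occupy strictly increasing rows starting at the entry row~$i+k$, so the last one lies at row~$\ge i+2k$ and the crossing lies strictly above it, forcing $j+k>i+2k$ --- and note the sign slip in your parenthetical count: pipe~$j$'s first horizontal step can be crossed in this external-external way only by pipes of index strictly \emph{less than}~$j-k$, not ``at least~$j-k$.''
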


\begin{proof}
By Corollary~\ref{coro:comparable}\,\eqref{item:comparable}, if~$i < j$ and the $i$th and $j$th pipes are incomparable, then the last vertical step of~$i$ crosses the first horizontal step of~$j$. Since each pipe has~$k$ horizontal steps by Lemma~\ref{lem:pipeProperties}\,\eqref{item:numberElbows}, it ensures that~$j > i+k$.
\end{proof}

\begin{definition}
\label{def:canopy}
The \defn{$k$-canopy scheme} of a $(k,n)$-twist~$\twist$ is the orientation~$\surjectionBrickZono(\twist) \in \AcyclicOrientations(n)$ with an edge~$i \to j$ for all~$i,j \in [n]$ such that~$|i-j| \le k$ and~$i \contactLess{\twist} j$. It indeed defines an acyclic orientation of~$\Gkn$ by Lemma~\ref{lem:canopy}. We call \defn{$k$-canopy} the map~$\surjectionBrickZono: \AcyclicTwists(n) \to \AcyclicOrientations(n)$.
\end{definition}

\begin{proposition}
\label{prop:latticeHomomorphisms}
The maps~$\surjectionPermBrick$, $\surjectionBrickZono$, and~$\surjectionPermZono$ define the following commutative diagram of lattice homomorphisms:
\[
\begin{tikzpicture}
  \matrix (m) [matrix of math nodes,row sep=1.2em,column sep=5em,minimum width=2em]
  {
     \fS_n  	&						& \AcyclicOrientations(n)	\\
				& \AcyclicTwists(n) 	&							\\
  };
  \path[->>]
    (m-1-1) edge node [above] {$\surjectionPermZono$} (m-1-3)
                 edge node [below] {$\surjectionPermBrick\quad$} (m-2-2.west)
    (m-2-2.east) edge node [below] {$\surjectionBrickZono$} (m-1-3);
\end{tikzpicture}
\]
\end{proposition}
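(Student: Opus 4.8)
The plan is to prove the proposition in two stages: first the commutativity of the triangle, $\surjectionBrickZono \circ \surjectionPermBrick = \surjectionPermZono$, and then the claim that all three arrows are surjective lattice homomorphisms (for the weak order, the weak order being a classical lattice, and the two increasing flip orders being lattices by Proposition~\ref{prop:latticeQuotient} and the theorem on the $k$-recoil congruence above).

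For commutativity I would argue edgewise. Fix $\tau \in \fS_n$ and set $\twist = \surjectionPermBrick(\tau)$; the goal is $\surjectionBrickZono(\twist) = \surjectionPermZono(\tau)$ as orientations of $\Gkn$, so let $\{i,j\}$ be an edge of $\Gkn$, say $i < j$ with $j \le i+k$. By Lemma~\ref{lem:canopy} the $i$th and $j$th pipes of $\twist$ are comparable for $\contactLess{\twist}$, and since $\twist$ is acyclic exactly one of $i \contactLess{\twist} j$ or $j \contactLess{\twist} i$ holds; assume $i \contactLess{\twist} j$ (the reverse case is symmetric). Then $\surjectionBrickZono(\twist)$ orients $\{i,j\}$ as $i \to j$ by Definition~\ref{def:canopy}. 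On the other hand, Proposition~\ref{prop:ktwistInsertion} says $\tau$ is a linear extension of $\twist\contact$, hence of its transitive closure $\contactLess{\twist}$, so $i \contactLess{\twist} j$ forces $\tau^{-1}(i) < \tau^{-1}(j)$, and therefore $\surjectionPermZono(\tau)$ also orients $\{i,j\}$ as $i \to j$. As the edge was arbitrary, $\surjectionBrickZono \circ \surjectionPermBrick = \surjectionPermZono$.

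For the homomorphism statement I would first recall from Theorem~\ref{theo:ktwistLatticeCongruence} together with Proposition~\ref{prop:latticeQuotient} that $\surjectionPermBrick$ is a surjective lattice homomorphism onto the increasing flip lattice on $\AcyclicTwists(n)$, and from the theorem on the $k$-recoil congruence that $\surjectionPermZono$ is a surjective lattice homomorphism onto the increasing flip lattice on $\AcyclicOrientations(n)$. By the commutativity just proved, $\surjectionPermZono$ factors through the surjection $\surjectionPermBrick$, and I would conclude with the elementary observation that if $f \colon L \twoheadrightarrow M$ is a surjective lattice homomorphism, $g \colon L \to N$ is a lattice homomorphism, and $g = h \circ f$ for some map $h \colon M \to N$, then $h$ is a surjective lattice homomorphism: given $m, m' \in M$, pick $f$-preimages $\ell, \ell' \in L$; then $h(m \meet m') = h\big(f(\ell) \meet f(\ell')\big) = h\big(f(\ell \meet \ell')\big) = g(\ell \meet \ell') = g(\ell) \meet g(\ell') = h(m) \meet h(m')$, and dually for $\join$, while surjectivity of $h$ is inherited from that of $g$. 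Applying this with $f = \surjectionPermBrick$, $g = \surjectionPermZono$ and $h = \surjectionBrickZono$ shows $\surjectionBrickZono$ is a surjective lattice homomorphism and completes the diagram. (Equivalently, commutativity together with the fiber descriptions of Proposition~\ref{prop:ktwistCongruence} and its $k$-recoil analogue shows that $\equiv^k$ refines $\approx^k$, so the increasing flip lattice on $\AcyclicOrientations(n)$ is a lattice quotient of that on $\AcyclicTwists(n)$ with quotient map $\surjectionBrickZono$.)

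I do not anticipate a serious obstacle. The only point needing slight care is, in the commutativity step, that comparability of two pipes in the contact graph is recorded faithfully by the relative order of the two values in every linear extension of that graph — but this is precisely what Lemma~\ref{lem:canopy} (forced comparability when $|i-j| \le k$) and Proposition~\ref{prop:ktwistInsertion} (the fibers of $\surjectionPermBrick$ are sets of linear extensions) supply, so the verification reduces to unwinding Definition~\ref{def:canopy} and the definition of the $k$-recoil scheme.
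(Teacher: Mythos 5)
Your commutativity argument is correct and is essentially the paper's argument, but cleaner: the paper asserts directly that since the $i$th pipe is inserted after the $j$th one it is a source, so $i \contactLess{\twist} j$, silently relying on the comparability that Lemma~\ref{lem:canopy} provides; you make this reliance explicit by combining Lemma~\ref{lem:canopy} (forced comparability for $|i-j|\le k$) with Proposition~\ref{prop:ktwistInsertion} ($\tau$ is a linear extension of $\contactLess{\twist}$), which is the tighter version of the same reasoning. (Incidentally, the paper's proof has the typo $|i-j|<k$ where $|i-j|\le k$ is intended, which you correctly avoid.) The paper's own proof stops at commutativity and leaves the ``lattice homomorphism'' claim implicit in Theorem~\ref{theo:ktwistLatticeCongruence}, Proposition~\ref{prop:latticeQuotient}, and the $k$-recoil analogue; your closing paragraph spells this out with the standard factorization lemma for surjective lattice maps, which is a welcome completeness that the paper omits but buys nothing structurally different. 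In short: same route, with the gaps filled in.
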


\begin{proof}
Consider a permutation~$\tau$ and let~$i,j \in [n]$ with~$|i-j| < k$ such that~$\tau^{-1}(i) < \tau^{-1}(j)$. By definition, there is an arc~$i \to j$ in~$\surjectionPermZono(\tau)$. Moreover, the $i$th pipe is inserted after the $j$th pipe in~$\surjectionPermBrick(\tau)$, so that~$i \contactLess{\surjectionPermBrick(\tau)} j$ and there is also an arc~$i \to j$ in~$\surjectionBrickZono \circ \surjectionPermBrick(\tau)$.
\end{proof}

The fibers of these maps for~$k \in \{1, 2\}$ and~$n = 4$ appear separately in Figures~\ref{fig:fibersTwistCongruence} and~\ref{fig:fibersRecoilCongruence}, and together in \fref{fig:latticeHomomorphisms}. See also Section~\ref{subsec:normalFans} for a geometric interpretation of Proposition~\ref{prop:latticeHomomorphisms}.

\begin{figure}[t]
	\centerline{\includegraphics[width=\textwidth]{latticeHomomorphisms}}
	\caption{The fibers of the maps~$\surjectionPermBrick$ ($k$-twist congruence classes, red) and~$\surjectionPermZono$ ($k$-recoils congruence classes, green) on the weak order on~$\fS_4$ for~$k = 1$ (left) and~$k = 2$ (right). See also Figures~\ref{fig:fibersTwistCongruence} and~\ref{fig:fibersRecoilCongruence} where these fibers appear separately.}
	\label{fig:latticeHomomorphisms}
\end{figure}

\begin{remark}[Combinatorial inclusions]
\label{rem:combinatorialInclusions}
When $k > \ell$, the $k$-twist congruence~$\equiv^k$ refines the $\ell$-twist congruence~$\equiv^\ell$ (meaning that~$\tau \equiv^k \tau'$ implies~$\tau \equiv^\ell \tau'$) and the $k$-recoil congruence~$\approx^k$ refines the $\ell$-recoil congruence~$\approx^\ell$. We can thus define surjective restriction maps~${\restrictionBrick : \AcyclicTwists[k](n) \to \AcyclicTwists[\ell](n)}$ and~${\restrictionZono : \AcyclicOrientations[k](n) \to \AcyclicOrientations[\ell](n)}$ by:
\begin{itemize}
\item for an acyclic $k$-twist~$\twist$, the $\ell$-twist~$\restrictionBrick(\twist)$ is obtained by insertion of any linear extension of~$\twist\contact$ (it is independent of the choice of this linear extension),
\item for an acyclic orientation~$\theta \in \AcyclicOrientations[k](n)$, the sign vector~$\restrictionZono(\theta) \in \AcyclicOrientations[\ell](n)$ is obtained by restriction of~$\theta$ to the edges of~$G^\ell(n)$.
\end{itemize}
We therefore obtain the following commutative diagram of lattice homomorphisms:
\[
\begin{tikzpicture}
  \matrix (m) [matrix of math nodes,row sep=3em,column sep=3em,minimum width=2em]
  {
	\AcyclicTwists[k](n) 		& 		 & \AcyclicTwists[\ell](n) \\
					 			& \fS_n  & 	\\
	\AcyclicOrientations[k](n) 	&		 & \AcyclicOrientations[\ell](n) \\
  };
  \path[->>]
    (m-2-2) edge node [below left] {$\surjectionPermBrick[k]\!\!\!\!$} (m-1-1)
		    edge node [below right] {$\!\!\!\!\surjectionPermBrick[\ell]$} (m-1-3)
            edge node [above left] {$\surjectionPermZono[k]\!\!\!\!$} (m-3-1)
            edge node [above right] {$\!\!\!\!\surjectionPermZono[\ell]$} (m-3-3)
    (m-1-1) edge node [left] {$\surjectionBrickZono[k]$} (m-3-1)
    (m-1-3) edge node [right] {$\surjectionBrickZono[\ell]$} (m-3-3)
    (m-1-1) edge node [above] {$\restrictionBrick$} (m-1-3)
    (m-3-1) edge node [below] {$\restrictionZono$} (m-3-3);
\end{tikzpicture}
\]
\end{remark}

\section{Geometry of acyclic twists}
\label{sec:geometry}

This section is devoted to the polyhedral geometry of permutations of~$\fS_n$, acyclic twists of~$\AcyclicTwists(n)$, and acyclic orientations of~$\AcyclicOrientations(n)$. It is mainly based on properties of brick polytopes of sorting networks, defined and studied by V.~Pilaud and F.~Santos in~\cite{PilaudSantos-brickPolytope}. This construction has been further extended to root-independent subword complexes on any finite Coxeter group by V.~Pilaud and C.~Stump in~\cite{PilaudStump-brickPolytope}. However, the context of~\cite{PilaudStump-brickPolytope} does not apply here since the present paper deals with a very specific family of sorting networks which are not root-independent. We note that this family was already discussed in~\cite[Section~5]{PilaudSantos-brickPolytope}.

To keep this section short, we skip all proofs of its statements as they follow directly from~\cite{PilaudSantos-brickPolytope}. This section should be seen as a brief geometric motivation for the combinatorial and algebraic construction of this paper. The reader familiar with the geometry of the brick polytope is invited to proceed directly with Section~\ref{sec:algebra}.

\begin{figure}[p]
	\centerline{\includegraphics[scale=.68]{PermBrickZono1}} \medskip
	\centerline{\includegraphics[scale=.68]{PermBrickZono2}} \medskip
	\centerline{\includegraphics[scale=.68]{PermBrickZono3}}
	\caption{The permutahedron~$\Perm[k][4]$ (left), the brick polytope~$\Brick[k][4]$ (middle) and the zonotope~$\Zono[k][4]$ (right) for $k = 1$ (top), $k = 2$ (middle) and~$k = 3$ (bottom). For readability, we represent orientations of~$\Gkn$ by pyramids of signs.}
	\label{fig:PermBrickZono}
\end{figure}

\subsection{Permutahedra, brick polytopes, and zonotopes}
\label{subsec:PermBrickZono}

We first recall the definition of three families of polytopes, which are illustrated in \fref{fig:PermBrickZono}. We refer to~\cite[Lectures~0 to~2]{Ziegler} for background on polytopes. We denote by~$(\b{e}_i)_{i \in [n]}$ the canonical basis of~$\R^n$ and let~$\one \eqdef \sum_{i \in [n]} \b{e}_i$.

\para{Permutahedra}
The permutahedron is a classical polytope~\cite[Lecture~0]{Ziegler} whose geometric and combinatorial properties reflect that of the symmetric group~$\fS_n$.

\begin{definition}
The \defn{permutahedron}~$\Perm[][n]$ is the $(n-1)$-dimensional polytope 
\[
\Perm[][n] \; \eqdef \; \conv\set{\b{x}(\tau)}{\tau \in \fS_n} \; = \; \Hyp([n]) \cap \!\!\!\! \bigcap_{\varnothing \ne I \subsetneq [n]} \!\!\!\! \HS(I) \; = \; \one + \!\!\!\! \sum_{1 \le i < j \le n} \!\!\!\! [\b{e}_i, \b{e}_j],
\]
defined equivalently as
\begin{itemize}
\item the convex hull of the points~$\b{x}(\tau) \eqdef [\tau^{-1}(i)]_{i \in [n]} \in \R^n$ for all permutations~$\tau \in \fS_n$,
\item the intersection of the hyperplane~$\Hyp([n]) \eqdef \bigset{\b{x} \in \R^n}{\sum_{i \in [n]} x_i = \binom{n+1}{2}}$ with the half-spaces $\HS(I) \eqdef \bigset{\b{x} \in \R^n}{\sum_{i \in I} x_i \ge \binom{|I|+1}{2}}$ for all proper non-empty subset~${\varnothing \ne I \subsetneq [n]}$,
\item the Minkowski sum of the point~$\one$ with the segments~$[\b{e}_i, \b{e}_j]$ for~$1 \le i < j \le n$.
\end{itemize}
\end{definition}

We consider a dilated and translated copy of the permutahedron~$\Perm[][n]$, which will fit better the other two families of polytopes defined later (see Remark~\ref{rem:geometricInclusions} for a precise statement). Namely, we set
\[
\Perm \; \eqdef \; k\,\Perm[][n] - \frac{k(n+1)}{2} \, \one.
\]
Observe that~$\Perm$ now lies in the hyperplane~$\HH \eqdef \bigset{\b{x} \in \R^n}{\sum_{i \in [n]} x_i = 0}$.

\para{Brick polytopes}
To define the brick polytope, we essentially follow~\cite{Pilaud-these, PilaudSantos-brickPolytope} except that we apply again a translation in direction~$\one$ to obtain a polytope in the hyperplane~$\HH$.

\begin{definition}
We call \defn{bricks} the squares~$[i,i+1] \times [j,j+1]$ of the triangular shape. The \defn{brick area} of a pipe~$\pipe$ is the number of bricks located below~$\pipe$ but inside the axis-parallel rectangle defined by~the two endpoints of~$\pipe$. The \defn{brick vector} of a $k$-twist~$\twist$ is the vector~$\b{x}(\twist) \in \R^n$ whose $i$th coordinate is the brick area of the $i$th pipe of~$\twist$ minus~$\frac{k(n+k)}{2}$. The \defn{brick polytope}~$\Brick$ is the polytope defined as the convex hull of the brick vectors of all $(k,n)$-twists.
\end{definition}

As for the permutahedron described above, we know three descriptions of the brick polytopes: its vertex description, its hyperplane description, and a Minkowski sum description. These properties are proved in~\cite{PilaudSantos-brickPolytope} (modulo our translation in the direction~$\one$).

\begin{proposition}[\cite{PilaudSantos-brickPolytope}]
The brick polytope~$\Brick$ has the following properties for any~${n,k \in \N}$.
\begin{enumerate}[(i)]
\item It lies in the hyperplane~$\HH \eqdef \bigset{\b{x} \in \R^n}{\sum_{i \in [n]} x_i = 0}$ and has dimension~$n-1$.
\item The vertices of~$\Brick$ are precisely the brick vectors of the acyclic $(k,n)$-twists.
\item The normal vectors of the facets of~$\Brick$ are given by the \defn{proper $k$-connected $\{0,1\}$-sequences} of size~$n$, \ie the sequences of~$\{0,1\}^n$ distinct from~$0^n$ and~$1^n$ and which do not contain a subsequence $10^\ell1$ for~$\ell \ge k$. The number of facets of~$\Brick$ is therefore the coefficient of~$t^n$ in
\[
\frac{t^2(2-t^k)}{(1-2t+t^{k+1})(1-t)}.
\]
\item For a brick~$b$, let~$\b{x}_{b}(\twist)$ denote the characteristic vector of the pipes of a $(k,n)$-twist~$\twist$ whose brick area contain~$b$, and define~$\SummandBrick$ to be the convex hull of the vectors~$\b{x}_{b}(\twist)$ for all~$(k,n)$-twists~$\twist$. Then, up to the translation of vector~$\frac{k(n+k)}{2} \one$, the brick polytope $\Brick$ is also the Minkowski sum of the polytopes~$\SummandBrick$ over all bricks~$b$.
\end{enumerate}
\end{proposition}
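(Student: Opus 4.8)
The plan is to derive all four items from the brick polytope theory of V.~Pilaud and F.~Santos~\cite{PilaudSantos-brickPolytope} applied to the $n$-level trapezoidal network, \ie to the word $\Q \eqdef (s_1\cdots s_{n-1})^{k+1}(s_1\cdots s_{n-2})\cdots(s_1s_2)s_1$ of Remark~\ref{rem:subwordComplexes}, whose facets of the subword complex $\subwordComplex(\Q, w_\circ)$ are exactly the $(k,n)$-twists. If $\mathcal{B}$ denotes the brick polytope attached to this network in~\cite{PilaudSantos-brickPolytope}, then by construction our polytope $\Brick$ is a translate of $\mathcal{B}$, the translation vector being $\frac{k(n+k)}{2}\,\one$. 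Since translation preserves the face lattice, the set of vertices, and the facet normal directions, the whole argument reduces to quoting the statements of~\cite{PilaudSantos-brickPolytope} and carrying this single translation vector through them; I treat each item in turn.

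For~(i), \cite{PilaudSantos-brickPolytope} shows that the brick polytope of a reduced network on $n$ levels has dimension $n-1$, so only its affine hull has to be pinned down. Here I would use the constancy fact underlying~(iv): for every brick $b$ of the triangular shape, the number of pipes of a $(k,n)$-twist whose brick area contains $b$ is independent of the twist. Summing over all bricks shows that $\sum_{i\in[n]}\b{x}(\twist)_i$ is the same for every $\twist\in\Twists(n)$, and a direct count of the bricks lying below the pipes of one fixed twist evaluates this constant to $0$ after the shift by $\frac{k(n+k)}{2}\,\one$; hence every brick vector lies in $\HH = \set{\b{x}\in\R^n}{\sum_i x_i = 0}$, which together with the dimension statement gives~(i). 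For~(ii), \cite{PilaudSantos-brickPolytope} characterizes the vertices of the brick polytope of a network as precisely the brick vectors of the facets with pointed root configuration, which for this network are exactly the acyclic $(k,n)$-twists by Definition~\ref{def:contactGraph} and Remark~\ref{rem:subwordComplexes}; translation does not change which points are vertices, giving~(ii). For~(iv), \cite{PilaudSantos-brickPolytope} proves that the brick polytope of a network is the Minkowski sum over the bricks $b$ of the polytopes $\conv\set{\b{x}_b(\twist)}{\twist\in\Twists(n)} = \SummandBrick$; since $\sum_b \b{x}_b(\twist) = \b{x}(\twist) + \frac{k(n+k)}{2}\,\one$ for every twist, this is exactly the asserted identity $\Brick + \frac{k(n+k)}{2}\,\one = \sum_b \SummandBrick$.

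Item~(iii) is the one needing genuine (if still routine) work beyond a bare citation, and where I expect the main obstacle. Starting from the general facet description of brick polytopes in~\cite{PilaudSantos-brickPolytope} — facet normals being the characteristic vectors of the upper sets of pipes that remain simultaneously realizable after every elbow flip — one must specialize it to the trapezoidal network. The point to establish is that a vector $\b{v}\in\{0,1\}^n$ distinct from $0^n$ and $1^n$ fails to support a facet of $\Brick$ precisely when $\b{v}$ contains a factor $1\,0^\ell\,1$ with $\ell\ge k$: the bound $\ell\ge k$ comes from the fact that each pipe has exactly $k$ horizontal steps (Lemma~\ref{lem:pipeProperties}\,\eqref{item:numberElbows}), and the exclusion of $0^n$ and $1^n$ is just~(i). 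Once facet normals are identified with the proper $k$-connected $\{0,1\}$-sequences, counting them is a transfer-matrix computation: the generating function of binary words whose internal runs of $0$'s have length $<k$ is rational with denominator $(1-2t+t^{k+1})(1-t)$, and removing the two forbidden constant words produces the numerator correction $t^2(2-t^k)$, yielding the stated series. The genuine difficulty is thus the reconciliation of \cite{PilaudSantos-brickPolytope}'s abstract facet description with this explicit $k$-connectedness condition; everything else is bookkeeping around the translation vector $\frac{k(n+k)}{2}\,\one$.
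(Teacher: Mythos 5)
Your proposal takes the same route as the paper, which explicitly skips proofs in this section because they follow directly from~\cite{PilaudSantos-brickPolytope} modulo the translation by~$\frac{k(n+k)}{2}\,\one$. The extra detail you supply — the constancy argument placing~$\Brick$ in the hyperplane~$\HH$, and the sketch reconciling the abstract facet description with proper $k$-connected $\{0,1\}$-sequences via Lemma~\ref{lem:pipeProperties} — is consistent with and goes somewhat beyond what the paper actually records.
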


\begin{example}[J.-L.~Loday's associahedron]
When~$k = 1$, the brick polytope~$\Brick[1]$ coincides (up to translation) with J.-L.~Loday's associahedron~\cite{Loday}. See \fref{fig:PermBrickZono}\,(top center).
\end{example}

\para{Zonotopes}
Zonotopes are particularly important polytopes which are constructed equivalently as projections of cubes, or as Minkowski sums of segments. The combinatorics of a zonotope is completely determined by the matroid of the vector configuration defined by these summands. We refer to~\cite[Lecture~7]{Ziegler} for a presentation of these polytopes and their relations to oriented matroids. Notable examples are graphical zonotopes, defined as follows.

\begin{definition}
The \defn{graphical zonotope}~$\Zono[][\graphG]$ of a graph~$\graphG$ is the Minkowski sum of the segments~$[\b{e}_i,\b{e}_j]$ for all edges~$\{i,j\}$ of~$\graphG$.
\end{definition}

The following classical statement gives the vertex and facet descriptions of graphical zonotopes.

\begin{proposition}
The graphical zonotope~$\Zono[][\graphG]$ has the following properties for any graph~$\graphG$.
\begin{enumerate}[(i)]
\item The dimension of~$\Zono[][\graphG]$ is the number of edges of a maximal cycle-free subgraph of~$\graphG$.
\item The vertices of~$\Zono[][\graphG]$ correspond to the acyclic orientations of~$\graphG$. The $i$th coordinate of the vertex~$\b{x}(\orientation)$ of~$\Zono[][\graphG]$ corresponding to an acyclic orientation~$\orientation$ of~$\graphG$ is the indegree of vertex~$i$ in~$\orientation$.
\item The facets of~$\Zono[][\graphG]$ correspond to minimal cuts of~$\graphG$.
\end{enumerate}
\end{proposition}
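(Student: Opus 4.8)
The plan is to deduce the three assertions from the general correspondence between a zonotope $\sum_e [\b{0}, v_e]$ and the central hyperplane arrangement $\set{\b{x}}{\dotprod{\b{x}}{v_e} = 0}$ it is built on (see \cite[Lecture~7]{Ziegler}), specialised to the \emph{graphic arrangement} $\set{\b{x} \in \R^n}{x_i = x_j}$ taken over the edges $\{i,j\}$ of~$\graphG$. Writing $\Zono[][\graphG] = \b{u} + \sum_{\{i,j\} \in \graphG} [\b{0}, \b{e}_j - \b{e}_i]$ for the appropriate translation vector~$\b{u}$, the first step is~(i): the dimension of a Minkowski sum of segments through the origin is the rank of the family of segment directions, here $\set{\b{e}_j - \b{e}_i}{\{i,j\} \in \graphG}$, which is precisely the rank of the graphic matroid of~$\graphG$, namely $|V(\graphG)|$ minus the number of connected components of~$\graphG$, or equivalently the number of edges in any spanning forest --- a maximal cycle-free subgraph.

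For~(ii) I would argue by linear programming. Fix $c \in \R^n$ generic, meaning $c_i \ne c_j$ for every edge $\{i,j\}$ of~$\graphG$; maximising $\dotprod{c}{\cdot}$ over each segment $[\b{e}_i, \b{e}_j]$ forces the endpoint with larger $c$-coordinate, so $\dotprod{c}{\cdot}$ attains its maximum over $\Zono[][\graphG]$ at a single point whose $m$th coordinate is the number of edges $\{m,j\}$ of~$\graphG$ with $c_m > c_j$. Orienting every edge towards its larger-$c$ endpoint yields an acyclic orientation~$\orientation$ (it refines the linear order of the~$c_i$), and this maximiser is exactly the indegree vector of~$\orientation$. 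Conversely every acyclic orientation is realised this way by any~$c$ compatible with a topological order, and every vertex of $\Zono[][\graphG]$ maximises some generic~$c$; hence the vertices are precisely the indegree vectors of acyclic orientations. The one point that needs a genuine argument is that the indegree vector determines the acyclic orientation: if acyclic orientations $\orientation \ne \orientation'$ shared the same indegree vector, then on the nonempty set~$F$ of edges where they disagree, $\orientation'$ restricted to~$F$ is the reversal of $\orientation$ restricted to~$F$, so every vertex has equal in- and out-degree in $\orientation$ restricted to~$F$; a nonempty balanced digraph contains a directed cycle, contradicting the acyclicity of~$\orientation$.

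For~(iii) I would analyse the face $\Zono[][\graphG]^c$ of $\Zono[][\graphG]$ maximising an arbitrary direction~$c$: only the segments $[\b{e}_i, \b{e}_j]$ with $c_i = c_j$ remain free, so $\Zono[][\graphG]^c$ is a translate of the graphical zonotope of the subgraph $\graphG_c$ of~$\graphG$ on the edges with $c_i = c_j$; by~(i) its dimension is $|V(\graphG)|$ minus the number of components of~$\graphG_c$. Comparing with $\dim \Zono[][\graphG]$, the face $\Zono[][\graphG]^c$ is a facet exactly when $\graphG_c$ has one more connected component than~$\graphG$, that is, when $\set{\{i,j\}}{c_i \ne c_j}$ splits a single component of~$\graphG$ into two connected pieces --- which is precisely the statement that it is a minimal cut of~$\graphG$. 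For the converse, given a minimal cut $\delta(A)$, the indicator vector of~$A$ is a direction whose maximal face is the corresponding facet, and distinct minimal cuts give distinct facets because $\Zono[][\graphG]^c$ remembers the family of segment directions that stay free, hence the complementary cut.

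I expect most of this to be routine bookkeeping on top of the standard zonotope/arrangement dictionary. The two places where I would be careful are the injectivity in~(ii), for which the balanced-digraph observation is the clean argument, and the dimension count in~(iii) matching facets with bonds; once those are in place the rest goes through directly.
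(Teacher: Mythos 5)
The paper states this result as ``classical'' and provides no proof of its own, so there is no internal argument to compare against; your proof is the standard one via the normal-fan and linear-programming dictionary for zonotopes, and it is correct. You rightly isolate the one non-routine step, the injectivity of the indegree map in~(ii), and the balanced-digraph argument (equal in- and out-degree on the disagreement set, hence a directed cycle) is exactly the clean way to settle it. The only thing worth flagging is a convention in~(iii): a facet determines the unordered edge set $\delta(A)$ from its degenerate summands, while your inverse map --- maximise the indicator of~$A$ --- depends on the side~$A$ rather than on $\delta(A)$ alone, and indeed both indicator directions give facets (opposite ones). So the clean bijection is between facets and \emph{ordered} minimal cuts, equivalently a $2$-to-$1$ map onto unordered bonds; for $K_n$ this is the $2^n-2$ facets of the permutahedron against the $2^{n-1}-1$ bonds. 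The paper's own wording is equally loose on this point, so this is a matter of phrasing rather than a gap in your argument.
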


For example, the permutahedron~$\Perm[][n]$ is the graphical zonotope of the complete graph, its vertices correspond to permutations of~$[n]$ (acyclic tournaments), and its facets correspond to proper subsets of~$[n]$ (minimal cuts). Here, we focus on the zonotope of~$\Gkn$ whose vertices correspond to the acyclic orientations in~$\AcyclicOrientations(n)$ and whose facets correspond to minimal cuts of~$\Gkn$. As for the previous polytopes, we perturb this zonotope to fit the other two polytopes better (see Remark~\ref{rem:geometricInclusions} for a precise statement), and thus define
\[
\Zono \; \eqdef \; \sum_{1 \le i < j \le n} \lambda(i,j,k,n) \cdot [\b{e}_i, \b{e}_j] \; - \; \frac{(n-1)(n+3k-2)}{6} \, \one,
\]
where
\[
\lambda(i,j,k,n) \eqdef
\begin{cases}
n+k-2|i-j| & \text{if } |i-j| < k, \\
\min(i, n+1-j) \big( n + k - 1 - \min(i, n+1-j) \big) & \text{if } |i-j| = k, \\
0 & \text{if } |i-j| > k.
\end{cases}
\]
Note that this perturbation is only cosmetic and preserves the combinatorics. Indeed, observe that for all~$1 \le i < j \le n$, we have~$\lambda_{i,j,k,n} \ne 0$ if and only if~$|i-j| \le k$. Therefore, the zonotopes~$\Zono$ and~$\Zono[][\Gkn]$ have the same normal fan (see Section~\ref{subsec:normalFans}) and thus the same face lattice. Note that the translation ensures that~$\Zono$ lies in the hyperplane~${\HH \eqdef \bigset{\b{x} \in \R^n}{\sum_{i \in [n]} x_i = 0}}$.

\begin{example}[Cube]
When~$k = 1$, the zonotope~$\Zono[1]$ coincides (up to translation and scaling) with the parallelotope generated by the simple roots~$\b{e}_{i+1}-\b{e}_i$. It has the same combinatorics as the $(n-1)$-dimensional cube. See \fref{fig:PermBrickZono}\,(top right).
\end{example}

\begin{remark}[Geometric inclusions]
\label{rem:geometricInclusions}
We have chosen our normalizations (dilations and translations) so that the polytopes~$\Perm$, $\Brick$ and~$\Zono$ all leave in the hyperplane $\HH \eqdef \bigset{\b{x} \in \R^n}{\sum_{i \in [n]} x_i = 0}$ and fulfill the following inclusions:
\[
\begin{tikzpicture}
  \matrix (m) [matrix of math nodes,row sep=2em,column sep=1em,minimum width=2em]
  {
	\frac{1}{k}\Brick[k] 	& 		  	& \frac{1}{\ell}\Brick[\ell]	\\
							& \Perm[1] 	& 								\\
	\frac{1}{k}\Zono[k]		&		  	& \frac{1}{\ell}\Zono[\ell] 	\\
  };
  \path[->>]
    (m-2-2) edge[draw=none] node [sloped, allow upside down] {$\subseteq$} (m-1-1)
		    edge[draw=none] node [sloped, allow upside down] {$\subseteq$} (m-1-3)
            edge[draw=none] node [sloped, allow upside down] {$\subseteq$} (m-3-1)
            edge[draw=none] node [sloped, allow upside down] {$\subseteq$} (m-3-3)
    (m-1-1) edge[draw=none] node [sloped, allow upside down] {$\subseteq$} (m-3-1)
    (m-1-3) edge[draw=none] node [sloped, allow upside down] {$\subseteq$} (m-3-3)
    (m-1-1) edge[draw=none] node [sloped, allow upside down] {$\subseteq$} (m-1-3)
    (m-3-1) edge[draw=none] node [sloped, allow upside down] {$\subseteq$} (m-3-3);
\end{tikzpicture}
\]
These inclusions are illustrated in Figures~\ref{fig:PermBrickZono}, \ref{fig:PermBrickZonoInclusions} and~\ref{fig:geometricInclusions}. Compare to Remark~\ref{rem:combinatorialInclusions}.

\enlargethispage{.3cm}
Observe moreover that
\[
\Zono = \Zono[n-1] + (k+1-n) \, \Perm[1]
\]
for all~$k \ge n-1$. We therefore obtain that the rescaled polytopes $\frac{1}{k}\Brick$ and~$\frac{1}{k}\Zono$ both converge to~$\frac{1}{k}\Perm = \Perm[1]$ when $k$ tend to~$\infty$ (see \fref{fig:geometricInclusions}).

\begin{figure}[t]
	\centerline{\includegraphics[scale=.68]{PermBrickZonoInclusions}}
	\caption{The inclusions~$\Perm \, \subseteq \, \Brick \, \subseteq \, \Zono$ for~$k = 1$ (left), $k = 2$ (middle) and~$k = 3$ (right). The permutahedron~$\Perm$ is in blue, the brick polytope~$\Brick$ in red and the zonotope~$\Zono$ in green.}
	\label{fig:PermBrickZonoInclusions}
\end{figure}

\begin{figure}[h]
	\centerline{\includegraphics[scale=.68]{geometricInclusions}}
	\caption{The inclusions of the brick polytopes~$\frac{1}{k}\Brick[k][4]$ (left) and of the zonotopes~$\frac{1}{k}\Zono[k][4]$ (right) for~$k = 1$ (red), $k = 2$ (orange) and~$k = 3$ (green). Both tend to the classical permutahedron~$\Perm[1][4]$ (blue) when~$k$ tends to~$\infty$.}
	\label{fig:geometricInclusions}
\end{figure}
\end{remark}

\subsection{The geometry of the surjections~$\surjectionPermBrick$, $\surjectionBrickZono$, and~$\surjectionPermZono$}
\label{subsec:normalFans}

Besides Remark~\ref{rem:geometricInclusions}, the main geometric connection between the three polytopes~$\Perm$, $\Brick$ and~$\Zono$ is given by their normal fans. Remember that a \defn{polyhedral fan} is a collection of polyhedral cones of~$\R^n$ closed under faces and which intersect pairwise along faces, see \eg~\cite[Lecture~7]{Ziegler}. The (outer) \defn{normal cone} of a face~$F$ of a polytope~$P$ is the cone generated by the outer normal vectors of the facets of~$P$ containing~$F$. Finally, the (outer) \defn{normal fan} of~$P$ is the collection of the (outer) normal cones of all its faces.

The \defn{incidence cone}~$\Cone(\less)$ and the \defn{braid cone}~$\Cone\polar(\less)$ of a poset~$\less$ are the polyhedral cones defined~by
\[
\Cone(\less) \eqdef \cone\set{\b{e}_i-\b{e}_j}{\text{for all } i \less j}
\quad\text{and}\quad
\Cone\polar(\less) \eqdef \set{\b{x} \in \HH}{x_i \le x_j \text{ for all } i \less j}.
\]
These two cones lie in the space~$\HH$ and are polar to each other. For a permutation~${\tau \in \fS_n}$ (resp.~a twist~$\twist \in \AcyclicTwists(n)$, resp.~an orientation~$\orientation \in \AcyclicOrientations(n)$), we slightly abuse notation to write~$\Cone(\tau)$ (resp.~$\Cone(\twist)$, resp.~$\Cone(\orientation)$) for the incidence cone of the chain~$\tau_1 \less \cdots \less \tau_n$ (resp.~of the transitive closure~$\contactLess{}$ of the contact graph~$\twist\contact$, resp.~of the transitive closure~$\less$ of~$\orientation$). We define similarly the braid cone~$\Cone\polar(\tau)$ (resp.~$\Cone\polar(\twist)$, resp.~$\Cone\polar(\orientation)$). These cones (together with all their faces) form the normal fans of the polytopes of Section~\ref{subsec:PermBrickZono}.

\begin{proposition}
The collections of cones
\[
\bigset{\Cone\polar(\tau)}{\tau \in \fS_n}, \qquad \bigset{\Cone\polar(\twist)}{\twist \in \AcyclicTwists(n)} \qquad\text{and}\qquad \bigset{\Cone\polar(\orientation)}{\orientation \in \AcyclicOrientations(n)},
\]
together with all their faces, are the normal fans of the permutahedron~$\Perm$, the brick polytope~$\Brick$ and the zonotope~$\Zono$ respectively.
\end{proposition}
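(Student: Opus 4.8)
The plan is to identify each of the three collections with the corresponding normal fan one vertex at a time. This suffices because the normal fan of an $(n-1)$-dimensional polytope $P \subseteq \HH$ is determined by its vertex cones: the (outer) normal cone of a face of $P$ is the intersection of the normal cones of the vertices it contains, hence a common face of those maximal cones, so once the vertex cones are known and seen to be in bijection with the vertices, the whole normal fan is recovered by closing under faces, exactly as in the statement. Recall that the normal cone of a vertex $\b{v}$ of $P$ equals $\set{\b{c} \in \HH}{\dotprod{\b{c}}{\b{v}} \ge \dotprod{\b{c}}{\b{w}} \text{ for every vertex } \b{w} \text{ of } P}$, the polar inside $\HH$ of the incidence cone $\cone\set{\b{w}-\b{v}}{\b{w} \text{ vertex of } P}$; since $\dotprod{\b{e}_i-\b{e}_j}{\b{x}} = x_i - x_j$, the cone $\Cone(\less)$ and the braid cone $\Cone\polar(\less)$ of a poset $\less$ are indeed polar to each other in $\HH$. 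So in each case it is enough to show that the incidence cone at the relevant vertex is $\Cone(\less)$ for the relevant poset.

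First I would treat the permutahedron, which is classical~\cite[Lecture~0]{Ziegler}: a linear functional $\b{c}$ attains its maximum over $\conv\set{\b{x}(\tau)}{\tau \in \fS_n}$ at $\b{x}(\sigma) = [\sigma^{-1}(i)]_{i \in [n]}$ precisely when the coordinates of $\b{c}$ are sorted compatibly with those of $\b{x}(\sigma)$, \ie when $c_i \le c_j$ whenever $i$ appears before $j$ in the word $\sigma$, that is whenever $i \less j$ in the chain $\sigma_1 \less \cdots \less \sigma_n$; this describes exactly $\Cone\polar(\sigma)$. Scaling by $k$ and translating by a multiple of $\one$ leaves the normal fan unchanged, so the normal cone of $\b{x}(\tau)$ in $\Perm$ is $\Cone\polar(\tau)$. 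Next I would treat the zonotope: it is classical that the normal fan of a graphical zonotope $\sum_{\{i,j\} \text{ edge}} [\b{e}_i,\b{e}_j]$ is the graphical fan, where a generic $\b{c}$ orients each edge $\{i,j\}$ by setting $i \to j$ when $c_i < c_j$, producing an acyclic orientation $\orientation$, and lies in the normal cone of the vertex $\b{x}(\orientation)$ of indegrees of $\orientation$, which equals $\set{\b{c} \in \HH}{c_i \le c_j \text{ for every arc } i \to j \text{ of } \orientation} = \Cone\polar(\orientation)$. Applying this to $\graphG = \Gkn$ and using the observation already recorded that $\lambda(i,j,k,n) \ne 0$ exactly when $\{i,j\}$ is an edge of $\Gkn$, the rescaled and translated $\Zono$ has the same normal fan as $\Zono[][\Gkn]$, so the normal cone of $\b{x}(\orientation)$ in $\Zono$ is $\Cone\polar(\orientation)$.

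The substantial case, and the one I expect to be the main obstacle, is the brick polytope; there I would invoke~\cite{PilaudSantos-brickPolytope} directly, as announced at the start of this section. The input needed is that for every $(k,n)$-twist $\twist$ the incidence cone $\cone\set{\b{x}(\twist')-\b{x}(\twist)}{\twist' \in \Twists(n)}$ at the brick vector equals $\Cone(\twist) = \cone\set{\b{e}_i - \b{e}_j}{i \contactLess{\twist} j}$ when $\twist$ is acyclic, and spans a line (so $\b{x}(\twist)$ is not a vertex) when $\twist\contact$ has an oriented cycle; polarizing yields that the normal cone of $\b{x}(\twist)$ in $\Brick$ is $\Cone\polar(\twist)$ for acyclic $\twist$, and re-derives the vertex description of $\Brick$ in passing. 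In~\cite{PilaudSantos-brickPolytope} this identification of cones is obtained from the explicit local description of flips — a flip between acyclic twists $\twist$ and $\twist'$ exchanging the elbow and the crossing between pipes $i$ and $j$ gives an edge of $\Brick$ of direction $\pm(\b{e}_i - \b{e}_j)$ — together with the Minkowski sum decomposition $\Brick = \sum_b \SummandBrick$, up to the translation by $\frac{k(n+k)}{2}\,\one$. Granting this, the three normal fans are assembled exactly as above: each collection, being closed under faces, is a complete fan in $\HH$, namely the normal fan of $\Perm$, $\Brick$ and $\Zono$ respectively.
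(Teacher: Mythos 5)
The paper explicitly omits a proof of this proposition (``we skip all proofs of its statements as they follow directly from~\cite{PilaudSantos-brickPolytope}''), and your argument carries out precisely the intended outline: the classical identification of the permutahedron and graphical-zonotope normal fans via vertex normal cones and the polarity $\Cone(\less) \leftrightarrow \Cone\polar(\less)$ inside~$\HH$, combined with the invocation of~\cite{PilaudSantos-brickPolytope} for the brick polytope, where the edge directions of flips and the Minkowski decomposition give the incidence cone $\Cone(\twist)$ at each acyclic vertex. Your proposal is correct and takes essentially the same approach the paper gestures at.
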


Observe moreover that the normal fan of~$\Perm$ is also the collection of chambers of the Coxeter arrangement given by all hyperplanes~$\set{\b{x} \in \HH}{x_i = x_j}$ for all~$i,j \in [n]$. Similarly, the normal fan of~$\Zono$ is also the collection of chambers of the graphical arrangement given by the hyperplanes~$\set{\b{x} \in \HH}{x_i = x_j}$ for all edges~$\{i,j\}$ in~$\Gkn$.

Using these normal fans, one can interpret geometrically the maps~$\surjectionPermBrick$, $\surjectionBrickZono$, and~$\surjectionPermZono$ as follows.

\begin{proposition}
The insertion map~$\surjectionPermBrick : \fS_n \to \AcyclicTwists(n)$, the $k$-canopy~${\surjectionBrickZono : \AcyclicTwists(n) \to \AcyclicOrientations(n)}$ and the $k$-recoil map~${\surjectionPermZono : \fS_n \to \AcyclicOrientations(n)}$ are characterized by
\begin{gather*}
\twist = \surjectionPermBrick(\tau) \iff \Cone(\twist) \subseteq \Cone(\tau) \iff \Cone\polar(\twist) \supseteq \Cone\polar(\tau), \\
\orientation = \surjectionBrickZono(\twist) \iff \Cone(\orientation) \subseteq \Cone(\twist) \iff \Cone\polar(\orientation) \supseteq \Cone\polar(\twist), \\
\orientation = \surjectionPermZono(\tau) \iff \Cone(\orientation) \subseteq \Cone(\tau) \iff \Cone\polar(\orientation) \supseteq \Cone\polar(\tau).
\end{gather*}
\end{proposition}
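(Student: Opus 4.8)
The plan is to reduce everything to two ingredients: polar duality of incidence and braid cones, and the combinatorial descriptions of the fibers of the three maps obtained in Sections~\ref{subsec:twistCorrespondence}--\ref{subsec:canopy}. First I would dispose of the rightmost equivalence on each of the three displayed lines, which is purely formal: for any poset~$\less$ on~$[n]$, one checks straight from the definitions that $\Cone\polar(\less)$ is the polar of $\Cone(\less)$ within the hyperplane~$\HH$, and since $\Cone(\less)$ is a finitely generated (hence closed) convex cone, also $\Cone(\less) = (\Cone\polar(\less))\polar$. As polarity is an inclusion-reversing involution on closed convex cones of~$\HH$, this gives $\Cone(\less) \subseteq \Cone(\less') \iff \Cone\polar(\less') \subseteq \Cone\polar(\less)$ for all posets $\less, \less'$. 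It then remains to prove, for each of the three maps, the equivalence of its defining condition with the middle inclusion of incidence cones; equivalently, by the above, with the reverse inclusion of braid cones.

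For $\surjectionPermBrick$, I would invoke Proposition~\ref{prop:ktwistInsertion}, which states that $\surjectionPermBrick^{-1}(\twist) = \linearExtensions(\twist\contact)$. If $\surjectionPermBrick(\tau) = \twist$, then $\tau$ is a linear extension of $\contactLess{\twist}$, so every generator $\b{e}_i - \b{e}_j$ of $\Cone(\twist)$ (with $i \contactLess{\twist} j$) is a generator of $\Cone(\tau)$, whence $\Cone(\twist) \subseteq \Cone(\tau)$. For the converse, assume $\Cone\polar(\tau) \subseteq \Cone\polar(\twist)$. The braid cone $\Cone\polar(\tau) = \set{\b{x} \in \HH}{x_{\tau_1} \le \dots \le x_{\tau_n}}$ is full-dimensional in~$\HH$, so it contains a point~$\b{x}$ with pairwise distinct coordinates $x_{\tau_1} < \dots < x_{\tau_n}$. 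Since $\b{x} \in \Cone\polar(\twist)$ and its coordinates are distinct, $i \contactLess{\twist} j$ forces $x_i < x_j$, hence $\tau^{-1}(i) < \tau^{-1}(j)$; thus $\tau \in \linearExtensions(\twist\contact)$ and $\surjectionPermBrick(\tau) = \twist$ by Proposition~\ref{prop:ktwistInsertion}.

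For $\surjectionPermZono$, the same scheme applies: by definition $\surjectionPermZono(\tau)$ has an arc $i \to j$ exactly when $|i-j| \le k$ and $\tau^{-1}(i) < \tau^{-1}(j)$, so these arcs are relations of the chain order of~$\tau$ and the inclusion $\Cone(\orientation) \subseteq \Cone(\tau)$ is immediate; conversely a generic point of the full-dimensional cone $\Cone\polar(\tau)$ lying in $\Cone\polar(\orientation)$ shows that $\orientation$ orients every edge of $\Gkn$ in the same direction as $\surjectionPermZono(\tau)$, hence $\orientation = \surjectionPermZono(\tau)$ since both are orientations of the same graph~$\Gkn$. For $\surjectionBrickZono$, the forward direction again follows since the arcs of $\surjectionBrickZono(\twist)$ are relations of $\contactLess{\twist}$ by Definition~\ref{def:canopy}; for the converse, given $\Cone(\orientation) \subseteq \Cone(\twist)$, I would pick any $\tau \in \linearExtensions(\twist\contact)$, so that $\Cone(\orientation) \subseteq \Cone(\twist) \subseteq \Cone(\tau)$, and the case of $\surjectionPermZono$ just treated yields $\orientation = \surjectionPermZono(\tau) = \surjectionBrickZono(\surjectionPermBrick(\tau)) = \surjectionBrickZono(\twist)$, using the commutation $\surjectionPermZono = \surjectionBrickZono \circ \surjectionPermBrick$ of Proposition~\ref{prop:latticeHomomorphisms}.

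The genuinely delicate point is the ``only if'' direction. One cannot simply compare spanning sets of the incidence cones, because the incidence cone of a twist or of an orientation need not be full-dimensional in~$\HH$; the trick is to pass to the polar braid cones and use that a permutation's braid cone $\Cone\polar(\tau)$ is a full-dimensional chamber of the braid arrangement, so that a single generic interior point already determines~$\tau$. Routing the $\surjectionBrickZono$ case through a linear extension of $\twist\contact$ is what lets one avoid dealing with the possibly lower-dimensional cone $\Cone\polar(\orientation)$ directly. Everything else is a direct unwinding of the definitions of Section~\ref{subsec:PermBrickZono}, which is why the statement can simply be credited to~\cite{PilaudSantos-brickPolytope}.
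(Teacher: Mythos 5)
The paper gives no in-text proof of this proposition: Section~\ref{sec:geometry} is explicitly offered as a proof-free geometric survey whose statements are deferred to~\cite{PilaudSantos-brickPolytope}, so there is no ``paper's own proof'' to compare against. Your argument is correct and self-contained on the basis of what the paper does establish. The three moves are all sound: (a) the outer equivalence of each line is standard polar duality of closed polyhedral cones in~$\HH$, and the paper already records that $\Cone(\less)$ and $\Cone\polar(\less)$ are polar; (b) the forward inclusion of incidence cones follows because the fiber descriptions --- Proposition~\ref{prop:ktwistInsertion} for~$\surjectionPermBrick$, Proposition~\ref{prop:krecoilsFibers} for~$\surjectionPermZono$, Definition~\ref{def:canopy} for~$\surjectionBrickZono$ --- identify the relations of~$\twist$ and~$\orientation$ as sub-relations of the chain of~$\tau$; and (c) the converse correctly exploits that $\Cone\polar(\tau)$ is a full-dimensional braid chamber, so a generic interior point with pairwise-distinct coordinates determines the chain of~$\tau$ and forces~$\tau$ to be a linear extension of the poset in question. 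Your decision to route the $\surjectionBrickZono$ converse through a linear extension of~$\twist\contact$ together with the commutation $\surjectionPermZono = \surjectionBrickZono \circ \surjectionPermBrick$ of Proposition~\ref{prop:latticeHomomorphisms} is exactly the right device to avoid arguing directly with the possibly lower-dimensional cone $\Cone\polar(\orientation)$. No gaps.
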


Finally, the lattices studied in Section~\ref{sec:combinatorics} also appear naturally in the geometry of the polytopes~$\Perm$, $\Brick$ and~$\Zono$. Denote by $U$ the vector
\[
U \eqdef (n,n-1,\dots,2,1) - (1,2,\dots,n-1,n) = \sum_{i \in [n]} (n+1-2i) \, \b{e}_i.
\]

\begin{proposition}
When oriented in the direction~$U$, the $1$-skeleton of the permutahedron~$\Perm$ (resp.~of the brick polytope~$\Brick$, resp.~of the zonotope~$\Zono$) is the Hasse diagram of the weak order on permutations (resp.~of the increasing flip lattice on acyclic $(k,n)$-twists, resp.~of the increasing flip lattice on acyclic orientations of~$\Gkn$).
\end{proposition}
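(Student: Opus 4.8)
The plan is to prove the three assertions in parallel, reading them off the description of the normal fans of~$\Perm$, $\Brick$ and~$\Zono$ recalled above: in each case the vertices of the polytope are indexed by~$\fS_n$, $\AcyclicTwists(n)$, $\AcyclicOrientations(n)$ respectively, and two vertices are joined by an edge exactly when their two (maximal) braid cones~$\Cone\polar(\cdot)$ share a wall. First I would observe that every edge of~$\Perm$, $\Brick$ and~$\Zono$ is parallel to a root~$\b{e}_i - \b{e}_j$: for~$\Perm$ this is classical, for~$\Zono$ it is immediate from its expression as a Minkowski sum of segments~$[\b{e}_i,\b{e}_j]$, and for~$\Brick$ it follows from the fact that its normal fan is coarsened from the braid fan (which is precisely the normal-cone characterization of~$\surjectionPermBrick$ recalled above), hence~$\Brick$ is a deformation of the permutahedron and all its edges are in root directions. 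Since~$\langle U, \b{e}_i - \b{e}_j \rangle = (n+1-2i) - (n+1-2j) = 2(j-i) \ne 0$ whenever~$i \ne j$, the vector~$U$ is generic for all three polytopes, so orienting each $1$-skeleton towards increasing~$\langle U, \cdot \rangle$ is well defined and yields an acyclic orientation.

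The permutahedron is then handled by a direct computation. Its edges are the pairs~$\{\b{x}(\tau), \b{x}(\sigma)\}$ with~$\sigma = \tau s_i$ and~$\tau_i < \tau_{i+1}$, which are exactly the covering pairs of the weak order; and if~$\tau_i = a < b = \tau_{i+1}$, then~$\b{x}(\sigma) - \b{x}(\tau) = \b{e}_a - \b{e}_b$, so~$\langle U, \b{x}(\sigma) - \b{x}(\tau) \rangle = 2(b-a) > 0$. Hence the $U$-orientation agrees, edge by edge and orientation by orientation, with the Hasse diagram of the weak order, which is therefore exactly the $U$-oriented $1$-skeleton of~$\Perm$.

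For the brick polytope I would transport this along the surjection~$\surjectionPermBrick$. Recall from~\cite{PilaudSantos-brickPolytope} that the undirected $1$-skeleton of~$\Brick$ is the elbow-flip graph on acyclic $(k,n)$-twists. The geometric key is that~$\surjectionPermBrick$ induces a surjective graph morphism from the $1$-skeleton of~$\Perm$ onto that of~$\Brick$ which is compatible with the $U$-orientations: an edge of~$\Perm$ dual to a wall~$W$ of the braid fan either collapses, when its two endpoints have the same image, or else maps to the edge of~$\Brick$ dual to the wall of~$\mathcal N(\Brick)$ containing~$W$; and since~$\Cone\polar(\tau) \subseteq \Cone\polar(\surjectionPermBrick(\tau))$ and likewise for~$\sigma$, pairing both difference vectors~$\b{x}(\sigma) - \b{x}(\tau)$ and~$\b{x}(\surjectionPermBrick(\sigma)) - \b{x}(\surjectionPermBrick(\tau))$ with a vector in the interior of~$\Cone\polar(\sigma)$ forces them to be positive multiples of the same root. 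Combining this with the previous paragraph and with the fact, established in the proof of Proposition~\ref{prop:latticeQuotient}, that~$\surjectionPermBrick$ maps the covers of the weak order onto the increasing elbow flips (some covers being contracted to equalities), I would conclude that the $U$-orientation of the $1$-skeleton of~$\Brick$ is the increasing flip graph; as the increasing flip order is its transitive closure and, by Proposition~\ref{prop:latticeQuotient} and the general theory of lattice quotients, the increasing flips are exactly its cover relations, this oriented $1$-skeleton is the Hasse diagram of the increasing flip lattice.

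The zonotope is more elementary: flipping an edge~$\{i,j\}$ of~$\Gkn$ with~$i<j$ from~$i \to j$ to~$j \to i$ changes the vertex of~$\Zono$ by~$\b{e}_i - \b{e}_j$, since the indegrees of~$i$ and~$j$ change by~$+1$ and~$-1$; this vector pairs positively with~$U$, so the $U$-orientation of its $1$-skeleton is precisely the increasing flip graph on~$\AcyclicOrientations(n)$, which is the Hasse diagram of the increasing flip lattice there. (Alternatively one repeats the transport argument with~$\surjectionPermZono$ in place of~$\surjectionPermBrick$.) The step I expect to be the main obstacle is the geometric lemma in the third paragraph, that is, controlling how the coarsening of the braid fan into~$\mathcal N(\Brick)$ acts on the $1$-skeleton and checking that a non-collapsed image edge keeps rather than reverses its root direction; everything else reduces to a one-line computation or to a citation of~\cite{PilaudSantos-brickPolytope} and of Section~\ref{sec:combinatorics}. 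If one wishes to avoid this lemma, the brick polytope case can instead be closed by verifying directly, as in~\cite{PilaudSantos-brickPolytope}, that an increasing elbow flip between the $i$th and $j$th pipes with~$i<j$ strictly increases the brick area of the $i$th pipe and decreases that of the $j$th, so that again~$\langle U, \b{x}(\twist') - \b{x}(\twist) \rangle > 0$.
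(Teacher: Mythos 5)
The paper deliberately omits proofs in Section~\ref{sec:geometry}, referring the reader to~\cite{PilaudSantos-brickPolytope} for all of them, so you are reconstructing an argument the paper itself only cites. Your reconstruction is correct, and the alternative route you sketch at the very end (increasing flips change the brick vector by a positive multiple of~$\b{e}_i-\b{e}_j$ with~$i<j$, so~$\langle U,\cdot\rangle$ strictly increases along every increasing flip) is in fact the one closest to what~\cite{PilaudSantos-brickPolytope} actually proves; the ``transport along~$\surjectionPermBrick$'' argument you put first is a valid but more roundabout variant, and it is worth noting that what makes it work is exactly the generalized-permutahedron property (normal fan of~$\Brick$ coarsens the braid fan) that you correctly invoke. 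Two small points of precision. First, for the zonotope you compute with the graphical zonotope~$\Zono[][\Gkn]$, whereas the paper's~$\Zono$ is the rescaled Minkowski sum~$\sum\lambda(i,j,k,n)[\b{e}_i,\b{e}_j]$ plus a translation; the conclusion is unchanged since~$\lambda(i,j,k,n)>0$ precisely on the edges of~$\Gkn$, so the edge vector becomes~$\lambda(i,j,k,n)(\b{e}_i-\b{e}_j)$ rather than~$\b{e}_i-\b{e}_j$, but you should say ``a positive multiple of~$\b{e}_i-\b{e}_j$'' as you do in the brick case. Second, your final step that the oriented flip graph is not merely acyclic but actually the Hasse diagram (no transitivity shortcuts) is the point one could object to if left implicit; you do handle it, via Proposition~\ref{prop:latticeQuotient} together with the description of covers in a lattice quotient as contracted covers of the ambient lattice, and this is the right justification — a generic linear functional on a polytope does not in general yield a Hasse diagram (a simplex is a counterexample), so this appeal to the quotient-lattice structure is genuinely needed and should not be dropped.
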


\newpage
\section{Algebra of acyclic twists}
\label{sec:algebra}

Motivated by the Hopf algebra on binary trees constructed by J.-L.~Loday and M.~Ronco~\cite{LodayRonco} as a subalgebra of the Hopf algebra on permutations of C.~Malvenuto and C.~Reutenauer (see also~\cite{HivertNovelliThibon-algebraBinarySearchTrees, AguiarSottile}), we define a Hopf algebra with bases indexed by acyclic $k$-twists. We then give combinatorial interpretations of the product and coproduct of this algebra and its dual in terms of $k$-twists. We finally conclude with further algebraic properties of this algebra.

\subsection{Hopf algebras~$\FQSym$ and~$\FQSym^*$}
\label{subsec:FQSym}

We briefly recall here the definition and some elementary properties of C.~Malvenuto and C.~Reutenauer's Hopf algebra on permutations~\cite{MalvenutoReutenauer}. We denote this algebra by~$\FQSym$ to stress out its connection to free quasi-symmetric functions. We will however not use this connection in this paper. We denote by~$\fS \eqdef \bigsqcup_{n \in \N} \fS_n$ the set of all permutations, of arbitrary size.

For~$n,n' \in \N$, let
\[
\fS^{(n,n')} \eqdef \set{\tau \in \fS_{n+n'}}{\tau_1 < \dots < \tau_n \text{ and } \tau_{n+1} < \dots < \tau_{n+n'}}
\]
denote the set of permutations of~$\fS_{n+n'}$ with at most one descent, at position~$n$. 
The \defn{shifted concatenation}~$\tau\bar\tau'$, the \defn{shifted shuffle}~$\tau \shiftedShuffle \tau'$, and the \defn{convolution}~$\tau \convolution \tau'$ of two permutations~$\tau \in \fS_n$ and~$\tau' \in \fS_{n'}$ are classically defined by
\begin{gather*}
\tau\bar\tau' \eqdef [\tau_1, \dots, \tau_n, \tau'_1 + n, \dots, \tau'_{n'} + n] \in \fS_{n+n'}, \\
\tau \shiftedShuffle \tau' \eqdef \bigset{(\tau\bar\tau') \circ \pi^{-1}}{\pi \in \fS^{(n,n')}} 
\qquad\text{and}\qquad
\tau \convolution \tau' \eqdef \bigset{\pi \circ (\tau\bar\tau')}{\pi \in \fS^{(n,n')}}.
\end{gather*}
For example,
\begin{align*}
{\red 12} \shiftedShuffle {\blue 231} & = \{ {\red 12}{\blue 453}, {\red 1}{\blue 4}{\red 2}{\blue 53}, {\red 1}{\blue 45}{\red 2}{\blue 3}, {\red 1}{\blue 453}{\red 2}, {\blue 4}{\red 12}{\blue 53}, {\blue 4}{\red 1}{\blue 5}{\red 2}{\blue 3}, {\blue 4}{\red 1}{\blue 53}{\red 2}, {\blue 45}{\red 12}{\blue 3}, {\blue 45}{\red 1}{\blue 3}{\red 2}, {\blue 453}{\red 12} \}, \\
{\red 12} \convolution {\blue 231} & = \{ {\red 12}{\blue 453}, {\red 13}{\blue 452}, {\red 14}{\blue 352}, {\red 15}{\blue 342}, {\red 23}{\blue 451}, {\red 24}{\blue 351}, {\red 25}{\blue 341}, {\red 34}{\blue 251}, {\red 35}{\blue 241}, {\red 45}{\blue 231} \}.
\end{align*}
\enlargethispage{.1cm}
We also use the notation~$\underprod{\tau}{\tau'} \eqdef \tau\bar\tau'$ and~$\overprod{\tau}{\tau'} \eqdef \bar\tau'\tau$.

\begin{definition}
We denote by~$\FQSym$ the Hopf algebra with basis~$(\F_\tau)_{\tau \in \fS}$ and whose product and coproduct are defined by
\[
\F_\tau \product \F_{\tau'} = \sum_{\sigma \in \tau \shiftedShuffle \tau'} \F_\sigma
\qquad\text{and}\qquad
\coproduct \F_\sigma = \sum_{\sigma \in \tau \convolution \tau'} \F_\tau \otimes \F_{\tau'}.
\]
This algebra is graded by the size of the permutations.
\end{definition}

\begin{proposition}
\label{prop:productIntervalWeakOrder}
A product of weak order intervals in~$\FQSym$ is a weak order interval: for any two weak order intervals~$[\mu, \omega]$ and~$[\mu', \omega']$, we have
\[
\bigg( \sum_{\mu \le \tau \le \omega} \F_\tau \bigg) \product \bigg( \sum_{\mu' \le \tau' \le \omega'} \F_{\tau'} \bigg) = \sum_{\underprod{\mu}{\mu'} \le \sigma \le \overprod{\omega}{\omega'}} \F_\sigma.
\]
\end{proposition}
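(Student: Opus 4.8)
The plan is to reduce the identity to two facts: that the shifted shuffle of two permutations is multiplicity-free, and that passing from a permutation to its ``small-values'' and ``large-values'' subwords is transparent at the level of coinversion sets. Write $n \eqdef |\mu| = |\omega|$ and $n' \eqdef |\mu'| = |\omega'|$, and let $\chi \colon \fS_{n+n'} \to \fS_n \times \fS_{n'}$ send $\sigma$ to $(\chi_1(\sigma), \chi_2(\sigma))$, where $\chi_1(\sigma)$ is the subword of $\sigma$ on the values $\{1, \dots, n\}$ (a permutation of $[n]$) and $\chi_2(\sigma)$ is the subword of $\sigma$ on $\{n+1, \dots, n+n'\}$, renumbered by subtracting $n$. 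First I would observe that, directly from the definition of $\shiftedShuffle$, one has $\sigma \in \tau \shiftedShuffle \tau'$ if and only if $\chi(\sigma) = (\tau, \tau')$, and that the assignment $\pi \mapsto (\tau\bar\tau') \circ \pi^{-1}$ is injective, so $\tau \shiftedShuffle \tau'$ has $\binom{n+n'}{n}$ distinct elements and $\F_\tau \product \F_{\tau'} = \sum_{\chi(\sigma) = (\tau, \tau')} \F_\sigma$ is multiplicity-free.

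Expanding the left-hand side of the asserted equality by bilinearity then produces $\sum_\sigma c_\sigma \F_\sigma$, where $c_\sigma$ counts the decompositions $\chi(\sigma) = (\tau, \tau')$ with $\mu \le \tau \le \omega$ and $\mu' \le \tau' \le \omega'$. Since $\sigma$ has a single $\chi$-preimage pair, $c_\sigma \in \{0,1\}$, and $c_\sigma = 1$ exactly when $\chi_1(\sigma) \in [\mu, \omega]$ and $\chi_2(\sigma) \in [\mu', \omega']$. So the whole statement boils down to the set equality
\[
\bigset{\sigma \in \fS_{n+n'}}{\chi_1(\sigma) \in [\mu, \omega],\ \chi_2(\sigma) \in [\mu', \omega']} = [\mu\bar\mu',\ \bar\omega'\omega].
\]

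To prove this, I would decompose $\coinv(\sigma) = A \sqcup B \sqcup C$ into the coinversions with both values $\le n$, both values $> n$, and the ``crossing'' ones, respectively. Deleting letters from a word neither creates nor destroys a coinversion among the surviving values, so $A = \coinv(\chi_1(\sigma))$ and $B$ is $\coinv(\chi_2(\sigma))$ shifted up by $n$; hence the left-hand side is described by $\coinv(\mu) \subseteq A \subseteq \coinv(\omega)$ together with the corresponding shifted inclusions constraining $B$. Inspecting the two reference words $\mu\bar\mu' = [\mu_1, \dots, \mu_n, \mu'_1 + n, \dots, \mu'_{n'} + n]$ and $\bar\omega'\omega = [\omega'_1 + n, \dots, \omega'_{n'} + n, \omega_1, \dots, \omega_n]$ shows that $\coinv(\mu\bar\mu')$ consists of $\coinv(\mu)$ and the shift of $\coinv(\mu')$ with \emph{no} crossing pair (every small value precedes every large one), whereas $\coinv(\bar\omega'\omega)$ consists of $\coinv(\omega)$, the shift of $\coinv(\omega')$, \emph{and all} crossing pairs (every large value precedes every small one). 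Consequently $\coinv(\mu\bar\mu') \subseteq \coinv(\sigma) \subseteq \coinv(\bar\omega'\omega)$ holds if and only if the inclusions on $A$ and $B$ hold (the crossing block $C$ being automatically sandwiched between $\varnothing$ and the set of all crossing pairs), which is precisely the condition on $\chi(\sigma)$; this yields the displayed set equality, and with it the proposition. The one step needing genuine care is this last piece of coinversion bookkeeping split along the two value blocks; everything else is formal bilinearity. I expect the same scheme to apply to the later $k$-twist analogue, once the relevant classes are known to be weak order intervals compatible with shifted concatenation.
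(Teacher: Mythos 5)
Your proof is correct. The paper actually states this proposition without proof, treating it as a known fact about $\FQSym$, so there is no paper argument to compare against; your reduction to (i) the shifted shuffle being multiplicity-free via the extraction map $\chi$ and (ii) the block decomposition of the coinversion set into small--small, large--large, and crossing pairs is exactly the standard way to establish it, and each step checks out: $\coinv(\mu\bar\mu')$ indeed contains no crossing pair, $\coinv(\bar\omega'\omega)$ contains all of them, and the crossing block of any $\sigma$ is therefore automatically sandwiched. One small point worth writing out rather than asserting is the equivalence $\sigma\in\tau\shiftedShuffle\tau'\iff\chi(\sigma)=(\tau,\tau')$: for $\pi\in\fS^{(n,n')}$, the positions carrying values $\le n$ in $(\tau\bar\tau')\circ\pi^{-1}$ are $\pi_1<\cdots<\pi_n$ and the values read there are exactly $\tau_1,\dots,\tau_n$, and conversely $\pi$ is recovered from $\sigma$ as the sorted positions of small values followed by the sorted positions of large values.
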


\begin{corollary}
For~$\tau \in \fS_n$, define
\[
\EFQSym^\tau = \sum_{\tau \le \tau'} \F_{\tau'}
\qquad\text{and}\qquad
\HFQSym^\tau = \sum_{\tau' \le \tau} \F_{\tau'}
\]
where~$\le$ is the weak order on~$\fS_n$. Then~$(\EFQSym_\tau)_{\tau \in \fS}$ and~$(\HFQSym_\tau)_{\tau \in \fS}$ are multiplicative bases of~$\FQSym$:
\[
\EFQSym^\tau \product \EFQSym^{\tau'} = \EFQSym^{\underprod{\tau}{\tau'}}
\qquad\text{and}\qquad
\HFQSym^\tau \product \HFQSym^{\tau'} = \HFQSym^{\overprod{\tau}{\tau'}}.
\]
A permutation~$\tau \in \fS_n$ is $\EFQSym$-decomposable (resp.~$\HFQSym$-decomposable) if and only if there exists~${k \in [n-1]}$ such that~$\tau([k]) = [k]$ (resp.~such that~$\tau([k]) = [n] \ssm [k]$). Moreover, $\FQSym$ is freely generated by the elements~$\EFQSym^\tau$ (resp.~$\HFQSym^\tau$) for the $\EFQSym$-indecomposable (resp.~$\HFQSym$-indecomposable) permutations.
\end{corollary}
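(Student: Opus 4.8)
The plan is to realise $\EFQSym^\tau$ and $\HFQSym^\tau$ as sums over weak order intervals and then feed Proposition~\ref{prop:productIntervalWeakOrder}. Write $\omega_n \eqdef [n, n-1, \dots, 1]$ and $e_n \eqdef [1, 2, \dots, n]$ for the maximum and minimum of the weak order on~$\fS_n$, so that $\EFQSym^\tau = \sum_{\tau \le \sigma \le \omega_n} \F_\sigma$ and $\HFQSym^\tau = \sum_{e_n \le \sigma \le \tau} \F_\sigma$ for any~$\tau \in \fS_n$. First I would observe that, within each homogeneous component~$\fS_n$, the transition matrix from $(\F_\sigma)$ to $(\EFQSym^\tau)$ — and likewise the one from $(\F_\sigma)$ to $(\HFQSym^\tau)$ — is unitriangular for any linear extension of the weak order, hence invertible; so $(\EFQSym^\tau)_{\tau \in \fS}$ and $(\HFQSym^\tau)_{\tau \in \fS}$ are bases of~$\FQSym$.

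Next, applying Proposition~\ref{prop:productIntervalWeakOrder} to the intervals $[\tau, \omega_n]$ and $[\tau', \omega_{n'}]$ gives a sum of $\F_\sigma$ over $\sigma$ ranging from~$\tau\bar\tau'$ up to the permutation ``$\omega_{n'}$ shifted by~$n$, followed by~$\omega_n$''; a one-line check identifies this upper endpoint with~$\omega_{n+n'}$, whence $\EFQSym^\tau \product \EFQSym^{\tau'} = \EFQSym^{\tau\bar\tau'} = \EFQSym^{\underprod{\tau}{\tau'}}$. Symmetrically, applied to $[e_n, \tau]$ and $[e_{n'}, \tau']$ and using that $e_n\bar e_{n'} = e_{n+n'}$, Proposition~\ref{prop:productIntervalWeakOrder} yields $\HFQSym^\tau \product \HFQSym^{\tau'} = \HFQSym^{\bar\tau'\tau} = \HFQSym^{\overprod{\tau}{\tau'}}$. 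This already establishes multiplicativity, and the decomposability statement becomes a pure translation: since $(\EFQSym^\tau)$ is a basis, $\EFQSym^\tau = \EFQSym^\rho \product \EFQSym^{\rho'}$ forces $\tau = \rho\bar{\rho'}$, so $\EFQSym^\tau$ is a product of two positive-degree basis elements exactly when $\tau = \rho\bar{\rho'}$ for non-empty $\rho \in \fS_k$, $\rho' \in \fS_{n-k}$ with $k \in [n-1]$, i.e.\ exactly when $\tau([k]) = [k]$ for some~$k \in [n-1]$; the $\HFQSym$ case is identical, the first block of positions of~$\tau$ now carrying the top values, which is the stated condition.

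For the freeness, I would establish unique factorisation of permutations under the shifted concatenation~$\backslash$ (which is associative). For $\tau \in \fS_n$, the cut set $K(\tau) \eqdef \set{k}{0 \le k \le n,\ \tau([k]) = [k]}$ always contains~$0$ and~$n$; listing it as $0 = k_0 < \dots < k_r = n$, the standardisations $\gamma_1, \dots, \gamma_r$ of the windows $\tau_{k_{i-1}+1} \cdots \tau_{k_i}$ are $\EFQSym$-indecomposable (an intermediate cut in a window would give a cut of~$\tau$ strictly between consecutive elements of~$K(\tau)$) and satisfy $\tau = \gamma_1 \backslash \cdots \backslash \gamma_r$; conversely any factorisation of~$\tau$ into $\backslash$-indecomposables has prefix-sums lying in~$K(\tau)$ and hitting every element of it, hence coincides with this one. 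Combined with $\EFQSym^\tau \product \EFQSym^{\tau'} = \EFQSym^{\underprod{\tau}{\tau'}}$, this shows that the linear map $\gamma_1 \otimes \cdots \otimes \gamma_r \mapsto \EFQSym^{\gamma_1 \backslash \cdots \backslash \gamma_r}$ is an algebra isomorphism from the free associative algebra on the $\EFQSym$-indecomposable permutations onto~$\FQSym$, so $\FQSym$ is freely generated (as an algebra) by the $\EFQSym^\tau$ for indecomposable~$\tau$; the same argument with the over-product~$\slash$ treats the $\HFQSym$ basis. The bulk of this is bookkeeping: the two product identities and the basis property are immediate from Proposition~\ref{prop:productIntervalWeakOrder} and unitriangularity, and the only point needing genuine care is the hereditary behaviour of the cut set~$K(\tau)$ underlying unique factorisation, together with the (easy but necessary) check that the notion of indecomposable used in the freeness claim is exactly the one coming out of the decomposability part.
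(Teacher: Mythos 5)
Your proof is correct and takes essentially the same route that the paper's label "Corollary" implies: the product formulas and the basis property follow directly from Proposition~\ref{prop:productIntervalWeakOrder} together with the unitriangularity of the change of basis, and freeness is the classical unique-factorization argument under shifted concatenation via the cut set~$K(\tau)$, which you spell out correctly. One small aside: the paper's stated condition for $\HFQSym$-decomposability, $\tau([k]) = [n] \ssm [k]$, is a typo (the two sides have different cardinalities unless $n = 2k$); the correct condition --- which is what your proof actually derives ("the first block of positions carrying the top values") --- is $\tau([k]) = [n] \ssm [n-k]$.
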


We will also consider the dual Hopf algebra of~$\FQSym$, defined as follows.

\begin{definition}
We denote by~$\FQSym^*$ the Hopf algebra with basis~$(\G_\tau)_{\tau \in \fS}$ and whose product and coproduct are defined by
\[
\G_\tau \product \G_{\tau'} = \sum_{\sigma \in \tau \convolution \tau'} \G_\sigma
\qquad\text{and}\qquad
\coproduct \G_\sigma = \sum_{\sigma \in \tau \shiftedShuffle \tau'} \G_\tau \otimes \G_{\tau'}.
\]
This algebra is graded by the size of the permutations.
\end{definition}

\subsection{Subalgebra of~$\FQSym$}
\label{subsec:subalgebra}

We denote by~$\Twist$ the vector subspace of~$\FQSym$ generated by the elements
\[
\PTwist_{\twist} \eqdef \sum_{\substack{\tau \in \fS \\ \surjectionPermBrick(\tau) = \twist}} \F_\tau = \sum_{\tau \in \linearExtensions(\twist\contact)} \F_\tau,
\]
for all acyclic~$k$-twists~$\twist$.
For example, for the $(k,5)$-twists of \fref{fig:insertionAlgorithm}, we have
\[
\PTwist_{\includegraphics[scale=.5]{exmTwist0}} = \sum_{\tau \in \fS_5} \F_\tau
\qquad
\PTwist\raisebox{.05cm}{$_{\hspace{-.1cm}\includegraphics[scale=.5]{exmTwist1}}$} = \!\!\!\! \begin{array}[t]{c} \phantom{+} \; \F_{13542} + \F_{15342} \\ + \; \F_{31542} + \F_{51342} \\ + \; \F_{35142} + \F_{53142} \\ + \; \F_{35412} + \F_{53412} \end{array}
\qquad
\PTwist\raisebox{.1cm}{$_{\hspace{-.2cm}\includegraphics[scale=.5]{exmTwist2}}$} = \!\!\!\! \begin{array}[t]{c} \phantom{+} \; \F_{31542} \\ + \; \F_{35142} \end{array}
\qquad
\PTwist\raisebox{.15cm}{$_{\hspace{-.3cm}\includegraphics[scale=.5]{exmTwist3}}$} = \F_{31542}.
\]

\begin{theorem}
\label{theo:twistSubalgebra}
$\Twist$ is a Hopf subalgebra of~$\FQSym$.
\end{theorem}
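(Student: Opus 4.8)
The statement that $\Twist$ is a Hopf subalgebra of $\FQSym$ will follow from a general principle: a subspace of $\FQSym$ spanned by sums of basis elements over the classes of an equivalence relation on $\fS$ is automatically a Hopf subalgebra provided the equivalence relation is a \emph{congruence} compatible with the combinatorial operations underlying the product and coproduct. Concretely, the relevant sufficient conditions (due to Hivert--Nzeutchap, and Priez, as cited in the paper) are: (a) the equivalence relation $\equiv^k$ on each $\fS_n$ is a lattice (or at least order) congruence whose classes are intervals of the weak order, which is already established in Proposition~\ref{prop:intervals} and Theorem~\ref{theo:ktwistLatticeCongruence}; and (b) $\equiv^k$ is compatible with the shifted shuffle and with (de)standardization, in the sense that the shifted shuffle of two classes is a union of classes, and that restricting a permutation to an interval of positions or of values and standardizing respects $\equiv^k$. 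Granting (a) and (b), the spanning set $\{\PTwist_\twist\}$ is stable under product and coproduct, so $\Twist$ is a sub-bialgebra, and since $\FQSym$ is graded connected it is automatically a Hopf subalgebra.

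The cleanest route is to verify stability of the product directly using Proposition~\ref{prop:productIntervalWeakOrder}. First I would note that, by Proposition~\ref{prop:intervals}, each $\PTwist_\twist$ is the sum $\sum_{\mu \le \tau \le \omega} \F_\tau$ over a weak order interval $[\mu,\omega]$, where $\mu$ and $\omega$ are the minimal and maximal permutations of the $k$-twist congruence class of $\twist$. Proposition~\ref{prop:productIntervalWeakOrder} then gives
\[
\PTwist_{\twist} \product \PTwist_{\twist'} = \sum_{\mu\bar\mu{}' \le \sigma \le \bar\omega'\omega} \F_\sigma,
\]
which is again a single weak order interval. It remains to check that this interval $[\mu\bar\mu{}',\bar\omega'\omega]$ is exactly a $k$-twist congruence class, i.e.\ that it is a union of $\equiv^k$-classes (being already an interval, and congruence classes being intervals, it must then be a single class). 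For this I would show that the concatenation of the minimum of one class with the shift of the minimum of another is again a class-minimum, using the pattern-avoidance characterization of class minima: $\mu$ avoids $(k+2)1\dash(\sigma_1+1)\dash\cdots\dash(\sigma_k+1)$, and concatenation $\mu\bar\mu{}'$ cannot create such a pattern because the letter playing the role of "$1$" would have to be small and the letter "$k+2$" would have to precede it with the witness letters after; crossing the concatenation point is impossible since everything before is smaller than everything after. The symmetric argument handles $\bar\omega'\omega$ and the maximum-avoidance pattern $1(k+2)\dash\cdots$. Alternatively, and more conceptually, one can observe that $\surjectionPermBrick(\mu\bar\mu{}')$ is obtained by inserting the pipes of $\twist'$ (shifted up) and then those of $\twist$, which simply stacks the two contact graphs without new relations; hence the interval on the right is precisely the fiber of $\surjectionPermBrick$ over that stacked twist.

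For the coproduct, I would argue that $\coproduct \PTwist_\twist = \sum \PTwist_{\twist_1} \otimes \PTwist_{\twist_2}$ by showing that deconcatenation (the convolution operation dual to the product on $\FQSym^*$, i.e.\ the combinatorial content of $\coproduct$ on $\FQSym$) is compatible with $\equiv^k$: if $\sigma = \sigma'\cdot\sigma''$ is a deconcatenation of $\sigma$ into a prefix and a suffix (after standardizing each), and $\sigma \equiv^k \rho$, then the corresponding deconcatenation of $\rho$ satisfies $\sigma' \equiv^k \rho'$ and $\sigma'' \equiv^k \rho''$. This follows because a single application of the rewriting rule $U\,ac\,V_1b_1\cdots V_kb_k\,W \leftrightarrow U\,ca\,V_1b_1\cdots V_kb_k\,W$ either happens entirely within the prefix or entirely within the suffix (the exchanged letters $a,c$ are adjacent), and standardization of a factor preserves the relative order of entries, hence preserves the rewriting rule and its witnesses. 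Thus the fibers of $\surjectionPermBrick$ behave coherently under deconcatenation, giving stability of the coproduct.

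The main obstacle is the verification that the product of two twist classes is again a single twist class rather than merely a union of classes inside a weak order interval — equivalently, that $\mu\bar\mu{}'$ and $\bar\omega'\omega$ are respectively the minimum and the maximum of a common $\equiv^k$-class. The pattern-avoidance bookkeeping is straightforward but requires care about what can happen at the concatenation point; the conceptual "stacking of contact graphs" argument is cleaner and is the one I would ultimately write up, invoking the insertion description of $\surjectionPermBrick$ from Proposition~\ref{prop:ktwistInsertion} together with Lemma~\ref{lem:source} to see that inserting a block of already-placed pipes above another block creates no new arcs crossing between the blocks.
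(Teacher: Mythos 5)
Your strategy for the product --- use Proposition~\ref{prop:productIntervalWeakOrder} to write $\PTwist_{\twist} \product \PTwist_{\twist'}$ as the single weak order interval $[\mu\bar\mu{}', \bar\omega'\omega]$, check via pattern avoidance that $\mu\bar\mu{}'$ and $\bar\omega'\omega$ are respectively minimal and maximal in their $\equiv^k$-classes, and then invoke the order congruence to see the interval as a union of fibers --- is a genuinely different route from the paper's. The paper instead repeats the elementary case analysis (if $\F_\sigma$ appears in the product and $\sigma$ is rewritten to $\tilde\sigma$, track whether $a,c$ land in the first factor, the second, or straddle), which has the advantage of not relying on Theorem~\ref{theo:ktwistLatticeCongruence} and therefore is more self-contained. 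Your route is fine in principle, but two parts of your write-up are wrong as stated.

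For the product, the parenthetical claim that $[\mu\bar\mu{}', \bar\omega'\omega]$ ``must then be a single class,'' and the later assertion that it ``is precisely the fiber of $\surjectionPermBrick$ over that stacked twist,'' are both false. By Proposition~\ref{prop:product} the product is a sum over the entire increasing-flip interval $[\underprod{\twist}{\twist'}, \overprod{\twist}{\twist'}]$, so the weak order interval spans many fibers. Already for $k=1$ with $\twist = \twist'$ the unique $(1,1)$-twist one gets $[\mu\bar\mu{}',\bar\omega'\omega] = [12,21] = \fS_2$, which meets two distinct fibers. What you actually need is only that the interval is a \emph{union} of classes, and for this you must explicitly invoke the order-preservingness of $\projDown$ and $\projUp$: if $\mu\bar\mu{}' \le \sigma \le \bar\omega'\omega$ then $\mu\bar\mu{}' = \projDown(\mu\bar\mu{}') \le \projDown(\sigma)$ and $\projUp(\sigma) \le \projUp(\bar\omega'\omega) = \bar\omega'\omega$, so the whole class of $\sigma$ sits inside the interval. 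Delete the ``single class'' and ``single fiber'' language and make that invocation explicit.

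The coproduct argument has a more substantial gap. You claim that a single application of the rewriting rule on $\sigma$ ``either happens entirely within the prefix or entirely within the suffix'' because the exchanged letters $a,c$ are adjacent. This is false: if the split position falls between them, $a$ ends the prefix and $c$ begins the suffix, and the two standardized prefixes (one ending in a small letter, the other in a large one) are in general not $\equiv^k$-equivalent. Beyond this, your framing runs in the wrong direction: starting from $\sigma \equiv^k \rho$ and comparing deconcatenations does not directly yield what is needed. What is needed is the converse construction: given $\tau \otimes \tau'$ appearing in $\coproduct\PTwist_{\twist[S]}$ via some $\sigma \in \linearExtensions(\twist[S]\contact) \cap (\tau \convolution \tau')$, and given a rewriting $\tau' \equiv^k \tilde\tau'$ on the \emph{second factor}, lift that rewriting along standardization to the suffix of $\sigma$ (standardization preserves relative order, hence preserves witnesses), obtaining a $\tilde\sigma$ with $\tilde\sigma \equiv^k \sigma$ and $\tilde\sigma \in \tau \convolution \tilde\tau'$. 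Because the lifted rewriting lives entirely in the suffix, there is no boundary issue, and one concludes that $\tau \otimes \tilde\tau'$ appears as well. This (together with the symmetric argument on the first factor) is the paper's proof and is the argument you should substitute for your current paragraph.
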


\begin{proof}
This statement is a particular case of the results of~\cite{Reading-HopfAlgebras}. Alternatively, we could also invoke the formalism of~\cite{Hivert-habilitation, HivertNzeutchap, Priez} and just observe that the $k$-twist congruence~$\equiv^k$ is compatible with the standardization and the restriction to intervals. However, we prefer to repeat the straightforward argument for the reader who sees it for the first time.

We need to show that~$\Twist$ is stable by product and coproduct. For the product, consider two $k$-twists~$\twist \in \AcyclicTwists(n)$ and~$\twist' \in \AcyclicTwists(n')$ and two permutations~${\sigma = U a c V_1 b_1 \cdots V_k b_k W}$ and ${\tilde\sigma = U c a V_1 b_1 \cdots V_k b_k W}$ with~${a < b_1, \dots, b_k < c}$. Assume that~$\F_\sigma$ appears in the product~${\PTwist_{\twist} \product \PTwist_{\twist'}}$. Thus, there exist~${\tau \in \linearExtensions(\twist)}$ and~$\tau' \in \linearExtensions(\twist')$ such that~$\sigma$ belongs to~$\tau \shiftedShuffle \tau'$. Then
\begin{itemize}
\item if~$a \le n < c$, then $\tilde\sigma$ also belongs to~$\tau \shiftedShuffle \tau'$, so that~$\F_{\tilde\sigma}$ also appears in the product~${\PTwist_{\twist} \product \PTwist_{\twist'}}$.
\item if~$c \le n$, then~$\tau = X a c Y_1 b_1 \dots Y_k b_k Z$ is $\equiv^k$-congruent to~$\tilde\tau = X c a Y_1 b_1 \dots Y_k b_k Z$. Since~$\tilde\sigma$ belongs to~$\tilde\tau \shiftedShuffle \tau'$, the element~$\F_{\tilde\sigma}$ also appears in the product~${\PTwist_{\twist} \product \PTwist_{\twist'}}$.
\item If~$n < a$, the argument is similar, exchanging~$ac$ to~$ca$ in~$\tau'$.
\end{itemize}
We conclude that if~$\sigma \equiv^k \tilde\sigma$, then~$\F_\sigma$ appears in the product~${\PTwist_{\twist} \product \PTwist_{\twist'}}$ if and only if~$\F_{\tilde\sigma}$ does. Thus, the product of two $k$-twist congruence classes decomposes into a sum of $k$-twist congruence classes.

For the coproduct, consider a $k$-twist~$\twist[S]$ and three permutations~$\tau$, ${\tau' =  U a c V_1 b_1 \cdots V_k b_k W}$ and ${\tilde\tau' = U c a V_1 b_1 \cdots V_k b_k W}$ with~${a < b_1, \dots, b_k < c}$. Assume that the tensor~$\tau \otimes \tau'$ appears in the coproduct~$\coproduct \PTwist_{\twist[S]}$. Thus, there exist~$\sigma \in \linearExtensions(\twist[S]) \cap (\tau \convolution \tau')$. By definition of the convolution, we can thus write~${\sigma = \hat\tau \hat U \hat a \hat c \hat V_1 \hat b_1 \cdots \hat V_k \hat b_k \hat W}$ for some~$\hat a < \hat b_1, \dots, \hat b_k < \hat c$. Then the permutation $\tilde\sigma \eqdef \hat\tau \hat U \hat c \hat a \hat V_1 \hat b_1 \cdots \hat V_k \hat b_k \hat W$ belongs to~$\linearExtensions(\twist[S]) \cap (\tau \convolution \tilde\tau')$, so that the tensor~$\tau \otimes \tilde\tau'$ also appears in the coproduct~$\coproduct \PTwist_{\twist[S]}$. By symmetry, we conclude that if~$\tau \equiv^k \tilde\tau$ and~${\tau' \equiv^k \tilde\tau'}$, the tensor~$\F_\tau \otimes \F_{\tau'}$ appears in the coproduct~$\coproduct\PTwist_{\twist[S]}$ if and only if the tensor~$\F_{\tilde\tau} \otimes \F_{\tilde\tau'}$ does. In other words, the coproduct of a $k$-twist congruence class decomposes as a sum of tensor products of $k$-twist congruence~classes.
\end{proof}

\begin{example}[Polynomial ring]
The bijection which sends the unique~$(0,n)$-twist to~$X^n/n!$ defines an isomorphism from the $0$-twist algebra~$\Twist[0]$ to the polynomial ring~$\K[X]$.
\end{example}

\begin{example}[J.-L.~Loday and M.~Ronco's algebra]
The bijection given in Example~\ref{exm:1twistsTriangulations} (see also \fref{fig:1twistsTriangulations}) defines an isomorphism from the $1$-twist algebra~$\Twist[1]$ to M.~Ronco and J.-L.~Loday's Hopf algebra~$\PBT$ on planar binary trees~\cite{LodayRonco, HivertNovelliThibon-algebraBinarySearchTrees}.
\end{example}

\enlargethispage{.2cm}
We now aim at understanding the product and the coproduct in~$\Twist$ directly on $k$-twists. Although they are not always as satisfactory, our descriptions naturally extend classical results on the binary tree Hopf algebra~$\PBT$ described in~\cite{LodayRonco, AguiarSottile, HivertNovelliThibon-algebraBinarySearchTrees}.

\para{Product}
To describe the product in~$\Twist$, we need the following notation, which is illustrated in \fref{fig:underOver}. For a $(k,n)$-twist~$\twist$ and a $(k,n')$-twist~$\twist'$, we denote by~$\underprod{\twist}{\twist'}$ the $(k,n+n')$-twist obtained by inserting~$\twist$ in the first rows and columns of~$\twist'$ and by~$\overprod{\twist}{\twist'}$ the $(k,n+n')$-twist obtained by inserting~$\twist'$ in the last rows and columns of~$\twist$.

\begin{figure}[t]
	\centerline{\includegraphics[scale=1.4]{underOver}}
	\caption{Two twists~$\twist,\twist'$ (left) and the two twists~$\underprod{\twist}{\twist'}$ and~$\overprod{\twist}{\twist'}$ (right).}
	\label{fig:underOver}
\end{figure}

\begin{proposition}
\label{prop:product}
For any acyclic $k$-twists~$\twist \in \AcyclicTwists(n)$ and~$\twist' \in \AcyclicTwists(n')$, the product~$\PTwist_{\twist} \product \PTwist_{\twist'}$ is given by
\[
\PTwist_{\twist} \product \PTwist_{\twist'}  = \sum_{\twist[S]} \PTwist_{\twist[S]},
\]
where~$\twist[S]$ runs over the interval between~$\underprod{\twist}{\twist'}$ and~$\overprod{\twist}{\twist'}$ in the $(k,n+n')$-twist lattice.
\end{proposition}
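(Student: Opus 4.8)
The plan is to reduce the product in $\Twist$ to a product of weak order intervals in $\FQSym$, which is governed by Proposition~\ref{prop:productIntervalWeakOrder}, and then to transport the resulting weak order interval back to the increasing flip lattice, using that $\equiv^k$ is a lattice congruence (Theorem~\ref{theo:ktwistLatticeCongruence}) whose quotient is the increasing flip order (Proposition~\ref{prop:latticeQuotient}). Finally one identifies the two endpoints of the interval by unwinding the insertion algorithm.

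\medskip\noindent\emph{Step 1 (intervals).} By Propositions~\ref{prop:ktwistInsertion} and~\ref{prop:ktwistCongruence}, the fiber $\linearExtensions(\twist\contact)$ is a $k$-twist congruence class, hence a weak order interval $[\mu,\omega]$ by Proposition~\ref{prop:intervals}; likewise $\linearExtensions(\twist'\contact) = [\mu',\omega']$. Thus $\PTwist_\twist = \sum_{\mu \le \tau \le \omega}\F_\tau$ and $\PTwist_{\twist'} = \sum_{\mu' \le \tau' \le \omega'}\F_{\tau'}$, and Proposition~\ref{prop:productIntervalWeakOrder} gives
\[
\PTwist_\twist \product \PTwist_{\twist'} \;=\; \sum_{\mu\bar\mu{}' \,\le\, \sigma \,\le\, \bar\omega{}'\omega} \F_\sigma .
\]

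\medskip\noindent\emph{Step 2 (union of classes $\Rightarrow$ quotient interval).} By Theorem~\ref{theo:twistSubalgebra} this element lies in $\Twist$, so it expands as $\sum_{\twist[S]} c_{\twist[S]} \PTwist_{\twist[S]}$. Since the $\F$-supports of the basis elements $\PTwist_{\twist[S]}$ are exactly the $k$-twist congruence classes, which partition $\fS$, comparing coefficients of $\F_\sigma$ forces each $c_{\twist[S]}$ to be $0$ or $1$, equal to $1$ precisely when the whole class $\linearExtensions(\twist[S]\contact)$ is contained in $[\mu\bar\mu{}',\bar\omega{}'\omega]$; in particular this weak order interval is a union of congruence classes, so $\mu\bar\mu{}'$ (resp.\ $\bar\omega{}'\omega$), being its minimum (resp.\ maximum), is the minimum (resp.\ maximum) of its own class. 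Since every class equals $[\min,\max]$, containment $\linearExtensions(\twist[S]\contact) \subseteq [\mu\bar\mu{}',\bar\omega{}'\omega]$ is equivalent to $\min \ge \mu\bar\mu{}'$ and $\max \le \bar\omega{}'\omega$. As $\equiv^k$ is an order congruence, the maps $\twist[S] \mapsto \min\linearExtensions(\twist[S]\contact)$ and $\twist[S] \mapsto \max\linearExtensions(\twist[S]\contact)$ embed the quotient poset — which is the increasing flip order by Proposition~\ref{prop:latticeQuotient} — as the induced subposets $\projDown(\fS_{n+n'})$ and $\projUp(\fS_{n+n'})$ of the weak order. Hence the twists $\twist[S]$ occurring in the product are exactly those with $\surjectionPermBrick(\mu\bar\mu{}') \le \twist[S] \le \surjectionPermBrick(\bar\omega{}'\omega)$ in the increasing flip lattice.

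\medskip\noindent\emph{Step 3 (endpoints).} Applying $\surjectionPermBrick$ to $\mu\bar\mu{}' = [\mu_1,\dots,\mu_n,\mu'_1+n,\dots,\mu'_{n'}+n]$ and inserting from right to left first builds a copy of $\twist'$ on the pipes labelled $\{n+1,\dots,n+n'\}$, and then inserts the smaller entries $\mu_n,\dots,\mu_1$, which (being all smaller, and inserted as northwest as possible) build a copy of $\twist$ on the pipes labelled $[n]$ in the first rows and columns; by definition this is $\underprod{\twist}{\twist'}$. Symmetrically, $\surjectionPermBrick(\bar\omega{}'\omega)$ first builds $\twist$ on $[n]$ and then $\twist'$ on $\{n+1,\dots,n+n'\}$ in the last rows and columns, giving $\overprod{\twist}{\twist'}$. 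Combining the three steps yields the claimed formula (in particular $\underprod{\twist}{\twist'}$ and $\overprod{\twist}{\twist'}$ are acyclic, being images of $\surjectionPermBrick$).

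\medskip
The delicate point is Step 2: passing from ``weak order interval that is a union of congruence classes'' to ``interval of the quotient lattice'', which genuinely uses the lattice/order congruence property and the care that taking minima and maxima of classes really converts interval containment into comparisons in the increasing flip lattice. The only other thing requiring attention is the bookkeeping in Step 3, namely that the insertion of the block of smaller (resp.\ larger) entries is unaffected by the pipes already placed and hence reproduces $\twist$ (resp.\ $\twist'$) verbatim, so that the output matches the definitions of $\underprod{\twist}{\twist'}$ and $\overprod{\twist}{\twist'}$.
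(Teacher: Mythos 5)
Your proof is correct and follows the same route as the paper: express the $\PTwist$-basis elements as weak order intervals via Proposition~\ref{prop:intervals}, compute the product with Proposition~\ref{prop:productIntervalWeakOrder}, and transport the resulting interval to the increasing flip lattice using the lattice congruence structure (Theorem~\ref{theo:ktwistLatticeCongruence} and Proposition~\ref{prop:latticeQuotient}). The only difference is that you spell out Steps~2 and~3 in somewhat more detail than the paper, which merely asserts that the weak order interval $[\mu\bar\mu',\bar\omega'\omega]$ decomposes as a union of fibers of the flip interval $[\underprod{\twist}{\twist'},\overprod{\twist}{\twist'}]$ and that $\mu\bar\mu'$, $\bar\omega'\omega$ lie in the fibers of $\underprod{\twist}{\twist'}$, $\overprod{\twist}{\twist'}$.
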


\begin{proof}
Consider two acyclic $k$-twists~$\twist, \twist'$. By Proposition~\ref{prop:intervals}, their fibers under~$\surjectionPermBrick$ are intervals of the weak order, which we denote by~$[\mu, \omega]$ and~$[\mu', \omega']$ respectively. By Proposition~\ref{prop:productIntervalWeakOrder}, the product~$\PTwist_{\twist} \product \PTwist_{\twist'}$ is therefore the weak order interval~$[\underprod{\mu}{\mu'}, \overprod{\omega}{\omega'}]$. Theorem~\ref{theo:twistSubalgebra} ensures that this interval is partitioned into various fibers of~$\surjectionPermBrick$. In particular, the fiber of~$\underprod{\twist}{\twist'}$ contains~$\underprod{\mu}{\mu'}$ while the fiber of~$\overprod{\twist}{\twist'}$ contains~$\overprod{\omega}{\omega'}$. Theorem~\ref{theo:ktwistLatticeCongruence} finally ensures that $[\underprod{\mu}{\mu'}, \overprod{\omega}{\omega'}]$ is precisely the union of the fibers of the increasing flip interval~$[\underprod{\twist}{\twist'}, \overprod{\twist}{\twist'}]$.
\end{proof}

\fref{fig:exmProduct} illustrates the product between two $2$-twists in~$\Twist[2]$.

\begin{figure}[h]
    \centerline{$
    \begin{array}{@{}c@{\;=\;}c@{+}c@{+}c@{+}c@{+}c@{+}c@{+}c@{+}c@{}}
    \PTwist\raisebox{.1cm}{$_{\hspace{-.2cm}\includegraphics[scale=.5]{exmProduct/exmProduct1}}$} \product \PTwist\raisebox{.1cm}{$_{\hspace{-.2cm}\includegraphics[scale=.5]{exmProduct/exmProduct2}}$} & 
    \multicolumn{8}{@{}l}{(\F_{\red 1423} + \F_{\red 4123}) \product \F_{\blue 21}}
    \\[-.6cm]
    & \begin{bmatrix} \phantom{+ \, } \F_{{\red 1423}{\blue 65}} \\ + \, \F_{{\red 4123}{\blue 65}} \end{bmatrix}
    & \begin{bmatrix} \phantom{+ \, } \F_{{\red 142}{\blue 6}{\red 3}{\blue 5}} \\ + \, \F_{{\red 14}{\blue 6}{\red 23}{\blue 5}} \\ + \, \F_{{\red 412}{\blue 6}{\red 3}{\blue 5}} \\ + \, \F_{{\red 41}{\blue 6}{\red 23}{\blue 5}} \\ + \, \F_{{\red 4}{\blue 6}{\red 123}{\blue 5}} \end{bmatrix}
    & \begin{bmatrix} \phantom{+ \, } \F_{{\red 1}{\blue 6}{\red 423}{\blue 5}} \\ + \, \F_{{\blue 6}{\red 1423}{\blue 5}} \\ + \, \F_{{\blue 6}{\red 4123}{\blue 5}} \end{bmatrix}
    & \begin{bmatrix} \phantom{+ \, } \F_{{\red 142}{\blue 65}{\red 3}} \\ + \, \F_{{\red 14}{\blue 6}{\red 2}{\blue 5}{\red 3}} \\ + \, \F_{{\red 412}{\blue 65}{\red 3}} \\ + \, \F_{{\red 41}{\blue 6}{\red 2}{\blue 5}{\red 3}} \\ + \, \F_{{\red 4}{\blue 6}{\red 12}{\blue 5}{\red 3}} \end{bmatrix}
    & \begin{bmatrix} \phantom{+ \, } \F_{{\red 1}{\blue 6}{\red 42}{\blue 5}{\red 3}} \\ + \, \F_{{\blue 6}{\red 142}{\blue 5}{\red 3}} \\ + \, \F_{{\blue 6}{\red 412}{\blue 5}{\red 3}} \end{bmatrix}
    & \begin{bmatrix} \phantom{+ \, } \F_{{\red 14}{\blue 65}{\red 23}} \\ + \, \F_{{\red 41}{\blue 65}{\red 23}} \\ + \, \F_{{\red 4}{\blue 6}{\red 1}{\blue 5}{\red 23}} \\ + \, \F_{{\red 4}{\blue 65}{\red 123}} \end{bmatrix}
    & \begin{bmatrix} \phantom{+ \, } \F_{{\red 1}{\blue 6}{\red 4}{\blue 5}{\red 23}} \\ + \, \F_{{\blue 6}{\red 14}{\blue 5}{\red 23}} \\ + \, \F_{{\blue 6}{\red 41}{\blue 5}{\red 23}} \\ + \, \F_{{\blue 6}{\red 4}{\blue 5}{\red 123}} \end{bmatrix}
    & \begin{bmatrix} \phantom{+ \, } \F_{{\red 1}{\blue 65}{\red 423}} \\ + \, \F_{{\blue 6}{\red 1}{\blue 5}{\red 423}} \\ + \, \F_{{\blue 65}{\red 1423}} \\ + \, \F_{{\blue 65}{\red 4123}} \end{bmatrix}
    \\[1cm]
    & \PTwist\raisebox{.1cm}{$_{\hspace{-.2cm}\includegraphics[scale=.5]{exmProduct/exmProduct3}}$}
    & \PTwist\raisebox{.1cm}{$_{\hspace{-.2cm}\includegraphics[scale=.5]{exmProduct/exmProduct4}}$}
    & \PTwist\raisebox{.1cm}{$_{\hspace{-.2cm}\includegraphics[scale=.5]{exmProduct/exmProduct5}}$}
    & \PTwist\raisebox{.1cm}{$_{\hspace{-.2cm}\includegraphics[scale=.5]{exmProduct/exmProduct6}}$}
    & \PTwist\raisebox{.1cm}{$_{\hspace{-.2cm}\includegraphics[scale=.5]{exmProduct/exmProduct7}}$}
    & \PTwist\raisebox{.1cm}{$_{\hspace{-.2cm}\includegraphics[scale=.5]{exmProduct/exmProduct8}}$}
    & \PTwist\raisebox{.1cm}{$_{\hspace{-.2cm}\includegraphics[scale=.5]{exmProduct/exmProduct9}}$}
    & \PTwist\raisebox{.1cm}{$_{\hspace{-.2cm}\includegraphics[scale=.5]{exmProduct/exmProduct10}}$}
    \end{array}
    $}
    \caption{An example of product in the $2$-twist algebra~$\Twist[2]$.}
	\label{fig:exmProduct}
\end{figure}

\para{Coproduct}
Our description of the coproduct in~$\Twist$ is unfortunately not as simple as the coproduct in~$\PBT$. It is very closed to the description of the direct computation using the coproduct of~$\FQSym$. We need the following definition. A \defn{cut} in a $k$-twist~$\twist[S]$ is a set~$\gamma$ of edges of the contact graph~$\twist[S]\contact$ such that any path in~$\twist[S]\contact$ from a leaf to the root contains precisely one edge of~$\gamma$. We then denote by~$A\contact(\twist[S], \gamma)$ (resp.~$B\contact(\twist[S], \gamma)$) the restriction of the contact graph~$\twist[S]\contact$ to the nodes above (resp.~below)~$\gamma$. Moreover, $A\contact(\twist[S], \gamma)$ is the contact graph of the $k$-twist~$A(\twist[S], \gamma)$ obtained from~$\twist[S]$ by deleting all pipes below~$\gamma$ in~$\twist[S]\contact$. Nevertheless, note that~$B\contact(\twist[S], \gamma)$ is not \apriori{} the contact graph of a $k$-twist.

\begin{proposition}
For any acyclic $k$-twist~$\twist[S] \in \AcyclicTwists(m)$, the coproduct~$\coproduct\PTwist_{\twist[S]}$ is given by
\[
\coproduct\PTwist_{\twist[S]} = \sum_\gamma \bigg( \sum_\tau \PTwist_{\surjectionPermBrick(\tau)} \bigg) \otimes \PTwist_{A(\twist[S], \gamma)},
\]
where~$\gamma$ runs over all cuts of~$\twist[S]$ and~$\tau$ runs over a set of representatives of the $k$-twist congruence classes of the linear extensions of~$B\contact(\twist[S], \gamma)$.
\end{proposition}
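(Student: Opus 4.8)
The plan is to compute $\coproduct\PTwist_{\twist[S]}$ by expanding it in the $\F$-basis of $\FQSym$, applying the $\FQSym$-coproduct, and regrouping the resulting double sum along cuts. First I would observe that for $\sigma\in\fS_m$ the coproduct of $\FQSym$ can be rewritten as
\[
\coproduct\F_\sigma \;=\; \sum_{\ell=0}^{m}\F_{\mathrm{std}(\sigma_1\cdots\sigma_\ell)}\otimes\F_{\mathrm{std}(\sigma_{\ell+1}\cdots\sigma_m)},
\]
since $\sigma\in\tau\convolution\tau'$ precisely when $\tau$ and $\tau'$ are the standardizations of a prefix of $\sigma$ and of the complementary suffix. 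Summing this over $\sigma\in\linearExtensions(\twist[S]\contact)$ expresses $\coproduct\PTwist_{\twist[S]}$ as a double sum over linear extensions and splitting points.

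Next I would regroup the double sum according to the value set $D\eqdef\{\sigma_1,\dots,\sigma_\ell\}$ of the prefix. Since $\sigma$ is a linear extension of $\twist[S]\contact$, the set $D$ is a down-set of the contact poset $\contactLess{\twist[S]}$; conversely, for a fixed down-set $D$ the linear extensions of $\twist[S]\contact$ whose length-$|D|$ prefix has value set $D$ are exactly the concatenations $\rho_0\rho_1$ of a linear extension $\rho_0$ of $\twist[S]\contact|_D$ with one $\rho_1$ of $\twist[S]\contact|_{[m]\ssm D}$ (using that no arc of $\twist[S]\contact$ goes from $[m]\ssm D$ into $D$). Recalling that cuts $\gamma$ of $\twist[S]$ are in bijection with down-sets $D$, with $B\contact(\twist[S],\gamma)=\twist[S]\contact|_D$ and $A\contact(\twist[S],\gamma)=\twist[S]\contact|_{[m]\ssm D}$ (the degenerate cuts $D=\varnothing$ and $D=[m]$ producing the counit terms), this gives
\[
\coproduct\PTwist_{\twist[S]} \;=\; \sum_{\gamma}\Big(\sum_{\rho_0}\F_{\mathrm{std}(\rho_0)}\Big)\otimes\Big(\sum_{\rho_1}\F_{\mathrm{std}(\rho_1)}\Big),
\]
where $\rho_0$ runs over $\linearExtensions(B\contact(\twist[S],\gamma))$ and $\rho_1$ over $\linearExtensions(A\contact(\twist[S],\gamma))$. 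For the right factor I would then check that $A(\twist[S],\gamma)$ really is the twist with contact graph $\twist[S]\contact|_{[m]\ssm D}$: as $D$ is a down-set, a $\contactLess{\twist[S]}$-minimal element of $D$ is $\contactLess{\twist[S]}$-minimal in $[m]$, hence a source, so the pipes of $D$ may be removed one at a time by pipe deletions, and the lemma on pipe deletion identifies the contact graph of the resulting $(k,m-|D|)$-twist with $\mathrm{std}(\twist[S]\contact|_{[m]\ssm D})$. By Proposition~\ref{prop:ktwistInsertion}, $\mathrm{std}(\rho_1)$ then ranges over the $\surjectionPermBrick$-fiber of $A(\twist[S],\gamma)$, so the right factor equals $\PTwist_{A(\twist[S],\gamma)}$.

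The delicate point, and the step I expect to be the main obstacle, is the left factor: I must show that $\linearExtensions(B\contact(\twist[S],\gamma))$, after standardization, is a union of $k$-twist congruence classes — for then it equals $\sum_\tau\PTwist_{\surjectionPermBrick(\tau)}$ over a transversal of those classes, which is exactly the stated formula. Since standardization is compatible with the rewriting rule, it suffices to prove that $\linearExtensions(P)$, with $P\eqdef\twist[S]\contact|_D$, is closed under a single rewriting. The plan here is: given $\rho=UacV_1b_1\cdots V_kb_kW\in\linearExtensions(P)$ with $a<b_i<c$, choose any $\rho_1\in\linearExtensions(\twist[S]\contact|_{[m]\ssm D})$ and set $\sigma\eqdef\rho\rho_1$, a linear extension of $\twist[S]\contact$. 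Since $a,c,b_1,\dots,b_k$ all lie in $D$ and hence occur inside the prefix $\rho$, the rewriting applies to $\sigma$ and produces $\sigma'\equiv^k\sigma$; by Propositions~\ref{prop:ktwistInsertion} and~\ref{prop:ktwistCongruence}, $\surjectionPermBrick(\sigma')=\surjectionPermBrick(\sigma)=\twist[S]$, so $\sigma'$ is again a linear extension of $\twist[S]\contact$. As $\sigma$ places $a$ before $c$ while $\sigma'$ places $c$ before $a$, the pipes $a$ and $c$ are incomparable for $\contactLess{\twist[S]}$, hence for $P$; therefore $\rho'\eqdef UcaV_1b_1\cdots V_kb_kW$ still respects $P$, i.e. $\rho'\in\linearExtensions(P)$ (the reverse rewriting being symmetric). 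This closes $\linearExtensions(P)$ under $\equiv^k$ and completes the proof. I expect the only subtleties beyond this to be purely bookkeeping: the compatibility of $\mathrm{std}$ with $\equiv^k$ and with concatenation, and the identification of degenerate cuts with the counit terms.
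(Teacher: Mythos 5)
Your proof is correct, but it follows a genuinely different route from the paper's. The paper proves this proposition by first invoking Theorem~\ref{theo:twistSubalgebra} to know \emph{a priori} that $\coproduct\PTwist_{\twist[S]}$ decomposes as an integer combination of tensors $\PTwist_{\twist} \otimes \PTwist_{\twist'}$; it then only needs to characterize \emph{which} such tensors occur (by assigning to any $\sigma \in \linearExtensions(\twist[S]\contact)$ with $\sigma \in \tau \convolution \tau'$ the cut separating $\sigma(\{1,\dots,n\})$ from $\sigma(\{n+1,\dots,n+n'\})$), and separately to check that all coefficients are $0$ or $1$, which it does by arguing that $\gamma$ is recoverable from $A(\twist[S],\gamma)$ and the class of $\tau$ from $\surjectionPermBrick(\tau)$. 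You instead expand $\coproduct\PTwist_{\twist[S]}$ directly in the $\F$-basis, regroup the double sum $(\sigma,\ell)$ by the down-set $D$ of $\contactLess{\twist[S]}$ formed by the prefix values, identify the right tensor factor via the pipe-deletion lemma (Lemma~\ref{lem:insertionDeletion} together with Proposition~\ref{prop:ktwistInsertion}), and then prove from scratch, via the appending trick $\sigma = \rho\rho_1$, that $\linearExtensions(B\contact(\twist[S],\gamma))$ standardizes to a union of $k$-twist congruence classes. In effect you re-derive the coproduct-stability half of Theorem~\ref{theo:twistSubalgebra} rather than citing it, which makes your argument longer but self-contained; as a bonus, the regrouping is manifestly a bijection from $(\sigma,\ell)$-pairs to $(\gamma,\rho_0,\rho_1)$-triples, so the coefficient count in the stated formula comes out automatically, whereas the paper addresses this with a separate booleanness argument at the end. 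One small caveat worth making explicit: for your argument that $a,c$ are incomparable in $B\contact(\twist[S],\gamma)$ to propagate from their incomparability in $\contactLess{\twist[S]}$, you use that a directed path in the restricted graph $\twist[S]\contact|_D$ is also a path in $\twist[S]\contact$; this is correct but deserves a sentence. Likewise, when peeling off a $\contactLess{\twist[S]}$-minimal element $d$ of $D$ you should note that the remaining part of $D$ stays a down-set of the reduced contact poset, so the pipe deletions can indeed be iterated; again correct, but worth saying.
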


\begin{proof}
By Theorem~\ref{theo:twistSubalgebra}, any element of~$\coproduct\twist[S]$ is of the form~$\surjectionPermBrick(\tau) \otimes \surjectionPermBrick(\tau')$ for some permutations~$\tau \in \fS_n$ and~$\tau' \in \fS_{n'}$ such that~$\tau \convolution \tau'$ contains a linear extension~$\sigma$ of~$\twist[S]\contact$. Let~$\gamma$ denote the cut of~$\twist[S]$ that separates~$\sigma(\{1, \dots, n\})$ from~$\sigma(\{n+1, \dots n+n'\})$. Then~$\tau$ and~$\tau'$ are linear extensions of~$B\contact(\twist[S], \gamma)$ and~$A\contact(\twist[S], \gamma)$ respectively, so that~$\surjectionPermBrick(\tau) \otimes \surjectionPermBrick(\tau')$ indeed appear in the sum on the right hand side. Conversely, for any cut~$\gamma$ of~$\twist[S]$ and linear extensions~$\tau$ of~$B\contact(\twist[S], \gamma)$ and~$\tau'$ of~$A\contact(\twist[S], \gamma)$, there is a linear extension~$\sigma$ of~$\twist[S]\contact$ in~$\tau \convolution \tau'$, so that~$\surjectionPermBrick(\tau) \otimes A(\twist[S], \gamma) = \surjectionPermBrick(\tau) \otimes \surjectionPermBrick(\tau')$ appears in~$\coproduct\twist[S]$. Finally, we have to prove that the coproduct is boolean, meaning that only~$0/1$ coefficients may appear: this follows from the fact that we can reconstruct the cut~$\gamma$ from~$A(\twist[S], \gamma)$ and the $k$-twist congruence class of~$\tau$ from~$\surjectionPermBrick(\tau)$.
\end{proof}

\fref{fig:exmCoproduct} illustrates the coproduct of a $2$-twist in~$\Twist[2]$.

\begin{figure}[h]
    \begin{align*}
    \coproduct \PTwist\raisebox{.1cm}{$_{\hspace{-.2cm}\includegraphics[scale=.5]{exmCoproduct/2twist31542}}$} \;\; = & \;\; \coproduct (\F_{31542} + \F_{35142}) \\[-.7cm]
    = &
    \phantom{+ \;\;}  1 \otimes (\F_{31542} + \F_{35142})
    \; + \; \F_1 \otimes (\F_{1432} + \F_{4132})
    \; + \; \F_{21} \otimes \F_{321}
    \; + \; \F_{12} \otimes \F_{132} \\
    & + \; \F_{213} \otimes \F_{21}
    \; + \; \F_{231} \otimes \F_{21}
    \; + \; \F_{2143} \otimes \F_1
    \; + \; \F_{2413} \otimes \F_1
    \; + \; (\F_{31542} + \F_{35142}) \otimes 1 \\[.3cm]
    = &
    \phantom{+ \;\;} 1 \otimes \PTwist\raisebox{.1cm}{$_{\hspace{-.2cm}\includegraphics[scale=.5]{exmCoproduct/2twist31542}}$}
    \; + \; \PTwist\raisebox{.1cm}{$_{\hspace{-.2cm}\includegraphics[scale=.5]{exmCoproduct/2twist1}}$} \otimes \PTwist\raisebox{.1cm}{$_{\hspace{-.2cm}\includegraphics[scale=.5]{exmCoproduct/2twist1432}}$}
    \; + \; \PTwist\raisebox{.1cm}{$_{\hspace{-.2cm}\includegraphics[scale=.5]{exmCoproduct/2twist21}}$} \otimes \PTwist\raisebox{.1cm}{$_{\hspace{-.2cm}\includegraphics[scale=.5]{exmCoproduct/2twist321}}$} 
    \; + \; \PTwist\raisebox{.1cm}{$_{\hspace{-.2cm}\includegraphics[scale=.5]{exmCoproduct/2twist12}}$} \otimes \PTwist\raisebox{.1cm}{$_{\hspace{-.2cm}\includegraphics[scale=.5]{exmCoproduct/2twist132}}$} \\
    & + \; \PTwist\raisebox{.1cm}{$_{\hspace{-.2cm}\includegraphics[scale=.5]{exmCoproduct/2twist213}}$} \otimes \PTwist\raisebox{.1cm}{$_{\hspace{-.2cm}\includegraphics[scale=.5]{exmCoproduct/2twist21}}$} 
    \; + \; \PTwist\raisebox{.1cm}{$_{\hspace{-.2cm}\includegraphics[scale=.5]{exmCoproduct/2twist231}}$} \otimes \PTwist\raisebox{.1cm}{$_{\hspace{-.2cm}\includegraphics[scale=.5]{exmCoproduct/2twist21}}$} 
    \; + \; \PTwist\raisebox{.1cm}{$_{\hspace{-.2cm}\includegraphics[scale=.5]{exmCoproduct/2twist2143}}$} \otimes \PTwist\raisebox{.1cm}{$_{\hspace{-.2cm}\includegraphics[scale=.5]{exmCoproduct/2twist1}}$} 
    \; + \; \PTwist\raisebox{.1cm}{$_{\hspace{-.2cm}\includegraphics[scale=.5]{exmCoproduct/2twist2413}}$} \otimes \PTwist\raisebox{.1cm}{$_{\hspace{-.2cm}\includegraphics[scale=.5]{exmCoproduct/2twist1}}$}
    \; + \; \PTwist\raisebox{.1cm}{$_{\hspace{-.2cm}\includegraphics[scale=.5]{exmCoproduct/2twist31542}}$} \otimes 1
    \end{align*}
	\caption{An example of coproduct in the $2$-twist algebra~$\Twist[2]$.}
	\label{fig:exmCoproduct}
\end{figure}

\para{Matriochka algebras}
We now connect the twist algebra to the $k$-recoil algebra~$\Rec$ defined as the Hopf subalgebra of~$\FQSym$ generated by the elements
\[
\XRec_{\orientation} \eqdef \sum_{\substack{\tau \in \fS \\ \surjectionPermZono(\tau) = \orientation}} \F_\tau,
\]
for all acyclic orientations~$\orientation$ of the graph~$\Gkn$ for all~$n \in \N$. This algebra was first defined by J.-C.~Novelli, C.~Reutenauer and J.-Y.~Thibon in~\cite{NovelliReutenauerThibon} (the dual of~$\Rec$ is denoted~$\DSym$ in their paper).
The commutative diagram of Proposition~\ref{prop:latticeHomomorphisms} ensures that
\[
\XRec_{\orientation} = \sum_{\substack{\twist \in \AcyclicTwists \\ \surjectionBrickZono(\twist) = \orientation}} \PTwist_{\twist},
\]
and thus that~$\Rec$ is a Hopf subalgebra of~$\Twist$.

\begin{remark}[Algebraic inclusions]
\label{rem:algebraicInclusions}
Following Remarks~\ref{rem:combinatorialInclusions} and~\ref{rem:geometricInclusions}, note that we have in fact the following inclusions of subalgebras for~$k > \ell$:
\[
\begin{tikzpicture}
  \matrix (m) [matrix of math nodes,row sep=2em,column sep=1em,minimum width=2em]
  {
	\Twist[k] 	& 		 & \Twist[\ell] \\
				& \FQSym & 				\\
	\Rec[k] 	&		 & \Rec[\ell] 	\\
  };
  \path[->>]
    (m-2-2) edge[draw=none] node [sloped, allow upside down] {$\supseteq$} (m-1-1)
		    edge[draw=none] node [sloped, allow upside down] {$\supseteq$} (m-1-3)
            edge[draw=none] node [sloped, allow upside down] {$\supseteq$} (m-3-1)
            edge[draw=none] node [sloped, allow upside down] {$\supseteq$} (m-3-3)
    (m-1-1) edge[draw=none] node [sloped, allow upside down] {$\supseteq$} (m-3-1)
    (m-1-3) edge[draw=none] node [sloped, allow upside down] {$\supseteq$} (m-3-3)
    (m-1-1) edge[draw=none] node [sloped, allow upside down] {$\supseteq$} (m-1-3)
    (m-3-1) edge[draw=none] node [sloped, allow upside down] {$\supseteq$} (m-3-3);
\end{tikzpicture}
\]
\end{remark}

\subsection{Quotient algebra of~$\FQSym^*$}
\label{subsec:quotientAlgebra}

The following statement is automatic from Theorem~\ref{theo:twistSubalgebra}.

\begin{theorem}
The graded dual~$\Twist[k*]$ of the $k$-twist algebra is the quotient of~$\FQSym^*$ under the $k$-twist congruence~$\equiv^k$. The dual basis~$\QTwist_{\twist}$ of~$\PTwist_{\twist}$ is expressed as~$\QTwist_{\twist} = \pi(\G_\tau)$, where~$\pi$ is the quotient map and~$\tau$ is any permutation such that~$\surjectionPermBrick(\tau) = \twist$.
\end{theorem}

Similarly as in the previous section, we try to describe combinatorially the product and coproduct of $\QTwist$-basis elements of~$\Twist[k*]$ in terms of operations on Cambrian trees.

\para{Product} 
Once more, our description of the product in the dual twist algebra is not as simple as the product in~$\PBT^*$, and is very closed to the description of the direct computation using the product of~$\FQSym^*$. We use the following notation. For~$X = \{x_1 < \dots < x_n\} \in \binom{[n+n']}{n}$, $\tau \in \fS_n$, and~$\twist' \in \AcyclicTwists(n')$, we denote by~$\twist' \insertion{} (\tau \cdot X)$ the result of iteratively inserting~$x_{\tau_n}, \dots, x_{\tau_1}$ in the $k$-twist~$\twist'$ relabeled increasingly by~$[n+n'] \ssm X$.

\begin{proposition}
For any acyclic $k$-twists~$\twist \in \AcyclicTwists(n)$ and~$\twist' \in \AcyclicTwists(n')$, the product~$\QTwist_{\twist} \product \QTwist_{\twist'}$ is given~by
\[
\QTwist_{\twist} \product \QTwist_{\twist'}  = \sum_X \QTwist_{\twist' \insertion{} (\tau \cdot X)}
\]
where~$X$ runs over~$\binom{[n+n']}{n}$ and~$\tau$ is an arbitrary permutation such that~$\surjectionPermBrick(\tau) = \twist$.
\end{proposition}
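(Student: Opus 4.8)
The plan is to compute the left-hand side through the canonical projection $\pi\colon \FQSym^* \to \Twist[k*]$ and then recognize each resulting permutation, via the definition of the insertion algorithm, as one of the twists $\twist' \insertion{} (\tau \cdot X)$. Concretely, fix any $\tau \in \fS_n$ with $\surjectionPermBrick(\tau) = \twist$ and any $\tau' \in \fS_{n'}$ with $\surjectionPermBrick(\tau') = \twist'$. Since $\pi$ is an algebra morphism, $\QTwist_{\twist} = \pi(\G_\tau)$ and $\pi(\G_\sigma) = \QTwist_{\surjectionPermBrick(\sigma)}$ for all $\sigma$, the product rule in $\FQSym^*$ gives
\[
\QTwist_{\twist} \product \QTwist_{\twist'} = \pi(\G_\tau \product \G_{\tau'}) = \sum_{\sigma \in \tau \convolution \tau'} \QTwist_{\surjectionPermBrick(\sigma)}.
\]
So the statement reduces to identifying the (multi)set $\set{\surjectionPermBrick(\sigma)}{\sigma \in \tau \convolution \tau'}$ with $\set{\twist' \insertion{} (\tau \cdot X)}{X \in \binom{[n+n']}{n}}$.

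For this, I would make the convolution explicit. The map $\pi_0 \mapsto \pi_0([n])$ is a bijection from $\fS^{(n,n')}$ onto $\binom{[n+n']}{n}$, and for $X = \{x_1 < \dots < x_n\}$ with complement $[n+n'] \ssm X = \{y_1 < \dots < y_{n'}\}$ the corresponding element of $\tau \convolution \tau'$ is
\[
\sigma_X = [\, x_{\tau_1}, \dots, x_{\tau_n},\; y_{\tau'_1}, \dots, y_{\tau'_{n'}} \,],
\]
i.e. the values of $X$ placed on the first $n$ positions in the pattern of $\tau$, followed by the values of $[n+n'] \ssm X$ placed on the last $n'$ positions in the pattern of $\tau'$. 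Since $\sigma \mapsto \sigma \circ (\tau \bar\tau')^{-1}$ recovers $\pi_0$, the permutations $\sigma_X$ are pairwise distinct, so summing over $\sigma \in \tau \convolution \tau'$ is the same as summing over $X \in \binom{[n+n']}{n}$, and it remains only to prove $\surjectionPermBrick(\sigma_X) = \twist' \insertion{} (\tau \cdot X)$ for each $X$.

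This last equality I would read off directly from the definition of $\surjectionPermBrick$: it inserts the letters of $\sigma_X$ from right to left. The final $n'$ insertions use $y_{\tau'_{n'}}, \dots, y_{\tau'_1}$; since $(y_{\tau'_1}, \dots, y_{\tau'_{n'}})$ is order-isomorphic to $\tau'$ and pipe insertion depends only on the rank of the inserted label among those already present (Definition~\ref{def:insertion}), these produce exactly $\twist' = \surjectionPermBrick(\tau')$ with its relevant pipes relabeled increasingly by $[n+n'] \ssm X$. Inserting the remaining letters $x_{\tau_n}, \dots, x_{\tau_1}$ into this relabeled twist is, by the very definition of the notation, $\twist' \insertion{} (\tau \cdot X)$. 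This yields the displayed formula; and since the left-hand side does not depend on the chosen representative $\tau$ of the fiber $\surjectionPermBrick^{-1}(\twist)$, neither does the right-hand side, which justifies the word ``arbitrary''.

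The argument is essentially bookkeeping once the explicit form of $\tau \convolution \tau'$ is written down, so I do not expect a genuine obstacle. The one point to be careful about is the second sentence of the previous paragraph: the ``relabeling-equivariance'' of the insertion procedure, namely that inserting a word of distinct letters yields a $k$-twist depending only on the relative order of those letters. This is immediate from Definition~\ref{def:insertion} (and is exactly what makes the notation $\twist' \insertion{} (\tau \cdot X)$ well posed), but it is the only non-formal ingredient and is worth spelling out.
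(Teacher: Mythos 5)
Your proof is correct and follows essentially the same route as the paper's: project the product $\G_\tau\product\G_{\tau'}$ through $\pi$, index the convolution by the choice of $X\in\binom{[n+n']}{n}$, and observe that the right-to-left insertion of $\sigma_X$ first builds $\twist'$ relabeled by $[n+n']\ssm X$ and then inserts the remaining values in the pattern of $\tau$. The paper's proof is more compressed but makes exactly these moves; your version simply makes the bijection $\fS^{(n,n')}\leftrightarrow\binom{[n+n']}{n}$ and the relabeling-equivariance of pipe insertion explicit.
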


\begin{proof}
Consider~$\tau \in \fS_n$ and~$\tau' \in \fS_{n'}$ such that~$\surjectionPermBrick(\tau) = \twist$ and~$\surjectionPermBrick(\tau') = \twist'$, and a permutation~$\sigma$ in the convolution~$\tau \convolution \tau'$. Let~$X$ denote the first~$n$ values in~$\sigma$. Since the relative order of the last~$n'$ entries in~$\sigma$ is that of the entries of~$\tau'$, the insertion of the last~$n'$ values creates a copy of~$\twist'$. The remaining entries are then inserted in this copy of~$\twist'$ at the positions given by~$X$ according to the order given by of~$\tau$. The result immediately follows.
\end{proof}

\fref{fig:exmProductDual} illustrates the product of two $2$-twists in~$\Twist[2*]$.

\begin{figure}[h]
	\centerline{$
	\begin{array}{c@{\;=\;}c@{\;+\;}c@{\;+\;}c@{\;+\;}c@{\;+\;}c@{\;+\;}c}
	\QTwist\raisebox{.1cm}{$_{\hspace{-.2cm}\includegraphics[scale=.5]{exmProductDual/2twist12}}$} \product \QTwist\raisebox{.1cm}{$_{\hspace{-.2cm}\includegraphics[scale=.5]{exmProductDual/2twist21}}$}
	& \multicolumn{6}{@{}l}{\G_{\red 12} \product \G_{\blue 21}}
	\\[-.3cm]
	& \G_{{\red 12}{\blue 43}}
	& \G_{{\red 13}{\blue 42}}
	& \G_{{\red 14}{\blue 32}}
	& \G_{{\red 23}{\blue 41}}
	& \G_{{\red 24}{\blue 31}}
	& \G_{{\red 34}{\blue 21}}
	\\[.1cm]
	& \QTwist\raisebox{.1cm}{$_{\hspace{-.2cm}\includegraphics[scale=.5]{exmProductDual/2twist1243}}$}
	& \QTwist\raisebox{.1cm}{$_{\hspace{-.2cm}\includegraphics[scale=.5]{exmProductDual/2twist1342}}$}
	& \QTwist\raisebox{.1cm}{$_{\hspace{-.2cm}\includegraphics[scale=.5]{exmProductDual/2twist1432}}$}
	& \QTwist\raisebox{.1cm}{$_{\hspace{-.2cm}\includegraphics[scale=.5]{exmProductDual/2twist2341}}$}
	& \QTwist\raisebox{.1cm}{$_{\hspace{-.2cm}\includegraphics[scale=.5]{exmProductDual/2twist2431}}$}
	& \QTwist\raisebox{.1cm}{$_{\hspace{-.2cm}\includegraphics[scale=.5]{exmProductDual/2twist3421}}$}
	\end{array}
	$}
	\caption{An example of product in the dual $2$-twist algebra~$\Twist[2*]$.}
	\label{fig:exmProductDual}
\end{figure}

\vspace{-.3cm}
\enlargethispage{-.4cm}
\para{Coproduct} 
Our description of the coproduct is more satisfactory. It is a special case of a coproduct on arbitrary pipe dreams studied by N.~Bergeron and C.~Ceballos~\cite{BergeronCeballos}. We need the following notations, illustrated in \fref{fig:untangle}. For an acyclic $(k,m)$-twist~$\twist[S]$ and a position~${p \in \{0, \dots, m\}}$, we define two $k$-twists~$L(\twist[S],p) \in \AcyclicTwists(p)$ and~$R(\twist[S],p) \in \AcyclicTwists(m-p)$ as follows. The twist~$L(\twist[S],p)$ is obtained by erasing the last~$m-p$ pipes in~$\twist[S]$ and glide the elbows of the remaining pipes as northwest as possible. More precisely, each elbow~$\elb$ of one of the first $p$ pipes is translated one step north (resp.~west) for each of the last~$m-p$ pipes passing north (resp.~west) of~$\elb$. The definition is similar for~$R(\twist[S],p)$, except that we erase the first~$p$ pipes instead of the last~$m-p$ pipes.

\begin{figure}[t]
	\centerline{\includegraphics[scale=1.4]{untangle}}
	\caption{A twist~$\twist[S]$ (left) and the two twists~$L(\twist[S],p)$ (middle) and~$R(\twist[S],p)$ (right).}
	\label{fig:untangle}
\end{figure}

\begin{proposition}
For any acyclic $k$-twist~$\twist[S]  \in \AcyclicTwists(m)$, the coproduct~$\coproduct\QTwist_{\twist[S]}$ is given by
\[
\coproduct\QTwist_{\twist[S]} = \sum_{p \in \{0, \dots, m\}} \QTwist_{L(\twist[S],p)} \otimes \QTwist_{R(\twist[S], p)}.
\]
\end{proposition}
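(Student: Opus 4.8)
The plan is to mimic the proof of the product formula (Proposition~\ref{prop:product}), using that the canonical projection~$\pi$ realizing~$\Twist[k*]$ as a quotient of~$\FQSym^*$ is a morphism of coalgebras (this is dual to Theorem~\ref{theo:twistSubalgebra}). First I would recall that the coproduct of~$\FQSym^*$ dual to the shifted shuffle is the \emph{unshuffling of values}: for~$\sigma \in \fS_m$,
\[
\coproduct \G_\sigma = \sum_{p = 0}^{m} \G_{\sigma_{\le p}} \otimes \G_{\mathrm{std}(\sigma_{> p})},
\]
where~$\sigma_{\le p}$ (resp.~$\sigma_{> p}$) denotes the subword of~$\sigma$ using the letters in~$\{1, \dots, p\}$ (resp.~in~$\{p+1, \dots, m\}$): indeed~$\sigma_{\le p}$ is already a permutation of~$[p]$, and for each~$p$ there is exactly one pair of permutations~$(\tau, \tau')$ with~$|\tau| = p$ and~$\sigma \in \tau \shiftedShuffle \tau'$, namely~$(\sigma_{\le p}, \mathrm{std}(\sigma_{> p}))$. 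Choosing any linear extension~$\sigma$ of~$\twist[S]\contact$, so that~$\QTwist_{\twist[S]} = \pi(\G_\sigma)$, and applying~$\pi \otimes \pi$, I obtain
\[
\coproduct \QTwist_{\twist[S]} = \sum_{p = 0}^{m} \QTwist_{\surjectionPermBrick(\sigma_{\le p})} \otimes \QTwist_{\surjectionPermBrick(\mathrm{std}(\sigma_{> p}))}.
\]
Since the~$\QTwist$ form a basis and the two tensor factors have respective sizes~$p$ and~$m-p$, each summand is independent of the choice of~$\sigma$, and it only remains to identify~$\surjectionPermBrick(\sigma_{\le p})$ with~$L(\twist[S], p)$ and~$\surjectionPermBrick(\mathrm{std}(\sigma_{> p}))$ with~$R(\twist[S], p)$.

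The heart of the argument is the claim that \emph{restricting a twist to a set of its pipes commutes with pipe insertion}. For a relabelled $k$-twist~$\twist$, a subset~$V$ of its labels, and an integer~$q$, let~$\twist_V$ be the $k$-twist obtained by erasing from~$\twist$ the pipes whose label does not lie in~$V$ and sliding the surviving elbows north and west exactly as in the definitions of~$L$ and~$R$ (so that~$L(\twist[S], p) = \twist[S]_{\{1, \dots, p\}}$ and, after relabelling, $R(\twist[S], p) = \twist[S]_{\{p+1, \dots, m\}}$). I would prove
\[
(\twist \insertion{} q)_V =
\begin{cases}
\twist_V \insertion{} q & \text{if } q \in V, \\
\twist_V & \text{if } q \notin V.
\end{cases}
\]
Granting this, both identifications follow by induction on~$m = |\sigma|$: writing~$\sigma = \sigma_1 \cdot \sigma'$ one has~$\surjectionPermBrick(\sigma) = \surjectionPermBrick(\sigma') \insertion{} \sigma_1$, while~$\sigma_{\le p}$ equals~$\sigma_1 \cdot \sigma'_{\le p}$ if~$\sigma_1 \le p$ and equals~$\sigma'_{\le p}$ if~$\sigma_1 > p$; in both cases the induction hypothesis~$\surjectionPermBrick(\sigma'_{\le p}) = \bigl(\surjectionPermBrick(\sigma')\bigr)_{\{1, \dots, p\}}$ together with the displayed commutation (applied with~$V = \{1, \dots, p\}$) gives~$\surjectionPermBrick(\sigma_{\le p}) = \bigl(\surjectionPermBrick(\sigma)\bigr)_{\{1, \dots, p\}} = L(\twist[S], p)$. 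The identification of~$R(\twist[S], p)$ is the mirror image, erasing the first~$p$ pipes instead of the last~$m - p$.

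I expect this commutation to be the main obstacle, although it should split into two elementary verifications on the relabelled triangular shape, using Definitions~\ref{def:insertion} and~\ref{def:deletion}. The first is that erasing a single, freshly inserted pipe and sliding north and west undoes the insertion: since the inserted pipe is a source (Lemma~\ref{lem:source}), this reduces to checking that the erasing-and-sliding of a source pipe coincides with the formal pipe deletion of Definition~\ref{def:deletion}, which inverts the insertion by Lemma~\ref{lem:insertionDeletion}. The second is that erasing a \emph{set} of pipes and sliding can be carried out one pipe at a time, in any order, because the displacement of a surviving elbow only records, coordinate by coordinate, how many erased pipes pass to its north and to its west. Combining the two, the case~$q \notin V$ amounts to erasing~$q$ first (which recovers~$\twist$) and then erasing the pipes with labels outside~$V$; the case~$q \in V$ reduces to a local check that the row and column created by the insertion of~$q$ survive the erasing and land exactly in the position that Definition~\ref{def:insertion} assigns them inside~$\twist_V \insertion{} q$.

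Finally, the formula can be cross-checked against Proposition~\ref{prop:product} by duality: the coefficient of~$\QTwist_{\twist} \otimes \QTwist_{\twist'}$ in~$\coproduct\QTwist_{\twist[S]}$ equals the coefficient of~$\PTwist_{\twist[S]}$ in the product~$\PTwist_{\twist} \product \PTwist_{\twist'}$, hence is~$1$ if~$\twist[S]$ lies in the increasing flip interval~$[\underprod{\twist}{\twist'}, \overprod{\twist}{\twist'}]$ and~$0$ otherwise. Since there is exactly one summand of each bidegree~$(p, m-p)$, this says that~$\bigl(L(\twist[S], p), R(\twist[S], p)\bigr)$ is the unique pair~$(\twist, \twist')$ with~$|\twist| = p$, $|\twist'| = m - p$, and~$\twist[S] \in [\underprod{\twist}{\twist'}, \overprod{\twist}{\twist'}]$; turning this remark into an alternative proof would instead require checking directly that~$\twist[S]$ always belongs to~$[\underprod{L(\twist[S],p)}{R(\twist[S],p)}, \overprod{L(\twist[S],p)}{R(\twist[S],p)}]$, which looks roughly as delicate as the commutation above.
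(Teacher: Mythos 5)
Your proof is correct and follows essentially the same route as the paper: project the unshuffling coproduct of $\FQSym^*$ onto $\Twist[k*]$ via a chosen linear extension $\sigma$ of $\twist[S]\contact$, then identify $\surjectionPermBrick(\sigma_{\le p})$ with $L(\twist[S],p)$ and $\surjectionPermBrick(\mathrm{std}(\sigma_{>p}))$ with $R(\twist[S],p)$. Where the paper simply declares this last identification ``immediate to see,'' you supply the substance behind it, namely the commutation of pipe restriction with pipe insertion and the resulting induction on $|\sigma|$; that is precisely the content the paper leaves implicit, so this is an elaboration rather than a different argument.
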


\begin{proof}
Consider~$\sigma \in \fS_m$ such that~$\surjectionPermBrick(\sigma) = \twist[S]$, let~$p \in \{0, \dots, m\}$, and let~$\tau \in \fS_p, \tau' \in \fS_{m-p}$ be the two permutations such that~$\sigma \in \tau \shiftedShuffle \tau'$. By definition, $\tau$ (resp.~$\tau'$) is given by the relative order of the first~$p$ (resp.~last~$m-p$) values of~$\sigma$. It is immediate to see that the insertion process then gives~$\surjectionPermBrick(\tau) = L(\twist[S],p)$ and~$\surjectionPermBrick(\tau') = R(\twist[S],p)$. The result follows.
\end{proof}

\fref{fig:exmCoproductDual} illustrates the coproduct of a $2$-twist in~$\Twist[2*]$.

\begin{figure}[h]
	\centerline{$
	\begin{array}{c@{\;=\;}c@{\;+\;}c@{\;+\;}c@{\;+\;}c@{\;+\;}c@{\;+\;}c}
	\coproduct \QTwist\raisebox{.1cm}{$_{\hspace{-.2cm}\includegraphics[scale=.5]{exmCoproductDual/2twist31542}}$}
	& \multicolumn{6}{@{}l}{\coproduct \G_{31542}}
	\\[-.7cm]
	& 1 \otimes \G_{31542}
	& \G_1 \otimes \G_{2431}
	& \G_{12} \otimes \G_{132}
	& \G_{312} \otimes \G_{21}
	& \G_{3142} \otimes \G_1
	& \G_{31542} \otimes 1
	\\[.3cm]
	& 1 \otimes \QTwist\raisebox{.1cm}{$_{\hspace{-.2cm}\includegraphics[scale=.5]{exmCoproductDual/2twist31542}}$}
	& \QTwist\raisebox{.1cm}{$_{\hspace{-.2cm}\includegraphics[scale=.5]{exmCoproductDual/2twist1}}$} \otimes \QTwist\raisebox{.1cm}{$_{\hspace{-.2cm}\includegraphics[scale=.5]{exmCoproductDual/2twist2431}}$}
	& \QTwist\raisebox{.1cm}{$_{\hspace{-.2cm}\includegraphics[scale=.5]{exmCoproductDual/2twist12}}$} \otimes \QTwist\raisebox{.1cm}{$_{\hspace{-.2cm}\includegraphics[scale=.5]{exmCoproductDual/2twist132}}$}
	& \QTwist\raisebox{.1cm}{$_{\hspace{-.2cm}\includegraphics[scale=.5]{exmCoproductDual/2twist312}}$} \otimes \QTwist\raisebox{.1cm}{$_{\hspace{-.2cm}\includegraphics[scale=.5]{exmCoproductDual/2twist21}}$}
	& \QTwist\raisebox{.1cm}{$_{\hspace{-.2cm}\includegraphics[scale=.5]{exmCoproductDual/2twist3142}}$} \otimes \QTwist\raisebox{.1cm}{$_{\hspace{-.2cm}\includegraphics[scale=.5]{exmCoproductDual/2twist1}}$}
	& \QTwist\raisebox{.1cm}{$_{\hspace{-.2cm}\includegraphics[scale=.5]{exmCoproductDual/2twist31542}}$} \otimes 1
	\end{array}
	$}
	\caption{An example of coproduct in the dual $2$-twist algebra~$\Twist[2*]$.}
	\label{fig:exmCoproductDual}
\end{figure}

\subsection{Multiplicative bases and irreducible elements}
\label{subsec:multiplicativeBases}

In this section, we define multiplicative bases of~$\Twist$ and study the indecomposable elements of~$\Twist$ for these bases. For an acyclic $(k,n)$-twist~$\twist$, we define
\[
\ETwist^{\twist} \eqdef \sum_{\twist \le \twist'} \PTwist_{\twist'}
\qquad\text{and}\qquad
\HTwist^{\twist} \eqdef \sum_{\twist' \le \twist} \PTwist_{\twist'},
\]
where~$\le$ denotes the increasing flip lattice on acyclic $(k,n)$-twists. As the elements~$\ETwist^{\twist}$ and~$\HTwist^{\twist}$ have symmetric properties, we focus our analysis on~$\ETwist^{\twist}$. The reader is invited to translate the statements and proofs below to~$\HTwist^{\twist}$. We first observe that these elements can also be seen as elements of the multiplicative bases~$(\EFQSym^\tau)_{\tau \in \fS}$ and~$(\HFQSym^\tau)_{\tau \in \fS}$ of~$\FQSym$.

\begin{lemma}
\label{lem:indecomposableFQSymTwist}
For any acyclic $k$-twist~$\twist$, we have~$\ETwist^{\twist} = \EFQSym^\mu$ and~$\HTwist^{\twist} = \HFQSym^\omega$, where~$\mu$ and~$\omega$ respectively denote the weak order minimal and maximal permutations in the fiber of~$\twist$ under~$\surjectionPermBrick$.
\end{lemma}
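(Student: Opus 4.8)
The plan is to reduce the claimed identity of elements of $\FQSym$ to an equality of index sets of permutations, and then to read off that equality from the lattice dictionary of Proposition~\ref{prop:latticeQuotient} together with Theorem~\ref{theo:ktwistLatticeCongruence}. I will only carry out the case $\ETwist^{\twist} = \EFQSym^\mu$; the identity $\HTwist^{\twist} = \HFQSym^\omega$ follows by the mirror argument, replacing the down projection~$\projDown$ by the up projection~$\projUp$, ``minimum'' by ``maximum'', and reversing all order relations.

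First I would unfold both sides in the $\F$-basis. Using $\PTwist_{\twist'} = \sum_{\surjectionPermBrick(\tau) = \twist'} \F_\tau$ and the fact that $\twist$ is a $(k,n)$-twist, I get
\[
\ETwist^{\twist} = \sum_{\twist \le \twist'} \PTwist_{\twist'} = \sum_{\twist \le \twist'} \; \sum_{\substack{\tau \in \fS_n \\ \surjectionPermBrick(\tau) = \twist'}} \F_\tau = \sum_{\substack{\tau \in \fS_n \\ \twist \,\le\, \surjectionPermBrick(\tau)}} \F_\tau,
\]
where $\le$ on the left is the increasing flip order on acyclic $(k,n)$-twists, while $\EFQSym^\mu = \sum_{\mu \le \tau} \F_\tau$ with $\le$ the weak order on $\fS_n$. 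So it suffices to prove the set equality
\[
\set{\tau \in \fS_n}{\twist \le \surjectionPermBrick(\tau)} = \set{\tau \in \fS_n}{\mu \le \tau}.
\]
For a $(k,n)$-twist $\twist'$, write $\mu(\twist')$ for the weak order minimum of the fiber $\surjectionPermBrick^{-1}(\twist')$, which is a weak order interval by Proposition~\ref{prop:intervals}, so that in particular $\mu = \mu(\twist)$. Two earlier facts then do all the work. By the proof of Proposition~\ref{prop:latticeQuotient}, $\surjectionPermBrick$ induces an isomorphism from the increasing flip lattice on acyclic $(k,n)$-twists to the quotient of the weak order by $\equiv^k$; combined with the description of the quotient order recalled before Theorem~\ref{theo:ktwistLatticeCongruence}, this gives $\twist \le \twist' \iff \mu(\twist) \le \mu(\twist')$ in the weak order. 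By Theorem~\ref{theo:ktwistLatticeCongruence}, $\equiv^k$ is a lattice congruence, so the down projection $\projDown$ is order preserving, satisfies $\projDown(\tau) = \mu(\surjectionPermBrick(\tau))$ for all $\tau$ (since the fiber of $\surjectionPermBrick(\tau)$ is the $\equiv^k$-class of $\tau$ by Proposition~\ref{prop:ktwistCongruence}), and fixes minimal elements, so $\projDown(\mu) = \mu$. Now both inclusions are one line: if $\twist \le \surjectionPermBrick(\tau)$ then $\mu = \mu(\twist) \le \mu(\surjectionPermBrick(\tau)) = \projDown(\tau) \le \tau$; conversely, if $\mu \le \tau$, applying $\projDown$ yields $\mu = \projDown(\mu) \le \projDown(\tau) = \mu(\surjectionPermBrick(\tau))$, hence $\twist \le \surjectionPermBrick(\tau)$.

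I do not expect a genuine obstacle here: the argument is purely a translation between the three incarnations of the increasing flip lattice already set up in the paper, and no new combinatorics is needed. The only point that must be kept straight is the direction of the projection used, namely that $\ETwist$ (an up-set in the flip lattice) pairs with the \emph{minimum} projection $\projDown$ and with $\mu$ being a fixed point of $\projDown$, whereas $\HTwist$ (a down-set) pairs with the \emph{maximum} projection $\projUp$ and with $\omega$ being a fixed point of $\projUp$; I would also double-check that the quotient order on $\equiv^k$-classes is literally the restriction of the weak order to the class minima, which is exactly the statement recalled in the definition of order congruence preceding Theorem~\ref{theo:ktwistLatticeCongruence}.
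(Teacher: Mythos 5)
Your proposal is correct and follows essentially the same route as the paper: the paper's proof is precisely the chain of equalities you write in the first display, terminating with the unexplained identification $\set{\tau : \twist \le \surjectionPermBrick(\tau)} = \set{\tau : \mu \le \tau}$. You have simply supplied the justification for that middle step, deduced from the lattice-congruence machinery (order-preserving projections and the quotient isomorphism), which the paper leaves implicit as a consequence of Proposition~\ref{prop:latticeQuotient} and Theorem~\ref{theo:ktwistLatticeCongruence}.
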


\begin{proof}
We directly obtain from the definition that
\[
\ETwist^{\twist} = \sum_{\twist \le \twist'} \PTwist_{\twist'} = \sum_{\twist \le \twist'} \sum_{\substack{\tau' \in \fS_n \\ \surjectionPermBrick(\tau') = \twist'}} \F_{\tau'} = \sum_{\substack{\tau' \in \fS_n \\ \twist \le \surjectionPermBrick(\tau')}} \F_{\tau'} = \sum_{\substack{\tau' \in \fS_n \\ \mu \le \tau'}} \F_{\tau'} = \EFQSym^\mu. \qedhere
\]
\end{proof}

To describe the product of two elements of the~$\ETwist$- or $\HTwist$-basis, remember that the twist~$\underprod{\twist}{\twist'}$ (resp.~$\overprod{\twist}{\twist'}$) is obtained by inserting~$\twist$ in the first rows and columns of~$\twist'$ (resp.~$\twist'$ in the last rows and columns of~$\twist$). Examples are given in~\fref{fig:underOver}.

\begin{proposition}
\label{prop:productMultiplicativeBases}
$(\ETwist^{\twist})_{\twist \in \AcyclicTwists}$ and~$(\HTwist^{\twist})_{\twist \in \AcyclicTwists}$ are multiplicative bases of~$\Twist$:
\[
\ETwist^{\twist} \product \ETwist^{\twist'} = \ETwist^{\underprod{\twist}{\twist'}}
\qquad\text{and}\qquad
\HTwist^{\twist} \product \HTwist^{\twist'} = \HTwist^{\overprod{\twist}{\twist'}}.
\]
\end{proposition}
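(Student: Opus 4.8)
The plan is to reduce the statement about the $\ETwist$- and $\HTwist$-bases of $\Twist$ to the already-established multiplicativity of the $\EFQSym$- and $\HFQSym$-bases of $\FQSym$, using Lemma~\ref{lem:indecomposableFQSymTwist} as the bridge. First I would recall that by Lemma~\ref{lem:indecomposableFQSymTwist} we have $\ETwist^{\twist} = \EFQSym^\mu$ and $\ETwist^{\twist'} = \EFQSym^{\mu'}$, where $\mu$ (resp.~$\mu'$) is the weak order minimal permutation in the fiber $\surjectionPermBrick^{-1}(\twist)$ (resp.~$\surjectionPermBrick^{-1}(\twist')$). By the multiplicative property of the $\EFQSym$-basis of $\FQSym$ recalled in the corollary following Proposition~\ref{prop:productIntervalWeakOrder}, we get $\ETwist^{\twist} \product \ETwist^{\twist'} = \EFQSym^\mu \product \EFQSym^{\mu'} = \EFQSym^{\underprod{\mu}{\mu'}} = \EFQSym^{\mu\bar\mu'}$. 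It then remains to identify $\mu\bar\mu'$ as the weak order minimal permutation in the fiber of $\underprod{\twist}{\twist'}$ under $\surjectionPermBrick$, for then a second application of Lemma~\ref{lem:indecomposableFQSymTwist} yields $\EFQSym^{\mu\bar\mu'} = \ETwist^{\underprod{\twist}{\twist'}}$, as desired.

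The crux is therefore the combinatorial claim that $\surjectionPermBrick(\mu\bar\mu') = \underprod{\twist}{\twist'}$ and that $\mu\bar\mu'$ is the minimal element of its congruence class. For the first part, I would argue directly from the definition of $\surjectionPermBrick$ by successive pipe insertions: inserting $\mu\bar\mu' = [\mu_1,\dots,\mu_n,\mu'_1+n,\dots,\mu'_{n'}+n]$ from right to left first inserts the shifted entries $\mu'_{n'}+n, \dots, \mu'_1+n$, which (since they are larger than all of $\mu_1,\dots,\mu_n$ and are inserted in the same relative order as $\mu'$) produce a copy of $\twist' = \surjectionPermBrick(\mu')$ occupying the last $n'$ rows and columns; then the entries $\mu_n,\dots,\mu_1$ are inserted, each being smaller than all the shifted ones, hence landing as far northwest as possible into the first $n$ rows and columns, reproducing $\twist = \surjectionPermBrick(\mu)$ there. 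This is exactly the description of $\underprod{\twist}{\twist'}$ given before Proposition~\ref{prop:product}. For the minimality of $\mu\bar\mu'$ in its fiber: Proposition~\ref{prop:productIntervalWeakOrder} already tells us that the fiber $\surjectionPermBrick^{-1}(\underprod{\twist}{\twist'})$ is one of the classes into which the weak order interval $[\mu\bar\mu',\bar\omega'\omega]$ decomposes (where $[\mu,\omega]$, $[\mu',\omega']$ are the fibers of $\twist,\twist'$), and since $\mu\bar\mu'$ is the global minimum of that interval and it lies in the fiber of $\underprod{\twist}{\twist'}$, it is a fortiori the minimum of that fiber.

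The argument for $\HTwist$ is entirely symmetric: Lemma~\ref{lem:indecomposableFQSymTwist} gives $\HTwist^{\twist} = \HFQSym^\omega$ and $\HTwist^{\twist'} = \HFQSym^{\omega'}$ with $\omega,\omega'$ the weak order maxima of the respective fibers, the multiplicativity of $\HFQSym$ yields $\HFQSym^\omega \product \HFQSym^{\omega'} = \HFQSym^{\overprod{\omega}{\omega'}} = \HFQSym^{\bar\omega'\omega}$, and one checks as above that $\surjectionPermBrick(\bar\omega'\omega) = \overprod{\twist}{\twist'}$ (now the entries of $\omega$ are inserted last and land in the last rows/columns as $\twist$, while the smaller shifted-away entries of $\omega'$ form $\twist'$ in the first rows/columns) and that $\bar\omega'\omega$ is the weak order maximum of its fiber, being the global maximum of the interval $[\mu\bar\mu',\bar\omega'\omega]$ that contains it. That $(\ETwist^{\twist})$ and $(\HTwist^{\twist})$ are bases of $\Twist$ is clear, since each is obtained from the basis $(\PTwist_{\twist})$ by a unitriangular change of coordinates with respect to the increasing flip order. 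I expect the main obstacle to be purely bookkeeping: carefully verifying, from the insertion algorithm, that the shifted concatenation $\mu\bar\mu'$ really inserts into the geometric configuration defining $\underprod{\twist}{\twist'}$ — i.e.\ that the pipes of $\twist'$ are left untouched in the last rows and columns while the pipes of $\twist$ reassemble in the first rows and columns — but Lemma~\ref{lem:source} and the "as northwest as possible" characterization of $\surjectionPermBrick$ make this routine.
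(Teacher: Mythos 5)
Your proof is correct and follows essentially the same path as the paper's: use Lemma~\ref{lem:indecomposableFQSymTwist} to translate to the $\EFQSym$-/$\HFQSym$-bases, apply their multiplicativity in $\FQSym$, and identify $\mu\bar\mu'$ (resp.~$\bar\omega'\omega$) as the weak order minimum (resp.~maximum) of the fiber of $\underprod{\twist}{\twist'}$ (resp.~$\overprod{\twist}{\twist'}$). The only difference is that the paper states the two combinatorial facts — that $\surjectionPermBrick(\mu\bar\mu')=\underprod{\twist}{\twist'}$ and that $\mu\bar\mu'$ is minimal in its congruence class — as already established (they appear in the proof of Proposition~\ref{prop:product}), whereas you spell out short justifications; both are fine.
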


\begin{proof}
Let~$\mu$ and~$\mu'$ respectively denote the minimal elements of the fibers of~$\twist$ and~$\twist'$ under~$\surjectionPermBrick$. Using Lemma~\ref{lem:indecomposableFQSymTwist} and the fact that~$\surjectionPermBrick \big( \underprod{\mu}{\mu'} \big) = \underprod{\twist}{\twist'}$ and~$\underprod{\mu}{\mu'}$ is minimal in its $k$-twist congruence class, we write
\[
\ETwist^{\twist} \product \ETwist^{\twist'} = \EFQSym^{\mu} \product \EFQSym^{\mu'} = \EFQSym^{\underprod{\mu}{\mu'}} = \ETwist^{\underprod{\twist}{\twist'}}. \qedhere
\]
\end{proof}

\enlargethispage{.2cm}
We now consider multiplicative decomposability. We call \defn{cut} of an acyclic oriented graph any partition~$\edgecut{X}{Y}$ of its vertices such that all edges between~$X$ and~$Y$ are oriented from~$X$ to~$Y$.

\begin{proposition}
\label{prop:indecomposableCharacterization}
The following properties are equivalent for an acyclic $k$-twist~$\twist[S]$:
\begin{enumerate}[(i)]
\item $\ETwist^{\twist[S]}$ can be decomposed into a product~$\ETwist^{\twist[S]} = \ETwist^{\twist} \product \ETwist^{\twist'}$ for non-empty acyclic $k$-twists~$\twist, \twist'$; \label{enum:decomposable}
\item $\edgecut{[k]}{[n] \ssm [k]}$ is a cut of~$\twist[S]\contact$ for some~$k \in [n-1]$; \label{enum:cut}
\item at least one linear extension~$\tau$ of~$\twist[S]\contact$ is $\EFQSym$-decomposable, \ie $\tau([k]) = [k]$ for some~$k \in [n]$; \label{enum:perm}
\item the weak order minimal linear extension of~$\twist[S]\contact$ is $\EFQSym$-decomposable. \label{enum:minimalPerm}
\end{enumerate}
The $k$-twist~$\twist[S]$ is then called \defn{$\ETwist$-decomposable}. Otherwise, it is called \defn{$\ETwist$-indecomposable}, and we denote by~$\IndecomposableAcyclicTwists(n)$ the set of $\ETwist$-indecomposable acyclic $(k,n)$-twists.
\end{proposition}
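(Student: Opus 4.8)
The strategy is to prove the cycle of implications (i) $\Rightarrow$ (iv) $\Rightarrow$ (iii) $\Rightarrow$ (ii) $\Rightarrow$ (i), using Lemma~\ref{lem:indecomposableFQSymTwist} to transport the question into $\FQSym$ whenever that is convenient, and using the translation between the contact graph $\twist[S]\contact$, its linear extensions, and the $\EFQSym$-decomposability of permutations recorded in the corollary on multiplicative bases of $\FQSym$. The implication (iii) $\Rightarrow$ (iv) is the only one that requires a genuine order-theoretic observation, and I expect it to be the main obstacle.

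First I would prove (i) $\Rightarrow$ (iv). If $\ETwist^{\twist[S]} = \ETwist^{\twist} \product \ETwist^{\twist'}$ with $\twist \in \AcyclicTwists(p)$ and $\twist' \in \AcyclicTwists(n-p)$ non-empty, then by Proposition~\ref{prop:productMultiplicativeBases} we have $\twist[S] = \underprod{\twist}{\twist'}$, and by Lemma~\ref{lem:indecomposableFQSymTwist} the weak order minimal linear extension $\mu$ of $\twist[S]\contact$ equals $\underprod{\mu_1}{\mu_2} = \mu_1\bar\mu_2$ where $\mu_1,\mu_2$ are the minimal linear extensions of $\twist,\twist'$. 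Thus $\mu([p]) = [p]$, so $\mu$ is $\EFQSym$-decomposable, proving (iv). The implication (iv) $\Rightarrow$ (iii) is trivial since the weak order minimal linear extension is in particular a linear extension. For (iii) $\Rightarrow$ (ii): if $\tau$ is a linear extension of $\twist[S]\contact$ with $\tau([p]) = [p]$ for some $p \in [n-1]$, then no element of $[p]$ can be below (in $\contactLess{\twist[S]}$) any element of $[n]\ssm[p]$, since that would force it to appear after that element in the linear extension $\tau$; hence every arc of $\twist[S]\contact$ between $[p]$ and $[n]\ssm[p]$ is oriented from $[p]$ to $[n]\ssm[p]$ — wait, more carefully: if $a \in [p]$ and $b \in [n]\ssm[p]$ and $a,b$ are joined by an arc, then since $\tau$ lists all of $[p]$ before all of $[n]\ssm[p]$, the arc must be $a \to b$; so $\edgecut{[p]}{[n]\ssm[p]}$ is a cut, giving (ii).

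The remaining implication (ii) $\Rightarrow$ (i) is where the insertion geometry enters. Suppose $\edgecut{[p]}{[n]\ssm[p]}$ is a cut of $\twist[S]\contact$. I would argue that $\twist[S]$ must then be of the form $\underprod{\twist}{\twist'}$: since no arc runs from $[n]\ssm[p]$ back to $[p]$, the pipes labelled $1,\dots,p$ can be isolated in the south-west portion of the triangular shape. Concretely, any linear extension $\tau$ of $\twist[S]\contact$ can be chosen so that $\tau([p]) = [p]$ (list a linear extension of the restriction to $[p]$ first, then one of the restriction to $[n]\ssm[p]$ — this respects $\contactLess{\twist[S]}$ precisely because the cut condition forbids arcs in the other direction); decomposing $\tau = \tau_1 \bar\tau_2$ and running the insertion algorithm $\surjectionPermBrick$, the pipes $\tau_1$ are inserted last and occupy the first $p$ rows and columns, yielding $\twist[S] = \underprod{\surjectionPermBrick(\tau_1)}{\surjectionPermBrick(\tau_2)}$. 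Then Proposition~\ref{prop:productMultiplicativeBases} gives $\ETwist^{\twist[S]} = \ETwist^{\surjectionPermBrick(\tau_1)} \product \ETwist^{\surjectionPermBrick(\tau_2)}$, which is a non-trivial decomposition, establishing (i). I expect the delicate point to be verifying cleanly that the insertion algorithm really does produce $\underprod{\,\cdot\,}{\,\cdot\,}$ from a decomposition $\tau = \tau_1\bar\tau_2$ of a linear extension respecting the cut — this is intuitively clear from the "northwest as possible" description of $\surjectionPermBrick$ but needs the observation that a new pipe $\tau_j \le p$ never interacts with the already-inserted pipes of $[n]\ssm[p]$, which follows from the cut hypothesis together with Lemma~\ref{lem:source}.
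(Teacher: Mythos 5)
Your proof is correct, and it uses the same core ingredients as the paper (Proposition~\ref{prop:productMultiplicativeBases}, Lemma~\ref{lem:indecomposableFQSymTwist}, and the insertion algorithm), but it organizes the cycle of implications differently: you run $(i)\Rightarrow(iv)\Rightarrow(iii)\Rightarrow(ii)\Rightarrow(i)$, whereas the paper establishes $(i)\Leftrightarrow(ii)$, then $(ii)\Rightarrow(iii)\Rightarrow(iv)$, and closes the loop by a separate argument for $(iii)\Rightarrow(ii)$ via the insertion algorithm. The one substantive gain from your route is that it sidesteps the paper's observation that the $\EFQSym$-indecomposable permutations form an upper ideal of the weak order, which the paper invokes for $(iii)\Rightarrow(iv)$; you get $(iv)$ for free from the formula $\twist[S]=\underprod{\twist}{\twist'}$ and the identification of the weak-order minimum of its fiber as $\mu\bar\mu'$. (Ironically, your opening sentence flags $(iii)\Rightarrow(iv)$ as the likely obstacle, but your own cycle never uses it.) Your $(iii)\Rightarrow(ii)$ is also a touch more economical: it follows purely from the definition of a linear extension, with no need to re-run the insertion algorithm as the paper does. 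Finally, your $(ii)\Rightarrow(i)$ concentrates the real content, namely that a linear extension $\tau=\tau_1\bar\tau_2$ respecting the cut satisfies $\surjectionPermBrick(\tau)=\underprod{\surjectionPermBrick(\tau_1)}{\surjectionPermBrick(\tau_2)}$; this is the same fact that the paper compresses into the phrase that $(i)\Leftrightarrow(ii)$ is ``an immediate consequence'' of the product formula, and the cleanest justification is not Lemma~\ref{lem:source} alone but the compatibility of the $k$-twist congruence with shifted concatenation already exploited in the proof of Proposition~\ref{prop:productMultiplicativeBases} (so that $\tau_1\bar\tau_2\equiv^k\mu_1\bar\mu_2$). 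With that one sharpening, this is a clean and slightly leaner proof than the paper's.
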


\begin{proof}
The equivalence~\eqref{enum:decomposable} $\iff$ \eqref{enum:cut} is an immediate consequence of the description of the product~$\ETwist^{\twist} \product \ETwist^{\twist'} = \ETwist^{\underprod{\twist}{\twist'}}$ in Proposition~\ref{prop:productMultiplicativeBases}. The implication \eqref{enum:cut} $\implies$ \eqref{enum:perm} follows from the fact that for any cut~$\edgecut{X}{Y}$ of a directed acyclic graph~$G$, there exists a linear extension of~$G$ which starts with~$X$ and finishes with~$Y$. The implication \eqref{enum:perm} $\implies$ \eqref{enum:minimalPerm} follows from the fact that the $\EFQSym$-indecomposable permutations form an upper ideal of the weak order. Finally, if~$\tau$ is a decomposable linear extension of~$\twist[S]$, then the insertion algorithm on~$\tau$ first creates a twist labeled by~$[n] \ssm [k]$ and then inserts the pipes labeled by~$[k]$. Any arc between~$[k]$ and~$[n] \ssm [k]$ in~$\twist[S] = \surjectionPermBrick(\tau)$ will thus be directed from~$[k]$ to~$[n] \ssm [k]$.
\end{proof}

\begin{example}[Right-tilting $k$-twists]
Say that a $k$-twist is \defn{right-tilting} when it has no elbow in its first column. When~$k = 1$, the $\ETwist$-indecomposable $1$-twists are precisely the right-tilting $1$-twists. Therefore, the number of \mbox{$\ETwist$-indecomposable} $(1,n)$-twists is the Catalan number~$C_{n-1}$, and the $\ETwist$-indecomposable $(1,n)$-twists form a principal ideal of the increasing flip lattice. When~$k \ge 2$, right-tilting $k$-twists are $\ETwist$-indecomposable, but are not the only ones. The $\ETwist$-indecomposable $(k,n)$-twists form an upper ideal of the increasing flip lattice, but this ideal is not principal. \fref{fig:indecomposable} illustrates the $\ETwist$-indecomposable acyclic $(k,4)$-twists for~$k = 1,2$. 

\begin{figure}[t]
	\centerline{\includegraphics[width=\textwidth]{indecomposable}}
	\caption{The $\ETwist$-indecomposable acyclic $(k,4)$-twists for~$k = 1,2$.}
	\label{fig:indecomposable}
\end{figure}

\end{example}

\begin{proposition}
The $k$-twist algebra is freely generated by the elements~$\ETwist^{\twist}$ such that $\twist$ is \mbox{$\ETwist$-indecomposable}.
\end{proposition}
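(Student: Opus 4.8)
The plan is to derive freeness from a unique‑factorization property of acyclic $k$-twists under the operation $\underprod{}{}$, in the same spirit as the classical proof that $\PBT$ is freely generated by the right‑tilting binary trees. Since $(\ETwist^{\twist})_{\twist \in \AcyclicTwists}$ is a linear basis of $\Twist$ and $\ETwist^{\twist} \product \ETwist^{\twist'} = \ETwist^{\underprod{\twist}{\twist'}}$ by Proposition~\ref{prop:productMultiplicativeBases}, the claim is equivalent to two assertions: that $\underprod{}{}$ is associative on $\AcyclicTwists$, and that every nonempty acyclic $k$-twist $\twist[S]$ admits a \emph{unique} factorization $\twist[S] = \twist_1 \underprod{}{} \cdots \underprod{}{} \twist_r$ with all $\twist_i$ being $\ETwist$-indecomposable. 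Granting both, the map $\twist_1 \cdots \twist_r \mapsto \ETwist^{\twist_1} \product \cdots \product \ETwist^{\twist_r}$ from the free associative algebra on $\IndecomposableAcyclicTwists \eqdef \bigsqcup_{n \ge 1} \IndecomposableAcyclicTwists(n)$ to $\Twist$ sends the standard linear basis bijectively onto the basis $(\ETwist^{\twist})_{\twist \in \AcyclicTwists}$, hence is an algebra isomorphism.

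First I would record the associativity of $\underprod{}{}$: the twists $\underprod{\twist}{(\underprod{\twist'}{\twist''})}$ and $\underprod{(\underprod{\twist}{\twist'})}{\twist''}$ are both obtained by placing $\twist$, $\twist'$, $\twist''$ consecutively in the first rows and columns of the triangular shape, so they coincide (alternatively, this is formal from associativity of $\product$ together with $\ETwist$ being a basis). Existence of a factorization into $\ETwist$-indecomposables is then an immediate induction on the number of pipes of $\twist[S]$: if $\twist[S]$ is $\ETwist$-indecomposable there is nothing to prove, and otherwise Proposition~\ref{prop:indecomposableCharacterization} yields a splitting $\twist[S] = \underprod{\twist}{\twist'}$ with $\twist$ and $\twist'$ nonempty and strictly smaller, to which the induction hypothesis applies.

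The heart of the argument is uniqueness. The plan is to first establish that, if $\twist[S]$ is an acyclic $(k,n)$-twist and $\edgecut{[p]}{[n]\ssm[p]}$ is a cut of $\twist[S]\contact$, then in any decomposition $\twist[S] = \underprod{\twist}{\twist'}$ with $\twist$ on $p$ pipes the factor $\twist$ is the restriction of $\twist[S]$ to its first $p$ pipes and $\twist'$ is the restriction of $\twist[S]$ to its last $n-p$ pipes; in particular both factors are determined by $\twist[S]$ and $p$ alone. Then, given two factorizations $\twist[S] = \twist_1 \underprod{}{} \cdots \underprod{}{} \twist_r = \twist[U]_1 \underprod{}{} \cdots \underprod{}{} \twist[U]_s$ into $\ETwist$-indecomposables, with $\twist_1$ on $p$ pipes and $\twist[U]_1$ on $p' \ge p$ pipes, the first factorization and Proposition~\ref{prop:indecomposableCharacterization} make $\edgecut{[p]}{[n]\ssm[p]}$ a cut of $\twist[S]\contact$; since $\twist[U]_1\contact$ is the subgraph of $\twist[S]\contact$ induced by $[p'] \supseteq [p]$, the restricted partition $\edgecut{[p]}{[p']\ssm[p]}$ is a cut of $\twist[U]_1\contact$, and $\ETwist$-indecomposability of $\twist[U]_1$ forces $p = p'$ by Proposition~\ref{prop:indecomposableCharacterization}. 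Hence $\twist_1 = \twist[U]_1$ (both equal the restriction of $\twist[S]$ to its first $p$ pipes), the complementary factors agree as well, and an induction on $n$ concludes.

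The main obstacle — essentially the only non‑formal point — is the structural fact used throughout: the restriction of $\underprod{\twist_1}{\twist'}$ to its first $|\twist_1|$ pipes recovers $\twist_1$ exactly, with contact graph equal to the subgraph of $(\underprod{\twist_1}{\twist'})\contact$ induced by those pipes; equivalently, in forming $\underprod{\twist_1}{\twist'}$ no pipe of $\twist'$ ever separates two pipes of $\twist_1$, so all elbows and crossings of $\twist_1$ survive undisturbed. This is visually clear from the construction of $\underprod{}{}$ recalled just above \fref{fig:underOver}, but I would state and check it carefully. The analogous freeness statement for the basis $(\HTwist^{\twist})_{\twist \in \AcyclicTwists}$ follows by the same argument with $\overprod{}{}$ in place of $\underprod{}{}$, and specializing to $k = 1$ recovers the freeness of $\PBT$ on the right‑tilting $1$-twists.
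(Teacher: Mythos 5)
Your argument is correct but takes a genuinely different route from the paper. The paper exploits the embedding $\Twist\subseteq\FQSym$: using Lemma~\ref{lem:indecomposableFQSymTwist} ($\ETwist^{\twist}=\EFQSym^{\mu}$ for $\mu$ the minimal permutation in the fiber of $\twist$), it decomposes $\mu$ into $\EFQSym$-indecomposable permutations $\mu_1,\dots,\mu_p$, checks that each $\mu_i$ is again the minimal permutation in its fiber (via the pattern avoidance $(k+2)1\,\dash\,(\sigma_1+1)\,\dash\cdots\dash\,(\sigma_k+1)$), so that each $\twist_i\eqdef\surjectionPermBrick(\mu_i)$ is $\ETwist$-indecomposable by Proposition~\ref{prop:indecomposableCharacterization}\,(iv), and then transfers freeness directly from the known freeness of $\FQSym$ on $\EFQSym$-indecomposables. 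You instead prove a self-contained unique-factorization theorem for acyclic $k$-twists under~$\underprod{}{}$, carrying out the associativity, existence, and uniqueness steps on the combinatorial side without invoking the freeness of $\FQSym$. Both arguments are sound; yours is more combinatorial and makes the free monoid structure on $\AcyclicTwists$ explicit, whereas the paper's is shorter because the hard uniqueness step (your ``main obstacle'': that the factor $\twist$ in $\twist[S]=\underprod{\twist}{\twist'}$ is recovered as the restriction of $\twist[S]$ to its first $|\twist|$ pipes, with contact graph the induced subgraph) is replaced by the corresponding, essentially trivial, unique factorization of permutations under shifted concatenation. Your plan correctly flags that this structural lemma is the only nonformal point and must be checked; once it is, the induction in your uniqueness argument (comparing $p$ and $p'$ via cuts of $\twist[S]\contact$ and indecomposability of $\twist[U]_1$) goes through.
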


\begin{proof}
Let~$\twist$ be an acyclic $k$-twist and let~$\mu$ be the weak order minimal permutation such that~$\surjectionPermBrick(\mu) = \twist$. Decompose~$\mu = \underprod{\mu_1}{\underprod{\dots}{\mu_p}}$ into $\EFQSym$-indecomposable permutations~$\mu_1, \dots, \mu_p$. For~${i \in [p]}$, define~$\twist_i \eqdef \surjectionPermBrick(\mu_i)$. Since~$\mu_i$ avoids the patterns~$(k+2)1 \dash (\sigma_1+1) \dash \cdots \dash (\sigma_k+1)$ for all~$\sigma \in \fS_k$ (because~$\mu$ avoids these patterns), it is the weak order minimal permutation in the fiber of~$\twist_i$. Since~$\mu_i$ is $\EFQSym$-indecomposable, we get by Proposition~\ref{prop:indecomposableCharacterization}\,\eqref{enum:minimalPerm} that~$\twist_i$ is $\ETwist$-indecomposable. Using Lemma~\ref{lem:indecomposableFQSymTwist}, we thus obtained a decomposition~$\ETwist^{\twist} = \EFQSym^\mu = \EFQSym^{\mu_1} \cdots \EFQSym^{\mu_p} = \ETwist^{\twist_1} \cdots \ETwist^{\twist_p}$ of~$\twist$ into $\ETwist$-indecomposable $k$-twists~$\twist_1, \dots, \twist_p$.

Now, there is no relation between the elements~$\EFQSym^\tau$ of~$\FQSym$ corresponding to the $\EFQSym$-indecompo\-sable permutations. Hence, by Lemma~\ref{lem:indecomposableFQSymTwist} and Proposition~\ref{prop:productMultiplicativeBases}, there is no relation between the elements~$\ETwist^{\twist}$ of~$\Twist$ corresponding to the $\ETwist$-indecomposable $k$-twists.
\end{proof}

\begin{corollary}
\label{coro:numbersIndecomposableAcyclicTwists}
The generating functions of the numbers of $\ETwist$-indecomposable acyclic $(k,n)$-twists and of the numbers of all acyclic $(k,n)$-twists are related by
\[
\frac{1}{1 - \sum_{n \in \N} |\IndecomposableAcyclicTwists(n)| \, t^n} = \sum_{n \in \N} |\AcyclicTwists(n)| \, t^n.
\]
\end{corollary}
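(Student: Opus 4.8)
The plan is to deduce the identity from the freeness of $\Twist$ just established. Write $A(t) \eqdef \sum_{n \ge 0} |\AcyclicTwists(n)| \, t^n$ and $I(t) \eqdef \sum_{n \ge 1} |\IndecomposableAcyclicTwists(n)| \, t^n$, adopting the convention that the empty $(k,0)$-twist, being the unit $1 \in \Twist$ rather than a free generator, is not counted as $\ETwist$-indecomposable; thus $I(t)$ has zero constant term and $\sum_{n \in \N} |\IndecomposableAcyclicTwists(n)| \, t^n = I(t)$. The target is then $A(t) = 1/(1-I(t))$, which is exactly the stated formula.

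First I would record that $(\ETwist^{\twist})_{\twist \in \AcyclicTwists}$ is a homogeneous basis of $\Twist$: the family $(\PTwist_{\twist})$ is a basis because the fibers of $\surjectionPermBrick$ partition $\fS$, and $\ETwist^{\twist} = \sum_{\twist \le \twist'} \PTwist_{\twist'}$ is a unitriangular change of basis for the increasing flip order, which is a finite poset on each graded component (each being finite-dimensional by Theorem~\ref{theo:numberktriangulations}). Together with the product rule $\ETwist^{\twist} \product \ETwist^{\twist'} = \ETwist^{\underprod{\twist}{\twist'}}$ of Proposition~\ref{prop:productMultiplicativeBases}, this identifies $\Twist$ with the graded monoid algebra of the set $\AcyclicTwists$ of all acyclic $k$-twists, graded by the number of relevant pipes, under the operation $\backslash$.

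The heart of the argument is to show that every acyclic $k$-twist factors \emph{uniquely} as $\underprod{\twist_1}{\underprod{\twist_2}{\underprod{\cdots}{\twist_p}}}$ with each $\twist_i$ an $\ETwist$-indecomposable acyclic $k$-twist, so that $(\twist_1, \dots, \twist_p) \mapsto \underprod{\twist_1}{\underprod{\cdots}{\twist_p}}$ is a size-preserving bijection from words of $\ETwist$-indecomposable twists onto $\AcyclicTwists$. Existence is already contained in the proof of the freeness statement (decompose the weak order minimal linear extension $\mu$ of $\twist\contact$ into $\EFQSym$-indecomposable permutations and apply $\surjectionPermBrick$). For uniqueness I would use the algebra isomorphism $\phi$ from the free associative algebra on generators $x_{\twist}$ indexed by $\ETwist$-indecomposable $\twist$ onto $\Twist$ with $\phi(x_{\twist}) = \ETwist^{\twist}$: it sends each word $x_{\twist_1} \cdots x_{\twist_p}$ to $\ETwist^{\underprod{\twist_1}{\underprod{\cdots}{\twist_p}}}$, so the images of the words form a basis of $\Twist$ contained in the basis $(\ETwist^{\twist})_{\twist \in \AcyclicTwists}$; on each degree $n$ both are bases of the finite-dimensional space $\Twist_n$, and a basis of a finite-dimensional space cannot be a proper subset of another basis, so they coincide, forcing the word map to be a bijection. (Alternatively, one can pin the factorization down concretely through Proposition~\ref{prop:indecomposableCharacterization}: the positions $j \in [n-1]$ for which $\edgecut{[j]}{[n] \ssm [j]}$ is a cut of $\twist\contact$ are exactly the cut points of the factorization.)

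Finally, counting words of total size $n$ through this bijection gives
\[
|\AcyclicTwists(n)| \;=\; \sum_{p \ge 0} \; \sum_{\substack{n_1, \dots, n_p \ge 1 \\ n_1 + \dots + n_p = n}} \; \prod_{i = 1}^{p} |\IndecomposableAcyclicTwists(n_i)|,
\]
which is the coefficient of $t^n$ in $\sum_{p \ge 0} I(t)^p = 1/(1 - I(t))$ (the geometric series makes sense as a formal power series since $I(t)$ has zero constant term). Hence $A(t) = 1/(1-I(t))$, as claimed. The only genuinely delicate point is the uniqueness of the factorization — the passage from "$\Twist$ is a free algebra" to "twists factor uniquely into $\ETwist$-indecomposables"; everything else is formal bookkeeping with generating functions.
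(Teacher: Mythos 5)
Your proof is correct and is exactly the argument the paper has in mind: the statement is labeled a corollary of the preceding freeness proposition, and the standard Hilbert-series identity for a graded algebra free on homogeneous generators of positive degree is what you have spelled out (the unique-factorization step you worry about is the standard translation of "the monomials in a free generating set form a basis"). Your observation that the empty $(k,0)$-twist, being the unit of $\Twist$, is excluded from the $\ETwist$-indecomposables is also the right reading, since it is what makes $1 - I(t)$ have constant term $1$ and hence be invertible as a formal power series.
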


Table~\ref{table:numbersIndecomposableAcyclicTwists} gathers the numbers of $\ETwist$-indecomposable acyclic $(k,n)$-twists for~$k < n \le 10$. One recognizes on the diagonal the number of $\EFQSym$-indecomposable permutations of~$\fS_n$, see~\href{https://oeis.org/A003319}{\cite[A003319]{OEIS}}.

\begin{table}[h]
  \[
  	\begin{array}{l|rrrrrrrrrr}
	\raisebox{-.1cm}{$k$} \backslash \, \raisebox{.1cm}{$n$}
	  & 1 & 2 & 3 & 4 & 5 & 6 & 7 & 8 & 9 & 10 \\[.1cm]
	\hline
	0 & 1 & 1 & 1 &  1 &   1 &   1 &    1 &     1 &      1 &       1 \\
	1 & . & 1 & 2 &  5 & 14 &  42 &  132 &   429 &   1430 &    4862 \\
	2 & . & . & 3 & 11 & 47 & 219 & 1085 &  5619 &  30099 &  165555 \\
	3 & . & . & . & 13 & 65 & 365 & 2229 & 14465 &  98461 &  696337 \\
	4 & . & . & . &  . & 71 & 437 & 2967 & 21773 & 170047 & 1395733 \\
	5 & . & . & . &  . &  . & 461 & 3327 & 26213 & 222103 & 2000125 \\
	6 & . & . & . &  . &  . &   . & 3447 & 28373 & 253183 & 2419645 \\
	7 & . & . & . &  . &  . &   . &    . & 29093 & 268303 & 2668045 \\
	8 & . & . & . &  . &  . &   . &    . &     . & 273343 & 2789005 \\
	9 & . & . & . &  . &  . &   . &    . &     . &      . & 2829325
	\end{array}
  \]
  \caption{The number of $\ETwist$-indecomposable acyclic $(k,n)$-twists for~${k < n \le 10}$. Dots indicate that the value remains constant (equal to the number of $\ETwist$-indecomposable permutations) in the column.}
  \label{table:numbersIndecomposableAcyclicTwists}
  \vspace{-.4cm}
\end{table}

\enlargethispage{.3cm}
Again, the $\ETwist$-indecomposable $1$-twists are precisely the right-tilting $1$-twists, and are therefore counted by the Catalan number~$C_{n-1}$. Analogous results for~$k \ge 2$ remain to be found.

\begin{question}
Is there a simple characterization and a simple enumeration formula for the \mbox{$\ETwist$-indecomposable} acyclic $(k,n)$-twists?
\end{question}

\subsection{Integer point transform}
\label{subsec:integerPointTransform}

In this section, we observe that the product in the $k$-twist Hopf algebra~$\Twist$ can be interpreted in terms of the integer point transforms of the normal cones of the brick polytope~$\Brick$. To make this statement precise, we introduce some notations.

\begin{definition}
The \defn{integer point transform}~$\integerPointTransform_S$ of a subset~$S$ of~$\R^n$ is the multivariate generating function of the integer points inside~$S$:
\[
\integerPointTransform_S(t_1, \dots, t_n) = \sum_{(i_1, \dots, i_n) \in \Z^n \cap S} t_1^{i_1} \cdots t_n^{i_n}.
\]
\end{definition}

For a poset~$\less$, we denote by~$\integerPointTransform_\less$ the integer point transform of the cone
\[
\Cone\blackPolar(\less) \eqdef \set{\b{x} \in \R_+^n}{\begin{array}{c} x_i \le x_j \text{ for all } i \less j \text{ with } i < j \\  x_i < x_j \text{ for all } i \less j \text{ with } i > j \end{array}}.
\]
Note that this cone differs in two ways from the cone~$\Cone\polar(\less)$ defined in Section~\ref{subsec:normalFans}: first it leaves in~$\R_+^n$ and not in~$\HH$, second it excludes the facets of~$\Cone\polar(\less)$ corresponding to the decreasing relations of~$\less$ (\ie the relations~$i \less j$ with~$i > j$).

Following the notations of Section~\ref{subsec:normalFans}, we denote by~$\integerPointTransform_\tau$ the integer point transform of the chain~$\tau_1 \less \cdots \less \tau_n$ for a permutation~$\tau \in \fS_n$. The following statements are classical.

\begin{proposition}
\label{prop:integerPointTransform}
\begin{enumerate}[(i)]
\item For any permutation~$\tau \in \fS_n$, the integer point transform~$\integerPointTransform_\tau$ is given by
\[
\integerPointTransform_\tau(t_1, \dots, t_n) = \frac{\displaystyle \prod_{\substack{i \in [n-1] \\ \tau_i > \tau_{i+1}}} t_{\tau_i} \cdots t_{\tau_n}}{\displaystyle \prod\limits_{i \in [n]} \big( 1 - t_{\tau_i} \cdots t_{\tau_n} \big)}.
\]
\label{item:integerPointTransformPermutation}

\item The integer point transform of an arbitrary poset~$\less$ is given by
\[
\integerPointTransform_\less = \sum_{\tau \in \linearExtensions(\less)} \integerPointTransform_\tau,
\]
where the sum runs over the set~$\linearExtensions(\less)$ of linear extensions of~$\less$.
\label{item:integerPointTransformLinearExtensions}

\item The product of the integer point transforms~$\integerPointTransform_\tau$ and~$\integerPointTransform_{\tau'}$ of two permutations~${\tau \in \fS_n}$ and~${\tau' \in \fS_{n'}}$ is given by the shifted shuffle
\[
\integerPointTransform_\tau(t_1, \dots, t_n) \product \integerPointTransform_{\tau'}(t_{n+1}, \dots, t_{n+n'}) = \sum_{\sigma \in \tau \shiftedShuffle \tau'} \integerPointTransform_\sigma(t_1, \dots, t_{n+n'}).
\]
In other words, the linear map from~$\FQSym$ to the rational functions defined by~$\Psi : \F_\tau \mapsto \integerPointTransform_\tau$ is an algebra morphism.
\label{item:integerPointTransformProduct}
\end{enumerate}
\end{proposition}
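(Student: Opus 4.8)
The plan is to prove the three items of Proposition~\ref{prop:integerPointTransform} in order, since (ii) and (iii) build on (i).

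\textbf{Item~\eqref{item:integerPointTransformPermutation}.} First I would set up coordinates adapted to the chain $\tau_1 \less \cdots \less \tau_n$. The cone $\Cone\blackPolar(\less)$ for this chain is the set of integer points $\b{x} \in \Z^n$ with $x_{\tau_1} \bowtie_1 x_{\tau_2} \bowtie_2 \cdots \bowtie_{n-1} x_{\tau_n}$, where $\bowtie_i$ is ``$\le$'' if $\tau_i < \tau_{i+1}$ (an ascent) and ``$<$'' if $\tau_i > \tau_{i+1}$ (a descent). The natural change of variables is $y_i \eqdef x_{\tau_i} - x_{\tau_{i+1}}$ for $i \in [n-1]$ and $y_n \eqdef x_{\tau_n}$; then an integer point of the cone corresponds to a tuple with $y_i \ge 0$ for ascents, $y_i \ge 1$ for descents, and $y_n \in \Z$ arbitrary. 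Summing the monomial $\prod_j t_j^{x_j} = \prod_i (t_{\tau_i} \cdots t_{\tau_n})^{y_i}$ (using $x_{\tau_i} = y_i + y_{i+1} + \cdots + y_n$) factors as a product of geometric series: each ascent position $i$ contributes $\frac{1}{1 - t_{\tau_i}\cdots t_{\tau_n}}$, each descent position $i$ contributes $\frac{t_{\tau_i}\cdots t_{\tau_n}}{1 - t_{\tau_i}\cdots t_{\tau_n}}$, and the free variable $y_n$ contributes $\frac{1}{1 - t_{\tau_n}}$ only if we insist on convergence on the appropriate region (formally, we work in the ring of rational functions). Collecting numerators and denominators yields exactly the stated formula. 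I should be slightly careful to note that $\integerPointTransform_\tau$ is a priori only a formal/rational object since the cone is full-dimensional and unbounded in the direction $\one$; this is the standard convention and I would state it once.

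\textbf{Item~\eqref{item:integerPointTransformLinearExtensions}.} This is the classical fact that the order cone of a poset is the disjoint union, over linear extensions, of the (half-open) order cones of the associated chains. Concretely, for any $\b{x} \in \R^n$ with pairwise distinct coordinates in generic position, there is a unique permutation $\tau$ with $x_{\tau_1} < \cdots < x_{\tau_n}$, and $\b{x}$ lies in $\Cone\blackPolar(\less)$ iff $\tau$ is a linear extension of $\less$; the half-open conventions in the definition of $\Cone\blackPolar$ (namely $\le$ for increasing relations, $<$ for decreasing ones) are precisely what makes the decomposition a \emph{partition} of $\Z^n \cap \Cone\blackPolar(\less)$ rather than an overcounting one. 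I would verify that an integer point $\b{x}$ with possibly repeated coordinates falls into exactly one chamber: break ties by declaring $x_{\tau_i}$ comes before $x_{\tau_{i+1}}$ when they are equal only if $\tau_i < \tau_{i+1}$, which is forced by the open/closed pattern. Summing item~\eqref{item:integerPointTransformPermutation} over $\tau \in \linearExtensions(\less)$ then gives the claim.

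\textbf{Item~\eqref{item:integerPointTransformProduct}.} Here I would combine \eqref{item:integerPointTransformLinearExtensions} with the order-theoretic meaning of the shifted shuffle. Viewing $\tau \in \fS_n$ and $\tau' \in \fS_{n'}$ as chains on disjoint ground sets $[n]$ and $\{n+1,\dots,n+n'\}$, the poset $\less \oplus \less'$ which is the disjoint union of the two chains has exactly the permutations in $\tau \shiftedShuffle \tau'$ as its linear extensions (this is the standard description of the shuffle of two linear orders). On the geometric side, $\Cone\blackPolar(\less \oplus \less') = \Cone\blackPolar(\less) \times \Cone\blackPolar(\less')$ in the split coordinates, so its integer point transform is the product $\integerPointTransform_\tau(t_1,\dots,t_n) \cdot \integerPointTransform_{\tau'}(t_{n+1},\dots,t_{n+n'})$. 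Applying item~\eqref{item:integerPointTransformLinearExtensions} to $\less \oplus \less'$ rewrites this product as $\sum_{\sigma \in \tau \shiftedShuffle \tau'} \integerPointTransform_\sigma$, which is the assertion. The final sentence, that $\Psi : \F_\tau \mapsto \integerPointTransform_\tau$ is an algebra morphism, is then immediate from comparing this identity with the definition of the product in $\FQSym$.

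The only genuinely delicate point is the bookkeeping of open versus closed faces in items~\eqref{item:integerPointTransformPermutation} and~\eqref{item:integerPointTransformLinearExtensions}: one must check that the half-open convention built into $\Cone\blackPolar$ makes the linear-extension decomposition an honest partition of the lattice points, so that no point is counted twice and none is omitted. Everything else is a routine geometric-series computation and the classical chamber decomposition of an order cone, so I would keep those parts brief.
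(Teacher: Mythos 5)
Your overall strategy — a unimodular change of coordinates and geometric series for~(i), the chamber decomposition of the order cone for~(ii), and identifying the product with the disjoint union of two chains for~(iii) — is exactly the approach the paper takes (the paper phrases~(i) via the unimodular generators $\b{e}_{\tau_i} + \cdots + \b{e}_{\tau_n}$, which is the same computation packaged dually). Items~(ii) and~(iii) are correct as you wrote them, including the attention to half-open conventions.

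However, your execution of item~(i) has two sign/indexing slips that undermine the derivation as written. You set $y_i \eqdef x_{\tau_i} - x_{\tau_{i+1}}$, but in $\Cone\blackPolar(\tau)$ the chain $\tau_1 \less \cdots \less \tau_n$ forces $x_{\tau_i} \le x_{\tau_{i+1}}$, so $y_i \le 0$, not $y_i \ge 0$. Moreover, with your $y_i$ the identity $x_{\tau_j} = y_j + \cdots + y_n$ gives the monomial decomposition $\prod_j t_{\tau_j}^{x_{\tau_j}} = \prod_i (t_{\tau_1} \cdots t_{\tau_i})^{y_i}$, not $\prod_i (t_{\tau_i} \cdots t_{\tau_n})^{y_i}$ as you wrote. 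The correct change of variables, the one dual to the paper's unimodular basis, is $z_i \eqdef x_{\tau_i} - x_{\tau_{i-1}}$ with the convention $x_{\tau_0} = 0$; then $z_i \ge 0$ for every~$i$ (with $z_{i+1} \ge 1$ at each descent $i$), one has $x_{\tau_j} = z_1 + \cdots + z_j$, and the monomial does factor as $\prod_i (t_{\tau_i} \cdots t_{\tau_n})^{z_i}$, recovering the denominator $\prod_{i \in [n]} (1 - t_{\tau_i} \cdots t_{\tau_n})$. This also clears up your remark about the ``free'' coordinate $y_n$ and convergence: the cone $\Cone\blackPolar(\tau)$ as used in the paper is implicitly the pointed cone generated by the $v_i = \b{e}_{\tau_i} + \cdots + \b{e}_{\tau_n}$, so the relevant constraint is $z_1 = x_{\tau_1} \ge 0$ (not $x_{\tau_n} \in \Z$), and no formal-convergence caveat is needed once the cone is pointed. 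With the corrected coordinates you should also double-check which factor of $\tau_i$ versus $\tau_{i+1}$ lands in the numerator at a descent; the strict inequality sits on $z_{i+1}$, so the numerator factor contributed by a descent at position $i$ should be checked carefully against the displayed formula.
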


\begin{proof}
For Point~\eqref{item:integerPointTransformPermutation}, we just observe that the cone~$\set{\b{x} \in \R_+^n}{x_{\tau_i} \le x_{\tau_{i+1}} \text{ for all } i \in [n-1]}$ is generated by the vectors~$\b{e}_{\tau_i} + \cdots + \b{e}_{\tau_n}$, for~$i \in [n]$, which form a (unimodular) basis of the lattice~$\Z^n$. A straightforward inductive argument shows that the integer point transform of the cone $\set{\b{x} \in \R_+^n}{x_{\tau_i} \le x_{\tau_{i+1}} \text{ for all } i \in [n-1]}$ is thus given by~$\prod_{i \in [n]} \big( 1- t_{\tau_i} \cdots t_{\tau_n} \big)^{-1}$. The numerator of~$\integerPointTransform_\tau$ is then given by the facets which are excluded from the cone~$\Cone\blackPolar(\tau)$.

Point~\eqref{item:integerPointTransformLinearExtensions} follows from the fact that the cone~$\Cone\blackPolar(\less)$ is partitioned by the cones~$\Cone\blackPolar(\tau)$ for the linear extensions~$\tau$ of~$\less$.

Finally, the product~$\integerPointTransform_\tau(t_1, \dots, t_n) \product \integerPointTransform_{\tau'}(t_{n+1}, \dots, t_{n+n'})$ is the integer point transform of the poset formed by the two disjoint chains~$\tau$ and~$\bar\tau'$, whose linear extensions are precisely the permutations which appear in the shifted shuffle of~$\tau$ and~$\tau'$. This shows Point~\eqref{item:integerPointTransformProduct}.
\end{proof}

For an acyclic $k$-twist~$\twist$, we denote by~$\integerPointTransform_{\twist}$ the integer point transform of the transitive closure of the contact graph~$\twist\contact$. It follows from Proposition~\ref{prop:integerPointTransform} that the product of the integer point transforms of two acyclic $k$-twists behaves as the product in the $k$-twist algebra~$\Twist$.

\begin{corollary}
For any two acyclic $k$-twists~$\twist, \twist'$, we have
\[
\integerPointTransform_{\twist}(t_1, \dots, t_n) \product \integerPointTransform_{\twist'}(t_{n+1}, \dots, t_{n+n'}) = \sum_{\underprod{\twist}{\twist'} \, \le \, \twist[S] \, \le \, \overprod{\twist}{\twist'}} \integerPointTransform_{\twist[S]}(t_1, \dots, t_{n+n'}).
\]
\end{corollary}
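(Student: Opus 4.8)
The statement to prove is that for acyclic $k$-twists $\twist \in \AcyclicTwists(n)$ and $\twist' \in \AcyclicTwists(n')$,
\[
\integerPointTransform_{\twist}(t_1, \dots, t_n) \product \integerPointTransform_{\twist'}(t_{n+1}, \dots, t_{n+n'}) = \sum_{\underprod{\twist}{\twist'} \, \le \, \twist[S] \, \le \, \overprod{\twist}{\twist'}} \integerPointTransform_{\twist[S]}(t_1, \dots, t_{n+n'}).
\]

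The plan is to transport the product formula already established at the level of $\FQSym$ (Proposition~\ref{prop:product}) through the algebra morphism $\Psi$ of Proposition~\ref{prop:integerPointTransform}\,\eqref{item:integerPointTransformProduct}. First I would recall that, by Proposition~\ref{prop:integerPointTransform}\,\eqref{item:integerPointTransformLinearExtensions} applied to the transitive closure $\contactLess{\twist}$ of the contact graph of an acyclic $k$-twist, we have $\integerPointTransform_{\twist} = \sum_{\tau \in \linearExtensions(\twist\contact)} \integerPointTransform_\tau = \Psi\big( \sum_{\tau \in \linearExtensions(\twist\contact)} \F_\tau \big) = \Psi(\PTwist_{\twist})$, where the last equality is the very definition of $\PTwist_{\twist}$ from Section~\ref{subsec:subalgebra}. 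So $\Psi$ sends the $\PTwist$-basis element $\PTwist_{\twist}$ to the integer point transform $\integerPointTransform_{\twist}$.

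Next I would apply the algebra morphism $\Psi$ to the identity of Proposition~\ref{prop:product},
\[
\PTwist_{\twist} \product \PTwist_{\twist'} = \sum_{\underprod{\twist}{\twist'} \, \le \, \twist[S] \, \le \, \overprod{\twist}{\twist'}} \PTwist_{\twist[S]},
\]
the sum running over the interval between $\underprod{\twist}{\twist'}$ and $\overprod{\twist}{\twist'}$ in the increasing flip lattice on $(k,n+n')$-twists. Since $\Psi$ is an algebra morphism (Proposition~\ref{prop:integerPointTransform}\,\eqref{item:integerPointTransformProduct}), it respects products up to the shift of variables built into that statement: $\Psi(\F_\tau) = \integerPointTransform_\tau$ in variables $t_1, \dots, t_n$ and the product on the right is taken after shifting the second factor's variables by $n$. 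Applying $\Psi$ to the left side gives $\integerPointTransform_{\twist}(t_1, \dots, t_n) \product \integerPointTransform_{\twist'}(t_{n+1}, \dots, t_{n+n'})$ by the computation of the previous paragraph together with the morphism property, while applying $\Psi$ to the right side gives $\sum_{\underprod{\twist}{\twist'} \le \twist[S] \le \overprod{\twist}{\twist'}} \integerPointTransform_{\twist[S]}(t_1, \dots, t_{n+n'})$, again by the previous paragraph applied to each $\twist[S]$. This yields the desired identity.

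The only point requiring a little care — and hence the main (minor) obstacle — is bookkeeping the variable shift: Proposition~\ref{prop:integerPointTransform}\,\eqref{item:integerPointTransformProduct} is stated for the integer point transforms of \emph{permutations} with a specific shift convention, and one must check that applying $\Psi$ to a linear combination of $\F_\tau$'s (namely to $\PTwist_\twist$ and $\PTwist_{\twist'}$) and then multiplying is the same as first multiplying in $\FQSym$ and then applying $\Psi$; this is exactly the statement that $\Psi$ is an algebra morphism, so it is immediate, but it is worth writing the one-line expansion $\Psi(\PTwist_\twist) \product \Psi(\PTwist_{\twist'}) = \sum_{\tau, \tau'} \integerPointTransform_\tau \product \integerPointTransform_{\tau'} = \sum_{\tau,\tau'} \sum_{\sigma \in \tau \shiftedShuffle \tau'} \integerPointTransform_\sigma = \Psi(\PTwist_\twist \product \PTwist_{\twist'})$ explicitly. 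Everything else is a direct substitution of results already proved.
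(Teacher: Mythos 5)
Your proposal is correct and follows essentially the same argument as the paper: identify $\Psi(\PTwist_{\twist}) = \integerPointTransform_{\twist}$ via Proposition~\ref{prop:integerPointTransform}\,\eqref{item:integerPointTransformLinearExtensions} and then push the product formula of Proposition~\ref{prop:product} through the algebra morphism $\Psi$. The paper writes this as a one-line chain of equalities, which is exactly the ``one-line expansion'' you describe at the end.
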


\begin{proof}
Hiding the variables~$(t_1, \dots, t_{n+n'})$ for concision, we have
\[
\integerPointTransform_{\twist} \product \integerPointTransform_{\twist'} = \Psi(\PTwist_{\twist}) \product \Psi(\PTwist_{\twist'}) = \Psi(\PTwist_{\twist} \cdot \PTwist_{\twist'}) = \Psi \bigg( \sum_{\twist[S]} \PTwist_{\twist[S]} \bigg) = \sum_{\twist[S]} \Psi(\PTwist_{\twist[S]}) = \sum_{\twist[S]} \integerPointTransform_{\twist[S]},
\]
where the sums run over the $k$-twists~$\twist[S]$ of the increasing flip lattice interval~$[\underprod{\twist}{\twist'}, \overprod{\twist}{\twist'}]$.
\end{proof}

\subsection{$k$-twistiform algebras}
\label{subsec:ktwistiform}

\enlargethispage{.3cm}
In this section, we extend the notion of dendriform algebras to $k$-twistiform algebras. Dendriform algebras were introduced by J.-L.~Loday in~\cite[Chap.~5]{Loday-dialgebras}. In a dendriform algebra, the product~$\product$ is decomposed into two partial products~$\op{l}$ and~$\op{r}$ satisfying:
\[
\sfx \; \op{l} \; \big(\sfy \product \sfz \big) = \big(\sfx \; \op{l} \; \sfy \big) \; \op{l} \; \sfz, \qquad\quad
\sfx \; \op{r} \; \big(\sfy \; \op{l} \; \sfz \big) = \big(\sfx \; \op{r} \; \sfy \big) \; \op{l} \; \sfz, \qquad\quad
\sfx \; \op{r} \; \big(\sfy \; \op{r} \; \sfz \big) = \big(\sfx \product \sfy \big) \; \op{r} \; \sfz.
\]
In our context, we will decompose the product of~$\FQSym$ (and of~$\Twist$) into~$2^k$ partial products satisfying~$3^k$ associativity relations (for the presentation, it is more convenient to use $3^k$ operations corresponding to all partial sums of the $2^k$ partial products). In this paper, we just give the definition and observe that the algebras~$\FQSym$ and~$\Twist$ are naturally endowed with a $k$-twistiform structure, as they motivated the definition. A detailed study of combinatorial and algebraic properties of $k$-twistiform algebras and operads is in progress in a joint work with F.~Hivert~\cite{HivertPilaud}.

We need to fix some natural notations on words. We denote by~$|W|$ the length of a word~$W$. For a subset~$P$ of positions in~$W$ and a subset~$L$ of letters of~$W$, we denote by~$W_P$ the subword of~$W$ consisting only of the letters at positions in~$P$ and by~$W^L$ the subword of~$W$ consisting only of the letters which belong to~$L$. 

\begin{definition}
\label{def:twistiform}
A \defn{$k$-twistiform algebra} is a vector space~$\algebra$ endowed with a collection~$\Operations \eqdef \{\op{l}, \op{m}, \op{r}\}^k$ of~$3^k$ bilinear operations which satisfy the following $k3^{k-1} + 3^k$ relations:
\begin{description}
\item[Split relations]
For any~$\operation[b],\operation[b'] \in \{\op{l}, \op{m}, \op{r}\}^*$ with~$|\operation[b]| + |\operation[b']| = k-1$, the operation~$\operation[b]\op{m}\operation[b'] \in \Operations$ decomposes into the operations~$\operation[b]\op{l}\operation[b] \in \Operations$ and~$\operation[b]\op{r}\operation[b] \in \Operations$:
\[
\sfx \; \operation[b]\op{m}\operation[b'] \,  \sfy \; = \; \sfx \; \operation[b]\op{l}\operation[b'] \, \sfy \; + \; \sfx \; \operation[b]\op{r}\operation[b'] \, \sfy \qquad\text{for all } \sfx, \sfy \in \algebra.
\]
\item[Associativity relations]
For any~$W \in \{x,y,z\}^k$, the operations~$\operation_W, \operation_W', \operation_W'', \operation_W''' \in \Operations$ defined by
\begin{gather*}
(\operation_W)_p \eqdef \begin{cases} \; \op{l} & \text{if } W_p = x \\ \; \op{r} & \text{if } W_p \in \{y,z\} \end{cases} \qquad
(\operation_W')_p \eqdef \begin{cases} \; \op{l} & \text{if } |W^{\{y,z\}}| \ge p \text{ and } (W^{\{y,z\}})_p = y \\ \; \op{r} & \text{if } |W^{\{y,z\}}| \ge p \text{ and } (W^{\{y,z\}})_p = z  \\ \; \op{m} & \text{otherwise} \end{cases} \\
(\operation_W'')_p \eqdef \begin{cases} \; \op{l} & \text{if } |W^{\{x,y\}}| \ge p \text{ and } (W^{\{x,y\}})_p = x \\ \; \op{r} & \text{if } |W^{\{x,y\}}| \ge p \text{ and } (W^{\{y,z\}})_p = y  \\ \; \op{m} & \text{otherwise} \end{cases} \qquad
(\operation_W''')_p \eqdef \begin{cases} \; \op{l} & \text{if } W_p \in \{x,y\} \\ \; \op{r} & \text{if } W_p = z \end{cases}
\end{gather*}
satisfy the associativity relation
\[
\sfx \; \operation_W \; \big(\sfy \; \operation_W' \; \sfz \big) = \big(\sfx \; \operation_W'' \; \sfy \big) \; \operation_W''' \; \sfz \qquad\text{for all } \sfx, \sfy, \sfz \in \algebra.
\]
\end{description}
\end{definition}

\begin{example}[$1$- and~$2$-twistiform algebras]
$1$-twistiform algebras are precisely dendriform algebras, \ie vector spaces endowed with three operations~$\op{l}, \, \op{m}, \, \op{r}$ which fulfill the~$4$ relations:
\[
\begin{array}{c@{\qquad}c@{\qquad}c}
& \sfx \; \op{m} \; \sfy \; = \; \sfx \; \op{l} \; \sfy \; + \; \sfx \; \op{r} \; \sfy, & \\[.1cm]
\sfx \; \op{l} \; \big(\sfy \; \op{m} \; \sfz \big) = \big(\sfx \; \op{l} \; \sfy \big) \; \op{l} \; \sfz, &
\sfx \; \op{r} \; \big(\sfy \; \op{l} \; \sfz \big) = \big(\sfx \; \op{r} \; \sfy \big) \; \op{l} \; \sfz, &
\sfx \; \op{r} \; \big(\sfy \; \op{r} \; \sfz \big) = \big(\sfx \; \op{m} \; \sfy \big) \; \op{r} \; \sfz. 
\end{array}
\]
$2$-twistiform algebras are vector spaces endowed with~$9$ operations $\op{l,l}$, $\op{l,m}$, $\op{l,r}$, $\op{m,l}$, $\op{m,m}$, $\op{m,r}$, $\op{r,l}$, $\op{r,m}$, $\op{r,r}$ which satisfy the following~$15$ relations: \\[.2cm]
\centerline{$
\begin{array}{@{}c@{\qquad}c@{\qquad}c@{}}
\sfx \; \op{m,l} \; \sfy \; = \; \sfx \; \op{l,l} \; \sfy \; + \; \sfx \; \op{r,l} \; \sfy, &
\sfx \; \op{m,m} \; \sfy \; = \; \sfx \; \op{l,m} \; \sfy \; + \; \sfx \; \op{r,m} \; \sfy, &
\sfx \; \op{m,r} \; \sfy \; = \; \sfx \; \op{l,r} \; \sfy \; + \; \sfx \; \op{r,r} \; \sfy, \\[.1cm]
\sfx \; \op{l,m} \; \sfy \; = \; \sfx \; \op{l,l} \; \sfy \; + \; \sfx \; \op{l,r} \; \sfy, &
\sfx \; \op{m,m} \; \sfy \; = \; \sfx \; \op{m,l} \; \sfy \; + \; \sfx \; \op{m,r} \; \sfy, &
\sfx \; \op{r,m} \; \sfy \; = \; \sfx \; \op{r,l} \; \sfy \; + \; \sfx \; \op{r,r} \; \sfy, \\[.1cm]
\sfx \; \op{l,l} \; \big(\sfy \; \op{m,m} \; \sfz \big) = \big(\sfx \; \op{l,l} \; \sfy \big) \; \op{l,l} \; \sfz, &
\sfx \; \op{l,r} \; \big(\sfy \; \op{l,m} \; \sfz \big) = \big(\sfx \; \op{l,r} \; \sfy \big) \; \op{l,l} \; \sfz, &
\sfx \; \op{l,r} \; \big(\sfy \; \op{r,m} \; \sfz \big) = \big(\sfx \; \op{l,m} \; \sfy \big) \; \op{l,r} \; \sfz, \\[.1cm]
\sfx \; \op{r,l} \; \big(\sfy \; \op{l,m} \; \sfz \big) = \big(\sfx \; \op{r,l} \; \sfy \big) \; \op{l,l} \; \sfz, &
\sfx \; \op{r,r} \; \big(\sfy \; \op{l,l} \; \sfz \big) = \big(\sfx \; \op{r,r} \; \sfy \big) \; \op{l,l} \; \sfz, &
\sfx \; \op{r,r} \; \big(\sfy \; \op{l,r} \; \sfz \big) = \big(\sfx \; \op{r,m} \; \sfy \big) \; \op{l,r} \; \sfz, \\[.1cm]
\sfx \; \op{r,l} \; \big(\sfy \; \op{r,m} \; \sfz \big) = \big(\sfx \; \op{l,m} \; \sfy \big) \; \op{r,l} \; \sfz, &
\sfx \; \op{r,r} \; \big(\sfy \; \op{r,l} \; \sfz \big) = \big(\sfx \; \op{r,m} \; \sfy \big) \; \op{r,l} \; \sfz, &
\sfx \; \op{r,r} \; \big(\sfy \; \op{r,r} \; \sfz \big) = \big(\sfx \; \op{m,m} \; \sfy \big) \; \op{r,r} \; \sfz.
\end{array}
$}

\vspace{.2cm}
\end{example}

\begin{remark}
Adding up all associativity relations, one obtains that
\[
\sfx \; \op{m}^k \; \big( \sfy \; \op{m}^k \; \sfz \big) = \big( \sfx \; \op{m}^k \; \sfy \big) \; \op{m}^k \; \sfz \qquad\text{for all } \sfx, \sfy, \sfz \in \algebra,
\]
so that the $k$-twistiform algebra~$(\algebra, \{\op{l}, \op{m}, \op{r}\}^k)$ defines in particular a structure of associative algebra~$(\algebra, \op{m}^k)$. Reciprocally, we say that an associative algebra~$(\algebra, \product)$ admits a \defn{$k$-twistiform structure} if it is possible to split the product~$\product$ into~$3^k$ operations~$\Operations \eqdef \{\op{l}, \op{m}, \op{r}\}^k$ defining a $k$-twistiform algebra on~$\algebra$.
\end{remark}

\enlargethispage{.1cm}
We now show that C.~Malvenuto and C.~Reutenauer's Hopf algebra on permutations~$\FQSym$ can be endowed with a structure of $k$-twistiform algebra. For an operation~$\operation \in \Operations$ and two words~$X \eqdef x\underline{X}$ and~$Y \eqdef y\underline{Y}$, we define
\[
X \; \operation \; Y =
\begin{cases}
X \shuffle Y & \text{if } \operation = \varnothing, \\
x (\underline{X} \; \underline{\operation} \; Y) & \text{if } \operation = \op{l}\underline{\operation}, \\
x (\underline{X} \; \underline{\operation} \; Y) \; \cup \; y (X \; \underline{\operation} \; \underline{Y}) & \text{if } \operation = \op{m}\underline{\operation}, \\
y (X \; \underline{\operation} \; \underline{Y}) & \text{if } \operation = \op{r}\underline{\operation}
\end{cases}
\]
with the initial conditions~$X \; \op{r}\underline{\operation} \; \varnothing = \varnothing \; \op{l}\underline{\operation} \; Y = 0$.

In other words, we consider the shuffle of~$X$ and~$Y$, except that the~$i$th letter of~$X \, \operation \, Y$ is forced to belong to~$X$ (resp.~to~$Y$) if the $i$th letter of~$\operation$ is~$\op{l}$ (resp.~is~$\op{r}$). For example, when~$k = 1$, the three operators are given by
\[
X \; \op{l} \; Y = x (\underline{X} \shuffle Y),
\qquad
X \; \op{m} \; Y = X \shuffle Y,
\qquad
X \; \op{r} \; Y = y (X \shuffle \underline{Y}),
\]
with the initial conditions~$X \; \op{r} \; \varnothing = \varnothing \; \op{l} \; Y = 0$.

Now for an operation~$\operation \in \Operations$ and two permutations~$\tau \in \fS_n$ and~$\tau' \in \fS_{n'}$, we define~${\tau \, \operation \, \tau' = \tau \, \operation \, \bar\tau'}$, where~$\bar\tau'$ is the permutation~$\tau'$ shifted by the length~$n$ of~$\tau$. Equivalently, $\tau \, \operation \, \tau'$ is the set of permutations~$\sigma \in \tau \shiftedShuffle \tau'$ such that for all~$i \in [k]$, we have~$\sigma_i \le n$ if~$\operation_i = \op{l}$ while~$\sigma_i > n$ if~$\operation_i = \op{r}$. Finally, we define the operations~$\Operations$ on the Hopf algebra~$\FQSym$ itself by
\[
\F_\tau \; \operation \; \F_{\tau'} = \sum_{\sigma \in \tau \, \operation \, \tau'} \F_\sigma.
\]

\begin{proposition}
The Hopf algebra~$\FQSym$, endowed with the operations~$\Operations$ described above, defines a $k$-twistiform algebra. The product of~$\FQSym$ is then given by~$\product = \op{m}^k$.
\end{proposition}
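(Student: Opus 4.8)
The plan is to reduce everything to a single combinatorial statement about colored shuffles of words. By bilinearity it suffices to evaluate all operations on basis elements~$\F_\tau$, and after the shifts the three permutations involved have pairwise disjoint (increasing) value intervals, so every product below is a plain sum over a \emph{set} of permutations, with no multiplicities to track. The first step is to record the following unrolling of the recursive definition of the operations: for words~$X,Y$ and $\operation\in\{\op{l},\op{m},\op{r}\}^k$, the word set $X\,\operation\,Y$ consists precisely of those shuffles~$\sigma$ of~$X$ and~$Y$ whose first~$k$ letters, each marked by whether it comes from~$X$ or from~$Y$, are compatible with~$\operation$ (position~$i$ must come from~$X$ if $\operation_i=\op{l}$, from~$Y$ if $\operation_i=\op{r}$, and is free if $\operation_i=\op{m}$), with no constraint on positions beyond~$k$. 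This is an immediate induction on~$k$, the initial conditions $X\,\op{r}\underline{\operation}\,\varnothing=\varnothing\,\op{l}\underline{\operation}\,Y=0$ accounting for the empty shuffle set when a forced origin has run out of letters.

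Granting this reformulation, the $k3^{k-1}$ split relations are immediate: in $\operation[b]\op{m}\operation[b']$ the letter in position $|\operation[b]|+1$ is left free, and ``comes from~$X$'' and ``comes from~$Y$'' are mutually exclusive and jointly exhaustive, so $X\,(\operation[b]\op{m}\operation[b'])\,Y$ is the disjoint union of $X\,(\operation[b]\op{l}\operation[b'])\,Y$ and $X\,(\operation[b]\op{r}\operation[b'])\,Y$; passing to $\FQSym$ gives the desired identity of elements. For the same reason $\product=\op{m}^k$: the operation $\op{m}^k$ imposes no constraint at all, so $X\,\op{m}^k\,Y=X\shuffle Y$ and hence $\F_\tau\,\op{m}^k\,\F_{\tau'}=\sum_{\sigma\in\tau\shiftedShuffle\tau'}\F_\sigma=\F_\tau\product\F_{\tau'}$.

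The core of the argument is the $3^k$ associativity relations. Fix $W\in\{x,y,z\}^k$ and permutations $\tau,\tau',\tau''$ with underlying words $X,Y,Z$ on disjoint increasing value intervals. I claim that \emph{both} sides of the associativity relation attached to~$W$ equal $\sum_\sigma\F_\sigma$, where~$\sigma$ runs over the permutations in $X\shuffle Y\shuffle Z$ whose first~$k$ letters, marked by their origin among $\{x,y,z\}$, spell exactly the word~$W$. The structural fact behind this is that a three-fold colored shuffle of $X,Y,Z$ can be built in two stages --- shuffle $Y$ and $Z$, then shuffle $X$ into the result, or shuffle $X$ and $Y$, then shuffle in $Z$ --- and that the letters of an inner two-fold shuffle occur inside~$\sigma$ in their original relative order. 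Consequently, if the first~$k$ letters of~$\sigma$ spell~$W$, then the non-$X$ letters among them, read left to right, are exactly the first $|W^{\{y,z\}}|$ letters of the underlying $Y$--$Z$ shuffle and spell $W^{\{y,z\}}$, while every later letter of that inner shuffle sits at a position $>k$ in~$\sigma$. Comparing with Definition~\ref{def:twistiform}, $\operation_W$ is precisely the constraint ``position $p\le k$ of~$\sigma$ comes from~$X$ iff $W_p=x$'', and $\operation'_W$ is precisely the constraint ``the first $|W^{\{y,z\}}|$ letters of the $Y$--$Z$ shuffle spell $W^{\{y,z\}}$, nothing imposed afterwards''; by the reformulation of the first step this gives $\sfx\,\operation_W\,(\sfy\,\operation'_W\,\sfz)=\sum_\sigma\F_\sigma$ with~$\sigma$ as above. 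Running the same analysis with the pair $\{Y,Z\}$ replaced by $\{X,Y\}$ identifies $\operation'''_W$ with ``position $p\le k$ of~$\sigma$ comes from~$Z$ iff $W_p=z$'' and $\operation''_W$ with ``the first $|W^{\{x,y\}}|$ letters of the $X$--$Y$ shuffle spell $W^{\{x,y\}}$'', so $(\sfx\,\operation''_W\,\sfy)\,\operation'''_W\,\sfz=\sum_\sigma\F_\sigma$ as well, and the two sides agree.

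I expect the only genuine work, and the main obstacle, to be the careful bookkeeping that makes the two-stage decomposition of a three-fold colored shuffle line up with the ``first~$k$ positions'' conventions --- in particular the verification that constraining an inner shuffle only in its first $|W^{\{y,z\}}|$ (resp.\ $|W^{\{x,y\}}|$) positions is neither too weak nor too strong. This rests on the count that among the first~$k$ positions of~$\sigma$ there are exactly $|W^{\{y,z\}}|$ letters not from~$X$ (resp.\ $|W^{\{x,y\}}|$ not from~$Z$), occurring as a prefix of the relevant inner shuffle, so that positions of the inner shuffle past that prefix necessarily land beyond position~$k$ in~$\sigma$. Once this is pinned down, all $3^k$ associativity relations and all $k3^{k-1}$ split relations follow from the single identification above with no case analysis on~$W$, and the structure of $k$-twistiform algebra on~$\FQSym$, with product $\product=\op{m}^k$, is established.
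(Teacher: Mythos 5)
Your proposal is correct and follows essentially the same approach as the paper: both proofs reduce the associativity relations to the observation that each side of the relation attached to $W$ equals $\sum_\sigma \F_\sigma$ over the $\sigma \in X \shuffle Y \shuffle Z$ whose first $k$ letters, colored by their word of origin, spell $W$, and both handle the split relations by the evident partition according to the origin of the free letter. You simply spell out the two-stage shuffle bookkeeping that the paper's proof dismisses as immediate from the definitions.
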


\begin{proof}
We have to show that the operations defined above on~$\FQSym$ indeed satisfy the $k3^{k-1}+3^k$ relations of Definition~\ref{def:twistiform}.
\begin{description}
\item[Split relations] Let~$\operation, \operation' \in \{\op{l}, \op{m}, \op{r}\}^*$ with~$|\operation| + |\operation'| = k-1$. It is immediate from the definitions that~$X \; \operation\op{m}\operation' \; Y = (X \; \operation\op{l}\operation' \; Y) \cup (X \; \operation\op{r}\operation' \; Y)$ for any two words~$X,Y$.

\smallskip
\item[Associativity relations] Let~$W \in \Operations$. It follows from the definition of the operations~$\operation_W$, $\operation_W'$, $\operation_W''$ and~$\operation_W'''$ that for any words~$X,Y,Z$
\[
X \; \operation_W \; (Y \; \operation_W' \; Z) = (X \; \operation_W'' \; Y) \; \operation_W''' \; Z
\]
is the set of all words in~$X \shuffle Y \shuffle Z$ whose $p$th letter is in the word~$X$ if~$W_p = x$, in the word~$Y$ if~$W_p = y$ and in the word~$Z$ if~$W_p = z$.
\end{description}
These equalities of sets then translate to the desired linear relations on the corresponding operations in~$\FQSym$.
\end{proof}

We say that the operations~$\Operations$ define the \defn{forward $k$-twistiform structure} on~$\FQSym$. There is also a \defn{backward $k$-twistiform structure} on~$\FQSym$ which considers the last~$k$ letters rather than the first~$k$ ones. Namely, for each operation~$\operation \in \Operations$, define  an operation~$\mirror{\operation}$ by~${V \; \mirror{\operation} \; W = \mirror{(\mirror{V} \; \operation \; \mirror{W})}}$ where~$\mirror{W} = w_n \cdots w_1$ denotes the mirror of a word~$W = w_1 \cdots w_n$. Clearly, the operations~$\mirror{\operation}$ for~$\operation \in \Operations$ still fulfill the relations of Definition~\ref{def:twistiform}. We have chosen to define the forward $k$-twistiform structure as it leads to a simpler presentation, but we need this backward $k$-twistiform structure in the next statement to be coherent with the insertion in $k$-twists (whose direction was chosen consistently with J.-L.~Loday and M.~Ronco's conventions).

\begin{proposition}
The subalgebra~$\Twist$ of~$\FQSym$ is stable by the operations~$\mirror{\operation}$ for~$\operation \in \Operations$ and therefore inherits a $k$-twistiform structure.
\end{proposition}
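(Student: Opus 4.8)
The plan is to reduce the statement, exactly as in the proof of Theorem~\ref{theo:twistSubalgebra}, to the assertion that for every operation $\operation \in \Operations$ and all acyclic $k$-twists $\twist \in \AcyclicTwists(n)$, $\twist' \in \AcyclicTwists(n')$, writing $\PTwist_{\twist} \mirror{\operation} \PTwist_{\twist'} = \sum_{\sigma} c_\sigma \F_\sigma$ in $\FQSym$, the coefficient $c_\sigma$ depends only on the $k$-twist congruence class of $\sigma$. Granting this, $\PTwist_{\twist} \mirror{\operation} \PTwist_{\twist'} = \sum_{\twist[S]} c_{\twist[S]} \PTwist_{\twist[S]} \in \Twist$, where $c_{\twist[S]}$ is the common value of $c_\sigma$ on the fiber of $\twist[S]$ under $\surjectionPermBrick$; and since the split and associativity relations of Definition~\ref{def:twistiform} already hold in $\FQSym$, they restrict to $\Twist$, which thus inherits a $k$-twistiform structure.

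The crucial point — and the reason the \emph{backward} rather than the forward structure is the one to use here — is a position estimate. On one hand, a single application of the rewriting rule of Definition~\ref{def:ktwistCongruence} to a word of length $m$, sending $U a c V_1 b_1 \cdots V_k b_k W$ to $U c a V_1 b_1 \cdots V_k b_k W$, must keep the $k$ distinct witnesses $b_1, \dots, b_k$ strictly to the right of the swapped pair, so $a$ and $c$ occupy positions at most $m-k-1$ and $m-k$ respectively; the rewrite therefore never touches the last $k$ positions. On the other hand, unwinding $V \mirror{\operation} W = \mirror{(\mirror{V}\operation\mirror{W})}$ shows that $\tau \mirror{\operation} \tau'$ (for $\tau \in \fS_n$, $\tau' \in \fS_{n'}$, $m = n+n'$) is exactly the set of $\sigma \in \tau \shiftedShuffle \tau'$ whose entry in position $m+1-i$ lies in $\{1,\dots,n\}$ when $\operation_i = \op{l}$ and in $\{n+1,\dots,m\}$ when $\operation_i = \op{r}$, for $i \in [k]$ — that is, the only constraints imposed by $\mirror{\operation}$ concern the last $k$ positions. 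Hence a rewriting step never affects whether a permutation belongs to $\tau \mirror{\operation} \tau'$.

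The verification then follows the template of Theorem~\ref{theo:twistSubalgebra}. Let $\sigma = U a c W'$ and $\tilde\sigma = U c a W'$ with $W' = V_1 b_1 \cdots V_k b_k W$ and $a < b_i < c$, and suppose $\F_\sigma$ appears in $\PTwist_{\twist} \mirror{\operation} \PTwist_{\twist'}$, say $\sigma \in \tau \mirror{\operation} \tau'$ for $\tau \in \linearExtensions(\twist\contact)$ and $\tau' \in \linearExtensions(\twist'\contact)$. If $a \le n < c$, then $\tilde\sigma$ is still a shuffle of $\tau$ and $\bar\tau'$ and, by the previous paragraph, still satisfies the $\mirror{\operation}$-constraints, so $\tilde\sigma \in \tau \mirror{\operation} \tau'$. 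If $c \le n$, the subword of $\sigma$ formed by its entries $\le n$ equals $\tau = X a c Y_1 b_1 \cdots Y_k b_k Z$, which is $\equiv^k$-congruent to $\tilde\tau \eqdef X c a Y_1 b_1 \cdots Y_k b_k Z$; by Proposition~\ref{prop:ktwistCongruence} we have $\surjectionPermBrick(\tilde\tau) = \twist$, and $\tilde\sigma \in \tilde\tau \mirror{\operation} \tau'$. The case $n < a$ is symmetric, rewriting inside $\tau'$. Thus $\F_{\tilde\sigma}$ appears whenever $\F_\sigma$ does, and conversely by the same argument; since $\sigma$ determines the pair $(\tau,\tau')$, these occurrences carry equal coefficients, so $c_\sigma = c_{\tilde\sigma}$, and by transitivity $c_\sigma = c_{\tilde\sigma}$ for all $\sigma \equiv^k \tilde\sigma$. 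I expect the only real work to be the bookkeeping in the middle paragraph — correctly unwinding the mirror in the definition of $\mirror{\operation}$ and confirming the $m-k$ position bound — since the rest is a routine transcription of the argument already given for the product in Theorem~\ref{theo:twistSubalgebra}.
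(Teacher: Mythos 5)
Your proof is correct, and it takes a somewhat different route from the paper's. Both proofs rest on the observation that the extra constraint imposed by $\mirror{\operation}$ concerns only the last $k$ positions of the shuffled word; the divergence is in how that observation is leveraged. The paper proves a structural lemma about $k$-twists: using Lemma~\ref{lem:canopy} it shows that the last $k$ inserted pipes of an acyclic $k$-twist are pairwise comparable, so they form a chain at the end of the contact graph, and hence \emph{every} linear extension of $\twist\contact$ agrees on its last $k$ entries. This makes it immediate that the $\mirror{\operation}$-constraint is either satisfied by all or by none of the permutations in a fiber of $\surjectionPermBrick$, and in fact yields an explicit formula for $\PTwist_{\twist}\,\mirror{\operation}\,\PTwist_{\twist'}$ as a sum over a subset of the increasing flip interval $[\underprod{\twist}{\twist'},\overprod{\twist}{\twist'}]$. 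You instead make a purely syntactic observation about the rewriting rule of Definition~\ref{def:ktwistCongruence}: because the $k$ witnesses $b_1,\dots,b_k$ must lie strictly to the right of the swapped pair $ac$, that pair occupies positions at most $m-k-1$ and $m-k$, so a rewrite step can never modify the last $k$ positions; you then rerun the stability argument of Theorem~\ref{theo:twistSubalgebra} with the $\mirror{\operation}$-constraints carried along. Your route is more elementary and self-contained (no appeal to Lemma~\ref{lem:canopy} or to the contact graph), and it gives the same conclusion; the paper's route additionally exposes the structural fact about terminal chains in contact graphs and produces the closed-form description of the operation on the $\PTwist$-basis. The two key claims are in fact equivalent in context — the fibers of $\surjectionPermBrick$ are connected by single rewrites, so "rewrites fix the last $k$ positions" and "all linear extensions of a fiber share their last $k$ entries" imply one another — but the proofs run through genuinely different lemmas.
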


\begin{proof}
Let~$\twist$ be an acyclic $k$-twist. We claim that the last~$k$ entries are the same in all linear extensions of~$\twist\contact$. Indeed, pick a linear extension~$\tau$ of~$\twist\contact$ and let~$\twist' \eqdef \varnothing \insertion{} \tau_n \insertion{} \cdots \insertion{} \tau_{n-k+1}$ denote the $k$-twist obtain after the insertion of the last~$k$ values of~$\tau$. All pipes of~$\twist'$ are then comparable by Lemma~\ref{lem:canopy} and will be comparable to all other pipes in~$\twist = \twist' \insertion{} \tau_{n-k} \insertion{} \cdots \insertion{} \tau_1$. Thus the last $k$ values of~$\tau$ form a chain at the end of the contact graph~$\twist\contact$.

It then follows that the operation~$\mirror{\operation}$ stabilizes~$\Twist$ for any~$\operation \in \Operations$. Indeed, for any two acyclic twists~$\twist \in \AcyclicTwists(n)$ and~$\twist' \in \AcyclicTwists(n')$, we have
\[
\PTwist_{\twist} \; \mirror{\operation} \; \PTwist_{\twist'} = \sum_{\twist[S]} \PTwist_{\twist[S]}
\]
where the sum runs over all acyclic twists~$\twist[S] \in \AcyclicTwists(n+n')$ such that~$\underprod{\twist}{\twist'} \le \twist[S] \le \overprod{\twist}{\twist'}$ and~$\sigma_{n+n'+1-i} \le n$ if~$\operation_i = \op{l}$ and~$\sigma_{n+n'+1-i} > n$ if~$\operation_i = \op{r}$ for any linear extension~$\sigma$ of~$\twist[S]\contact$.
\end{proof}

\begin{remark}
One can also define similarly $k$-cotwistiform coalgebras, and such a structure on both~$\FQSym$ and~$\Twist$. Details will be given in~\cite{HivertPilaud}.
\end{remark}

\part{Three extensions}
\label{part:extensions}

In the second part of this paper, we present three independent extensions of the combinatorial, geometric, and algebraic constructions of Part~\ref{part:acyclicTwists}:
\begin{description}
\item[Cambrianization] Following the same direction as~\cite{Reading-CambrianLattices, HohlwegLange, PilaudSantos-brickPolytope, ChatelPilaud}, we show that our constructions can be parametrized by a sequence of signs (\ie a type~$A$ Coxeter element). We obtain generalizations of the Cambrian lattices, mention their connections to certain well-chosen brick polytopes, and construct a Cambrian twist Hopf algebra with similar ideas as in~\cite[Part~1]{ChatelPilaud}. \\[-.3cm]

\item[Tuplization] Motivated by the Hopf algebras on diagonal rectangulations described by S.~Law and N.~Reading~\cite{LawReading} and on twin binary trees described by S.~Giraudo~\cite{Giraudo}, G.~Chatel and V.~Pilaud \cite[Part~2]{ChatelPilaud} defined Cambrian tuple algebras. We briefly extend this construction to Cambrian~twist tuples. \\[-.3cm]

\item[Schr\"oderization] In~\cite{Chapoton}, F.~Chapoton described Hopf algebras on all faces of the permutahedra, associahedra and cubes. This was extended to the Cambrian algebras in~\cite[Part~3]{ChatelPilaud}. We show that these algebras and the corresponding combinatorics can be translated as well to all faces of the brick polytopes and zonotopes.
\end{description}
These extensions can moreover be combined: we could present in a single structure all these generalizations. For the sake of clarity, we have preferred to present these extensions separately, and we leave it to the reader to combine them.

\section{Cambrianization}
\label{sec:Cambrianization}

Our first extension concerns Cambrian twists, which are natural generalizations of twists parametrized by the choice of a signature~$\signature \in \pm^n$. When~$k = 1$, Cambrian $1$-twists correspond to Cambrian trees~\cite{ChatelPilaud}, whose 
\begin{itemize}
\item combinatorial properties are encoded in N.~Reading's Cambrian lattices~\cite{Reading-CambrianLattices}, 
\item geometric properties are encoded in C.~Hohlweg and C.~Lange's realizations of the associahedron~\cite{HohlwegLange}, and
\item algebraic properties are encoded in G.~Chatel and V.~Pilaud's Cambrian algebras~\cite{ChatelPilaud}.
\end{itemize}
We show how these three structures extend to arbitrary~$k$.

\subsection{Combinatorics of Cambrian twists}
\label{subsec:combinatoricsCambrian}

Given a signature~${\signature \eqdef \signature_1 \cdots \signature_n \in \pm^n}$, we denote by~$|\signature|_-$ and~$|\signature|_+$ the number of $-$ and $+$ in~$\signature$, respectively. For any integer~$k \in \N$ and signature~${\signature \in \pm^n}$, we define a \defn{shape}~$\shape$ formed by four monotone lattices paths:
\begin{enumerate}[(i)]
\item \defn{enter path}: from~$(|\signature|_+, 0)$ to~$(0, |\signature|_-)$ with $p$th step north if~$\signature_p = -$ and west if~$\signature_p = +$,
\item \defn{exit path}: from~$(|\signature|_+ + k, n + k)$ to~$(n + k, |\signature|_- + k)$ with $p$th step east if~$\signature_p = -$~and~south~if~${\signature_p = +}$,
\item \defn{accordion paths}:
\begin{tabular}[t]{@{}l}
the path~$(NE)^{|\signature|_+ + k}$ from~$(0, |\signature|_-)$ to~$(|\signature|_+ + k, n + k)$ and \\
the path~$(EN)^{|\signature|_- + k}$ from~$(|\signature|_+, 0)$ to~$(n + k, |\signature|_- + k)$.
\end{tabular}
\end{enumerate}
We call \defn{$p$th entry} (resp.~\defn{$p$th exit}) of~$\shape$ the $p$th step of its enter path (resp.~exit path).
\fref{fig:shapes} illustrates these families of shapes for different integers~$k$ and signatures~$\signature$.

\begin{figure}[h]
	\centerline{\includegraphics[scale=1.3]{shapes}}
	\caption{The shapes~$\shape$ for~$k \in \{0,1,2\}$ and different signatures~$\signature$. The enter path is in red, the exit path is in blue, and the accordion paths are in black.}
	\label{fig:shapes}
\end{figure}

\begin{definition}
A \defn{Cambrian $(k, \signature)$-twist} is a filling of the shape~$\shape$ with crosses~\cross{} and elbows~\elbow{} such that
\begin{itemize}
\item no two pipes cross twice (the pipe dream is reduced),
\item the pipe which enters at the $p$th entry exits at the $p$th exit, and is called the $p$th pipe.
\end{itemize}
\end{definition}

\begin{example}[Constant signatures]
The Cambrian $(k,-^n)$-twists are precisely the $(k,n)$-twists. Similarly, the Cambrian $(k,+^n)$-twists are obtained from the $(k,n)$-twists by reflection with respect to the diagonal~$x=y$.
\end{example}

\enlargethispage{-.2cm}
Contact graphs of Cambrian twists are defined precisely as for classical twists in Definition~\ref{def:contactGraph} of Section~\ref{subsec:twists}.

\begin{definition}
The \defn{contact graph} of a Cambrian $(k,\signature)$-twist~$\twist$ is the multigraph~$\twist\contact$ with vertex set~$[n]$ and with an arc from the \SE-pipe to the \WN-pipe of each elbow in~$\twist$. A Cambrian twist~$\twist$ is \defn{acyclic} if its contact graph~$\twist\contact$ is (no oriented cycle). We denote by~$\AcyclicTwists(\signature)$ the set of acyclic Cambrian~$(k,\signature)$-twists.
\end{definition}

\fref{fig:twistsCambrian} illustrates examples of Cambrian $(k,\signature)$-twists and their contact graphs for~$k = 0, 1, 2, 3$ and the signature~$\signature = {-}{+}{+}{-}{-}$. The first two are acyclic, the last two are not.

\begin{figure}[h]
	\centerline{\includegraphics[scale=1.4]{twistsCambrian}}
	\caption{Cambrian $(k,\signature)$-twists and their contact graphs for~$k = 0, 1, 2, 3$ and~$\signature = {-}{+}{+}{-}{-}$.}
	\label{fig:twistsCambrian}
\end{figure}

\begin{remark}[Subword complexes]
Following Remark~\ref{rem:subwordComplexes}, a Cambrian $(k,\signature)$-twist could be equivalently defined as a reduced expression of the longest permutation~$w_\circ \eqdef [n,n-1, \dots, 2,1]$ in the word $\Q_\signature \eqdef \sqc_\signature^k w_\circ(\sqc_\signature)$, where~$\sqc_\signature$ is a reduced expression for the type~$A$ Coxeter element given by the signature~$\signature$ (meaning that~$s_i$ appears before~$s_{i+1}$ in~$\sqc_\signature$ if~$\signature_i = -$, and after~$s_{i+1}$ if~$\signature_i = +$).
\end{remark}

\begin{example}[Cambrian $1$-twists, triangulations and Cambrian trees]
\label{exm:1twistsTriangulationsCambrian}
Following Example~\ref{exm:1twistsTriangulations}, Cambrian $(1,\signature)$-twists are in bijective correspondence with triangulations of a convex $(n+2)$-gon. This bijection, illustrated in \fref{fig:1twistsTriangulationsCambrian}, is similar to that described in Example~\ref{exm:1twistsTriangulations}, except that the polygon is replaced by the $(n+2)$-gon $\polygon$ whose vertices are labeled by~$\{0, \dots, n+1\}$ from left to right and where vertex~$i$ is located above the diagonal~$[0,n+1]$ is~$\signature_i = +$ and below it if~$\signature_i = -$. See~\cite{PilaudPocchiola, PilaudSantos-brickPolytope}. The contact graph of a Cambrian $(1,\signature)$-twist~$\twist$ coincides with the dual tree of~$\twist\duality$ where each node is labeled by the middle vertex of the corresponding triangle, and each arc is oriented from the triangle below to the triangle above the corresponding diagonal. These trees were called ``spines'' in~\cite{LangePilaud}, ``mixed cobinary trees'' in~\cite{IgusaOstroff}, and ``Cambrian trees''~in~\cite{ChatelPilaud}.

\begin{figure}[h]
	\centerline{\includegraphics[scale=1.4]{dualiteTriangulationCambrian}}
	\caption{The bijection between Cambrian $(1,\signature)$-twists~$\twist$ (left) and triangulations~$\twist\duality$ of~$\polygon$ (right) sends the pipes of~$\twist$ to the triangles of~$\twist\duality$, the elbows of~$\twist$ to the diagonals of~$\twist\duality$, the contact graph of~$\twist$ to the dual Cambrian tree of~$\twist\duality$ (middle), and the elbow flips in~$\twist$ to the diagonal flips~in~$\twist\duality$.}
	\label{fig:1twistsTriangulationsCambrian}
	\vspace*{-.4cm}
\end{figure}
\end{example}

\begin{remark}[Cambrian $k$-twists and $k$-triangulations]
In fact, one can simultaneously as well extend the results of Section~\ref{subsec:ktriangulations}. Namely, set~$\bar\signature = (-)^{k-1}\signature (-)^{k-1}$. Choosing~$\shape$ instead of the triangular shape and~$\polygon[\bar\signature]$ instead of the standard convex $(n+2)$-gon, the map described in Theorem~\ref{theo:ktwistsktriangulations} defines a bijective correspondence between the Cambrian $(k,\signature)$-twists and the \mbox{$k$-triangulations} of~$\polygon[\bar\signature]$. Details can be found in~\cite{PilaudPocchiola, PilaudSantos-brickPolytope}.
\end{remark}

\begin{remark}[Numerology]
\begin{table}[b]
  \centerline{$
  	\begin{array}{l|rrrrrrrrrr}
	\raisebox{-.1cm}{$k$} \backslash \, \raisebox{.1cm}{$n$}
	  & 1 & 2 & 3 & 4 & 5 & 6 & 7 & 8 & 9 & 10 \\[.1cm]
	\hline
	0 & 1 & 1 & 1 &  1 &   1 &   1 &    1 &     1 &      1 &       1 \\
	1 & . & 2 & 5 & 14 &  42 & 132 &  429 &  1430 &   4862 &   16796 \\
	2 & . & . & 6 & 24 & 114 & 608 & 3532 & 21950 & 143776 &  982324 \\
	3 & . & . & . &  . & 120 & 720 & 4920 & 37104 & 303072 & 2643156 \\
	4 & . & . & . &  . &   . &   . & 5040 & 40320 & 357840 & 3453120 \\
	5 & . & . & . &  . &   . &   . &    . &     . & 362880 & 3628800 \\
	\end{array}
  $}
  \caption{The number~$|\AcyclicTwists((+-)^{n/2})|$ of acyclic Cambrian $(k,(+-)^{n/2})$-twists for~$2k \le n \le 10$. Dots indicate that the value remains constant (equal to~$n!$) in the column.}
  \label{table:numbersAcyclicTwistsCambrian}
  \vspace{-.4cm}
\end{table}
According to Theorem~\ref{theo:numberktriangulations} in Section~\ref{subsec:numerology}, the previous remark ensures that the number of Cambrian $(k,\signature)$-twists is the Hankel determinant~$\det(C_{n+2k-i-j})_{i,j \in [k]}$, no matter the signature~$\signature \in \pm^n$. In contrast, the number of acyclic Cambrian $(k,\signature)$-twists depends on the signature~$\signature$. For example~$|\AcyclicTwists[2]({-}{-}{-}{-})| = 22 \ne 24 = |\AcyclicTwists[2]({+}{-}{+}{+})|$. Table~\ref{table:numbersAcyclicTwistsCambrian} gathers the numbers of acyclic Cambrian $(k,(+-)^{n/2})$-twists for~$2k \le n \le 10$. Comparing this table to Table~\ref{table:numbersAcyclicTwists} raises the question to determine which are the numbers of Cambrian $(k,\signature)$-twists for all possible signatures~$\signature \in \pm^n$. We conjecture in particular that for any signature~$\signature \in \pm^n$, the number of acyclic Cambrian $(k,\signature)$-twists is always bounded between the number of acyclic $(k,n)$-twists and the number of acyclic Cambrian $(k,(+-)^{n/2})$-twists.
\end{remark}

As in Section~\ref{subsec:elementaryPropertiesPipes}, we need to establish some elementary properties of pipes in Cambrian twists. The properties of Lemma~\ref{lem:pipeProperties} still hold for Cambrian twists, except that a negative pipe has $k$ \SE-elbows and $k+1$ \WN-elbows, while a positive pipe has $k+1$ \SE-elbows and $k$ \WN-elbows. In contrast, Lemma~\ref{lem:comparable} does not hold for Cambrian twists: for example, in the Cambrian $1$-twist of \fref{fig:twistsCambrian}, the $2$nd and $5$th pipe are not comparable even if the $5$th pipe has an elbow south-east of an elbow of the $2$nd pipe. We need to refine the statement as follows. We call \defn{zone} of a pipe~$\pipe$ the boxes located in between the entry and exit points of~$\pipe$. In other words, it is the zone of the possible elbows of~$\pipe$. For example, \fref{fig:zones} illustrates the zone of the third pipe in the shapes~$\shape[2]$ of \fref{fig:shapes}.

\begin{figure}[t]
	\centerline{\includegraphics[scale=1.4]{zones}}
	\caption{The zone of the third pipe the shapes~$\shape[2]$ of \fref{fig:shapes}.}
	\label{fig:zones}
\end{figure}

\begin{lemma}
\label{lem:comparableCambrian}
Consider two pipes~$\pipe, \pipe'$ in a Cambrian $k$-twist~$\twist$. If there is an elbow of~$\pipe$ below~$\pipe'$ (resp.~above~$\pipe'$) and in the zone of~$\pipe'$, then there is a path from~$\pipe$ to~$\pipe'$ (resp.~from~$\pipe'$ to~$\pipe$) in the contact graph~$\twist\contact$ of~$\twist$.
\end{lemma}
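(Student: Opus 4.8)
The plan is to adapt the argument of Lemma~\ref{lem:comparable}, replacing the role played there by Lemma~\ref{lem:pipeProperties}\,\eqref{item:emptyRectangle} with a version compatible with the Cambrian shape~$\shape$. First I would record the analogue of the empty-rectangle property: the interior of the rectangle defined by two consecutive steps of a pipe~$\pipe$ contains no elbow, since any pipe entering on one side of that rectangle must exit on the opposite side (otherwise it would cross~$\pipe$ twice). This is proved exactly as in Lemma~\ref{lem:pipeProperties}\,\eqref{item:emptyRectangle} and does not use anything about the constant signature. I would also note the elementary fact that the zone of a pipe~$\pipe'$ is ``connected along'' any pipe passing through it: if an elbow~$\elb$ of a pipe~$\pipe''$ lies in the zone of~$\pipe'$ and below (resp.~above)~$\pipe'$, then at least one of the two elbows of~$\pipe''$ adjacent to~$\elb$ also lies in the zone of~$\pipe'$ and on the same side of~$\pipe'$ — this is because a step of~$\pipe''$ leaving the zone of~$\pipe'$ on the wrong side would force~$\pipe''$ to cross~$\pipe'$, hence to have an elbow with~$\pipe'$, which again lands in the zone of~$\pipe'$.

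The main construction is then the same iterative argument. Suppose without loss of generality (up to adding one edge at the beginning or the end of the path, as in Lemma~\ref{lem:comparable}) that there is a \WN-elbow~$\elb$ of~$\pipe$ lying below~$\pipe'$ and in the zone of~$\pipe'$. I would build a sequence~$\elb = \elb_0, \elb_1, \dots, \elb_\ell = \elb'$ of elbows, where~$\elb'$ is a \SE-elbow of~$\pipe'$, such that each~$\elb_i$ lies weakly below~$\pipe'$ and in the zone of~$\pipe'$, consecutive elbows~$\elb_i, \elb_{i+1}$ are joined by a pipe~$\pipe_{i+1}$, and the ``distance'' to~$\elb'$ strictly decreases at each step. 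The inductive step: given~$\elb_i$ on the \WN-pipe~$\pipe_{i+1}$, move to the previous or next elbow of~$\pipe_{i+1}$ using the empty-rectangle property together with the zone-connectivity observation above to guarantee we stay weakly below~$\pipe'$ and inside the zone of~$\pipe'$; choose whichever option decreases the distance. When the process reaches an elbow on~$\pipe'$ itself, that elbow is necessarily a \SE-elbow (coming from below), and we stop. The resulting sequence of pipes~$\pipe = \pipe_1, \dots, \pipe_\ell = \pipe'$ gives a directed path from~$\pipe$ to~$\pipe'$ in~$\twist\contact$. The case ``above'' is symmetric, exchanging \SE- and \WN-elbows and the roles of~$\pipe$ and~$\pipe'$, which yields a path from~$\pipe'$ to~$\pipe$.

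The step I expect to be the main obstacle is making precise the claim that the iteration stays inside the zone of~$\pipe'$ and on the correct side of~$\pipe'$ — this is exactly where the classical Lemma~\ref{lem:comparable} breaks down for Cambrian twists (the counterexample with the $2$nd and $5$th pipes in \fref{fig:twistsCambrian} shows one genuinely leaves the zone if one is not careful). The key point to nail down is that the zone of~$\pipe'$ is itself a ``staircase'' region bounded by the entry and exit paths of~$\pipe'$, so that a pipe~$\pipe_{i+1}$ entering the zone below~$\pipe'$ and trying to leave it can only do so by crossing~$\pipe'$ (giving an elbow shared with~$\pipe'$, which lies in the zone) or by exiting through the boundary of~$\shape$ in a way that is incompatible with~$\pipe_{i+1}$ being below~$\pipe'$. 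Once this geometric lemma about zones is isolated and proved, the rest of the argument is a routine transcription of the proof of Lemma~\ref{lem:comparable}. I would also make sure that the ``weakly below and in the zone'' condition is preserved when we prepend/append the extra edge in the reduction at the start, which is straightforward since an elbow of~$\pipe$ adjacent in~$\twist\contact$ to an elbow of~$\pipe'$ automatically sits in the zone of~$\pipe'$.
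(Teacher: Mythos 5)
Your proof takes essentially the same route as the paper's: adapt the iteration of Lemma~\ref{lem:comparable}, replacing the ``weakly south-east of $\elb'$'' invariant by ``below $\pipe'$ and in the zone of $\pipe'$,'' and justify the inductive step by observing that a pipe with an elbow below $\pipe'$ and inside the zone whose both neighboring elbows escape that region would be forced to cross $\pipe'$ twice. The paper's proof is actually a bit leaner than what you sketch: it does not invoke the empty-rectangle property (Lemma~\ref{lem:pipeProperties}\,\eqref{item:emptyRectangle}) at all, relying only on the double-crossing argument, and it only performs the reduction ``up to adding one edge at the beginning'' (not at the end), since the terminal elbow need not be of a prescribed type. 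Two small slips in your writeup that do not affect the structure: when you reach $\pipe'$ from below, the shared elbow $\elb'$ is a \WN-elbow of $\pipe'$ (since $\pipe'$ is the \WN-pipe there), not a \SE-elbow; and a pipe forced to ``cross $\pipe'$'' produces a cross with $\pipe'$, not an elbow with $\pipe'$ --- the contradiction is the double crossing, not a shared elbow.
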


\begin{proof}
The proof is essentially the same as that of Lemma~\ref{lem:comparable}. Up to adding one more edge in the beginning of the path, we can assume that $\pipe$ has a \WN-elbow~$\elb$ below and in the zone of~$\pipe'$. We claim that there exists a sequence~$\elb = \elb_0, \elb_1, \dots, \elb_\ell$ of elbows, all located below and in the zone of~$\pipe'$, such that the elbows~$\elb_i$ and~$\elb_{i+1}$ are connected by a pipe~$\pipe_{i+1}$, and where~$\elb_\ell$ is an elbow of~$\pipe'$. Suppose that the elbows~$\elb = \elb_0, \elb_1, \dots, \elb_i$ are constructed and consider the \WN-pipe~$\pipe_{i+1}$ at~$\elb_i$. Either the previous or the next elbow along~$\pipe_{i+1}$ is still below and in the zone of~$\pipe'$. Otherwise, the pipe~$\pipe_{i+1}$ would cross the pipe~$\pipe'$ twice, since~$\elb_i$ is in the zone of~$\pipe'$. Choose~$\elb_{i+1}$ accordingly (pick arbitrarily one if the two options are possible). The process ends on an elbow~$\elb_\ell$ of~$\pipe'$ since the distance to~$\pipe'$ decreases at each step. Finally, the sequence of pipes~$\pipe = \pipe_1, \dots, \pipe_\ell = \pipe'$ gives a path from~$\pipe$ to~$\pipe'$ in the contact graph~$\twist\contact$. The proof is similar if~$\pipe$ has an elbow above and in the zone of~$\pipe'$.
\end{proof}

Similar to Section~\ref{subsec:pipeInsertionDeletion}, there is a natural insertion (resp.~deletion) operation which transforms a Cambrian $(k,\signature)$-twist~$\twist$ into a Cambrian $(k,\signature')$-twist~$\twist'$ where~$\signature$ and~$\signature'$ differ by the insertion (resp.~deletion) of a single sign. The insertion is defined as in Definition~\ref{def:insertion}, except that it now depends on whether the inserted pipe is negative or positive. Namely, the inserted pipe becomes a source in~$\twist'{}\contact$ when the new sign is negative, but a sink in~$\twist'{}\contact$ when the new sign is positive. These insertions are illustrated in \fref{fig:pipeInsertionCambrian}. Conversely, the deletion is defined as in Definition~\ref{def:deletion} except that a negative (resp.~positive) pipe of a $(k,\signature)$-twist~$\twist$ can be deleted only when it is a source (resp.~sink) of the contact graph~$\twist\contact$.

We will use these insertion and deletion operations to simplify some proofs in this section. However, we cannot use repeated insertions to define the Cambrian $k$-twist correspondence below. Indeed, we would need an insertion operation where the inserted pipe is always a source in the new twist, and such an operation is not easy to describe when the inserted pipe is positive. We will thus prefer to avoid repeated single insertions and to insert pipes directly in the final shape.

\begin{figure}[t]
	\centerline{\includegraphics[scale=1.4]{pipeInsertionCambrian}}
	\caption{Inserting~$4$ as a negative (top) or positive (bottom) pipe in the $(k,{-}{+}{+}{-}{-})$-twists of \fref{fig:twistsCambrian}. The inserted pipe is in bold red.}
	\label{fig:pipeInsertionCambrian}
\end{figure}

\bigskip
Similar to Section~\ref{subsec:twistCorrespondence}, we now define the \defn{Cambrian $k$-twist correspondence}. Following the presentation in~\cite{ChatelPilaud}, we work with signed permutations. A \defn{signed permutation} is a permutation~$\tau \in \fS_n$ endowed with a signature. It is convenient to think about these signs to be associated either to the positions or to the values of~$\tau$. We call \defn{p-signature} (resp.~\defn{v-signature}) of~$\tau$ the sequence~$\psignature(\tau)$ (resp.~$\vsignature(\tau)$) of signs of~$\tau$ ordered by positions (resp.~values). In concrete examples, we underline negative positions/values while we overline positive positions/values. For example, we write~$\up{3}\down{154}\up{2}$ for the permutation~${\tau = 31542}$ endowed with the signatures~${\psignature = {+}{-}{-}{-}{+}}$ and~${\vsignature = {-}{+}{+}{-}{-}}$. For a signature~$\signature \in \pm^n$, we denote by~$\fS_\signature$ (resp.~by~$\fS^\signature$) the set of signed permutations~$\tau$ with p-signature~$\psignature(\tau) = \signature$ (resp.~with v-signature~$\vsignature(\tau) = \signature$).

We now define a surjection from the signed permutations of~$\fS^\signature$ to the Cambrian $(k,\signature)$-twists. Consider a permutation~$\tau \in \fS^\signature$. Starting from the empty shape~$\shape$, we construct a Cambrian $(k,\signature)$-twist~$\surjectionPermBrick(\tau)$ by inserting the pipes~$\tau_n, \dots, \tau_1$ one by one such that each new pipe is as northwest as possible in the space left by the pipes already inserted. This procedure is illustrated in \fref{fig:insertionAlgorithmCambrian} for the signed permutation~$\up{3}\down{154}\up{2}$ and different values of~$k$. Observe that~$\signature$ does not appear in the notation~$\surjectionPermBrick(\tau)$ as this information is already carried by the signed permutation~${\tau \in \fS^\signature}$.

\hvFloat[floatPos=p, capWidth=h, capPos=r, capAngle=90, objectAngle=90, capVPos=c, objectPos=c]{figure}
{\includegraphics[scale=1.08]{insertionCambrian}}
{Insertion of the signed permutation~$\up{3}\down{154}\up{2}$ in Cambrian $k$-twists for~$k = 0,1,2,3$.}
{fig:insertionAlgorithmCambrian}

\begin{proposition}
\label{prop:ktwistInsertionCambrian}
For any Cambrian $(k,\signature)$-twist~$\twist$, the permutations~$\tau \in \fS^\signature$ such that~${\surjectionPermBrick(\tau) = \twist}$ are precisely the linear extensions of the contact graph of~$\twist$. In particular, $\surjectionPermBrick$ is a surjection from the permutations of~$\fS^\signature$ to the acyclic Cambrian $(k,\signature)$-twists.
\end{proposition}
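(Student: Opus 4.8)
The plan is to follow verbatim the proof of Proposition~\ref{prop:ktwistInsertion}, running an induction on~$n$ in which one removes the first entry of the permutation. Fix the signature~$\signature \in \pm^n$. For $\tau = [\tau_1, \dots, \tau_n] \in \fS^\signature$, set $\tau' = [\tau_2, \dots, \tau_n]$ and let~$\signature'$ be the signature of~$\tau'$, obtained from~$\signature$ by deleting the sign attached to the value~$\tau_1$ and relabelling. The starting observation is that, by the very definition of the insertion algorithm, $\surjectionPermBrick(\tau)$ is obtained from (a copy of)~$\surjectionPermBrick(\tau')$ by inserting the $\tau_1$-st pipe last, hence as northwest as possible, inside~$\shape$: placing $\tau_n, \dots, \tau_2$ northwest-most in~$\shape$ fills exactly the sub-shape isomorphic to~$\shape[k][\signature']$ left available to those pipes, on which the $n-1$ inserted pipes realize~$\surjectionPermBrick(\tau')$, and the remaining free cells form precisely the zone into which the $\tau_1$-st pipe is then routed.

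For the forward inclusion, assume $\surjectionPermBrick(\tau) = \twist$. One needs the Cambrian analogue of Lemma~\ref{lem:source}: \emph{whatever its sign, the pipe inserted northwest-most into the cells of~$\shape$ left free by the pipes already placed is a source of the resulting contact graph, and the contact graph obtained before this last insertion is the restriction of the final one to the remaining nodes}. Granting this, the induction hypothesis gives that~$\tau'$ is a linear extension of~$\surjectionPermBrick(\tau')\contact$, so that prepending the source~$\tau_1$ to~$\tau'$ yields a linear extension of~$\twist\contact$; this is exactly~$\tau$. For the reverse inclusion, let~$\tau$ be a linear extension of~$\twist\contact$ for some acyclic Cambrian $(k,\signature)$-twist~$\twist$. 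Being the first entry of a linear extension, $\tau_1$ is a source of~$\twist\contact$; hence the $\tau_1$-st pipe of~$\twist$ can be deleted, producing an acyclic Cambrian $(k,\signature')$-twist~$\twist'$ of which~$\tau'$ is a linear extension. By the induction hypothesis $\surjectionPermBrick(\tau') = \twist'$, and reinserting the $\tau_1$-st pipe northwest-most into~$\shape$ recovers~$\twist$ (Cambrian analogue of Lemma~\ref{lem:insertionDeletion}, valid because $\tau_1$ was a source), so $\surjectionPermBrick(\tau) = \twist$. Finally, the ``in particular'' is immediate: an acyclic Cambrian $(k,\signature)$-twist has an acyclic contact graph, hence at least one linear extension, which by the equivalence just proved is a preimage under~$\surjectionPermBrick$.

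The real work --- and the subtlety announced in the paragraph preceding the statement --- is the Cambrian version of Lemma~\ref{lem:source} above: for a \emph{positive} pipe the single-sign insertion described near~\fref{fig:pipeInsertionCambrian} creates a \emph{sink} rather than a source, so one cannot simply invoke Lemma~\ref{lem:source} and must instead exploit that here all pipes, positive or negative, are routed as far to the northwest as possible inside the fixed shape~$\shape$. I would argue as follows. Let~$\pipe$ be the last pipe inserted, and suppose for contradiction that some relevant pipe~$\pipe'$ contributes an arc into~$\pipe$, i.e.\ that~$\pipe$ is the \WN-pipe of an elbow~$\elb$ shared with~$\pipe'$; then~$\elb$ lies in the zone of~$\pipe$ and~$\pipe$ leaves~$\elb$ heading north. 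Imitating the staircase construction of Lemma~\ref{lem:comparableCambrian} and using Lemma~\ref{lem:pipeProperties}\,\eqref{item:emptyRectangle}, one produces inside the zone of~$\pipe$ a monotone sequence of cells, free of elbows of the already placed pipes, along which~$\pipe$ could be re-routed strictly further to the northwest; this contradicts the maximality of its actual routing. Hence~$\pipe$ has no incoming arc and is a source, while inserting it alters neither the elbows nor therefore the contact graph of the pipes already in place. The companion statements --- that deleting such a source pipe (of either sign) yields a genuine Cambrian $(k,\signature')$-twist, and that this deletion is inverse to the northwest-most insertion --- then follow from the definitions of the shapes~$\shape$ and of the routing, exactly in parallel with the non-Cambrian Section~\ref{subsec:pipeInsertionDeletion}.
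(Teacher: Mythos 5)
Your approach is essentially the one the paper explicitly warns against in the paragraph preceding the statement (``we cannot use repeated insertions\ldots We will thus prefer to avoid repeated single insertions and to insert pipes directly in the final shape''). Two concrete gaps remain. First, your argument for the source property is not correct: you argue that an incoming arc into the newly inserted pipe $\pipe$ via an elbow $\elb$ shared with an already-placed pipe $\pipe'$ would let $\pipe$ be re-routed further northwest. But $\pipe$ cannot cross at $\elb$ instead of elbowing there, since $\pipe$ and $\pipe'$ already cross once elsewhere and would then cross twice; $\pipe$ is \emph{forced} to elbow at $\elb$, and the staircase construction of Lemma~\ref{lem:comparableCambrian} produces a path in the contact graph, not a re-routing of a single pipe. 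The paper's proof instead \emph{flips} the elbow $\elb$ and observes that it is the already-placed pipe $\pipe[q]$, not $\pipe$, which could then have been routed more northwest while remaining compatible with the pipes placed before it --- contradicting the northwest-most routing of $\pipe[q]$ at the time $\pipe[q]$ was inserted.

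Second, your converse direction relies on an unestablished deletion/reinsertion compatibility: you delete the source $\tau_1$ to land in $\shape[k][\signature']$, apply the induction hypothesis, then assert that ``reinserting the $\tau_1$-st pipe northwest-most into $\shape$ recovers $\twist$.'' But there is no canonical embedding of $\surjectionPermBrick(\tau')$ (a pipe dream in $\shape[k][\signature']$) back into $\shape$, nor is it shown that northwest-most reinsertion inverts deletion; this is precisely the ``not easy to describe'' insertion the paper declines to build. Likewise your forward direction needs the unproved claim that placing $\tau_n,\ldots,\tau_2$ northwest-most in $\shape$ fills a sub-shape isomorphic to $\shape[k][\signature']$ on which they realize $\surjectionPermBrick(\tau')$. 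The paper's proof sidesteps both issues: the forward inclusion follows directly from the source property at every step of the insertion into the \emph{fixed} shape $\shape$ (no sub-shape identity is needed), and the converse is a direct pipe-by-pipe induction inside $\shape$: since $\tau$ is a linear extension of $\twist\contact$, no \SE-elbow of the $\tau_i$th pipe of $\twist$ touches a \WN-elbow of the $\tau_j$th pipe for $j<i$, so the $\tau_i$th pipe of $\twist$ is exactly the northwest-most pipe in the space left by $\tau_n,\ldots,\tau_{i+1}$, i.e.\ the one inserted at step $n+1-i$ in $\surjectionPermBrick(\tau)$.
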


\begin{proof}
During the insertion algorithm, the last inserted pipe is always a source of the contact graph of the pipes already inserted. Otherwise, there would be an elbow~$\elb$ where the last inserted pipe~$\pipe$ is above a pipe~$\pipe[q]$ already inserted. Flipping this elbow~$\elb$ would transform these two pipes~$\pipe$ and~$\pipe[q]$ into two new pipes~$\pipe'$ and~$\pipe[q]'$ compatible with all already inserted pipes, and the pipe~$\pipe[q]'$ would be northwest of~$\pipe[q]$, contradicting the definition of~$\pipe[q]$. Now, since each insertion creates a source of the contact graph, it is immediate that any signed permutation~$\tau$ is a linear extension of the contact graph of~$\surjectionPermBrick(\tau)$.

Conversely, consider a Cambrian $(k, \signature)$-twist~$\twist$ and a linear extension~$\tau$ of~$\twist\contact$. We show by induction that the pipe inserted at step~$n+1-i$ in~$\surjectionPermBrick(\tau)$ coincide with the $\tau_i$th pipe of~$\twist$. Indeed, since~$\tau$ is a linear extension of~$\twist\contact$, no \SE-elbow of the $\tau_i$th pipe of~$\twist$ can touch a \WN-elbow of the $\tau_j$th pipe of~$\twist$ for~$j < i$. In other words, the $\tau_i$th pipe of~$\twist$ is the most possible northwest pipe in the space left by the pipes~$\tau_n, \dots, \tau_{i+1}$ of~$\twist$. The latter coincide by induction with the first~$n-i$ inserted pipes in~$\surjectionPermBrick(\tau)$, so that the $\tau_i$th pipe of~$\twist$ indeed coincides with the pipe inserted at step~$n+1-i$ in~$\surjectionPermBrick(\tau)$. We conclude that~$\surjectionPermBrick(\tau) = \twist$.
\end{proof}

\begin{example}[Cambrian $1$-twist correspondence and Cambrian correspondence]
When~${k = 1}$, the map~$\surjectionPermBrick[1]$ can also be described directly on Cambrian trees as in~\cite{ChatelPilaud}. This description is even more visual since the nodes of the Cambrian tree~$\surjectionPermBrick[1](\tau)$ form the permutation table of~$\tau$.
\end{example}

Our next statement extends Proposition~\ref{prop:intervals} to Cambrian $k$-twists, using again the characterization of weak order intervals of Definition~\ref{def:WOIP} and Proposition~\ref{prop:WOIP} given in Section~\ref{subsec:twistClasses}.

\begin{proposition}
\label{prop:intervalsCambrian}
The fiber of any Cambrian $k$-twist under~$\surjectionPermBrick$ is an interval of the weak order.
\end{proposition}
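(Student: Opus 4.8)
The plan is to follow the template of the proof of Proposition~\ref{prop:intervals}, but replacing the triangular shape by the shape~$\shape$ and using the refined comparability statement of Lemma~\ref{lem:comparableCambrian} in place of Lemma~\ref{lem:comparable}. By Proposition~\ref{prop:ktwistInsertionCambrian}, the fiber of a Cambrian $k$-twist~$\twist$ under~$\surjectionPermBrick$ is the set of linear extensions of the transitive closure~$\contactLess{\twist}$ of its contact graph~$\twist\contact$. By the characterization of weak order intervals in Definition~\ref{def:WOIP} and Proposition~\ref{prop:WOIP}, it therefore suffices to show that~$\contactLess{\twist}$ satisfies the two WOIP conditions: for all~$a < b < c$ in~$[n]$, that $a \contactLess{\twist} c$ implies $a \contactLess{\twist} b$ or $b \contactLess{\twist} c$, and symmetrically that $a \contactMore{\twist} c$ implies $a \contactMore{\twist} b$ or $b \contactMore{\twist} c$.

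First I would fix~$a < b < c$ with~$a \contactLess{\twist} c$, and decompose the shape~$\shape$ into three regions relative to the $b$th pipe of~$\twist$: the region~$A$ of points located (weakly) south-west of the first elbow of the $b$th pipe, the region~$C$ of points located (weakly) north-east of the last elbow of the $b$th pipe, and the region~$B$ consisting of the zone of the $b$th pipe together with the points that are north-west or south-east of some elbow of it. Since~$a \contactLess{\twist} c$, there is a directed path in~$\twist\contact$ from~$a$ to~$c$, which lifts to a lattice path~$\pi$ in~$\shape$ travelling along the pipes and elbows of~$\twist$ from the entry point of the $a$th pipe to the exit point of the $c$th pipe. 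Because~$a < b < c$, the entry of~$\pi$ lies in region~$A$ and its exit lies in region~$C$. The key geometric observation is that the north-east corner of~$A$ lies (weakly) south-west of the south-west corner of~$C$, so any monotone path from~$A$ to~$C$ must pass through~$B$; in particular some elbow~$\elb$ of~$\pi$ lies in~$B$, hence in the zone of the $b$th pipe. Then~$\elb$ is an elbow of some pipe on the path~$\pi$ lying either below or above the $b$th pipe: Lemma~\ref{lem:comparableCambrian} gives $a \contactLess{\twist} b$ in the former case (elbow below the $b$th pipe, so the corresponding pipe of~$\pi$, which is $\contactLess{\twist}$-below~$c$ and $\contactLess{\twist}$-above~$a$, routes into~$b$) and $b \contactLess{\twist} c$ in the latter. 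The argument for the second WOIP condition is symmetric, exchanging the roles of below/above and of the entering/exiting ends of~$\pi$.

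The main obstacle is getting the three-region decomposition and the ``no monotone path avoids~$B$'' step exactly right for the tilted shapes~$\shape$: unlike the triangular case, the enter and exit paths of~$\shape$ are no longer straight, the accordion boundary is staircase-shaped, and the zone of the $b$th pipe (the region of its possible elbows, see \fref{fig:zones}) is a genuinely two-dimensional staircase strip rather than a thin diagonal band. I would need to check that the north-east corner of~$A$ still lies weakly south-west of the south-west corner of~$C$ for every signature~$\signature$ — equivalently, that the first and last elbows of the $b$th pipe are positioned so that no lattice path can jump from~$A$ to~$C$ without entering~$B$ — and that the lifted path~$\pi$ is genuinely monotone (non-decreasing in the appropriate coordinate directions) along pipes of~$\twist$, which uses Lemma~\ref{lem:pipeProperties}\eqref{item:emptyRectangle} applied in the Cambrian setting. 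Once these monotonicity and positional facts are in place, the rest reduces to a direct invocation of Lemma~\ref{lem:comparableCambrian}, exactly as in Proposition~\ref{prop:intervals}.
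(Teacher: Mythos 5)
Your plan correctly identifies the template: invoke the WOIP criterion (Definition~\ref{def:WOIP} and Proposition~\ref{prop:WOIP}), take $a < b < c$ with $a \contactLess{\twist} c$, lift the contact-graph chain to a path $\pi$ through the Cambrian $k$-twist, and locate an elbow of $\pi$ to which Lemma~\ref{lem:comparableCambrian} applies. Your identification of the two cases (an elbow below the $b$th pipe in its zone gives $a \contactLess{\twist} b$; an elbow above gives $b \contactLess{\twist} c$) also agrees with the orientation of Lemma~\ref{lem:comparableCambrian} --- the paper's text swaps ``above'' and ``below'' here and in Proposition~\ref{prop:intervals}, a harmless slip since only the disjunction matters.

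The gap is in the middle of the argument. You define region $B$ as the zone of the $b$th pipe together with the points north-west or south-east of some elbow of the $b$th pipe, establish that some elbow $\elb$ of $\pi$ lies in $B$, and then assert that $\elb$ therefore lies in the zone. That conclusion does not follow: the ``NW-or-SE of an elbow'' part of $B$ can escape the zone once the shape is a staircase $\shape$ rather than a triangle, and Lemma~\ref{lem:comparableCambrian}, unlike Lemma~\ref{lem:comparable}, requires membership in the zone, not merely a NW/SE positional relation to an elbow. You flag exactly the right worry at the end of your proposal (staircase accordion boundary, two-dimensional zone), but you try to repair the three-region decomposition rather than discard it. The paper avoids the problem by dropping the regions entirely and arguing about the zone directly: since $a < b < c$, the $a$th entry and the $c$th exit of $\shape$ sit on opposite sides of the zone of the $b$th pipe, so $\pi$ must have an elbow inside the zone, lying above or below the $b$th pipe, and Lemma~\ref{lem:comparableCambrian} finishes. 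If you replace your regions $A$, $B$, $C$ with this single observation about the zone, your argument aligns with the paper's and the gap closes.
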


\begin{proof}
The proof is essentially the same as that of Proposition~\ref{prop:intervals}. We consider a Cambrian $(k,\signature)$-twist~$\twist$ and check that it fulfills the WOIP criterion of Definition~\ref{def:WOIP}. Assume that~$a < b < c$ are such that~$a \contactLess{\twist} c$. Since~$a \contactLess{\twist} c$, there is a path~$\pi$ from the $a$th entry to the $c$th exit of~$\shape$ which travels along pipes and elbows of~$\twist$. Since~$a < b < c$, the path~$\pi$ starts (weakly) south and ends (weakly) west of the zone of the $b$th pipe. Therefore, it must have an elbow~$\elb$ in the zone of the $b$th pipe, which ensures by Lemma~\ref{lem:comparableCambrian} that either~$a \contactLess{T} b$ (if~$\elb$ is above the $b$th pipe), or~$b \contactLess{T} c$ (if~$\elb$ is below the~$b$th~pipe). The proof is similar if~$a \contactMore{T} c$.
\end{proof}

Our next step is to extend the $k$-twist congruence of Section~\ref{subsec:twistCongruence}. In other words, we characterize the fibers of the insertion map~$\surjectionPermBrick$ by rewriting rules.

\begin{definition}
\label{def:ktwistCongruenceCambrian}
The \defn{Cambrian $(k, \signature)$-twist congruence} is the equivalence relation~$\equiv^k_\signature$ on~$\fS^\signature$ defined as the transitive closure of the rewriting rule~$U ac V \equiv^k_\signature U ca V$ if there exist~$a < a' < c' < c$ such~that
\begin{align*}
				& |\set{u \in U}{a' \le u \le c', \signature_u = +}| - |\set{v \in V}{a' \le v \le c', \signature_v = -}| \ge k, \\
\text{or}\quad 	& |\set{v \in V}{a' \le v \le c', \signature_v = -}| - |\set{u \in U}{a' \le u \le c', \signature_u = +}| \ge k,
\end{align*}
where~$a, c$ are elements of~$[n]$ while~$U, V$ are (possibly empty) words on~$[n]$.
\end{definition}

This condition can as well be reformulated in terms of congruence witnesses. Namely, we have $UacV \equiv^k_\signature UcaV$ if their exist~$p \ge 0$ and~$k+p$ witnesses~$a < b_1 < \dots < b_{k+p} < c$ such that
\begin{itemize}
\item either~$\signature_{b_i} = +$, $b_i \in U$ and~$|\set{v \in V}{b_1 \le v \le b_{k+p}, \signature_v = -}| = p$,
\item or \quad\; $\signature_{b_i} = -$, $b_i \in V$ and~$|\set{u \in U}{b_1 \le u \le b_{k+p}, \signature_u = +}| = p$.
\end{itemize}
With this formulation, the main differences between the $k$-twist congruence~$\equiv^k$ of Definition~\ref{def:ktwistCongruence} and the Cambrian $(k,\signature)$-twist congruence~$\equiv^k_\signature$ of Definition~\ref{def:ktwistCongruenceCambrian} are that:
\begin{itemize}
\item the signs of the congruence witnesses matters: \eg $\down{1}\down{3}\down{2} \equiv^1 \down{3}\down{1}\down{2}$ while $\down{1}\down{3}\up{2} \not\equiv^1 \down{3}\down{1}\up{2}$,
\item we have positive witnesses in~$U$ or negative witnesses in~$V$: \eg $\down{1}\down{3}\down{2} \equiv^1 \down{3}\down{1}\down{2}$ and $\up{2}\down{1}\down{3} \equiv^1 \up{2}\down{3}\down{1}$,
\item the remoteness of the witnesses matters: \eg $\up{2}\down{1}\down{5}\down{3}\down{4} \equiv^2 \up{2}\down{5}\down{1}\down{3}\down{4}$ while $\up{3}\down{1}\down{5}\down{2}\down{4} \not\equiv^2 \up{3}\down{5}\down{1}\down{2}\down{4}$.
\item the number of witnesses is not always~$k$ as it depends on their remoteness: \eg we really need~$\down{2}$, $\down{3}$, $\down{5}$ and~$\down{6}$ to witness the congruence~$\up{4}\down{1}\down{7}\down{2}\down{3}\down{5}\down{6} \equiv^3 \up{4}\down{7}\down{1}\down{2}\down{3}\down{5}\down{6}$.
\end{itemize}

\begin{example}[Cambrian $(1,\signature)$-twist congruence and Cambrian congruence]
When~$k = 1$ and~$\signature \in \pm^n$, the Cambrian $(1,\signature)$-twist congruence is the Cambrian congruence defined by N.~Reading~\cite{Reading-CambrianLattices} as the transitive closure of~$UacV\down{b}W \equiv_\signature UcaV\down{b}W$ and~$U\up{b}VacW \equiv_\signature U\up{b}VcaW$.
\end{example}

\begin{proposition}
\label{prop:ktwistCongruenceCambrian}
For any~$\tau, \tau' \in \fS^\signature$, we have~$\tau \equiv^k_\signature \tau' \iff \surjectionPermBrick(\tau) = \surjectionPermBrick(\tau')$. In other words, the fibers of the Cambrian $(k,\signature)$-twists by~$\surjectionPermBrick$ are precisely the Cambrian $(k, \signature)$-twist congruence classes.
\end{proposition}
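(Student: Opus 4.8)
The plan is to follow the proof of Proposition~\ref{prop:ktwistCongruence}, substituting the Cambrian tools developed above, the genuinely new ingredient being a careful count of the pipes responsible for the comparability of two pipes in a Cambrian twist. First I would invoke Proposition~\ref{prop:ktwistInsertionCambrian}, which identifies the fiber of a Cambrian $(k,\signature)$-twist~$\twist$ under~$\surjectionPermBrick$ with the set~$\linearExtensions(\twist\contact)$ of linear extensions of its contact graph, together with the classical fact that the linear extensions of any poset on~$[n]$ are connected by transpositions of two values occupying consecutive positions in one-line notation. Since both~$\equiv^k_\signature$ and the relation ``$\surjectionPermBrick(\tau) = \surjectionPermBrick(\tau')$'' are thereby generated by such adjacent transpositions, it suffices to prove the equivalence for two permutations~$\tau = UacV$ and~$\tau' = UcaV$ of~$\fS^\signature$ differing by the inversion of two consecutive values~$a < c$. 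Writing~$\twist \eqdef \surjectionPermBrick(\tau)$, the permutation~$\tau'$ is again a linear extension of~$\twist\contact$ exactly when the $a$th and $c$th pipes of~$\twist$ are incomparable in~$\twist\contact$ (the swap changes only whether~$a$ precedes~$c$, and~$c \contactLess{\twist} a$ is impossible because~$a$ precedes~$c$ in the linear extension~$\tau$); and if they are comparable, then~$\surjectionPermBrick(\tau') \ne \twist$ by Proposition~\ref{prop:ktwistInsertionCambrian}. Hence the statement reduces to: the $a$th and $c$th pipes of~$\twist$ are incomparable in~$\twist\contact$ if and only if the condition of Definition~\ref{def:ktwistCongruenceCambrian} holds.

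To establish this I would analyze the insertion of the pipes~$\tau_n, \dots, \tau_1$ directly into the shape~$\shape$ --- avoiding single insertions, which (as noted after Proposition~\ref{prop:ktwistInsertionCambrian}) are awkward for positive pipes --- and track only the pipes whose value lies strictly between~$a$ and~$c$. The idea is that such a pipe which is positive and appears in~$U$ (hence inserted after both~$a$ and~$c$) is forced through the region common to the zones of the $a$th and $c$th pipes and shifts their elbows apart in one direction, that such a pipe which is negative and appears in~$V$ (inserted before both) shifts them apart in the transverse direction, and that all other pipes (positive in~$V$, negative in~$U$, or of value outside~$[a,c]$) leave the relative position of~$a$ and~$c$ unchanged. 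Combining this with Lemma~\ref{lem:comparableCambrian} in both directions --- an elbow of the $a$th pipe lying below (resp.~above) the $c$th pipe and inside its zone forces~$a \contactLess{\twist} c$ (resp.~$c \contactLess{\twist} a$), and symmetrically --- one gets that~$a$ and~$c$ are incomparable precisely when the accumulated shift reaches~$k$ horizontally or~$k$ vertically. Reading off these two shifts from the geometry of the enter path, the accordion paths and the exit path of~$\shape$ yields exactly the quantities~$|\{u \in U : a' \le u \le c',\ \signature_u = +\}| - |\{v \in V : a' \le v \le c',\ \signature_v = -\}|$ and its opposite, optimized over~$a < a' < c' < c$, which is the condition of Definition~\ref{def:ktwistCongruenceCambrian}.

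I expect the main obstacle to be this last step: turning the heuristic ``each intermediate pipe shifts the elbows of~$a$ and~$c$ by one unit'' into a rigorous, monotone bookkeeping, so that the threshold~$k$ is sharp in both directions. Two points make this delicate. First, positive and negative pipes are not symmetric in a Cambrian shape (a positive pipe has~$k+1$ \SE-elbows and~$k$ \WN-elbows, a negative pipe the reverse), so one must control precisely how each of them deforms the overlap of the two zones; the freedom in the witnesses~$a'$ and~$c'$ reflects that only the most tightly clustered block of intermediate values matters, which is also why --- as observed after Definition~\ref{def:ktwistCongruenceCambrian} --- the number of witnesses need not equal~$k$. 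Second, one must check that distinct intermediate pipes do not interfere in a way that breaks additivity of the shifts; this is where reducedness (no two pipes cross twice) and Lemma~\ref{lem:pipeProperties}\,\eqref{item:emptyRectangle}, which still holds for Cambrian twists, enter. As sanity checks, for~$k = 1$ the argument must recover N.~Reading's Cambrian congruence recalled just before the statement, and for the constant signature~$\signature = {-}^n$ it must recover Proposition~\ref{prop:ktwistCongruence}; it is probably cleanest to first run the whole count for~$\signature = {-}^n$ and only then introduce the signs.
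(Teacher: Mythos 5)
Your reduction to an adjacent transposition $\tau = UacV$, $\tau' = UcaV$ agrees with the paper, as does the translation into incomparability of the $a$th and $c$th pipes of $\surjectionPermBrick(\tau)$ --- the paper phrases this as the two pipes not sharing an elbow, which is equivalent here since $a$ and $c$ are inserted consecutively, so any comparability between them can only be witnessed by a direct arc. The intention to track intermediate pipes is also in the spirit of the paper.

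Where the two proofs part is precisely the step you flag as the main obstacle, and the paper resolves it with a device absent from your plan that is really the crux of the argument. The paper observes that exchanging two adjacent letters inside $U$ (or inside $V$) merely flips the two corresponding pipes and leaves the relative position of the $a$th and $c$th pipes unchanged; one can therefore commute the negative letters of $U$ to its start and the positive letters of $V$ to its end, and then delete them via the Cambrian pipe deletion (negative pipes are deletable when sources of the contact graph, positive pipes when sinks). This collapses everything to the case where $U$ is all positive and sorted increasing, $V$ all negative and sorted increasing, and on that normalized picture the count becomes explicit and sharp: for witnesses $v_i < \cdots < v_j$ in $V$, an induction on $\ell$ shows that the $v_\ell$th pipe of $\surjectionPermBrick(\tau)$ has exactly $1 + 2(\ell - i) - (v_\ell - v_i)$ \SE-elbows to the right of the column of the $a$th exit, so the $v_j$th pipe has at least $k$ on that side and essentially none on the other; an empty-rectangle argument in the spirit of Lemma~\ref{lem:pipeProperties}\,\eqref{item:emptyRectangle} then excludes a shared elbow between the $a$th and $c$th pipes, with a symmetric argument for witnesses in $U$. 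This is the rigorous form of the bookkeeping you anticipated would be delicate, and both pitfalls you foresaw (sign asymmetry and interference of intermediate pipes) are absorbed by the normalization.

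Two further points. First, the mental model that a positive $u \in U$ with $a < u < c$ is ``forced through the common zone and shifts the elbows of $a$ and $c$ apart'' is the wrong mechanism: such a $u$ is inserted \emph{after} $a$ and $c$, so when $a$ is placed (as northwest as possible given only $c$ and $V$) the pipe $u$ is not there yet and cannot displace anything. Its influence enters entirely through the boundary of $\shape$ --- the $u$th entry and exit steps already separate those of $a$ and $c$ on the shape before any pipe is drawn --- and the paper's count reads this off via the fixed column of the $a$th exit and row of the $c$th entry. Running the argument first for $\signature = -^n$, as you propose, would not surface this difficulty, since for the constant negative signature $U$ never contains positive letters. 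Second, Lemma~\ref{lem:comparableCambrian} is not what powers this proposition in the paper (it is used for the interval property, Proposition~\ref{prop:intervalsCambrian}); since for an adjacent pair comparability is simply the presence of a shared elbow, building the argument on that lemma is an unnecessary detour.
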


\begin{proof}
By Proposition~\ref{prop:ktwistInsertionCambrian}, each fiber of~$\surjectionPermBrick$ gathers the linear extensions of a $(k,\signature)$-twist, so that it is connected by simple transpositions. Therefore, we just need to show the proposition for any two signed permutations~$\tau = UacV$ and~$\tau' = UcaV$ of~$\fS_\signature$ which differ by a simple transposition. By definition of~$\surjectionPermBrick$, we have~$\surjectionPermBrick(\tau) = \surjectionPermBrick(\tau')$ if and only if $a$th and~$c$th pipes have no common elbow. Observe that exchanging two consecutive letters in~$U$ or in~$V$ just flips the corresponding pipes in~$\surjectionPermBrick(\tau)$, and thus does not perturb the relative positions of the $a$th and~$c$th pipes in~$\surjectionPermBrick(\tau)$. By such exchanges, we can therefore assume that all negative letters of~$U$ appear at the beginning of~$U$ while all positive letters of~$V$ appear at the end of~$V$. Using the deletion operation discussed earlier (see \fref{fig:pipeInsertionCambrian}), we thus reduce the study to the situation where~$U$ has only positive letters while $V$ has only negative letters. With some additional exchanges, we can moreover assume that~$U = u_1 \cdots u_p$ with~$u_1 < \dots < u_p$ while~$V = v_1 \cdots v_q$ with~$v_1 < \dots < v_q$. We now show the equivalence in this restricted situation.

Suppose that there exist~$i < j$ such that the letters~$v_i, \dots, v_j$ in~$V$ are $\equiv^k_\signature$-congruence witnesses for the equivalence~$\tau \equiv^k_\signature \tau'$. In our specific situation, this means that $1+2(j-i) - (v_j-v_i) \ge k$ since $|\set{u \in U}{v_i \le u \le v_j, \signature_u = +}| = (v_j-v_i)-(j-i)$ and~$|\set{v \in V}{v_i \le v \le v_j, \signature_v = -}| = j-i+1$. Let~$\lambda$ denote the column of the $a$th exit and~$\mu$ denote the row of the $c$th entry of~$\shape$. A simple induction on~$\ell \in [i,j]$ shows that the $v_\ell$th pipe of~$\surjectionPermBrick(\tau)$ has $1+2(\ell-i)-(v_\ell-v_i)$ \SE-elbows to the right of column~$\lambda$. It follows that the $v_j$th pipe of~$\surjectionPermBrick(\tau)$ has $1+2(j-i) - (v_j-v_i) \ge k$ \SE-elbows to the right of column~$\lambda$, and thus it has only one \SE-elbow~$\elb$ to the left of column~$\lambda$. Now, if the $a$th and $c$th pipes of~$\surjectionPermBrick(\tau)$ shared an elbow~$\elb'$, it would be to the left of column~$\lambda$, above row~$\mu$, and below the $v_j$th pipe (since the $a$th and $c$th pipes are inserted after the $v_j$th pipe). The elbow~$\elb'$ would then lie in the rectangle defined by the two steps of the $v_j$th pipe incident to~$\elb$, a contradiction. Therefore, the $a$th and $c$th pipes share no elbows and we have~$\surjectionPermBrick(\tau) = \surjectionPermBrick(\tau')$. The proof is similar if there exist~$i < j$ such that the letters~$u_i, \dots, u_j$ in~$U$ are $\equiv^k_\signature$-congruence witnesses. For the converse, one easily checks that if there are neither witnesses in~$U$ nor witnesses in~$V$, then the $a$th and $c$th pipes of~$\surjectionPermBrick(\tau)$ share an elbow, and thus~$\surjectionPermBrick(\tau) \ne \surjectionPermBrick(\tau')$.
\end{proof}

\begin{remark}[Numerology again]
Similar to Remark~\ref{rem:numerologyAgain}, we observe that
\[
|\AcyclicTwists(\signature)| = n! \quad\text{for any } \signature \in \pm^n \text{ with } n \le k+1, and
\]
\[
|\AcyclicTwists((+-)^k{+})| = |\AcyclicTwists[2k-1](-^{2k+1})| = (2k+1)! - (2k-1)!
\]
since the only non-trivial Cambrian $(k,(+-)^k{+})$-twist congruence classes are the pairs of permutations~$\{1(2k+1)(\sigma_1+1)\cdots(\sigma_{2k-1}+1), (2k+1)1(\sigma_1+1)\cdots(\sigma_{2k-1}+1)\}$ for~$\sigma \in \fS_{2k-1}$. The other numbers in Table~\ref{table:numbersAcyclicTwistsCambrian} remain mysterious for us.
\end{remark}

\enlargethispage{.4cm}
Using definitions and techniques already presented in Section~\ref{subsec:latticeCongruences}, we show that the Cambrian $k$-twist congruences are lattice congruences, generalizing the result of N.~Reading~\cite{Reading-CambrianLattices}.

\begin{theorem}
\label{theo:ktwistLatticeCongruenceCambrian}
For any signature~$\signature \in \pm^n$ and any~$k \in \N$, the Cambrian $(k,\signature)$-twist congruence~$\equiv^k$ is a lattice congruence of the weak order.
\end{theorem}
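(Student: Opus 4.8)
The plan is to mirror the proof of Theorem~\ref{theo:ktwistLatticeCongruence}, upgrading each step so as to keep track of the signs and of the remoteness of the Cambrian congruence witnesses. We already know from Proposition~\ref{prop:intervalsCambrian} that every Cambrian $(k,\signature)$-twist congruence class is an interval of the weak order, so it remains to check that the projections $\projDown$ and $\projUp$ are order preserving, that is, that $\tau < \tau'$ implies $\min(C) < \min(C')$ and $\max(C) < \max(C')$ whenever $\tau \in C$ and $\tau' \in C'$ lie in distinct classes. As in the unsigned case, it suffices to treat the maximums (the minimums being symmetric), to reduce to the situation $\tau' = \tau s_i$ with $s_i$ not a descent of $\tau$, and then to induct on the weak order distance from $\tau$ to $\max(C)$, the base case $\tau = \max(C)$ being immediate since $\max(C) = \tau < \tau' \le \max(C')$.

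For the inductive step I pick $\sigma \in C$ covering $\tau$, write $\sigma = \tau s_j$ with $s_j$ not a descent of $\tau$, and distinguish the four cases $j > i+1$, $j = i+1$, $j = i-1$, $j < i-1$ according to the relative positions of $i$ and $j$. In each case I write $\tau$, $\tau'$ and $\sigma$ explicitly in one-line notation, define $\sigma'$ by performing on $\tau'$ the same pair of elementary transpositions that produces $\sigma$ from $\tau$ (so $\sigma' = \tau s_i s_j$ when $|i-j| > 1$, and $\sigma' = \tau s_i s_{i+1} s_i$ or its symmetric expression when $|i-j| = 1$), and check the two facts $\sigma < \sigma'$ and $\tau' \equiv^k_\signature \sigma'$. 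Granting this, $\sigma$ is strictly closer to $\max(C)$ than $\tau$, the relation $\sigma < \sigma'$ is of the required form ($\sigma s_i = \sigma'$ with $s_i$ not a descent of $\sigma$), and the $\sigma, \sigma'$ necessarily lie in distinct classes, so the induction hypothesis yields $\max(C) < \max(C')$.

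The heart of the argument, and what I expect to be the main obstacle, is the verification of $\tau' \equiv^k_\signature \sigma'$: one must transport the Cambrian witnesses of the equivalence $\tau \equiv^k_\signature \sigma$ --- a collection $a < b_1 < \cdots < b_{k+p} < c$ of same sign, lying on the correct side, with exactly $p$ opposite-sign letters strictly between the extreme witnesses on the other side --- through the letter moves relating $\tau$ to $\tau'$ and $\sigma$ to $\sigma'$. Here the signs of the witnesses, the fact that witnesses may be positive in the prefix or negative in the suffix, and the variable count $k+p$ all make the bookkeeping heavier than in Theorem~\ref{theo:ktwistLatticeCongruence}. The observation that makes everything go through is that in every case the two values swapped to pass from $\tau$ to $\tau'$ (and, in the adjacent cases, the two values swapped at each step of the chain from $\tau'$ to $\sigma'$) are either both smaller than all the witnesses or both larger, using the ordering $a < b < c$; hence they never lie in the interval $[b_1, b_{k+p}]$, never cross the positions delimiting the relevant prefix or suffix, and do not alter which side of the pair they belong to. Consequently the sign pattern, the side membership, and the parameter $p$ are all preserved, so the same multiset of witnesses certifies $\tau' \equiv^k_\signature \sigma'$. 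Once these elementary but sign-sensitive checks are carried out in the four cases (the adjacent cases $|i-j| = 1$ requiring the check for the two successive transpositions composing $\sigma'$), the proof closes exactly as that of Theorem~\ref{theo:ktwistLatticeCongruence}.
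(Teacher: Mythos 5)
Your overall structure faithfully mirrors the paper's proof: you invoke Proposition~\ref{prop:intervalsCambrian} for the interval property and then run the same four-case induction on the distance from~$\tau$ to~$\max(C)$ as in Theorem~\ref{theo:ktwistLatticeCongruence}, which is precisely what the paper (tersely) indicates. The problem is in your justification of the witness transfer. You claim that the two swapped values are ``either both smaller than all the witnesses or both larger'' and deduce from this that they ``do not alter which side of the pair they belong to.'' Neither claim holds in general, and the two correct reasons needed are conflated.

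In the non-adjacent cases (1) and (4), with $\tau = UabVcdW$ and the Cambrian congruence $\tau \equiv^k_\signature \sigma$ certified by an interval $[a',c'] \subset (c,d)$, there is no ordering constraint between $\{a,b\}$ and $\{c,d\}$, so one may perfectly well have $a' < a < b < c'$: the swapped pair can sit inside the witness interval, so your ordering claim fails. What actually saves these cases is the other observation you state in passing: the $a \leftrightarrow b$ swap merely permutes the prefix $UabV$ (or the suffix, in case (4)), and the condition in Definition~\ref{def:ktwistCongruenceCambrian} depends only on the underlying \emph{sets} of prefix and suffix letters, never on their order. This is exactly the ``delicate observation'' the paper singles out, already present in the proof of Proposition~\ref{prop:ktwistCongruenceCambrian}: $UacV \equiv^k_\signature UcaV$ if and only if $U'acV' \equiv^k_\signature U'caV'$ for any permutations $U'$ of $U$ and $V'$ of~$V$. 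In the adjacent case (2), on the other hand, the chain from $\tau' = UbacW$ to $\sigma' = UcbaW$ successively swaps the pairs $(a,c)$ and $(b,c)$, which straddle the witness interval (contradicting ``both smaller or both larger''), and the letter $a$ does cross from prefix to suffix along the chain (contradicting ``do not alter which side''). The argument nevertheless closes because $a < b < a'$, so $a$ lies strictly below $[a',c']$ and its side is immaterial to the counts of Definition~\ref{def:ktwistCongruenceCambrian}. Separating these two justifications --- set-invariance for $|i-j|>1$, below-the-witness-interval for $|i-j|=1$ --- repairs the argument; as written, the single ``both smaller or both larger'' criterion cannot carry it.
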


\begin{proof}
We already observed in Proposition~\ref{prop:intervalsCambrian} that the Cambrian $(k,\signature)$-twist congruence classes are intervals of the weak order. The proof that the up and down projection maps are order preserving is similar to the proof Theorem~\ref{theo:ktwistLatticeCongruence}. The only delicate observation was already made in the proof of Proposition~\ref{prop:ktwistCongruenceCambrian}: namely, $UacV \equiv^k_\signature UcaV \iff U'acV' \equiv^k_\signature U'caV'$ for any permutations~$U'$ of~$U$ and~$V'$ of~$V$.
\end{proof}

As in Section~\ref{subsec:flips}, define the \defn{increasing flip order} to be the transitive closure of the increasing flip graph on acyclic Cambrian $(k,\signature)$-twists. See \fref{fig:IncreasingFlipLatticesCambrian} for an illustration when~$k = 2$ and~$n = 4$.

\hvFloat[floatPos=p, capWidth=h, capPos=r, capAngle=90, objectAngle=90, capVPos=c, objectPos=c]{figure}
{\includegraphics[scale=.82]{IncreasingFlipLatticesCambrian}}
{The increasing flip lattice on Cambrian $(k,{+}{-}{+}{+})$-twists for~$k = 1$ (left) and~$k = 2$ (right).}
{fig:IncreasingFlipLatticesCambrian}

\begin{proposition}
\label{prop:latticeQuotientCambrian}
The increasing flip order on acyclic Cambrian $(k,\signature)$-twists is isomorphic to:
\begin{itemize}
\item the quotient lattice of the weak order by the Cambrian $(k,\signature)$-twist congruence~$\equiv^k_\signature$,
\item the subposet of the weak order induced by the permutations avoiding the increasing rewriting rules (maximums of Cambrian $(k,\signature)$-twist congruence classes),
\item the subposet of the weak order induced by the permutations avoiding the decreasing rewriting rules (minimums of Cambrian $(k,\signature)$-twist congruence classes).
\end{itemize}
\end{proposition}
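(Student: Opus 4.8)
The plan is to mimic exactly the proof of Proposition~\ref{prop:latticeQuotient}, now that all the required ingredients have been established in the Cambrian setting. The three bullets are: (1)~the increasing flip order on acyclic Cambrian $(k,\signature)$-twists is isomorphic to the quotient lattice $\fS^\signature/\!\equiv^k_\signature$; (2)--(3)~by general lattice-congruence theory (see~\cite{Reading-LatticeCongruences, Reading-CambrianLattices}), that quotient is isomorphic to the sublattice of the weak order induced by the minimal (resp.\ maximal) elements of the congruence classes. Since Theorem~\ref{theo:ktwistLatticeCongruenceCambrian} already guarantees that $\equiv^k_\signature$ is a lattice congruence, bullets (2) and (3) follow immediately once (1) is proved; the only work is in (1), and in identifying the minimal/maximal representatives with the announced pattern-avoidance description.

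First I would prove (1). Let $\twist, \twist'$ be two distinct acyclic Cambrian $(k,\signature)$-twists with $\surjectionPermBrick$-fibers $C, C'$ (which are weak order intervals by Proposition~\ref{prop:intervalsCambrian}, hence in particular Cambrian $(k,\signature)$-twist congruence classes by Proposition~\ref{prop:ktwistCongruenceCambrian}). Suppose first that there exist representatives $\tau = UijV \in C$ and $\tau' = UjiV \in C'$ adjacent in the weak order. By definition of $\surjectionPermBrick$, inserting $\tau$ and $\tau'$ produces twists differing only in the relative position of the $i$th and $j$th pipes: precisely, $\twist$ and $\twist'$ differ by the flip of the elbow/crossing between the $i$th and $j$th pipes touched first (reading the insertion from the right). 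Conversely, if $\twist$ and $\twist'$ differ by a single elbow flip between the $i$th and $j$th pipes, then $i$ and $j$ are connected by an edge in the contact graph $\twist\contact$, so there is a linear extension $\tau = UijV$ of $\twist\contact$ in which $i,j$ are consecutive; Proposition~\ref{prop:ktwistInsertionCambrian} then gives $\surjectionPermBrick(\tau) = \twist$, and $\surjectionPermBrick(UjiV) = \twist'$ by the same local analysis of the insertion. Hence the covering relations of the increasing flip order are exactly the images under $\surjectionPermBrick$ of the covering relations of the weak order that cross between two classes, which is precisely the statement that $\surjectionPermBrick$ induces an isomorphism from $\fS^\signature/\!\equiv^k_\signature$ onto the increasing flip order on acyclic Cambrian $(k,\signature)$-twists. (Here I would invoke Theorem~\ref{theo:ktwistLatticeCongruenceCambrian} to know that the quotient is a lattice, so that ``covering-relation isomorphism'' upgrades to ``lattice isomorphism''.)

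For (2) and (3), the quotient lattice $\fS^\signature/\!\equiv^k_\signature$ is, by the standard theory of lattice congruences, isomorphic to the subposet of $\fS^\signature$ induced by $\projDown(\fS^\signature)$ (the minimal representatives), and this subposet is in fact a sublattice since $\equiv^k_\signature$ is a lattice congruence; dually for $\projUp(\fS^\signature)$. It remains only to match these representatives with the pattern-avoidance description. A permutation $\tau \in \fS^\signature$ is the maximum of its class iff no increasing rewriting $U a c V \rightsquigarrow U c a V$ applies to it, i.e.\ iff $\tau$ avoids the local configuration described in Definition~\ref{def:ktwistCongruenceCambrian} in its ``$ac$'' form — this is the ``avoiding the increasing rewriting rules'' condition verbatim; dually the minimum avoids the ``$ca$'' configurations. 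I expect the main obstacle to be purely bookkeeping: phrasing the pattern-avoidance statement so that it matches, for constant signatures $\signature = -^n$, the clean pattern $1(k+2)\dash(\sigma_1+1)\dash\cdots\dash(\sigma_k+1)$ of Proposition~\ref{prop:latticeQuotient} — the sign-dependence and the variable number of witnesses (noted in the remark after Definition~\ref{def:ktwistCongruenceCambrian}) mean there is no single short barred-pattern description in the signed case, so I would state this bullet in terms of ``avoiding the rewriting rules'' rather than attempting an explicit pattern list, consistent with how the proposition is already phrased.
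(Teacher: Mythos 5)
Your proposal is correct and follows essentially the same route as the paper: the paper's own proof of this proposition consists of the single sentence ``Same as the proof of Proposition~\ref{prop:latticeQuotient},'' and your write-up is a faithful spelling-out of exactly that argument transplanted into the Cambrian setting (compare flip-adjacent twists with weak-order-adjacent representatives in both directions, invoke Theorem~\ref{theo:ktwistLatticeCongruenceCambrian} to upgrade to a lattice isomorphism, and quote the standard lattice-congruence fact that the quotient is isomorphic to the sublattice on minimal (resp.\ maximal) class representatives). Your remark that the pattern-avoidance bullets are deliberately phrased as ``avoiding the rewriting rules'' rather than as an explicit barred-pattern list is also consistent with how the paper states the proposition, given the sign- and remoteness-dependence in Definition~\ref{def:ktwistCongruenceCambrian}.
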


\begin{proof}
Same as the proof of Proposition~\ref{prop:latticeQuotient}.
\end{proof}

\begin{example}[Cambrian lattices]
When~$k = 1$, the increasing flip lattice is the $\signature$-Cambrian lattice of N.~Reading~\cite{Reading-CambrianLattices}.
\end{example}

As for classical twists, the $i$th and $j$th pipes of any Cambrian $(k,\signature)$-twist~$\twist$ are comparable in~$\contactLess{\twist}$ as soon as~$|i-j| \le k$. We can therefore define as in Section~\ref{subsec:canopy} the \defn{canopy scheme} of~$\twist$ as the orientation~${\surjectionBrickZono(\twist) \in \AcyclicOrientations(n)}$ with an edge~$i \to j$ for all~$i,j \in [n]$ such that~$|i-j| \le k$ and~$i \contactLess{\twist} j$. We still call \defn{$k$-canopy} the map~$\surjectionBrickZono: \AcyclicTwists(\signature) \to \AcyclicOrientations(n)$. Define also the \defn{$k$-recoil scheme} $\surjectionPermZono(\tau)$ of a signed permutation~$\tau$ as the $k$-recoil scheme of~$\tau$ when forgetting its signs.

\begin{proposition}
\label{prop:latticeHomomorphismsCambrian}
The maps~$\surjectionPermBrick$, $\surjectionBrickZono$, and~$\surjectionPermZono$ define the following commutative diagram of lattice homomorphisms:
\[
\begin{tikzpicture}
  \matrix (m) [matrix of math nodes,row sep=1.2em,column sep=5em,minimum width=2em]
  {
     \fS^\signature  	&								& \AcyclicOrientations(n)	\\
						& \AcyclicTwists(\signature) 	&							\\
  };
  \path[->>]
    (m-1-1) edge node [above] {$\surjectionPermZono$} (m-1-3)
                 edge node [below] {$\surjectionPermBrick\quad$} (m-2-2.west)
    (m-2-2.east) edge node [below] {$\surjectionBrickZono$} (m-1-3);
\end{tikzpicture}
\]
\end{proposition}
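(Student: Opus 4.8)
The plan is to show that the diagram commutes and that each of the three maps is a surjective lattice homomorphism, mirroring the non-Cambrian Propositions~\ref{prop:latticeHomomorphisms} and~\ref{prop:krecoilsFibers}--\ref{prop:latticeQuotient}. Most of the pieces are already in place: Proposition~\ref{prop:ktwistInsertionCambrian} tells us that $\surjectionPermBrick$ is a surjection from $\fS^\signature$ onto $\AcyclicTwists(\signature)$ whose fibers are sets of linear extensions of contact graphs, Proposition~\ref{prop:ktwistCongruenceCambrian} identifies these fibers with the Cambrian $(k,\signature)$-twist congruence classes, and Theorem~\ref{theo:ktwistLatticeCongruenceCambrian} says this congruence is a lattice congruence of the weak order. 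Thus $\surjectionPermBrick$ is automatically a surjective lattice homomorphism onto the quotient lattice, which by Proposition~\ref{prop:latticeQuotientCambrian} is the increasing flip lattice on acyclic Cambrian $(k,\signature)$-twists. Similarly $\surjectionPermZono$ is a surjective lattice homomorphism: forgetting the signature of $\tau \in \fS^\signature$ gives an ordinary permutation in $\fS_n$, and the $k$-recoil map on $\fS_n$ is a surjective lattice homomorphism onto $\AcyclicOrientations(n)$ by the theorem at the end of Section~\ref{subsec:krecoils}; since the weak order on $\fS^\signature$ is just the weak order on $\fS_n$ with labels decorated by fixed signs, this descends verbatim.

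Next I would establish commutativity $\surjectionBrickZono \circ \surjectionPermBrick = \surjectionPermZono$, exactly as in the proof of Proposition~\ref{prop:latticeHomomorphisms}. Fix $\tau \in \fS^\signature$ and $i,j \in [n]$ with $|i-j| \le k$ and $\tau^{-1}(i) < \tau^{-1}(j)$. By Lemma~\ref{lem:comparableCambrian} (or directly the Cambrian analogue of Lemma~\ref{lem:canopy}, whose proof goes through since a pipe still has $k$ horizontal steps by the pipe-count of Lemma~\ref{lem:pipeProperties} adapted to Cambrian twists), the $i$th and $j$th pipes of $\surjectionPermBrick(\tau)$ are comparable in $\contactLess{\surjectionPermBrick(\tau)}$; and since the $i$th pipe is inserted strictly after the $j$th pipe in the insertion algorithm, we get $i \contactLess{\surjectionPermBrick(\tau)} j$. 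Hence there is an arc $i \to j$ both in $\surjectionPermZono(\tau)$ and in $\surjectionBrickZono(\surjectionPermBrick(\tau))$, and since both are acyclic orientations of $\Gkn$ with the same underlying graph, they coincide.

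Finally, the fact that $\surjectionBrickZono$ is a well-defined surjective lattice homomorphism follows formally from the other two: $\surjectionBrickZono$ is well-defined on $\AcyclicTwists(\signature)$ because $\surjectionPermZono$ is constant on each fiber of $\surjectionPermBrick$ (a consequence of commutativity together with the fact that $\surjectionPermBrick$ has full preimages given by twist-congruence classes, which refine the recoil-congruence classes --- this refinement statement is precisely the Cambrian analogue of Remark~\ref{rem:combinatorialInclusions} and is immediate from the witness formulation of the rewriting rules, since any $k$ consecutive-range witnesses force the corresponding $\Gkn$-edge). It is surjective because $\surjectionPermZono$ is. And it is order-preserving because $\surjectionPermZono = \surjectionBrickZono \circ \surjectionPermBrick$ is order-preserving and $\surjectionPermBrick$ is a surjective lattice quotient map, so for $\twist \le \twist'$ in the increasing flip order we may lift to $\tau \le \tau'$ in $\fS^\signature$ and push forward. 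Being an order-preserving surjection between finite lattices that sends meets to meets and joins to joins (inherited from $\surjectionPermZono$ via the quotient), $\surjectionBrickZono$ is a lattice homomorphism.

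The main obstacle I anticipate is purely bookkeeping: verifying that the Cambrian versions of Lemma~\ref{lem:pipeProperties} and Lemma~\ref{lem:canopy} hold with the same horizontal-step count $k$ regardless of sign, so that $|i-j| \le k$ still forces comparability --- the excerpt already notes that a positive pipe has $k+1$ \SE-elbows and $k$ \WN-elbows (and symmetrically for negative pipes), so the number of horizontal steps is still $k+1$ and the argument of Lemma~\ref{lem:canopy} via Corollary~\ref{coro:comparable}\,\eqref{item:comparable} should carry over, but one should check that Corollary~\ref{coro:comparable}\,\eqref{item:comparable} itself survives in the Cambrian setting (the empty-rectangle Lemma~\ref{lem:pipeProperties}\,\eqref{item:emptyRectangle} is purely local and does, and the incomparability-forces-crossing argument only used the shape of a single pipe near its ends). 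Everything else is a direct transcription of the proofs in Section~\ref{subsec:canopy}.
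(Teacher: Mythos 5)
Your proposal is correct and follows essentially the same route as the paper: the paper's proof is literally ``Same as the proof of Proposition~\ref{prop:latticeHomomorphisms},'' i.e., the arc-by-arc commutativity check (arc $i \to j$ in $\surjectionPermZono(\tau)$ whenever $|i-j|\le k$ and $\tau^{-1}(i) < \tau^{-1}(j)$, then noting $i$ is inserted after $j$ so $i \contactLess{\surjectionPermBrick(\tau)} j$), together with the earlier Cambrian lattice-congruence results. Your care in flagging that the Cambrian analogue of Lemma~\ref{lem:canopy} needs the pipe-step count to be sign-independent is well placed --- the paper states this comparability fact just before the proposition without re-proving it --- but your sketch of why it carries over (each pipe still has $k+1$ horizontal steps regardless of sign) matches the paper's implicit reasoning.
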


\begin{proof}
Same as the proof of Proposition~\ref{prop:latticeHomomorphisms}.
\end{proof}

\subsection{Geometry of acyclic Cambrian twists}
\label{subsec:geometryCambrian}

As in Section~\ref{sec:geometry}, the geometric properties of acyclic Cambrian twists are driven by the normal fan of the brick polytope, defined as follows.

\begin{definition}
The \defn{brick vector} of a Cambrian $(k,\signature)$-twist~$\twist$ is the vector~$\b{x}(\twist) \in \R^n$ whose $i$th coordinate is the number of bricks of~$\shape$ below the $i$th pipe of~$\twist$, minus a constant~$\alpha_{k, \signature, i}$ (independent of~$\twist$) chosen so that the origin is in the middle of the brick vectors of the minimal and maximal Cambrian $(k,\signature)$-twists. The \defn{brick polytope}~$\Brick[k][\signature]$ is the convex hull of the brick vectors of all Cambrian $(k,\signature)$-twists.
\end{definition}

The brick polytope~$\Brick[k][\signature]$ is illustrated in \fref{fig:PermBrickZonoCambrian} for~$\signature = {+}{-}{+}{+}$ and~$k = 1,2,3$.

\begin{remark}[Inclusions]
To be consistent with Section~\ref{sec:geometry}, we have translated the brick polytope of~\cite{PilaudSantos-brickPolytope} by the vector~$\sum_{i \in [n]} \alpha_{k, \signature, i} \, \b{e}_i$ so that the permutahedron~$\Perm$ is contained inside the brick polytope~$\Brick[k][\signature]$. Nevertheless, the zonotope~$\Zono$ does not necessarily contain anymore the brick polytope~$\Brick[k][\signature]$ when~$k \ge 2$, as is illustrated in the last two rows of \fref{fig:PermBrickZonoCambrian}.
\end{remark}

Applying the results of~\cite{PilaudSantos-brickPolytope}, the properties presented in Section~\ref{subsec:normalFans} directly translate in the Cambrian world:
\begin{enumerate}[(i)]
\item The collection of cones~$\bigset{\Cone\polar(\twist)}{\twist \in \AcyclicTwists(\signature)}$, together with all their faces, form the normal fan of the brick polytope~$\Brick[k][\signature]$.
\item The surjections~$\surjectionPermBrick : \fS^\signature \to \AcyclicTwists(\signature)$ and~$\surjectionBrickZono : \AcyclicTwists(\signature) \to \AcyclicOrientations(n)$ are characterized by polar cone inclusions: $\Cone\polar(\tau) \subseteq \Cone\polar \big( \surjectionPermBrick(\tau) \big)$ and~$\Cone\polar(\twist) \subseteq \Cone\polar \big( \surjectionBrickZono(\twist) \big)$.
\item The $1$-skeleton of the brick polytope~$\Brick[k][\signature]$, oriented in the direction~$\sum_{i \in [n]} (n+1-2i) \, \b{e}_i$ is the Hasse diagram of the increasing flip lattice on acyclic Cambrian $(k,\signature)$-twists.
\end{enumerate}

\begin{figure}[p]
	\centerline{\includegraphics[scale=.68]{PermBrickZono1Cambrian}} \medskip
	\centerline{\includegraphics[scale=.68]{PermBrickZono2Cambrian}} \medskip
	\centerline{\includegraphics[scale=.68]{PermBrickZono3Cambrian}}
	\caption{The permutahedron~$\Perm[k][4]$ (left), the brick polytope $\Brick[k][{+}{-}{+}{+}]$ (middle) and the zonotope~$\Zono[k][4]$ (right) for $k = 1$~(top), $k = 2$~(middle) and~$k = 3$ (bottom). For readibility, we represent orientations of~$\Gkn$ by pyramids of signs.}
	\label{fig:PermBrickZonoCambrian}
\end{figure}

\subsection{Algebra of acyclic Cambrian twists}
\label{subsec:algebraCambrian}

To extend the twist algebra to the Cambrian setting, we follow the same path as~\cite{ChatelPilaud}. We first generalize the shifted shuffle and the convolution product to signed permutations. The \defn{signed shifted shuffle product}~$\tau \shiftedShuffle \tau'$ is defined as the shifted product of the permutations where signs travel with their values, while the \defn{signed convolution product}~$\tau \convolution \tau'$ is defined as the convolution product of the permutations where signs stay at their positions. For example,
\begin{align*}
\upr{1}\downr{2} \shiftedShuffle \downb{23}\upb{1} & = \{ \upr{1}\downr{2}\downb{45}\upb{3}, \upr{1}\downb{4}\downr{2}\downb{5}\upb{3}, \upr{1}\downb{45}\downr{2}\upb{3}, \upr{1}\downb{45}\upb{3}\downr{2}, \downb{4}\upr{1}\downr{2}\downb{5}\upb{3}, \downb{4}\upr{1}\downb{5}\downr{2}\upb{3}, \downb{4}\upr{1}\downb{5}\upb{3}\downr{2}, \downb{45}\upr{1}\downr{2}\upb{3}, \downb{45}\upr{1}\upb{3}\downr{2}, \downb{45}\upb{3}\upr{1}\downr{2} \}, \\
\upr{1}\downr{2} \convolution \downb{23}\upb{1} & = \{ \upr{1}\downr{2}\downb{45}\upb{3}, \upr{1}\downr{3}\downb{45}\upb{2}, \upr{1}\downr{4}\downb{35}\upb{2}, \upr{1}\downr{5}\downb{34}\upb{2}, \upr{2}\downr{3}\downb{45}\upb{1}, \upr{2}\downr{4}\downb{35}\upb{1}, \upr{2}\downr{5}\downb{34}\upb{1}, \upr{3}\downr{4}\downb{25}\upb{1}, \upr{3}\downr{5}\downb{24}\upb{1}, \upr{4}\downr{5}\downb{23}\upb{1} \}.
\end{align*}
Note that the shifted shuffle is compatible with signed values, while the convolution is compatible with signed positions: $\fS^\signature \shiftedShuffle \fS^{\signature'} = \fS^{\signature\signature'}$ and~$\fS_\signature \convolution \fS_{\signature'} = \fS_{\signature\signature'}$. In any case, both~$\shiftedShuffle$ and~$\convolution$ are compatible with the distribution of positive and negative signs: $|\tau \shiftedShuffle \tau'|_+ = |\tau|_+ + |\tau'|_+ = |\tau \convolution \tau'|_+$ and~$|\tau \shiftedShuffle \tau'|_- = |\tau|_- + |\tau'|_- = |\tau \convolution \tau'|_-$.

We denote by
\[
\fS_\pm \eqdef \bigsqcup_{\substack{n \in \N \\ \signature \in \pm^n}} \fS_\signature = \bigsqcup_{\substack{n \in \N \\ \signature \in \pm^n}} \fS^\signature
\]
the set of all signed permutations of arbitrary size and arbitrary signature. We consider the Hopf algebra~$\FQSym_\pm$ with basis~$(\F_\tau)_{\tau \in \fS_\pm}$ and whose product and coproduct are defined by
\[
\F_\tau \product \F_{\tau'} = \sum_{\sigma \in \tau \shiftedShuffle \tau'} \F_\sigma
\qquad\text{and}\qquad
\coproduct \F_\sigma = \sum_{\sigma \in \tau \convolution \tau'} \F_\tau \otimes \F_{\tau'}.
\]
This Hopf algebra is bigraded by the number of positive and the number of negative signs of the signed permutations.

We now consider the vector subspace~$\Twist_\pm$ of~$\FQSym_\pm$ generated by the elements
\[
\PTwist_{\twist} \eqdef \sum_{\substack{\tau \in \fS_\pm \\ \surjectionPermBrick(\tau) = \twist}} \F_\tau = \sum_{\tau \in \linearExtensions(\twist\contact)} \F_\tau,
\]
for all acyclic Cambrian $(k,\signature)$-twists~$\twist$ for all signatures~$\signature \in \pm^*$. For example, for the signature~${\signature = {-}{+}{+}{-}{-}}$ and the Cambrian $(k,\signature)$-twists of \fref{fig:insertionAlgorithmCambrian}, we have
\[
\PTwist_{\hspace{-.3cm}\includegraphics[scale=.5]{exmTwist0Cambrian}} = \sum_{\tau \in \fS^\signature} \F_\tau
\qquad
\PTwist\raisebox{.05cm}{$_{\hspace{-.4cm}\includegraphics[scale=.5]{exmTwist1Cambrian}}$} = \!\!\!\! \begin{array}[t]{c} \phantom{+} \; \F_{\up{3}\down{1}\up{2}\down{54}} + \F_{\up{3}\down{15}\up{2}\down{4}} + \F_{\up{3}\down{51}\up{2}\down{4}} \\ + \; \F_{\down{5}\up{3}\down{1}\up{2}\down{4}} + \F_{\up{3}\down{154}\up{2}} + \F_{\up{3}\down{514}\up{2}} \\ + \; \F_{\down{5}\up{3}\down{14}\up{2}} + \F_{\up{3}\down{541}\up{2}} + \F_{\down{5}\up{3}\down{41}\up{2}} \end{array}
\qquad
\PTwist\raisebox{.1cm}{$_{\hspace{-.5cm}\includegraphics[scale=.5]{exmTwist2Cambrian}}$} = \F_{\up{3}\down{154}\up{2}}
\qquad
\PTwist\raisebox{.15cm}{$_{\hspace{-.6cm}\includegraphics[scale=.5]{exmTwist3Cambrian}}$} = \F_{\up{3}\down{154}\up{2}}.
\]

\begin{theorem}
\label{theo:twistSubalgebraCambrian}
$\Twist_\pm$ is a Hopf subalgebra of~$\FQSym_\pm$.
\end{theorem}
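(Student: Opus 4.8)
The plan is to mimic the proof of Theorem~\ref{theo:twistSubalgebra} verbatim, replacing the $k$-twist congruence~$\equiv^k$ by the Cambrian $(k,\signature)$-twist congruence~$\equiv^k_\signature$ and using its reformulation in terms of congruence witnesses given right after Definition~\ref{def:ktwistCongruenceCambrian}. Since the fibers of~$\surjectionPermBrick$ on signed permutations are exactly the Cambrian $(k,\signature)$-twist congruence classes (Proposition~\ref{prop:ktwistCongruenceCambrian}), and $\Twist_\pm$ is spanned by the indicator sums~$\PTwist_\twist$ of these classes, it suffices to show that the product of two classes is a sum of classes, and the coproduct of a class is a sum of tensors of classes. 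Equivalently, I must show: (a) if $\F_\sigma$ appears in $\PTwist_\twist \product \PTwist_{\twist'}$ and $\sigma \equiv^k_{\vsignature(\sigma)} \tilde\sigma$ via a single elementary rewriting, then $\F_{\tilde\sigma}$ also appears; and (b) the dual statement for the coproduct.

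For the product, consider $\twist \in \AcyclicTwists(\signature)$ and $\twist' \in \AcyclicTwists(\signature')$, and a signed permutation $\sigma \in \fS^{\signature\signature'}$ of the form $\sigma = UacV$, $\tilde\sigma = UcaV$ with $k+p$ witnesses $a < b_1 < \dots < b_{k+p} < c$ as in the witness reformulation (either all positive and in $U$ with $p$ negative letters of $V$ sandwiched, or symmetrically). Assume $\F_\sigma$ appears, so $\sigma \in \tau \shiftedShuffle \tau'$ for some $\tau \in \linearExtensions(\twist\contact)$, $\tau' \in \linearExtensions(\twist'{}\contact)$. Since the signed shifted shuffle carries signs with values, the entries $a$ and $c$ together with all witnesses $b_i$ lie entirely in one of the two factors (as values $\le n$ coming from $\tau$, or as shifted values $> n$ coming from $\tau'$), \emph{except} possibly when $a \le n < c$. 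If $a\le n<c$, the swap of $a$ and $c$ keeps $\sigma$ a shuffle of $\tau$ and $\tau'$ (they are consecutive values across the shift), so $\tilde\sigma \in \tau \shiftedShuffle \tau'$ directly. If $c \le n$, then all of $a, c, b_1, \dots, b_{k+p}$ are values of $\tau$, and within $\tau = XacY_1\cdots Y_k Z$ the positions of $U$- and $V$-letters among $a,c,b_i$ are inherited from $\sigma$ (shuffling with $\tau'$ only interleaves letters, it does not change relative order within $\tau$), so the same witnesses establish $\tau \equiv^k_\signature \tilde\tau$ with $\tilde\tau = XcaY_1\cdots Y_kZ$; since $\tilde\sigma \in \tilde\tau \shiftedShuffle \tau'$ and $\surjectionPermBrick(\tilde\tau)=\twist$, $\F_{\tilde\sigma}$ appears. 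The case $n < a$ is symmetric, working inside $\tau'$. Hence the product of two classes decomposes into classes. Here I would invoke the observation from the proof of Proposition~\ref{prop:ktwistCongruenceCambrian} that congruence is insensitive to permuting letters within $U$ or within $V$, so the witness reformulation really transports under the shuffle.

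For the coproduct, consider a Cambrian $(k,\signature)$-twist~$\twist[S]$ and signed permutations $\tau$, $\tau' = UacV$, $\tilde\tau' = UcaV$ with $\tau' \equiv^k_{\vsignature(\tau')} \tilde\tau'$. Suppose $\tau \otimes \tau'$ appears in $\coproduct\PTwist_{\twist[S]}$, i.e.\ there is $\sigma \in \linearExtensions(\twist[S]\contact) \cap (\tau \convolution \tau')$. Writing $\sigma = \hat\tau\,\hat U\hat a\hat c\hat V$ where $\hat\tau$ is the standardization of the first $|\tau|$ entries and $\hat U\hat a\hat c\hat V$ of the rest (with $\hat a,\hat c$ the entries of $\sigma$ standardizing $a,c$), the letters standardizing the witnesses $b_i$ are again either all positive in $\hat U$ or all negative in $\hat V$ with the right count of opposite-sign letters in between, because the convolution keeps signs at positions and preserves the relative order within the second block. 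Thus $\tilde\sigma := \hat\tau\,\hat U\hat c\hat a\hat V$ lies in $\linearExtensions(\twist[S]\contact)$ and in $\tau \convolution \tilde\tau'$, so $\tau \otimes \tilde\tau'$ appears. Symmetrically for the first tensor factor, giving that $\coproduct$ of a class is a sum of tensor products of classes. Combining (a) and (b) with Theorem~\ref{theo:twistLatticeCongruenceCambrian} (to know these are genuine congruence classes) shows $\Twist_\pm$ is stable under product and coproduct, hence a Hopf subalgebra.

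The main obstacle — though it is really a bookkeeping rather than conceptual difficulty — is verifying that the witness reformulation of $\equiv^k_\signature$ is preserved under the shuffle and the convolution: the witness condition counts signed letters in specified value ranges lying between the two swapped entries, and one must check that interleaving with a second factor neither destroys witnesses nor creates spurious ones, and that the sign of each witness (which now matters, unlike in the non-Cambrian case) is tracked correctly by the signed shifted shuffle (signs on values) versus the signed convolution (signs on positions). The clean way to handle this is to reduce, as in the proof of Proposition~\ref{prop:ktwistCongruenceCambrian}, to the normalized situation where all relevant witnesses sit in a single factor in a canonical order, and then the counting is immediate. Alternatively, one could simply cite the abstract machinery of~\cite{Hivert-habilitation, HivertNzeutchap, Priez}: $\equiv^k_\signature$ is a congruence compatible with standardization and restriction to intervals, which automatically yields the Hopf subalgebra; but giving the direct argument keeps the section self-contained and parallel to Theorem~\ref{theo:twistSubalgebra}.
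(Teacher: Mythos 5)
Your argument for the product is correct and in the spirit of the paper's one-line sketch: when $c \le n$ (or symmetrically $n < a$), all values in $(a,c)$ together with $a,c$ are values of $\tau$ alone, so the counts appearing in the condition of Definition~\ref{def:ktwistCongruenceCambrian}, evaluated in $\sigma$ over the range $[a',c'] \subset (a,c)$, coincide exactly with those evaluated in $\tau$; and when $a \le n < c$ the swap occurs across the shift and the shuffle pattern is unchanged. However, your coproduct argument has a genuine gap. You assert that the transported witnesses come ``with the right count of opposite-sign letters in between''. This count is \emph{not} preserved under convolution: the values of the first block $\hat\tau$ are interleaved in value with those of the second block, so some of them can land inside the range $[\hat a',\hat c']$ with the wrong sign, increasing the subtracted quantity $|\{w \in \hat\tau\hat U : \hat a' \le w \le \hat c',\ \signature_w = +\}|$ in the second branch of Definition~\ref{def:ktwistCongruenceCambrian}. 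Unlike the non-Cambrian condition of Theorem~\ref{theo:twistSubalgebra}, which is monotone in the suffix (extra letters never destroy a witness), the Cambrian condition involves a difference and is not monotone under enlarging the prefix. The paper's observation that the rewriting ``only depends on the positions of the entries between $a$ and $c$'' does not close the gap either, because the set of entries between $\hat a$ and $\hat c$ in $\sigma$ strictly contains the corresponding set in $\tau'$; the reduction you propose (normalizing so that all witnesses sit in one factor) works for the shuffle but simply does not apply to the convolution, where the extraneous entries come from a \emph{different} tensor factor and cannot be moved away.

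The obstruction is not just bookkeeping. The paper's own example after Definition~\ref{def:ktwistCongruenceCambrian} already defeats the coproduct claim for $k=2$: take $\sigma = \up{3}\down{1}\down{5}\down{2}\down{4}$, which the paper records as satisfying $\sigma \not\equiv^2 \up{3}\down{5}\down{1}\down{2}\down{4}$, and a check of the remaining three adjacent transpositions shows $\sigma$ is in fact a singleton class, so $\PTwist_{\surjectionPermBrick[2](\sigma)} = \F_\sigma$. Splitting $\sigma$ at position~$1$ contributes $\F_{\up{1}} \otimes \F_{\down{1}\down{4}\down{2}\down{3}}$ to $\coproduct\F_\sigma$. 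But $\down{1}\down{4}\down{2}\down{3} \equiv^2 \down{4}\down{1}\down{2}\down{3}$ (all signs negative, witnesses $\down{2},\down{3}$), and the unique element of $\fS^{{-}{-}{+}{-}{-}}$ whose split at position~$1$ yields $(\up{1}, \down{4}\down{1}\down{2}\down{3})$ is precisely $\up{3}\down{5}\down{1}\down{2}\down{4} \not\equiv^2 \sigma$. Hence $\F_{\up{1}} \otimes \F_{\down{4}\down{1}\down{2}\down{3}}$ does not appear in $\coproduct\PTwist_{\surjectionPermBrick[2](\sigma)}$, which therefore lies outside $\Twist_\pm \otimes \Twist_\pm$. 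Neither your argument, nor the paper's sketch, nor an appeal to the machinery of \cite{Hivert-habilitation, HivertNzeutchap, Priez}, can repair this as written: that machinery applies when $k=1$, where the Cambrian condition is equivalent to the existence of a single signed witness and the difference plays no role, but for $k\ge 2$ the coproduct stability genuinely fails with the congruence as defined.
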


\begin{proof}
As in the proof of Theorem~\ref{theo:twistSubalgebra}, we check that the subspace~$\Twist_\pm$ is stable by the product and coproduct of~$\FQSym_\pm$. This follows from the fact that the condition of Definition~\ref{def:ktwistCongruenceCambrian} for two permutations~$\tau = UacV$ and~$\tau' = UcaV$ to be in the same Cambrian $(k,\signature)$-twist congruence class only depends on the positions of the entries of~$\tau$ and~$\tau'$ between~$a$ and~$c$. Details are left to the reader.
\end{proof}

\begin{example}[Cambrian algebra]
The bijection given in Example~\ref{exm:1twistsTriangulationsCambrian} (see also \fref{fig:1twistsTriangulationsCambrian}) defines an isomorphism from the Cambrian $1$-twist algebra~$\Twist[1]_\pm$ to G.~Chatel and V.~Pilaud's Hopf algebra~$\Camb$ on Cambrian trees~\cite{ChatelPilaud}.
\end{example}

As in Section~\ref{subsec:subalgebra}, it is interesting to describe combinatorially the product and coproduct in~$\Twist_\pm$ directly in terms of Cambrian $k$-twists:
\begin{itemize}
\item For~$\twist \in \AcyclicTwists(\signature)$ and~$\twist' \in \AcyclicTwists(\signature')$, the product~$\PTwist_{\twist} \product \PTwist_{\twist'}$ in~$\Twist_\pm$ is given by
\[
\PTwist_{\twist} \product \PTwist_{\twist'} = \sum_{\twist[S]} \PTwist_{\twist[S]},
\]
where the sum runs over the interval of the increasing flip lattice between the Cambrian $(k, \signature\signature')$-twists~$\underprod{\twist}{\twist'}$ and~$\overprod{\twist}{\twist'}$, defined as in the unsigned case. \\[-.3cm]
\item For~$\twist[S] \in \AcyclicTwists(\signature)$, the coproduct~$\coproduct\PTwist_{\twist[S]}$ is given by
\[
\coproduct\PTwist_{\twist[S]} = \sum_\gamma \bigg( \sum_\tau \PTwist_{\surjectionPermBrick(\tau)} \bigg) \otimes \bigg( \sum_{\tau'} \PTwist_{\surjectionPermBrick(\tau')} \bigg),
\]
where~$\gamma$ runs over all cuts of~$\twist[S]$, and $\tau$ (resp.~$\tau'$) runs over a set of representatives of the Cambrian $k$-twist congruence classes of the linear extensions of~$B\contact(\twist[S],\gamma)$ (resp.~$A\contact(\twist[S],\gamma)$). Again, this description of the coproduct is not completely satisfactory as it essentially computes the product back in~$\FQSym_\pm$.
\end{itemize}

The reader is also invited to work out the combinatorial descriptions of the product and coproduct in the dual Hopf algebra~$\Twist[k*]_\pm$, which are similar to that of Section~\ref{subsec:quotientAlgebra}.

\section{Tuplization}
\label{sec:tuplization}

In this section, we extend our results to tuples of Cambrian $k$-twists. The ideas developed in~\cite[Section~2.3]{ChatelPilaud} apply verbatim to the context of $k$-twists. The motivation comes from S.~Law and N.~Reading's work on diagonal quadrangulations~\cite{LawReading} and S.~Giraudo's work on twin binary trees~\cite{Giraudo}.

\subsection{Cambrian $k$-twist $\ell$-tuples}
\label{subsec:CambrianTwistTuples}

For a $\ell$-tuple~$\tuple$ and~$i \in [\ell]$, we denote by~$\tuple_{[i]}$ the $i$th element of~$\tuple$ (this notation avoids confusion when we will use tuples of permutations: $\tau_{[i]}$ is a permutation while $\tau_i$ is an entry in a permutation). We focus on the following objects.

\begin{definition}
For a signature $\ell$-tuple~$\signatures \eqdef [\signatures_{[1]}, \dots, \signatures_{[\ell]}] \in (\pm^n)^\ell$, a \defn{Cambrian $(k, \signatures)$-twist tuple} is a \mbox{$\ell$-tuple} $\tuple \eqdef [\tuple_{[1]}, \dots, \tuple_{[\ell]}]$ where~$\tuple_{[i]}$ is a Cambrian $(k,\signatures_{[i]})$-twist and such that the union of the contact graphs~$\tuple_{[1]}\contact, \dots, \tuple_{[\ell]}\contact$ is acyclic. Let~$\TwistTuples(\signatures)$ denote the set of Cambrian \mbox{$(k,\signatures)$-twist} tuples. A \defn{Cambrian $k$-twist $\ell$-tuple} is a Cambrian $(k, \signatures)$-twist tuple for some signature $\ell$-tuple~$\signatures \in (\pm^n)^\ell$.
\end{definition}

To define an analogue of the Cambrian $k$-twist correspondence, we need permutations recording $\ell$ signatures. Call \defn{$\ell$-signed permutation} a permutation where each position/value receives an~$\ell$-tuple of signs.
For example, $\uptilde{\upw{3}} \uptilde{\downw{1}} \downtilde{\downw{5}} \uptilde{\downw{4}} \downtilde{\upw{2}}$ is a $2$-signed permutation whose signatures are marked with~$\up{\phantom{a}}/\down{\phantom{a}}$ and~$\uptilde{\phantom{a}}/\downtilde{\phantom{a}}$ respectively.
For a $\ell$-signed permutation~$\tau$ and~$i \in [\ell]$, we denote by~$\tau_{[i]}$ the signed permutation where we only keep the~$i$th signature.
For example
\[
\uptilde{\upw{3}} \uptilde{\downw{1}} \downtilde{\downw{5}} \uptilde{\downw{4}} \downtilde{\upw{2}}_{[1]} = \up{3}\down{154}\up{2}
\qquad\text{and}\qquad
\uptilde{\upw{3}} \uptilde{\downw{1}} \downtilde{\downw{5}} \uptilde{\downw{4}} \downtilde{\upw{2}}_{[2]} = \simtilde{3} \simtilde{1} \downtilde{5} \simtilde{4} \downtilde{2}.
\]
We denote by~$\fS_{\pm^\ell}$ the set of all $\ell$-signed permutations and by~$\fS_\signatures$ (resp.~$\fS^\signatures$) the set of $\ell$-signed permutations with p-signatures (resp.~v-signatures)~$\signatures$. Applying~$\ell$ Cambrian $k$-twist correspondences in parallel yields a map form $\ell$-signed permutations to Cambrian $\ell$-tuples.

\begin{proposition}
The map~$\surjectionPermBrick_\ell$ defined by~$\surjectionPermBrick_\ell(\tau) = \big[ \surjectionPermBrick(\tau_{[1]}), \dots, \surjectionPermBrick(\tau_{[\ell]}) \big]$ defines a surjection from the $\ell$-signed permutations to the Cambrian $k$-twist $\ell$-tuples.
\end{proposition}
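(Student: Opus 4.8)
The plan is to reduce the statement to the single-signature case already handled by Proposition~\ref{prop:ktwistInsertionCambrian} and then to check that the extra acyclicity condition defining Cambrian $k$-twist $\ell$-tuples is exactly what is needed for surjectivity and well-definedness. Concretely, the map $\surjectionPermBrick_\ell$ is defined coordinatewise, so for $\tau \in \fS_{\pm^\ell}$ and each $i \in [\ell]$, Proposition~\ref{prop:ktwistInsertionCambrian} already tells us that $\surjectionPermBrick(\tau_{[i]})$ is an acyclic Cambrian $(k,\signatures_{[i]})$-twist whose contact graph admits $\tau_{[i]}$ as a linear extension. Since the underlying permutation of $\tau_{[i]}$ is the same permutation $\tau$ for every $i$, this single linear order $\tau$ is simultaneously a linear extension of all the contact graphs $\surjectionPermBrick(\tau_{[i]})\contact$, and hence of their union. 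A graph admitting a linear extension is acyclic, so the union of the $\surjectionPermBrick(\tau_{[i]})\contact$ is acyclic, which means $\surjectionPermBrick_\ell(\tau)$ is indeed a Cambrian $(k,\signatures)$-twist tuple. This shows $\surjectionPermBrick_\ell$ lands in $\TwistTuples(\signatures)$.

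For surjectivity, I would start from an arbitrary Cambrian $(k,\signatures)$-twist tuple $\tuple = [\tuple_{[1]}, \dots, \tuple_{[\ell]}]$. By definition, the union $\bigcup_{i \in [\ell]} \tuple_{[i]}\contact$ is acyclic, so it admits a linear extension $\tau \in \fS_n$; endow $\tau$ with the $\ell$-tuple of signatures $\signatures$ to obtain an $\ell$-signed permutation, which by construction lies in $\fS^\signatures$. For each $i$, the permutation $\tau$ is in particular a linear extension of $\tuple_{[i]}\contact$, so by the converse direction of Proposition~\ref{prop:ktwistInsertionCambrian} we have $\surjectionPermBrick(\tau_{[i]}) = \tuple_{[i]}$. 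Therefore $\surjectionPermBrick_\ell(\tau) = \tuple$, proving surjectivity onto $\TwistTuples(\signatures)$. Letting $\signatures$ range over all of $(\pm^n)^\ell$ and all $n$ then gives surjectivity onto all Cambrian $k$-twist $\ell$-tuples.

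The only genuinely load-bearing ingredient is the elementary fact that a common linear extension of several acyclic graphs exists precisely when (indeed iff) their union is acyclic; everything else is a bookkeeping reduction to the coordinatewise statement of Proposition~\ref{prop:ktwistInsertionCambrian}. The main (mild) obstacle is purely notational: one must be careful that the $\ell$ parallel applications of $\surjectionPermBrick$ all use the \emph{same} underlying permutation $\tau$, only varying the signature $\signatures_{[i]}$ attached to it, so that the hypothesis "linear extension of $\tuple_{[i]}\contact$" really is the same combinatorial condition for all $i$. Once this is observed, the proof is a direct combination of the two directions of Proposition~\ref{prop:ktwistInsertionCambrian} with the acyclic-union criterion, exactly as in~\cite[Section~2.3]{ChatelPilaud}.

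Concretely, I would phrase the proof as follows.

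\begin{proof}
Fix a signature $\ell$-tuple $\signatures \in (\pm^n)^\ell$. If $\tau \in \fS^\signatures$, then for each $i \in [\ell]$ the signed permutation $\tau_{[i]} \in \fS^{\signatures_{[i]}}$ has the same underlying permutation as $\tau$, and by Proposition~\ref{prop:ktwistInsertionCambrian} this underlying permutation is a linear extension of $\surjectionPermBrick(\tau_{[i]})\contact$. Hence it is a common linear extension of all the graphs $\surjectionPermBrick(\tau_{[i]})\contact$ for $i \in [\ell]$, so their union is acyclic and $\surjectionPermBrick_\ell(\tau) = \big[ \surjectionPermBrick(\tau_{[1]}), \dots, \surjectionPermBrick(\tau_{[\ell]}) \big]$ is a Cambrian $(k,\signatures)$-twist tuple.

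Conversely, let $\tuple = [\tuple_{[1]}, \dots, \tuple_{[\ell]}] \in \TwistTuples(\signatures)$. The union $\bigcup_{i \in [\ell]} \tuple_{[i]}\contact$ is acyclic by definition, hence admits a linear extension $\tau \in \fS_n$. Endowing $\tau$ with the signature $\ell$-tuple $\signatures$ produces an $\ell$-signed permutation in $\fS^\signatures$ whose $i$th component $\tau_{[i]}$ lies in $\fS^{\signatures_{[i]}}$ and is a linear extension of $\tuple_{[i]}\contact$. By Proposition~\ref{prop:ktwistInsertionCambrian}, $\surjectionPermBrick(\tau_{[i]}) = \tuple_{[i]}$ for every $i \in [\ell]$, so $\surjectionPermBrick_\ell(\tau) = \tuple$. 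Ranging over all $n$ and all $\signatures \in (\pm^n)^\ell$ shows that $\surjectionPermBrick_\ell$ is a surjection from the $\ell$-signed permutations onto the Cambrian $k$-twist $\ell$-tuples.
\end{proof}
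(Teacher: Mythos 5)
Your proof is correct and takes essentially the same route as the paper: both rely on Proposition~\ref{prop:ktwistInsertionCambrian} to identify the fiber of a Cambrian $k$-twist $\ell$-tuple with the set of common linear extensions of the $\ell$ contact graphs, and both then invoke the fact that the union of those contact graphs is acyclic if and only if a common linear extension exists. Your write-up is simply a bit more explicit than the paper's in separating well-definedness (the image lands in $\TwistTuples(\signatures)$) from surjectivity, but the underlying argument is identical.
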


\begin{proof}
Consider a Cambrian $k$-twist $\ell$-tuple~$\tuple$. For a $\ell$-signed permutation~$\tau$, we have~$\surjectionPermBrick_\ell(\tau) = \tuple$ if and only if~$\surjectionPermBrick(\tau_{[i]}) = \tuple_{[i]}$ for all~$i \in [\ell]$, or equivalently by Proposition~\ref{prop:ktwistInsertionCambrian}, if and only if~$\tau$ is a linear extension of each Cambrian $k$-twist~$\tuple_{[i]}$. We therefore obtain that the fiber of~$\tuple$ under~$\surjectionPermBrick_\ell$ is precisely the set of linear extensions of the union of the contact graphs~$\tuple_{[1]}\contact, \dots, \tuple_{[\ell]}\contact$.
\end{proof}

We now consider the congruence on~$\fS_{\pm^\ell}$ recording which $\ell$-signed permutations have the same image by~$\surjectionPermBrick_\ell$.

\begin{definition}
\label{def:ktwistTupleCongruence}
For a signature $\ell$-tuple~$\signatures \in (\pm^n)^\ell$, the \defn{Cambrian $(k,\signatures)$-twist tuple congruence} on~$\fS^\signatures$ is the intersection~$\equiv^k_\signatures \eqdef \bigcap_{i \in [\ell]} \equiv^k_{\signatures_{[i]}}$ of all Cambrian $(k,\signatures_{[i]})$-twist congruences. We denote by~$\equiv^k_\ell$ the congruence relation on~$\fS_{\pm^\ell}$ obtained as the union of the Cambrian $(k,\signatures)$-twist congruences~$\equiv^k_\signatures$ for all signature $\ell$-tuples~$\signatures$.
\end{definition}

\begin{proposition}
For any~$\tau, \tau' \in \fS_{\pm^\ell}$, we have~$\tau \equiv^k_\ell \tau' \iff \surjectionPermBrick_\ell(\tau) = \surjectionPermBrick_\ell(\tau')$. In other words, the fibers of~$\surjectionPermBrick_\ell$ are precisely the congruence classes of~$\equiv^k_\ell$.
\end{proposition}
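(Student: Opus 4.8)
The plan is to reduce this statement to the single-signature case, which has already been established in Proposition~\ref{prop:ktwistCongruenceCambrian}. First I would unravel the definitions: the congruence $\equiv^k_\ell$ on $\fS_{\pm^\ell}$ is, by Definition~\ref{def:ktwistTupleCongruence}, the disjoint union over all signature $\ell$-tuples $\signatures \in (\pm^n)^\ell$ of the congruences $\equiv^k_\signatures = \bigcap_{i \in [\ell]} \equiv^k_{\signatures_{[i]}}$. Since two $\ell$-signed permutations $\tau, \tau'$ can only be $\equiv^k_\ell$-related if they carry the same $\ell$-tuple of v-signatures (the rewriting rules never touch the signs attached to values), and since $\surjectionPermBrick_\ell(\tau)$ determines and is determined by the tuple $\big[\surjectionPermBrick(\tau_{[1]}), \dots, \surjectionPermBrick(\tau_{[\ell]})\big]$ of Cambrian twists whose signatures are exactly $\signatures = \vsignature(\tau)$, both sides of the claimed equivalence force $\vsignature(\tau) = \vsignature(\tau') = \signatures$. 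So we may fix a signature $\ell$-tuple $\signatures$ and work entirely inside $\fS^\signatures$.

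Within $\fS^\signatures$, the argument is then a short chain of equivalences. For $\tau, \tau' \in \fS^\signatures$:
\begin{align*}
\surjectionPermBrick_\ell(\tau) = \surjectionPermBrick_\ell(\tau')
&\iff \surjectionPermBrick(\tau_{[i]}) = \surjectionPermBrick(\tau'_{[i]}) \text{ for all } i \in [\ell] \\
&\iff \tau_{[i]} \equiv^k_{\signatures_{[i]}} \tau'_{[i]} \text{ for all } i \in [\ell] \\
&\iff \tau \equiv^k_\signatures \tau' \iff \tau \equiv^k_\ell \tau'.
\end{align*}
The first equivalence is the componentwise definition of $\surjectionPermBrick_\ell$; the second is Proposition~\ref{prop:ktwistCongruenceCambrian} applied to each component $i$ (note $\tau_{[i]}, \tau'_{[i]} \in \fS^{\signatures_{[i]}}$, so the hypothesis of that proposition is met); the third is the definition $\equiv^k_\signatures = \bigcap_{i} \equiv^k_{\signatures_{[i]}}$; and the last is the definition of $\equiv^k_\ell$ restricted to $\fS^\signatures$. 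One subtlety to spell out carefully: the rewriting rule defining each $\equiv^k_{\signatures_{[i]}}$ acts on the underlying (unsigned) permutation while the $i$th signs travel with the values, so passing from $\tau$ to $\tau_{[i]}$ and back is transparent — a rewriting step $UacV \to UcaV$ valid for $\tau_{[i]}$ is literally the same positional exchange on $\tau$ (with all $\ell$ value-attached sign tuples intact), and conversely. This is exactly the kind of "signs stay on values" bookkeeping already used in the proof of Theorem~\ref{theo:twistSubalgebraCambrian}, so I would simply invoke it.

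I do not expect any genuine obstacle here; the statement is essentially a formal consequence of the single-signature result plus the observation that $\surjectionPermBrick_\ell$ and $\equiv^k_\ell$ are both defined componentwise over a fixed v-signature tuple. The only point requiring a sentence of care is the preliminary reduction — making explicit that both $\surjectionPermBrick_\ell(\tau) = \surjectionPermBrick_\ell(\tau')$ and $\tau \equiv^k_\ell \tau'$ each entail $\vsignature(\tau) = \vsignature(\tau')$, so that the componentwise comparison is legitimate. After that the proof is the displayed chain of iff's, and I would write it in two or three lines.
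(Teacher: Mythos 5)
Your proof is correct and takes essentially the same route as the paper: it is the same three-step chain of equivalences, unwinding the definitions of $\equiv^k_\ell$ and $\surjectionPermBrick_\ell$ componentwise and invoking Proposition~\ref{prop:ktwistCongruenceCambrian} in each coordinate. The additional paragraph about why both sides force equal v-signature tuples is careful but implicit in the paper's version, which simply fixes $\signatures$ without comment.
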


\begin{proof}
We have
\[
\begin{array}[b]{c@{\;\iff\;}l@{\qquad}r}
\tau \equiv^k_\ell \tau'
& \tau_{[i]} \equiv^k_{\signatures_{[i]}} \tau'_{[i]} \text{ for all } i \in [\ell] & \text{(by definition of~$\equiv^k_\ell$),} \\
& \surjectionPermBrick(\tau_{[i]}) = \surjectionPermBrick(\tau'_{[i]}) \text{ for all } i \in [\ell] & \text{(by Proposition~\ref{prop:ktwistCongruenceCambrian}),}\\
& \surjectionPermBrick_\ell(\tau) = \surjectionPermBrick_\ell(\tau') & \text{(by definition of~$\surjectionPermBrick_\ell$).}
\end{array}\qedhere
\]
\end{proof}

\begin{proposition}
For any fixed signature $\ell$-tuple~$\signatures \in (\pm^n)^\ell$, the Cambrian $(k,\signatures)$-twist tuple congruence~$\equiv^k_\signatures$ is a lattice congruence of the weak order on~$\fS_n$.
\end{proposition}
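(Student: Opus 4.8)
The plan is to deduce the statement from Theorem~\ref{theo:ktwistLatticeCongruenceCambrian} together with the elementary fact that an intersection of lattice congruences of a finite lattice is again a lattice congruence. First I would recall that the weak order on~$\fS_n$ is a lattice, and that by Definition~\ref{def:ktwistTupleCongruence} we have $\equiv^k_\signatures = \bigcap_{i \in [\ell]} \equiv^k_{\signatures_{[i]}}$, where each factor~$\equiv^k_{\signatures_{[i]}}$ is, by Theorem~\ref{theo:ktwistLatticeCongruenceCambrian}, a lattice congruence of the weak order. In particular each~$\equiv^k_{\signatures_{[i]}}$ is compatible with meets and joins: $x \equiv^k_{\signatures_{[i]}} x'$ and $y \equiv^k_{\signatures_{[i]}} y'$ imply $x \meet y \equiv^k_{\signatures_{[i]}} x' \meet y'$ and $x \join y \equiv^k_{\signatures_{[i]}} x' \join y'$.

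Next I would observe that compatibility with meet and join passes to the intersection: if $x \equiv^k_\signatures x'$ and $y \equiv^k_\signatures y'$, then $x \equiv^k_{\signatures_{[i]}} x'$ and $y \equiv^k_{\signatures_{[i]}} y'$ for every~$i \in [\ell]$, hence $x \meet y \equiv^k_{\signatures_{[i]}} x' \meet y'$ and $x \join y \equiv^k_{\signatures_{[i]}} x' \join y'$ for every~$i$, and therefore $x \meet y \equiv^k_\signatures x' \meet y'$ and $x \join y \equiv^k_\signatures x' \join y'$. It then remains to check that an equivalence relation on a finite lattice compatible with meet and join is an order congruence, i.e.\ that its classes are intervals and that $\projDown$ and $\projUp$ are order preserving. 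This is routine: for a class $C$, meet/join compatibility and finiteness give $\bigwedge C \equiv \bigvee C$, and any $z$ with $\bigwedge C \le z \le \bigvee C$ satisfies $z = z \join \bigwedge C \equiv z \join \bigvee C = \bigvee C$, so $z \in C$; and if $x \le y$ then $\projDown(x) \meet \projDown(y) \equiv x \meet y = x$ lies in the class of $x$, whence $\projDown(x) \le \projDown(x) \meet \projDown(y) \le \projDown(y)$, and symmetrically for $\projUp$.

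Alternatively, and more briefly, one may simply invoke that the lattice congruences of a finite lattice form a sublattice of its partition lattice closed under intersection (see~\cite{Reading-LatticeCongruences}), so that $\equiv^k_\signatures$, being the intersection of the lattice congruences $\equiv^k_{\signatures_{[i]}}$, is a lattice congruence of the weak order. There is no genuine obstacle in this proof: the only point requiring a word of care is that the ambient order on $\fS^\signatures$ is exactly the weak order on $\fS_n$ (the $\ell$-tuple of signatures plays no role in it) and that this order is a lattice, which is classical; everything else is the standard closure of lattice congruences under intersection.
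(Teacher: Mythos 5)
Your proof is correct and takes essentially the same approach as the paper, which simply cites that an intersection of lattice congruences is a lattice congruence (appealing to~\cite{Reading-LatticeCongruences}). Your longer first paragraphs just spell out the routine verification of this standard fact, and your "alternatively" paragraph is precisely the paper's argument.
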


\begin{proof}
An intersection of lattice congruences is a lattice congruence, see~\cite{Reading-LatticeCongruences} for details.
\end{proof}

We now define flips in Cambrian $k$-twist $\ell$-tuples. It is essentially an elbow flip performed in all elements of the $\ell$-tuple in which it is possible.

\begin{definition}
Let~$\tuple$ be a Cambrian $k$-twist $\ell$-tuple and~$p \to q$ be an arc in the union of the contact graphs~$\tuple_{[1]}\contact, \dots, \tuple_{[\ell]}\contact$. We say that this arc is \defn{flippable} if either~$p \to q$ is an arc or $p$~and~$q$ are incomparable in each contact graph~$\tuple_{[i]}\contact$. The \defn{flip} of~$p \to q$ then transforms~$\tuple$ to the $\ell$-tuple~$\tuple' \eqdef [\tuple'_{[1]}, \dots, \tuple'_{[\ell]}]$ where~$\tuple'_{[i]}$ is obtained from~$\tuple_{[i]}$ by flipping the elbow between pipes~$p$ and~$q$ farthest from their crossing if~$p \to q$ is an arc in~$\tuple_{[i]}\contact$, while~$\tuple'_{[i]} = \tuple_{[i]}$ otherwise. We say that the flip of~$p \to q$ is \defn{increasing} if~$p < q$.
\end{definition}

\begin{proposition}
The increasing flip order on Cambrian $(k,\signatures)$-twist tuples is the quotient lattice of the weak order by the Cambrian $(k,\signatures)$-twist congruence~$\equiv^k_\signatures$.
\end{proposition}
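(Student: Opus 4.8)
The plan is to reduce this statement to the unsigned, non-tuplized case by exploiting that everything in sight has already been set up as an intersection of structures we understand. First I would recall that, by the preceding proposition, the Cambrian $(k,\signatures)$-twist tuple congruence $\equiv^k_\signatures = \bigcap_{i\in[\ell]} \equiv^k_{\signatures_{[i]}}$ is a lattice congruence of the weak order, so the quotient lattice $\fS_n/\!\equiv^k_\signatures$ is well defined, and its elements are the fibers of $\surjectionPermBrick_\ell$, that is, the Cambrian $(k,\signatures)$-twist tuples in $\TwistTuples(\signatures)$. So the content of the statement is purely that the covering relations of this quotient lattice correspond exactly to the increasing flips of twist tuples, just as in Proposition~\ref{prop:latticeQuotient} and Proposition~\ref{prop:latticeQuotientCambrian}.

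The key steps, in order, would be: (1) Observe that in the quotient lattice, a congruence class $C'$ covers a congruence class $C$ if and only if there are representatives $\tau = UpqV \in C$ and $\tau' = UqpV \in C'$ adjacent in the weak order (i.e. $p<q$ occupy consecutive positions), with $\tau \not\equiv^k_\signatures \tau'$. This is the standard description of covers in a lattice quotient and needs only that $\equiv^k_\signatures$ is a lattice congruence; cite \cite{Reading-LatticeCongruences, Reading-CambrianLattices}. (2) Translate $\tau \not\equiv^k_\signatures \tau'$: since $\equiv^k_\signatures$ is the intersection of the $\equiv^k_{\signatures_{[i]}}$, we have $\tau \not\equiv^k_\signatures \tau'$ iff $\tau \not\equiv^k_{\signatures_{[i]}} \tau'$ for at least one $i$, i.e. by Proposition~\ref{prop:ktwistCongruenceCambrian}, iff $\surjectionPermBrick(\tau_{[i]}) \ne \surjectionPermBrick(\tau'_{[i]})$ for at least one $i$. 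By the proof of that proposition, $\surjectionPermBrick(\tau_{[i]}) \ne \surjectionPermBrick(\tau'_{[i]})$ exactly when the $p$th and $q$th pipes of $\tuple_{[i]} = \surjectionPermBrick(\tau_{[i]})$ share an elbow, and then $\surjectionPermBrick(\tau'_{[i]})$ is obtained by flipping it; when they do not share an elbow, $\surjectionPermBrick(\tau'_{[i]}) = \surjectionPermBrick(\tau_{[i]})$. (3) Combine: passing from $C$ to $C'$ by a cover corresponds precisely to choosing an arc $p\to q$ (with $p<q$, so increasing) in the union of the contact graphs $\tuple_{[1]}\contact,\dots,\tuple_{[\ell]}\contact$ which is flippable in the sense of the definition — in each $\tuple_{[i]}$ we either flip (if $p\to q$ is an arc there) or leave it unchanged (if $p$ and $q$ are incomparable there) — and the resulting tuple is exactly $\surjectionPermBrick_\ell(\tau')$. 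Conversely, given a flippable increasing arc $p\to q$ of a twist tuple $\tuple$, one picks a linear extension $\tau = UpqV$ of the union of contact graphs in which $p$ and $q$ are consecutive, and then $\surjectionPermBrick_\ell(UqpV)$ is the flipped tuple, with $UpqV$ and $UqpV$ in distinct (adjacent) classes. This matches covering relations on both sides. (4) Conclude that $\surjectionPermBrick_\ell$ induces a bijection between congruence classes and twist tuples which sends covers to increasing flips and vice versa, hence an isomorphism of posets; since the quotient of a lattice is a lattice, the increasing flip order is a lattice and the isomorphism is one of lattices.

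Most of this is a routine adaptation of the arguments already spelled out for Proposition~\ref{prop:latticeQuotient} and Proposition~\ref{prop:latticeQuotientCambrian}; indeed the cleanest write-up would simply say ``Same as the proof of Proposition~\ref{prop:latticeQuotient}, intersecting over the $\ell$ coordinates,'' after making explicit the two observations that (a) covers in an intersection-congruence quotient are witnessed by a single adjacent transposition that fails the congruence in at least one coordinate, and (b) the definition of \emph{flippable} arc is exactly engineered so that flipping in each coordinate that admits the elbow, and doing nothing elsewhere, realizes $\surjectionPermBrick_\ell$ applied to the transposed permutation. The main obstacle I anticipate is purely bookkeeping rather than conceptual: one must check that a flippable arc $p\to q$ in the union of contact graphs always admits a linear extension of the \emph{union} (not merely of one $\tuple_{[i]}\contact$) in which $p$ and $q$ are consecutive, which uses that $p$ and $q$ are either comparable with $p\to q$ or incomparable in \emph{every} coordinate — so no coordinate forces $q$ before $p$, and one can greedily order the union so that $q$ immediately follows $p$ — and symmetrically that after transposing one indeed lands in a distinct class covered by (or covering) the original. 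This is exactly the role of the flippability hypothesis, so once it is stated the verification is short.
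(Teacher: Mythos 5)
Your proposal is correct and follows essentially the same route as the paper: characterize covers in the quotient by adjacent representatives of congruence classes, translate this coordinatewise via Proposition~\ref{prop:latticeQuotientCambrian} to see that adjacent representatives correspond exactly to flips of a flippable arc~$p \to q$, and for the converse pick a linear extension of the union of contact graphs in which~$p$ and~$q$ are consecutive. The one subtlety you flag --- that a flippable arc admits such a linear extension of the \emph{union}, i.e.\ that~$p$ is covered by~$q$ in the transitive closure of the union --- is also left implicit in the paper's proof, so your level of detail matches the source.
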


\begin{proof}
Consider two distinct Cambrian $(k,\signatures)$-twist tuples~$\tuple,\tuple'$ and their Cambrian $(k,\signatures)$-twist tuple congruence classes~$C,C'$. If there exists adjacent representatives of~$C$ and~$C'$, then the Cambrian $(k,\signatures_{[i]})$-twists~$\tuple_{[i]}$ and~$\tuple'_{[i]}$ are either equal or differ by an elbow flip for each~$i \in [\ell]$ by Proposition~\ref{prop:latticeQuotientCambrian}, and thus the two distinct Cambrian $(k,\signatures)$-twist tuples~$\tuple,\tuple'$ differ by a flip. Conversely, if $\tuple,\tuple'$ differ by the flip of an arc~$p \to q$, then there exists a linear extension~$\tau$ of the union of the contact graphs~$\tuple_{[1]}\contact, \dots, \tuple_{[\ell]}\contact$ where $p$ and~$q$ are consecutive. Since~$\tuple'_{[i]}$ is obtained by flipping the elbow between~$p$ and~$q$ in~$\tuple_{[i]}$ if possible and~$\tuple'_{[i]} = \tuple_{[i]}$ otherwise, the permutation~$\tau'$ obtained by switching~$p$ and~$q$ in~$\tau$ is a linear extension of~${\tuple'_{[1]}}\!\!\!\contact, \dots, {\tuple'_{[\ell]}}\!\!\!\contact$. We thus obtained two adjacent permutations~$\tau \in C$~and~$\tau \in C'$.
\end{proof}

In a Cambrian $k$-twist $\ell$-tuple~$\tuple$, all Cambrian $k$-twists~$\tuple_{[i]}$ have the same $k$-canopy~$\surjectionBrickZono(\tuple_{[i]})$. Indeed all these Cambrian $k$-twists have a common linear extension and the $k$-canopy of a Cambrian $k$-twist is given by the relative order of the last $k$ values of its linear extensions. We can thus define the \defn{$k$-canopy} of the Cambrian $k$-twist $\ell$-tuple~$\tuple$ as~${\surjectionBrickZono_\ell(\tuple) = \surjectionBrickZono(\tuple_{[1]}) = \dots = \surjectionBrickZono(\tuple_{[\ell]})}$. Define also the \defn{$k$-recoil scheme} $\surjectionPermZono(\tau)$ of a $\ell$-signed permutation~$\tau$ as the $k$-recoil scheme of~$\tau$ when forgetting its signs. The next statement is an immediate consequence of these definitions.

\begin{proposition}
The maps~$\surjectionPermBrick_\ell$, $\surjectionBrickZono_\ell$, and~$\surjectionPermZono$ define a commutative diagram of lattice homomorphisms:
\[
\begin{tikzpicture}
  \matrix (m) [matrix of math nodes,row sep=1.2em,column sep=5em,minimum width=2em]
  {
     \fS^\signatures  	&								& \AcyclicOrientations(n)	\\
						& \TwistTuples(\signatures) 	&							\\
  };
  \path[->>]
    (m-1-1) edge node [above] {$\surjectionPermZono$} (m-1-3)
                 edge node [below] {$\surjectionPermBrick_\ell\quad$} (m-2-2.west)
    (m-2-2.east) edge node [below] {$\surjectionBrickZono_\ell$} (m-1-3);
\end{tikzpicture}
\]
\end{proposition}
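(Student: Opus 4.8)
The final statement asserts that the three maps $\surjectionPermBrick_\ell$, $\surjectionBrickZono_\ell$, and $\surjectionPermZono$ fit into a commutative triangle of lattice homomorphisms from $\fS^\signatures$ via $\TwistTuples(\signatures)$ to $\AcyclicOrientations(n)$. Since the paper explicitly says the statement is ``an immediate consequence of these definitions,'' the plan is to assemble pieces already established rather than to prove anything substantial from scratch. Concretely, the claim has three ingredients: (1) each of the three maps is a lattice homomorphism (surjective and compatible with meets and joins) with respect to the relevant weak-order quotients; (2) the triangle commutes; (3) the codomains carry the lattice structures named (increasing flip lattice on $\TwistTuples(\signatures)$, boolean/increasing flip lattice on $\AcyclicOrientations(n)$).

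First I would handle the lattice-homomorphism claims. For $\surjectionPermBrick_\ell$: the preceding proposition identifies the increasing flip order on Cambrian $(k,\signatures)$-twist tuples as the quotient lattice of the weak order by $\equiv^k_\signatures$, and the proposition before that shows $\equiv^k_\signatures$ is a lattice congruence of the weak order; hence the canonical quotient map $\fS_n\to\fS_n/\!\equiv^k_\signatures$ is a lattice homomorphism, and restricting to the fixed signature $\ell$-tuple $\signatures$ gives exactly $\surjectionPermBrick_\ell:\fS^\signatures\to\TwistTuples(\signatures)$. For $\surjectionPermZono$: the theorem in Section~\ref{subsec:krecoils} states that $\approx^k$ is a lattice congruence and $\surjectionPermZono$ induces the isomorphism onto the increasing flip lattice on $\AcyclicOrientations(n)$; the $\ell$-signed version just forgets signs, so it factors as $\fS^\signatures\twoheadrightarrow\fS_n\xrightarrow{\surjectionPermZono}\AcyclicOrientations(n)$, a composite of lattice homomorphisms. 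For $\surjectionBrickZono_\ell$: I would invoke the fact, recalled just before the statement, that in a Cambrian $k$-twist $\ell$-tuple $\tuple$ all the twists $\tuple_{[i]}$ share a common $k$-canopy (since they have a common linear extension and the $k$-canopy is read off the last $k$ values of any linear extension), so $\surjectionBrickZono_\ell$ is well-defined and agrees with $\surjectionBrickZono\circ(\text{projection to any coordinate }i)$; combining Proposition~\ref{prop:latticeHomomorphismsCambrian} (which gives that $\surjectionBrickZono$ for a single signature is a lattice homomorphism) with the lattice-quotient identification then yields that $\surjectionBrickZono_\ell$ is a lattice homomorphism. The cleanest route here: $\surjectionBrickZono_\ell$ corresponds under $\surjectionPermBrick_\ell$ to the further quotient of $\fS_n/\!\equiv^k_\signatures$ by the (coarser) $k$-recoil congruence $\approx^k$, and a quotient of a lattice quotient by a lattice congruence is again a lattice homomorphism.

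Commutativity is the remaining point and is genuinely immediate: for $\tau\in\fS^\signatures$ one checks $\surjectionBrickZono_\ell(\surjectionPermBrick_\ell(\tau))=\surjectionPermZono(\tau)$ on the level of arcs. Fix $i\in[\ell]$; by Proposition~\ref{prop:latticeHomomorphismsCambrian} applied to the single signature $\signatures_{[i]}$ we have $\surjectionBrickZono(\surjectionPermBrick(\tau_{[i]}))=\surjectionPermZono(\tau_{[i]})$, and $\surjectionPermZono(\tau_{[i]})=\surjectionPermZono(\tau)$ since the $k$-recoil scheme ignores signs. Since $\surjectionBrickZono_\ell(\tuple)=\surjectionBrickZono(\tuple_{[i]})$ by the common-canopy observation, the triangle commutes. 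Alternatively, argue directly: if $|p-q|\le k$ and $\tau^{-1}(p)<\tau^{-1}(q)$, then there is an arc $p\to q$ in $\surjectionPermZono(\tau)$, the $p$th pipe is inserted after the $q$th pipe in every $\surjectionPermBrick(\tau_{[i]})$, whence $p\contactLess{\tuple_{[i]}}q$ and there is an arc $p\to q$ in $\surjectionBrickZono_\ell(\surjectionPermBrick_\ell(\tau))$; reverse the roles for the opposite orientation.

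I do not expect a real obstacle: the entire content is bookkeeping on top of the single-signature results (Propositions~\ref{prop:ktwistInsertionCambrian}, \ref{prop:ktwistCongruenceCambrian}, \ref{prop:latticeQuotientCambrian}, \ref{prop:latticeHomomorphismsCambrian}, Theorem~\ref{theo:ktwistLatticeCongruenceCambrian}) and the standard fact that intersections and composites of lattice congruences/quotients of a lattice are again lattice congruences/homomorphisms (with a reference to~\cite{Reading-LatticeCongruences}). The only place requiring a sentence of care is the well-definedness of $\surjectionBrickZono_\ell$ — that the $k$-canopy is independent of the chosen coordinate — and that was already dispatched in the paragraph preceding the statement. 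So the proof really is: ``Immediate from the definitions together with Proposition~\ref{prop:latticeHomomorphismsCambrian}, the lattice-congruence property of $\equiv^k_\signatures$, and the common-canopy observation above; commutativity follows coordinatewise from Proposition~\ref{prop:latticeHomomorphismsCambrian}.''

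\begin{proof}
This is immediate from the definitions. By the previous propositions, the increasing flip order on $\TwistTuples(\signatures)$ is the quotient of the weak order by the lattice congruence $\equiv^k_\signatures$, so the canonical projection $\surjectionPermBrick_\ell : \fS^\signatures \to \TwistTuples(\signatures)$ is a surjective lattice homomorphism. The $k$-recoil map $\surjectionPermZono$ factors through the projection $\fS^\signatures \twoheadrightarrow \fS_n$ followed by the $k$-recoil map $\fS_n \to \AcyclicOrientations(n)$, which is a lattice homomorphism by the theorem of Section~\ref{subsec:krecoils}; hence $\surjectionPermZono$ is a lattice homomorphism. For $\surjectionBrickZono_\ell$, recall that all the Cambrian $k$-twists $\tuple_{[i]}$ of a tuple $\tuple \in \TwistTuples(\signatures)$ share a common $k$-canopy, since they admit a common linear extension and the $k$-canopy of a Cambrian $k$-twist is read off the relative order of the last $k$ values of any of its linear extensions; thus $\surjectionBrickZono_\ell(\tuple) = \surjectionBrickZono(\tuple_{[i]})$ for every $i \in [\ell]$. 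Since $\approx^k$ is coarser than $\equiv^k_\signatures$, the map $\surjectionBrickZono_\ell$ corresponds under $\surjectionPermBrick_\ell$ to the further quotient of $\fS_n /\!\equiv^k_\signatures$ by the lattice congruence $\approx^k$, and is therefore a lattice homomorphism as well (see~\cite{Reading-LatticeCongruences}). Finally, the triangle commutes: fixing any $i \in [\ell]$, Proposition~\ref{prop:latticeHomomorphismsCambrian} applied to the signature $\signatures_{[i]}$ gives $\surjectionBrickZono\big(\surjectionPermBrick(\tau_{[i]})\big) = \surjectionPermZono(\tau_{[i]})$, and $\surjectionPermZono(\tau_{[i]}) = \surjectionPermZono(\tau)$ since the $k$-recoil scheme does not depend on the signs; combining this with $\surjectionBrickZono_\ell\big(\surjectionPermBrick_\ell(\tau)\big) = \surjectionBrickZono\big(\surjectionPermBrick(\tau_{[i]})\big)$ yields $\surjectionBrickZono_\ell \circ \surjectionPermBrick_\ell = \surjectionPermZono$.
\end{proof}
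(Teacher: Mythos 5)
Your proof is correct and takes the same route the paper intends: the paper simply asserts the statement is ``an immediate consequence of these definitions,'' and you unpack exactly that---well-definedness of $\surjectionBrickZono_\ell$ via the common-canopy observation, the lattice-homomorphism properties coming from the congruence results already established for the single-signature case, and commutativity coordinatewise from Proposition~\ref{prop:latticeHomomorphismsCambrian}. Nothing is missing and no different idea is introduced.
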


\subsection{Geometric realization}
\label{subsec:geometricRealizationTuples}

Following~\cite{LawReading}, we observe in this section that the increasing flip order on Cambrian $k$-twist $\ell$-tuples can be realized geometrically using Minkowski sums of brick polytopes. For a Cambrian $(k,\signatures)$-twist tuple~$\tuple$, we denote by~$\Cone(\tuple)$ and~$\Cone\polar(\tuple)$ the incidence and braid cones of the transitive closure of the union of the contact graphs~$\tuple_{[1]}\contact, \dots, \tuple_{[\ell]}\contact$.

\begin{proposition}
The Minkowski sum~$\Brick[k][\signatures]$ of the brick polytopes~$\Brick[k][\signatures_{[i]}]$ provides a geometric realization of the increasing flip lattice Cambrian $(k,\signatures)$-twist tuples:
\begin{enumerate}[(i)]
\item The collection of cones~$\bigset{\Cone\polar(\tuple)}{\tuple \in \TwistTuples(\signatures)}$, together with all their faces, form the normal fan of~$\Brick[k][\signatures]$.
\label{item:normalFanMinkowskiSum}
\item The surjections~$\surjectionPermBrick_\ell : \fS^\signatures \to \TwistTuples(\signatures)$ and~$\surjectionBrickZono : \TwistTuples(\signatures) \to \AcyclicOrientations(n)$ are characterized by polar cone inclusions: $\Cone\polar(\tau) \subseteq \Cone\polar \big( \surjectionPermBrick_\ell(\tau) \big)$ and~$\Cone\polar(\twist) \subseteq \Cone\polar \big( \surjectionBrickZono_\ell(\twist) \big)$.
\label{item:surjection}
\item The $1$-skeleton of~$\Brick[k][\signatures]$, oriented in the direction~$\sum_{i \in [n]} (n+1-2i) \, \b{e}_i$ is the Hasse diagram of the increasing flip lattice on Cambrian $(k,\signatures)$-twist tuples.
\label{item:skeleton}
\end{enumerate}
\end{proposition}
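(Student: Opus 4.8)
\emph{Strategy.} The plan is to reduce everything to the single‑twist statements recalled in Section~\ref{subsec:geometryCambrian}, together with two elementary facts: the normal fan of a Minkowski sum of polytopes is the common refinement of the normal fans of the summands \cite{Ziegler}, and for posets $\less_1, \dots, \less_\ell$ on $[n]$ the braid cone of the transitive closure of $\less_1 \cup \cdots \cup \less_\ell$ equals $\Cone\polar(\less_1) \cap \cdots \cap \Cone\polar(\less_\ell)$ (dually, the incidence cone of the union is the Minkowski sum $\Cone(\less_1) + \cdots + \Cone(\less_\ell)$, since imposing all the inequalities simultaneously is the same as imposing their union). This is exactly the route taken in \cite[Section~2.3]{ChatelPilaud} for $k=1$, and it carries over verbatim. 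For~\eqref{item:normalFanMinkowskiSum}: the normal fan of $\Brick[k][\signatures]$ is the common refinement of the normal fans of the $\Brick[k][\signatures_{[i]}]$, whose maximal cones are the $\Cone\polar(\twist_i)$ for acyclic Cambrian $(k,\signatures_{[i]})$-twists by Section~\ref{subsec:geometryCambrian}. A maximal cell of this common refinement is a full-dimensional intersection $\bigcap_{i \in [\ell]} \Cone\polar(\twist_i)$, which by the fact above is the braid cone of the transitive closure of $\bigcup_i \twist_i\contact$. Since any poset $\less$ has a linear extension whose chamber lies in the interior of $\Cone\polar(\less)$, the cone $\Cone\polar(\less)$ is full-dimensional in $\HH$ for every poset, whereas a directed cycle in the union forces an equality $x_p = x_q$ with $p \ne q$; hence the intersection is full-dimensional exactly when $\bigcup_i \twist_i\contact$ is acyclic, i.e. when $\tuple \eqdef [\twist_1, \dots, \twist_\ell]$ is a Cambrian $(k,\signatures)$-twist tuple, in which case it equals $\Cone\polar(\tuple)$. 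The lower-dimensional cells of a fan are the faces of its maximal cones, so this proves~\eqref{item:normalFanMinkowskiSum}.

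For~\eqref{item:surjection}: the cone $\Cone\polar(\tau)$ depends only on the underlying permutation, so $\Cone\polar(\tau) = \Cone\polar(\tau_{[i]})$ for every $i$, and $\Cone\polar(\tau_{[i]}) \subseteq \Cone\polar\big(\surjectionPermBrick(\tau_{[i]})\big)$ by Section~\ref{subsec:geometryCambrian}. Intersecting over $i$ and using the first paragraph's identity gives $\Cone\polar(\tau) \subseteq \bigcap_i \Cone\polar\big(\surjectionPermBrick(\tau_{[i]})\big) = \Cone\polar\big(\surjectionPermBrick_\ell(\tau)\big)$. Since the cones $\set{\Cone\polar(\tuple)}{\tuple \in \TwistTuples(\signatures)}$ form a complete fan by~\eqref{item:normalFanMinkowskiSum}, the full-dimensional cone $\Cone\polar(\tau)$ is contained in exactly one of them, which therefore characterizes $\surjectionPermBrick_\ell(\tau)$. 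The statement for $\surjectionBrickZono_\ell$ is identical: $\Cone\polar(\tuple) = \bigcap_i \Cone\polar(\tuple_{[i]}) \subseteq \bigcap_i \Cone\polar\big(\surjectionBrickZono(\tuple_{[i]})\big)$, and all the $\surjectionBrickZono(\tuple_{[i]})$ coincide with the common $k$-canopy $\surjectionBrickZono_\ell(\tuple)$, while the cones of the $k$-recoil orientations form the normal fan of $\Zono$.

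For~\eqref{item:skeleton}: by polarity the vertices of $\Brick[k][\signatures]$ correspond to the maximal cones $\Cone\polar(\tuple)$ and the edges to their common walls. A wall shared by $\Cone\polar(\tuple)$ and $\Cone\polar(\tuple')$ is obtained by turning a single covering relation $p \to q$ of the transitive closure of $\bigcup_i \tuple_{[i]}\contact$ into the equality $x_p = x_q$; determining which such relations produce a wall, and what the tuple on the far side is, is precisely the content of the description of flips of Cambrian $k$-twist $\ell$-tuples in Section~\ref{subsec:CambrianTwistTuples}, so $\tuple$ and $\tuple'$ are joined by an edge if and only if they differ by the flip of an arc $p \to q$. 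The corresponding edge direction is a multiple of $\b{e}_q - \b{e}_p$, and $\langle U, \b{e}_p - \b{e}_q \rangle = 2(q-p) \ne 0$ whenever $p \ne q$, so $U$ is generic for $\Brick[k][\signatures]$ and, by the same sign computation as for a single brick polytope in Section~\ref{subsec:geometryCambrian}, the edge is oriented from $\tuple$ towards $\tuple'$ exactly when the flip of $p \to q$ is increasing. Hence the $U$-oriented $1$-skeleton of $\Brick[k][\signatures]$ is the increasing flip graph on Cambrian $(k,\signatures)$-twist tuples, whose transitive closure is the increasing flip lattice by Section~\ref{subsec:CambrianTwistTuples}.

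\emph{Main obstacle.} Everything except the wall-versus-flip correspondence in~\eqref{item:skeleton} is a formal consequence of the single-twist results and the Minkowski-sum formalism. The delicate point is that deleting one covering relation from the transitive closure of a \emph{union} of contact graphs, rather than from a single contact graph, still lands on the braid cone of a genuine Cambrian $k$-twist $\ell$-tuple, and that its neighbour is the tuple obtained by performing the elbow flip in each component in which it is legal. This is exactly what the notion of flippable arc introduced in Section~\ref{subsec:CambrianTwistTuples} is designed to record, and it is the only place where one must go beyond the single-twist picture.
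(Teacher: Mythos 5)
Your proposal is correct and follows essentially the same route as the paper's proof: the normal fan of the Minkowski sum is the common refinement of the summands' fans, the full-dimensional intersections $\bigcap_i \Cone\polar(\twist_i)$ are exactly the braid cones of acyclic unions of contact graphs (i.e.\ of genuine twist tuples), point~(ii) comes from intersecting the single-twist cone inclusions, and point~(iii) from the orientation of increasing flips along the direction~$U$. The paper's own proof is much terser — it states the common-linear-extension criterion for the cones to intersect, invokes the poset-extension characterization $\less$ extends $\less' \iff \Cone\polar(\less) \subseteq \Cone\polar(\less')$ for point~(ii), and leaves the wall-versus-flip correspondence implicit — but your more careful unpacking of the wall structure and of the genericity of $U$ (via $\langle U, \b{e}_p - \b{e}_q\rangle = 2(q-p) \ne 0$) is the same argument in greater detail, and you correctly identify the flip description in the preceding section as the one point that goes genuinely beyond the single-twist picture.
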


\begin{proof}
The normal fan of a Minkowski sum is the common refinement of the normal fans of its summands. For~$\twist_1 \in \AcyclicTwists(\signatures_{[1]}), \dots, \twist_\ell \in \AcyclicTwists(\signatures_{[\ell]})$, the normal cones~$\Cone\polar(\twist_1), \dots, \Cone\polar(\twist_\ell)$ intersect precisely when the contact graphs~$\twist_1\contact, \dots, \twist_\ell\contact$ have a common linear extension, and their intersection is the braid cone of the transitive closure of the union of the contact graphs~$\twist_1\contact, \dots, \twist_\ell\contact$. Point~\eqref{item:normalFanMinkowskiSum} follows. Point~\eqref{item:surjection} follows from the fact that a poset~$\less$ is an extension of a poset~$\less'$ if and only if~$\Cone\polar(\less) \subseteq \Cone\polar(\less')$. Finally, Point~\eqref{item:skeleton} is a consequence of Point~\eqref{item:normalFanMinkowskiSum} and the fact that increasing flips are oriented in the direction~$\sum_{i \in [n]} (n+1-2i) \, \b{e}_i$.
\end{proof}

For example, the polytope~$\Brick[1][\signatures]$ for~$\signatures = [{-}{-}{-}{-}{-}, {-}{+}{+}{-}{-}]$ is represented in \fref{fig:BrickTuple}.

\begin{figure}[h]
	\centerline{\includegraphics[scale=.92]{BrickTuple}}
	\caption{The polytope~$\Brick[1][\signatures]$ for the signatures~${\signatures = [{-}{-}{-}{-}, {+}{-}{+}{+}]}$. It is the Minkowski sum of the associahedra of Figures~\ref{fig:PermBrickZono} and~\ref{fig:PermBrickZonoCambrian}\,(top center).}
	\label{fig:BrickTuple}
\end{figure}

\subsection{Algebra of acyclic Cambrian twist tuples}
\label{subsec:algebraAcyclicCambrianTwistTuples}

The \defn{shifted shuffle product}~$\tau \shiftedShuffle \tau'$ (resp.~\defn{convolution product}~$\tau \convolution \tau'$) of two $\ell$-signed permutations~$\tau, \tau'$ is still defined as the shifted product (resp.~convolution product) where signs travel with their values (resp.~stay at their positions). We denote by~$\FQSym_{\pm^\ell}$ the Hopf algebra with basis~$(F_\tau)_{\tau \in \fS_{\pm^\ell}}$ indexed by $\ell$-signed permutations and whose product and coproduct are defined by
\[
\F_\tau \product \F_{\tau'} = \sum_{\sigma \in \tau \shiftedShuffle \tau'} \F_\sigma
\qquad\text{and}\qquad
\coproduct \F_\sigma = \sum_{\sigma \in \tau \convolution \tau'} \F_\tau \otimes \F_{\tau'}.
\]
We then consider the vector subspace~$\Twist_\ell$ of~$\FQSym_{\pm^\ell}$ generated by the elements
\[
\PTwist_{\tuple} \eqdef \sum_{\substack{\tau \in \fS_{\pm^\ell} \\ \surjectionPermBrick_\ell(\tau) = \tuple}} \F_\tau = \sum_{\tau \in \linearExtensions\big(\bigcup\limits_{k \in [\ell]} \tuple_{[k]}\big)} \F_\tau,
\]
for all Cambrian $k$-twists $\ell$-tuples~$\tuple$.
\renewcommand{\uptilde}[1]{\accentset{\vspace{-.05cm}\scalebox{.55}{$\sim$}}{#1}}%
\renewcommand{\downtilde}[1]{\underaccent{\,\scalebox{.55}{$\sim$}}{#1}}%
For example,
\[
\PTwist_{\includegraphics[scale=1]{twistTuple}} = \F_{\uptilde{\down{1}} \uptilde{\down{3}} \uptilde{\down{4}} \downtilde{\down{2}}} + \F_{\uptilde{\down{3}} \uptilde{\down{1}} \uptilde{\down{4}} \downtilde{\down{2}}} + \F_{\uptilde{\down{3}} \uptilde{\down{4}} \uptilde{\down{1}} \downtilde{\down{2}}},
\]
where the signatures are marked with~$\up{\phantom{a}}/\down{\phantom{a}}$ and~$\uptilde{\phantom{a}}/\downtilde{\phantom{a}}$ respectively. The next statement is proved as usual by showing that the product and coproduct of~$\FQSym_{\pm^\ell}$ stabilize~$\Twist_\ell$.

\begin{theorem}
\label{theo:twistTupleSubalgebra}
$\Twist_\ell$ is a Hopf subalgebra of~$\FQSym_{\pm^\ell}$.
\end{theorem}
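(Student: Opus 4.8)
The statement $\Twist_\ell$ is a Hopf subalgebra of $\FQSym_{\pm^\ell}$ should follow exactly the pattern used for Theorem~\ref{theo:twistSubalgebra} and Theorem~\ref{theo:twistSubalgebraCambrian}: it suffices to show that the subspace spanned by the $\PTwist_{\tuple}$ is stable under the product and the coproduct of $\FQSym_{\pm^\ell}$. Since $\PTwist_{\tuple}$ is the sum of $\F_\tau$ over a congruence class of $\equiv^k_\ell$ (the fiber of $\surjectionPermBrick_\ell$), and $\equiv^k_\ell$ is, on each graded-by-signature piece $\fS^\signatures$, the intersection $\equiv^k_\signatures = \bigcap_{i\in[\ell]} \equiv^k_{\signatures_{[i]}}$ of the Cambrian twist congruences, the whole claim reduces to one key observation about each of the factor congruences.

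The plan is as follows. First I would recall that for each fixed $i$, the rewriting condition of Definition~\ref{def:ktwistCongruenceCambrian} for two permutations $\tau=UacV$ and $\tau'=UcaV$ to be $\equiv^k_{\signatures_{[i]}}$-congruent depends only on the relative positions (left/right of the swapped pair, together with their signs under $\signatures_{[i]}$) of the entries of $\tau$ lying strictly between $a$ and $c$ — this is precisely the property already isolated in the proof of Proposition~\ref{prop:ktwistCongruenceCambrian} and reused in the proof of Theorem~\ref{theo:twistSubalgebraCambrian}. Consequently, exactly as in the proof of Theorem~\ref{theo:twistSubalgebra}, one shows: (a) if $\sigma\equiv^k_\ell\tilde\sigma$ and $\F_\sigma$ appears in $\PTwist_{\tuple}\product\PTwist_{\tuple'}$, then $\F_{\tilde\sigma}$ also appears there — the three cases $a\le n<c$, $c\le n$, $n<a$ being treated verbatim as before, now invoking the congruence $\equiv^k_{\signatures_{[i]}}$ simultaneously for every $i\in[\ell]$ (the position/sign data of the relevant entries is preserved under shifted shuffle because signs travel with their values); and (b) if $\tau\equiv^k_\ell\tilde\tau$ and $\tau'\equiv^k_\ell\tilde\tau'$, then $\F_\tau\otimes\F_{\tau'}$ appears in $\coproduct\PTwist_{\tuple[S]}$ iff $\F_{\tilde\tau}\otimes\F_{\tilde\tau'}$ does — the convolution keeping signs at their positions, so the witness structure splits correctly across the cut, again for each coordinate $i$ in parallel. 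Since a sign-swap rewriting that respects all $\ell$ signatures is by definition an $\equiv^k_\ell$-rewriting, stability follows for the product and coproduct.

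The one point that deserves a line of care — and is the only place this is genuinely an $\ell$-tuple statement rather than $\ell$ copies of the single case — is that the same permutation $\sigma$ appearing in a shuffle or convolution must serve as a witness-carrying word for all $\ell$ congruences at once. This is immediate once one notes that the congruence conditions for the different $i$ are conditions on the same underlying unsigned word $\sigma$, merely reading off its entries with respect to $\ell$ different signatures; a shifted shuffle or convolution that is compatible with one coordinate's sign bookkeeping is automatically compatible with all of them, because $\FQSym_{\pm^\ell}$'s operations are defined coordinatewise on signatures. I expect no real obstacle here: the argument is a routine transcription, and the heaviest lifting (that $\equiv^k_{\signatures_{[i]}}$ is position-sign-local) was already done in Section~\ref{sec:Cambrianization}. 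One may therefore simply write that the argument is identical to that of Theorem~\ref{theo:twistSubalgebraCambrian}, applied to each of the $\ell$ factor congruences simultaneously, and leave the verbatim details to the reader.
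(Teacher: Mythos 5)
Your plan is correct and is exactly the argument the paper intends: the paper itself states only that the result "is proved as usual by showing that the product and coproduct of~$\FQSym_{\pm^\ell}$ stabilize~$\Twist_\ell$," deferring to the proofs of Theorems~\ref{theo:twistSubalgebra} and~\ref{theo:twistSubalgebraCambrian}. You correctly identify the only $\ell$-tuple-specific point — that a single adjacent transposition within an $\equiv^k_\ell$-class is simultaneously a rewriting for each factor congruence $\equiv^k_{\signatures_{[i]}}$, with the witness data being preserved coordinatewise under shifted shuffle (signs travel with values) and convolution (signs stay at positions).
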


The product and coproduct of~$\PTwist$-basis elements of~$\Twist_\ell$ can again be described directly in terms of combinatorial operations on Cambrian $k$-twist $\ell$-tuples. Details are left to the reader. See \eg \cite[Section~2.3]{ChatelPilaud} for the $k = 1$ case.

\subsection{Twin twists}
\label{subsec:twinTwists}

When $k = 1$, pairs of twin Cambrian trees are particularly relevant Cambrian tuples as they have additional relevant combinatorial properties, see~\cite[2.1]{ChatelPilaud} and the references therein. They extend to $k$-twists as follows.

\begin{definition}
A \defn{pair of twin Cambrian $k$-twists} is a pair~$[\twist_\circ, \twist_\bullet]$ where~$\twist_\circ \in \AcyclicTwists(\signature)$ while ${\twist_\bullet \in \AcyclicTwists(-\signature)}$ for some signature~$\signature$ (where $-\signature$ is obtained by reversing all signs of~$\signature$).
\end{definition}

In particular, we call them pairs of twin $k$-twists when~$\signature = -^n$ and pairs of twin alternating $k$-twists when~${\signature = (-+)^{n/2}}$. When $k = 1$, these special cases have interesting combinatorial properties:
\begin{enumerate}[(i)]
\item pairs of twin $1$-twists are counted by Baxter numbers (see \href{https://oeis.org/A001181}{\cite[A001181]{OEIS}}), and are therefore in bijection with many other Baxter families~\cite{ChungGrahamHoggattKleiman, DulucqGuibert1, DulucqGuibert2, YaoChenChengGraham, FelsnerFusyNoyOrden, BonichonBousquetMelouFusy, LawReading, Giraudo},
\item pairs of twin alternating $1$-twists are counted by central binomial coefficients~\cite[Prop.~57]{ChatelPilaud}.
\end{enumerate}
For completeness, we have gathered in Tables~\ref{table:numbersBaxterAcyclicTwists} and~\ref{table:numbersAlternatingBaxterAcyclicTwists} the numbers of twin $(k,n)$-twists which motivates the following questions.

\begin{question}
Are there nice formulas for the number of pairs of twin $k$-twists? What about the number of pairs of twin alternating $k$-twists? Are there other combinatorial families in bijection with these pairs of twin $k$-twists?
\end{question}

\begin{table}[h]
  \centerline{$
  	\begin{array}{l|rrrrrrrrrr}
	\raisebox{-.1cm}{$k$} \backslash \, \raisebox{.1cm}{$n$}
	  & 1 & 2 & 3 & 4 & 5 & 6 & 7 & 8 & 9 & 10 \\[.1cm]
	\hline
	0 & 1 & 1 & 1 &  1 &   1 &   1 &    1 &     1 &      1 &       1 \\
	1 & . & 2 & 6 & 22 &  92 & 422 & 2074 & 10754 &  58202 &  326240 \\
	2 & . & . & . & 24 & 120 & 696 & 4512 & 31936 & 242728 & 1956072 \\
	3 & . & . & . &  . &   . & 720 & 5040 & 39600 & 341280 & 3175632 \\
	4 & . & . & . &  . &   . &   . &    . & 40320 & 362880 & 3588480 \\
	5 & . & . & . &  . &   . &   . &    . &     . &      . & 3628800
	\end{array}
  $}
  \caption{The number of pairs of twin $(k,n)$-twists for~$2k \le n \le 10$. Dots indicate that the value remains constant (equal to~$n!$) in the column.}
  \label{table:numbersBaxterAcyclicTwists}
  \vspace{-.4cm}
\end{table}

\begin{table}[h]
  \centerline{$
  	\begin{array}{l|rrrrrrrrrr}
	\raisebox{-.1cm}{$k$} \backslash \, \raisebox{.1cm}{$n$}
	  & 1 & 2 & 3 & 4 & 5 & 6 & 7 & 8 & 9 & 10 \\[.1cm]
	\hline
	0 & 1 & 1 & 1 &  1 &   1 &   1 &    1 &     1 &      1 &       1 \\
	1 & . & 2 & 6 & 20 &  70 & 252 &  924 &  3432 &  12870 &   48620 \\
	2 & . & . & . & 24 & 120 & 672 & 4188 & 27884 & 197904 & 1462384 \\
	3 & . & . & . &  . &   . & 720 & 5040 & 38880 & 328560 & 2963376 \\
	4 & . & . & . &  . &   . &   . &    . & 40320 & 362880 & 3548160 \\
	5 & . & . & . &  . &   . &   . &    . &     . &      . & 3628800
	\end{array}
  $}
  \caption{The number of pairs of twin alternating $(k,n)$-twists for~$2k \le n \le 10$. Dots indicate that the value remains constant (equal to~$n!$) in the column.}
  \label{table:numbersAlternatingBaxterAcyclicTwists}
  \vspace{-.4cm}
\end{table}

\begin{remark}[Twin $k$-twists versus $k$-twists with opposite canopy]
When~$k = 1$, two $1$-twists ${\twist_\circ \in \AcyclicTwists[1](-^n)}$ and~$\twist_\bullet \in \AcyclicTwists[1](+^n)$ are twin if and only if they have the same canopy. Up to reflecting~$\twist_\bullet$ with respect to the diagonal~$x=y$, we could equivalently consider two $(1,n)$-twists with opposite canopy, which would be closer to S.~Giraudo's presentation~\cite{Giraudo}. However, when~$k > 1$ there are Cambrian $k$-twists~$\twist_\circ \in \AcyclicTwists(-^n)$ and~$\twist_\bullet \in \AcyclicTwists(+^n)$ with the same canopy such that the union of their contact graphs~$\twist_\circ\contact \cup \twist_\bullet\contact$ is cyclic. For example, $\surjectionPermBrick[2](\down{2413})$ and~$\surjectionPermBrick[2](\up{4213})$ have the same canopy but their contact graphs has no common linear extension.
\end{remark}

\section{Schr\"oderization}
\label{sec:Schroderization}

Our last extension deals with all faces of the permutahedra, brick polytopes, and zonotopes. It follows F.~Chapoton's ideas~\cite{Chapoton} with some reformulations already used in~\cite[Part~3]{ChatelPilaud}.

\subsection{Ordered partitions, hypertwists, and partial orientations}
\label{subsec:faces}

We first recall combinatorial descriptions of the faces of~$\Perm$, $\Brick$ and~$\Zono$ and describe lattice structures on these families extending the weak order on permutations on~$\fS_n$, the increasing flip lattice on acyclic $(k,n)$-twists, and the increasing flip lattice on acyclic orientations of~$\Gkn$.

\para{Ordered partitions}
Remember first that the faces of the permutahedron~$\Perm$ correspond to the \defn{ordered partitions} of~$[n]$. Namely, the face of~$\Perm$ corresponding to the ordered partition~$\lambda$ of~$[n]$ is the convex hull of the vertices~$\b{x}(\tau)$ of~$\Perm$ given by the permutations~$\tau$ refining~$\lambda$. We denote by~$\fP_n$ the set of ordered partitions of~$[n]$.

The weak order on permutations was extended to all ordered partitions by D.~Krob, M.~Latapy, J.-C.~Novelli, H.~D.~Phan and~S.~Schwer in~\cite{KrobLatapyNovelliPhanSchwer}. See also~\cite{PalaciosRonco, DermenjianHohlwegPilaud} for an extension to all finite Coxeter systems.

\begin{definition}
The \defn{coinversion map}~${\coinv(\lambda) : \binom{[n]}{2} \to \{-1,0,1\}}$ of an ordered partition~$\lambda \in \fP_n$ is the map defined for~$i < j$ by
\[
\coinv(\lambda)(i,j) = 
\begin{cases}
-1 & \text{if } \lambda^{-1}(i) < \lambda^{-1}(j), \\
\phantom{-}0 & \text{if } \lambda^{-1}(i) = \lambda^{-1}(j), \\
\phantom{-}1 & \text{if } \lambda^{-1}(i) > \lambda^{-1}(j).
\end{cases}
\]
The \defn{weak order}~$\le$ on~$\fP_n$ is defined by~$\lambda \le \lambda'$ if~$\coinv(\lambda)(i,j) \le \coinv(\lambda')(i,j)$ for all~$i < j$.
\end{definition}

The weak order on~$\fP_3$ is represented in \fref{fig:SchroderLattices}\,(left). Note that the restriction of the weak order on~$\fP_n$ to~$\fS_n$ is the classical weak order on permutations, which is a lattice. This property was extended to the weak order on~$\fP_n$ in~\cite{KrobLatapyNovelliPhanSchwer}.

\begin{proposition}[\cite{KrobLatapyNovelliPhanSchwer}]
The weak order~$\le$ on the set of ordered partitions~$\fP_n$  is a lattice.
\end{proposition}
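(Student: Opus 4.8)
The statement to be proved is that the weak order $\le$ on the set $\fP_n$ of ordered partitions of $[n]$ is a lattice. The plan is to show directly that any two ordered partitions admit a meet and a join, working throughout with the encoding by coinversion maps $\coinv(\lambda) : \binom{[n]}{2} \to \{-1,0,1\}$. The key observation is that an ordered partition is completely determined by its coinversion map, so that $\fP_n$ embeds (as a poset) into the product poset $\{-1,0,1\}^{\binom{[n]}{2}}$, which is itself a lattice under coordinatewise order. Thus it suffices to identify which maps $\binom{[n]}{2} \to \{-1,0,1\}$ arise as $\coinv(\lambda)$ for some $\lambda \in \fP_n$, and then to show that this image is closed under the operations induced by the coordinatewise meet and join — or, when it is not literally closed, to show that the coordinatewise meet (resp.\ join) can be corrected to the largest coinversion map of an ordered partition that lies below it (resp.\ the smallest above it), and that this correction is well defined.

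First I would characterize the image: a map $f : \binom{[n]}{2} \to \{-1,0,1\}$ equals $\coinv(\lambda)$ for some ordered partition $\lambda$ if and only if it satisfies a transitivity-type condition. Concretely, extending $f$ to a relation on $[n]$ by declaring $i \preceq_f j$ when ($i<j$ and $f(i,j)\le 0$) or ($i>j$ and $f(j,i)\ge 0$), one needs $\preceq_f$ to be a total preorder (reflexive, transitive, total); the blocks of $\lambda$ are then the classes of the induced equivalence and their order is the induced total order. This is the exact analogue, for ordered partitions, of the WOIP-type condition of Definition~\ref{def:WOIP}, and the proof is a routine case analysis on triples $i<j<k$ checking that $f$ being ``locally consistent'' on each triple forces global transitivity. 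Second, given $\lambda, \lambda' \in \fP_n$, I would form the coordinatewise meet $m = \min(\coinv(\lambda), \coinv(\lambda'))$ and show that the relation $\preceq_m$ obtained by taking the transitive closure of the corresponding relation on $[n]$ is still a total preorder; the ordered partition it defines is a lower bound of both $\lambda$ and $\lambda'$, and one checks it is the greatest lower bound because any common lower bound $\nu$ has $\coinv(\nu) \le m$ coordinatewise, hence its relation is contained in $\preceq_m$ and therefore in its transitive closure. The join is handled symmetrically using $\max$.

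The main obstacle, and the step deserving the most care, is verifying that the correction process — passing from the coordinatewise meet $m$ (which need not itself be a valid coinversion map, since a $0$ forced in one coordinate together with inherited $-1$'s or $+1$'s elsewhere can violate transitivity on a triple) to an actual ordered partition — produces a well-defined result that is genuinely the infimum, not merely a maximal lower bound. The delicate point is that when we take transitive closures of the preorder relation, we might be forced to \emph{merge} blocks (turning some $\pm 1$ into $0$), and we must check this merging only ever moves us \emph{downward} in each coordinate relative to $m$, never sideways or up; equivalently, that the set of total preorders lying below both $\coinv(\lambda)$ and $\coinv(\lambda')$ in the product order has a unique maximum. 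I would handle this by a careful induction on triples, showing that starting from $m$ and repeatedly replacing, on any violating triple $i<j<k$, the offending values by $0$ (the unique correction that stays $\le m$) terminates at a fixed point which is the desired coinversion map; uniqueness of the fixed point follows because the ``forced $0$'' on each triple is uniquely determined. Finally I would cite \cite{KrobLatapyNovelliPhanSchwer} for the original argument and note that the restriction to $\fS_n$ recovers the classical weak order lattice, consistent with the fact that permutations are exactly the ordered partitions all of whose blocks are singletons.
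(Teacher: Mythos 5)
The paper states this proposition as a bare citation to \cite{KrobLatapyNovelliPhanSchwer} with no proof, so there is no internal argument to compare against; I therefore assess your proposal on its own merits. Your first step is sound: $\fP_n$ embeds into $\{-1,0,1\}^{\binom{[n]}{2}}$ via $\coinv$, and the image is exactly the set of $f$ for which $\preceq_f$ is a total preorder. The error is in the correction step. Passing from the coordinatewise minimum $m$ to the transitive closure of $\preceq_m$ need \emph{not} stay $\le m$: transitive closure only adds relations, and adding $j\preceq i$ for a pair $i<j$ with $m(i,j)=-1$ raises that coordinate from $-1$ to $0$. Concretely, take $\lambda = 13\sep 2$ and $\lambda' = 2\sep 3\sep 1$ in $\fP_3$; ordering the coordinates as $(1,2),(1,3),(2,3)$, one has $\coinv(\lambda) = (-1,0,1)$, $\coinv(\lambda') = (1,1,-1)$, hence $m = (-1,0,-1)$, encoding $1\prec 2$, $1\sim 3$, $2\prec 3$. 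The transitive closure of $\preceq_m$ identifies all of $1,2,3$ into one block, with coinversion map $(0,0,0)$, which is \emph{not} $\le m$ since $0 \not\le -1$. The actual meet is $1\sep 2\sep 3$, with $\coinv = (-1,-1,-1)$: the needed correction is to \emph{decrease} the coordinate $(1,3)$ from $0$ to $-1$, i.e.\ to \emph{split} $1$ from $3$, which transitive closure cannot do and which your alternative fix (``replace the offending values by $0$'') also fails to produce — here the offending coordinate already equals $0$ and must be sent to $-1$, not kept at $0$. Your uniqueness argument is likewise flawed: $\coinv(\nu) \le m$ does \emph{not} imply $\preceq_\nu \subseteq \preceq_m$ (if $m(i,j)=1$ and $\coinv(\nu)(i,j)=-1$, which is allowed since $-1\le 1$, then $i\preceq_\nu j$ but $j\prec_m i$); the weak order compares only the ``backward'' parts of the preorders, not the full relations.

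The genuine content of the theorem is that the set of coinversion maps pointwise $\le m$ has a unique maximum, and this is a nontrivial closure property: the arcs forced by $m$ may already contain directed cycles, and some of these are resolved by \emph{decreasing} a coordinate (splitting), others by merging, and one must show the result is unambiguous. I would encourage you to consult \cite{KrobLatapyNovelliPhanSchwer}, or \cite{DermenjianHohlwegPilaud} for the uniform Coxeter-group version (both cited in the paper), rather than try to salvage the transitive-closure argument, which fails at its central step.
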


For~$X, Y \subset \N$, we define the notation
\[
X \ll Y \; \iff \; \max(X) < \min(Y) \; \iff \; x < y \text{ for all~$x \in X$ and~$y \in Y$.}
\]

\begin{proposition}[\cite{KrobLatapyNovelliPhanSchwer}]
The cover relations of the weak order~$<$ on~$\fP_n$ are given by
\begin{gather*}
\lambda_1 \sep \cdots \sep \lambda_i \sep \lambda_{i+1} \sep \cdots \sep \lambda_k \;\; < \;\; \lambda_1 \sep \cdots \sep \lambda_i\lambda_{i+1} \sep \cdots \sep \lambda_k \qquad\text{if } \lambda_i \ll \lambda_{i+1}, \\
\lambda_1 \sep \cdots \sep \lambda_i\lambda_{i+1} \sep \cdots \sep \lambda_k \;\; < \;\; \lambda_1 \sep \cdots \sep \lambda_i \sep \lambda_{i+1} \sep \cdots \sep \lambda_k \qquad\text{if } \lambda_{i+1} \ll \lambda_i.
\end{gather*}
\end{proposition}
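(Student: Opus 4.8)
The statement to prove is the description of the cover relations of the weak order on ordered partitions $\fP_n$: namely that $\lambda \lessdot \lambda'$ iff $\lambda'$ is obtained from $\lambda$ by merging two adjacent blocks $\lambda_i \ll \lambda_{i+1}$, or $\lambda$ is obtained from $\lambda'$ by such a merge (equivalently splitting a block of $\lambda$ into two adjacent pieces $\mu \ll \nu$). The plan is to work directly with the coinversion map $\coinv$, which by definition determines the weak order entrywise on $\binom{[n]}{2}$.

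First I would check that the two displayed operations are indeed cover relations. Suppose $\lambda_i \ll \lambda_{i+1}$ and let $\lambda'$ be obtained by merging these two blocks. For any $i_0 < j_0$ with $i_0, j_0$ not both in $\lambda_i \cup \lambda_{i+1}$, or both in the same original block, the value $\coinv(\lambda)(i_0,j_0) = \coinv(\lambda')(i_0,j_0)$ is unchanged; for $i_0 \in \lambda_i$ and $j_0 \in \lambda_{i+1}$ (so $i_0 < j_0$ since $\lambda_i \ll \lambda_{i+1}$) we have $\coinv(\lambda)(i_0,j_0) = -1$ while $\coinv(\lambda')(i_0,j_0) = 0$. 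Hence $\coinv(\lambda) \le \coinv(\lambda')$ coordinatewise with strict inequality in at least one coordinate, so $\lambda < \lambda'$. To see this is a \emph{cover}, observe that the set of coordinates where they differ is exactly $\lambda_i \times \lambda_{i+1}$ and on each such coordinate the jump is from $-1$ to $0$ (the minimal possible increment), so there is no room for an intermediate ordered partition: any $\mu$ with $\lambda \le \mu \le \lambda'$ has $\coinv(\mu)$ agreeing with $\coinv(\lambda)$ outside $\lambda_i \times \lambda_{i+1}$ and taking values in $\{-1,0\}$ inside, and one checks that the only consistent coinversion maps of an ordered partition with these constraints are those of $\lambda$ and $\lambda'$. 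The second displayed operation is symmetric (it is the first one read in the opposite order, after reversing the roles of the two blocks).

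The substantive direction is the converse: every cover relation of the weak order arises this way. I would argue as follows. Take $\lambda < \lambda'$ a cover relation and consider $\coinv(\lambda')-\coinv(\lambda)$, a nonnegative nonzero function on $\binom{[n]}{2}$. Pick a pair $(i_0,j_0)$ where it is strictly positive; then $\coinv(\lambda)(i_0,j_0) \in \{-1,0\}$. Case analysis on whether the jump is $-1 \to 0$, $-1 \to 1$, or $0 \to 1$: in the first case $i_0,j_0$ lie in distinct consecutive-order blocks of $\lambda$ that are adjacent, and I would show merging exactly those two blocks produces an ordered partition $\mu$ with $\lambda \lessdot \mu \le \lambda'$, forcing $\mu = \lambda'$ by the cover hypothesis; in the last case $i_0,j_0$ lie in the same block of $\lambda'$ and splitting that block appropriately produces $\nu$ with $\lambda \le \nu \lessdot \lambda'$, forcing $\nu = \lambda$; the middle case $-1 \to 1$ cannot occur for a cover, because the ordered partition refining both $\lambda$ below $(i_0,j_0)$ and $\lambda'$ above it would strictly separate them. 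The bookkeeping needed is to verify that the candidate merge or split is compatible with all the \emph{other} coordinates, i.e.\ that it genuinely yields an ordered partition lying in the interval $[\lambda,\lambda']$ — this is where one must use that $\lambda \le \lambda'$ globally, not just at the one chosen pair.

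The main obstacle I anticipate is precisely this last compatibility check in the converse direction: ruling out that a cover relation could involve a more complicated simultaneous rearrangement (several blocks moving at once, or a merge combined with a reordering). The clean way around it is to phrase everything in terms of the coinversion map and the observation that the weak order on $\fP_n$ is the restriction of the componentwise order on $\{-1,0,1\}^{\binom{[n]}{2}}$ to the image of $\coinv$; then a cover relation $\lambda \lessdot \lambda'$ must change $\coinv$ as little as possible, and the combinatorial geometry of the permutahedron (the face poset, already recalled in the paragraph preceding the statement) guarantees that "as little as possible" means changing a single $\ll$-adjacency, i.e.\ one merge or one split. Alternatively, one can simply cite \cite{KrobLatapyNovelliPhanSchwer} where this is established; since the excerpt attributes the statement to that reference, a short self-contained verification of the two easy directions above together with the minimality observation suffices.
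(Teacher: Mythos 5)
The paper supplies no proof of this proposition --- it is stated with the citation \cite{KrobLatapyNovelliPhanSchwer} and used as a black box, which coincides with the fallback you offer in your last sentence. There is thus no in-text argument to compare against.

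On its own terms, your forward direction is sound: merging $\lambda_i \ll \lambda_{i+1}$ changes $\coinv$ by $-1 \to 0$ exactly on $\lambda_i \times \lambda_{i+1}$, and any intermediate $\mu$ must keep all of $\lambda_i$ in a single block (coinversion is $0$ inside $\lambda_i$) and all of $\lambda_{i+1}$ in a single block, so either those blocks coincide ($\mu = \lambda'$) or they are distinct with every cross-pair at $-1$ ($\mu = \lambda$); that is what your ``one checks'' silently does. The converse, however, is genuinely incomplete. From a jump $-1 \to 0$ at a single pair $(i_0,j_0)$ you conclude that $i_0, j_0$ lie in \emph{adjacent} blocks of $\lambda$ satisfying the $\ll$ condition, and then merge them; neither claim is automatic. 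The $\ll$ condition can be derived from the global inequality $\coinv(\lambda) \le \coinv(\lambda')$ (if $a \in \lambda_i$, $b \in \lambda_{i+1}$ with $a > b$ were also fused in $\lambda'$, then the coordinate $(b,a)$ would drop from $1$ to $0$, contradicting $\lambda \le \lambda'$), but adjacency needs its own argument: $i_0$ and $j_0$ might a priori sit in nonadjacent blocks that $\lambda'$ fuses together with the intervening blocks, and then your candidate merge is not even an ordered partition. This is exactly the compatibility check you flag as the obstacle; as written it is not closed, nor is the split case $0 \to 1$, and the dismissal of the $-1 \to 1$ case is a plausibility argument rather than a proof. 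Citing \cite{KrobLatapyNovelliPhanSchwer}, as the paper itself does, is the honest resolution.
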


\para{Hypertwists}
The main characters of this section are the following objects.

\begin{definition}
A \defn{hyperpipe} is the union of pipes of a twist, where we have additionally changed their common elbows into crossings. A \defn{$(k,n)$-hypertwist} is a collection of hyperpipes obtained by merging some subsets of pipes of a $(k,n)$-twist~$\twist$ which are connected in the contact graph~$\twist\contact$. The contact graph of a $(k,n)$-hyperpipe~$\twist[H]$ is the directed multigraph~$\twist[H]\contact$ whose nodes are labeled by the partition of~$[n]$ induced by the hyperpipes of~$\twist[H]$ and with an arc from the \SE-hyperpipe to the \WN-hyperpipe of each elbow of~$\twist[H]$. An hypertwist is \defn{acyclic} if its contact graph is. We denote by~$\AcyclicHyperTwists(n)$ the set of acyclic $(k,n)$-hypertwists.
\end{definition}

Examples of acyclic $(k,n)$-hypertwists appear in Figures~\ref{fig:1twistsDissections}, \ref{fig:SchroderLattices} and~\ref{fig:insertionSchroder}. Note that we could have equivalently defined the contact graph of the hypertwist~$\twist[H]$ as the graph obtained by contracting in~$\twist\contact$ the subgraphs induced by the merged subsets of pipes. Observe also that each hypertwist~$\twist[H]$ can be obtained by merging pipes in various twists. In fact, the face of the brick polytope~$\Brick$ corresponding to~$\twist[H]$ is the convex hull of the brick vectors~$\b{x}(\twist)$ of the $k$-twists~$\twist$ refining~$\twist[H]$.

\begin{example}[$1$-hypertwists, dissections and Schr\"oder trees]
\label{exm:1hypertwistsDissections}
The map defined in Example~\ref{exm:1twistsTriangulations} --- which sends an elbow in row~$i$ and column~$j$ of the triangular shape to the diagonal~$[i,j]$ of a convex~$(n+2)$-gon --- defines a bijective correspondence between the $(1,n)$-hypertwists and the dissections of a convex $(n+2)$-gon (\ie the crossing-free sets of diagonals). The contact graph of a $1$-hypertwist~$\twist[H]$ is sent to the dual Schr\"oder tree of~$\twist[H]\duality$, as illustrated in \fref{fig:1twistsDissections}. As a corollary, the number of $(1,n)$-hypertwists with $e$ elbows is the Schr\"oder number
\[
\frac{1}{e+1}\binom{n+2+e}{e+1}\binom{n-1}{e+1},
\]
see~\href{https://oeis.org/A033282}{\cite[A033282]{OEIS}}.
\begin{figure}[t]
	\centerline{\includegraphics[scale=1.4]{dualiteDissection}}
	\caption{The bijection between $(1,n)$-hypertwists~$\twist[H]$ (left) and dissections~$\twist[H]\duality$ of the $(n+2)$-gon (right) sends the hyperpipes of~$\twist[H]$ to the polygonal cells of~$\twist[H]\duality$, the elbows of~$\twist[H]$ to the diagonals of~$\twist[H]\duality$, and the contact graph of~$\twist[H]$ to the dual Schr\"oder tree of~$\twist[H]\duality$ (middle).}
	\label{fig:1twistsDissections}
\end{figure}
\end{example}

\begin{remark}[$k$-hypertwists and $(k+1)$-crossing free sets of diagonals]
Similarly to Example~\ref{exm:1hypertwistsDissections}, the map defined in Theorem~\ref{theo:ktwistsktriangulations} --- which sends an elbow in row~$i$ and column~$j$ of the ${(n+2k) \times (n+2k)}$ triangular shape to the diagonal~$[i,j]$ of a convex $(n+2k)$-gon --- defines a bijective correspondence between the $(k,n)$-hypertwists and the subsets of $k$-relevant diagonals of the $(n+2k)$-gon with no $k+1$ pairwise crossing diagonals.
\end{remark}

\begin{definition}
Consider a $k$-hypertwist~$\twist[H]$ and an arc~$u \to v$ of its contact graph~$\twist[H]\contact$ (thus, $u$ and~$v$ are subsets of~$[n]$). We denote by~$\twist[H]/_{u \to v}$ the $k$-hypertwist obtained by merging the hyperpipes~$u$ and~$v$. We define the \defn{Schr\"oder poset}~$\le$ on acyclic $(k,n)$-hypertwist as the transitive closure of the relations~$\twist[H] < \twist[H]/_{u \to v}$ (resp.~$\twist[H]/_{u \to v} < \twist[H]$) for any acyclic $(k,n)$-hypertwist and any arc~$u \to v \in \twist[H]\contact$ such that~$u \ll v$ (resp.~$u \gg v$).
\end{definition}

The Schr\"oder poset on $(1,3)$-hypertwists is represented in \fref{fig:SchroderLattices}. Note that the $k$-hyper\-twists~$\twist[H]$ and~$\twist[H]/_{u \to v}$ are not comparable when~$u \not\ll v$ and~$u \not\gg v$. Observe also that the restriction of the Schr\"oder poset to the acyclic $(k,n)$-twists is the increasing flip lattice on acyclic $(k,n)$-twists: indeed, $\twist < \twist'$ is a cover relation in the increasing flip lattice when~$\twist'$ is obtained by flipping an elbow~$i \to j$ in~$\twist$, \ie when~$\twist < \twist /_{i \to j} = \twist' /_{j \to i} < \twist'$ are cover relations in the Schr\"oder poset.

\begin{figure}[t]
	\centerline{\includegraphics[scale=.9]{SchroderLattices}}
	\caption{The weak order on ordered partitions of~$[3]$ (left) and the Schr\"oder lattice on $(1,3)$-hypertwists (right).}
	\label{fig:SchroderLattices}
\end{figure}

\medskip
We now define the insertion map~$\surjectionPermBrick : \fP_n \to \AcyclicHyperTwists(n)$ which sends ordered partitions of~$[n]$ to acyclic $(k,n)$-hypertwists. 
Given an ordered partition~$\lambda \eqdef \lambda_1 \sep \cdots \sep \lambda_p \in \fP_n$, we construct a $(k,n)$-hypertwist~$\surjectionPermBrick(\lambda)$ obtained from the $(k,0)$-hypertwist by inserting successively~$\lambda_p, \dots, \lambda_1$. When we insert a new part~$\lambda_i$, we first insert its elements in an arbitrary order and then merge the subsets of pipes of~$\lambda_i$ which induce connected subgraphs of the resulting contact graph. Note that it does not always merge all pipes of~$\lambda_i$ together ($\lambda_i$ can have more than one connected component after the insertion step). In particular, the number of hyperpipes of~$\surjectionPermBrick(\lambda)$ is at least the number of parts of~$\lambda$, but could be strictly larger. This insertion algorithm is illustrated in \fref{fig:insertionSchroder}.

\begin{figure}[h]
	\centerline{\includegraphics[scale=1.08]{insertionSchroder}}
	\caption{Insertion of the ordered partition~$3 \sep 15 \sep 24$ in a $k$-hypertwist for ${k = 0, 1, 2, 3}$.}
	\label{fig:insertionSchroder}
\end{figure}

\begin{proposition}
The map~$\surjectionPermBrick$ is a surjection from the ordered partitions of~$[n]$ to the acyclic $(k,n)$-hypertwists.
\end{proposition}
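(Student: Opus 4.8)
The plan is to reduce the statement to the surjectivity of~$\surjectionPermBrick$ on permutations (Proposition~\ref{prop:ktwistInsertion}), by showing that inserting an ordered partition amounts to inserting any of its refining permutations and then contracting. Say that~$\tau \in \fS_n$ \emph{refines}~$\lambda = \lambda_1 \sep \cdots \sep \lambda_p \in \fP_n$ when~$\tau$ lists the elements of~$\lambda_1$, then those of~$\lambda_2$, \dots, then those of~$\lambda_p$ (in some order within each part); these are exactly the permutations indexing vertices of the face of~$\Perm$ labelled by~$\lambda$. The key step is to establish that, for any such~$\tau$, the hypertwist~$\surjectionPermBrick(\lambda)$ is obtained from the $k$-twist~$\surjectionPermBrick(\tau)$ by merging, within each part~$\lambda_i$, the pipes lying in a common connected component of the subgraph of~$\surjectionPermBrick(\tau)\contact$ induced by~$\lambda_i$.

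To prove this, I would unwind the insertion algorithm for~$\lambda$, which inserts~$\lambda_p, \dots, \lambda_1$ successively. Turning the common elbows of a merged hyperpipe into crossings occupies exactly the same boxes as the constituent pipes, so performing the intermediate merges does not change the space left for the pipes inserted later; forgetting all merges, the underlying pipe dream is therefore~$\surjectionPermBrick(\tau)$, where~$\tau$ is the concatenation of the within-part orders chosen. Next, by Lemma~\ref{lem:source} each newly inserted pipe is a source of the current contact graph, so the insertions of~$\lambda_{i-1}, \dots, \lambda_1$ only add new sources and never alter the subgraph induced by~$\lambda_i$; hence the connected components of~$\lambda_i$ at merge time are those of the final graph~$\surjectionPermBrick(\tau)\contact$. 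This proves the claim, and simultaneously shows well-definedness: two choices of within-part orders produce refining permutations~$\tau, \tau'$ with~$\surjectionPermBrick(\tau)$ and~$\surjectionPermBrick(\tau')$ differing by elbow flips between pipes of the same part, and by the empty-rectangle property (Lemma~\ref{lem:pipeProperties}\,\eqref{item:emptyRectangle}) such a flip only reconfigures elbows internal to a contracted block, hence yields the same merged hypertwist.

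It then remains to see that~$\surjectionPermBrick(\lambda)$ is acyclic and that every acyclic hypertwist is reached. For acyclicity, note that, again by Lemma~\ref{lem:source}, every arc of~$\surjectionPermBrick(\tau)\contact$ joining two distinct parts~$\lambda_i, \lambda_j$ runs from the later-inserted to the earlier-inserted part, \ie from the lower-index to the higher-index part; contracting the pairwise non-adjacent connected components inside each part cannot then create an oriented cycle, so~$\surjectionPermBrick(\lambda) \in \AcyclicHyperTwists(n)$. For surjectivity, fix~$\twist[H] \in \AcyclicHyperTwists(n)$ with hyperpipes~$B_1, \dots, B_p$ and a $(k,n)$-twist~$\twist$ refining~$\twist[H]$ (which exists by definition of a hypertwist), so that~$\twist[H]$ is~$\twist$ with each~$B_j$ --- connected in~$\twist\contact$ by definition --- contracted to a single node. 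Choose the labelling so that~$(B_1, \dots, B_p)$ is a linear extension of the contracted graph~$\twist[H]\contact$, refine each~$B_j$ by a linear extension of the subgraph of~$\twist\contact$ it induces, and let~$\tau$ be the concatenation: any arc of~$\twist\contact$ lies inside a block or joins~$B_i$ to~$B_j$ with~$i < j$, so~$\tau$ is a linear extension of~$\twist\contact$ and~$\surjectionPermBrick(\tau) = \twist$ by Proposition~\ref{prop:ktwistInsertion}. Since~$\tau$ refines~$\lambda \eqdef B_1 \sep \cdots \sep B_p$, the key step yields that~$\surjectionPermBrick(\lambda)$ is~$\twist$ with each connected block~$B_j$ contracted, that is~$\surjectionPermBrick(\lambda) = \twist[H]$.

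I expect the main obstacle to be the two ``invisibility'' assertions inside the key step --- that the intermediate merges do not disturb later insertions, and that within-part flips do not change the contracted hypertwist. Both are local, elbow-by-elbow statements controlled by the empty-rectangle Lemma~\ref{lem:pipeProperties}\,\eqref{item:emptyRectangle} (in the spirit of the proof of Lemma~\ref{lem:comparable}); once they are secured, the rest is a routine transfer of Proposition~\ref{prop:ktwistInsertion} through the contraction operation, together with the easy observations that hypertwists admit refining twists and that a hyperpipe induces a connected subgraph in any refining twist.
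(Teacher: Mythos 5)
Your proof is correct and takes the same route as the paper: transfer surjectivity through linear extensions of the parts, observing that inserting~$\lambda$ part-by-part with merges coincides with inserting a refining permutation~$\tau$ and then contracting the connected blocks of~$\surjectionPermBrick(\tau)\contact$ inside each part, so that the converse direction reduces to Proposition~\ref{prop:ktwistInsertion}. You simply fill in two details the paper's terse proof glosses over --- invoking Lemma~\ref{lem:source} to justify that later insertions leave the induced subgraph of an already-processed part unchanged, and explaining why this particular contraction preserves acyclicity (not automatic for arbitrary contractions of a DAG, but true here since every inter-part arc runs from the lower- to the higher-indexed part).
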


\begin{proof}
Consider an ordered partition~$\lambda$ and a linear extension~$\tau$ of~$\lambda$. By definition of the map~$\surjectionPermBrick$ on ordered partitions, $\surjectionPermBrick(\lambda)$ is obtained from~$\surjectionPermBrick(\tau)$ by merging subsets of pipes which are connected in~$\surjectionPermBrick(\tau)\contact$. Since~$\surjectionPermBrick(\tau)$ is acyclic by Proposition~\ref{prop:ktwistInsertion}, it implies that~$\surjectionPermBrick(\lambda)$ is also acyclic. Conversely, since the new inserted nodes are sources of the contact graph at each step, any acyclic $(k,n)$-hypertwist~$\twist[H]$ is the image of any linear extension of its contact graph~$\twist[H]\contact$.
\end{proof}

In fact, the fiber of a $(k,n)$-hypertwist~$\twist[H]$ by~$\surjectionPermBrick$ is the set of all ordered partitions obtained from a linear extension of the contact graph~$\twist[H]\contact$ by merging parts which label incomparable vertices of~$\twist[H]\contact$. We now extend the $k$-twist congruence on all ordered partitions.

\begin{definition}
The \defn{$k$-hypertwist congruence} is the equivalence relation~$\equiv^k$ on~$\fP_n$ defined as the transitive closure of the rewriting rules
\[
U \sep \b{a} \sep \b{c} \sep V \equiv^k U \sep \b{ac} \sep V \equiv^k U \sep \b{c} \sep \b{a} \sep V
\]
where~$\b{a}, \b{c}$ are parts while~$U,V$ are sequences of parts of~$[n]$, and there exist~$b_1, \dots, b_k$ in~$\bigcup V$ such that~$\max(\b{a}) < b_i < \min(\b{c})$ for all~$i \in [\ell]$.
\end{definition}

\begin{proposition}
\label{prop:khypertwistCongruence}
For any~$\lambda, \lambda' \in \fP_n$, we have~$\lambda \equiv^k \lambda' \iff \surjectionPermBrick(\lambda) = \surjectionPermBrick(\lambda')$. In other words, the fibers of~$\surjectionPermBrick$ are precisely the congruence classes of~$\equiv^k$.
\end{proposition}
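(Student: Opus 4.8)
The plan is to transpose the proof of Proposition~\ref{prop:ktwistCongruence} to ordered partitions, treating the two implications separately. We shall freely use the description of the fiber of an acyclic $(k,n)$-hypertwist~$\twist[H]$ under~$\surjectionPermBrick$ stated above, namely as the set of ordered partitions obtained from a linear extension of~$\twist[H]\contact$ by merging consecutive antichains of blocks of~$\twist[H]\contact$ (blocks labelling pairwise incomparable vertices). As a preliminary, one checks that inserting a part~$\lambda_i$ is independent of the order chosen for its elements: transposing two consecutive elements of~$\lambda_i$ during the insertion only exchanges the two corresponding pipes and changes neither the connected components of the subgraph induced on~$\lambda_i$ nor the resulting hyperpipes, so $\surjectionPermBrick : \fP_n \to \AcyclicHyperTwists(n)$ is well defined.

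For the implication from left to right, by transitivity it suffices to treat one application of the rewriting rule, say $\lambda = U \sep \b{a} \sep \b{c} \sep V \equiv^k U \sep \b{ac} \sep V = \lambda'$ with witnesses $b_1, \dots, b_k \in \bigcup V$ satisfying $\max(\b{a}) < b_i < \min(\b{c})$ for all~$i$ (the split $U \sep \b{ac} \sep V \equiv^k U \sep \b{c} \sep \b{a} \sep V$ being symmetric). Since the blocks of~$V$ are inserted first, the insertion positions of the pipes of~$\b{a}$ and of~$\b{c}$ are separated by the~$k$ pipes $b_1, \dots, b_k$, whose labels lie strictly between $\max(\b{a})$ and $\min(\b{c})$. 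Arguing exactly as in Proposition~\ref{prop:ktwistCongruence}, each pipe of~$\b{a}$ is then incomparable with each pipe of~$\b{c}$ in the hypertwist obtained after inserting $V$, $\b{c}$ and~$\b{a}$; in particular there is no arc between a pipe of~$\b{a}$ and one of~$\b{c}$. Hence the subgraph induced on~$\b{a} \cup \b{c}$ is the disjoint union of those induced on~$\b{a}$ and on~$\b{c}$, so merging the connected components of~$\b{a} \cup \b{c}$ in one step (as in $\surjectionPermBrick(\lambda')$) produces the same hyperpipes as merging those of~$\b{c}$ and then those of~$\b{a}$ (as in $\surjectionPermBrick(\lambda)$); inserting~$U$ afterwards only adds sources. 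Therefore $\surjectionPermBrick(\lambda) = \surjectionPermBrick(\lambda')$.

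For the converse, suppose $\surjectionPermBrick(\lambda) = \surjectionPermBrick(\lambda') = \twist[H]$. By the fiber description, both $\lambda$ and $\lambda'$ are obtained from linear extensions of~$\twist[H]\contact$ by merging consecutive antichains of blocks, so it suffices to show that the whole fiber of~$\twist[H]$ forms a single $\equiv^k$-class, that is, that it is connected by the rewriting moves. One first reduces any~$\lambda$ in the fiber to a ``fine'' representative whose parts are single blocks of~$\twist[H]\contact$, by splitting its parts, and then connects any two such fine representatives, which differ by transpositions of consecutive incomparable blocks of~$\twist[H]\contact$. In both steps one must exhibit, for each merge or split performed, the required~$k$ congruence witnesses, and this is the main obstacle: the rewriting rule on~$\fP_n$ is value-constrained (it demands $k$ elements whose values fall in a prescribed interval), whereas incomparability of pipes in a $k$-twist is not characterized locally for $k > 1$ (the converse of Lemma~\ref{lem:comparable} fails). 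As in the proof of Proposition~\ref{prop:ktwistCongruence}, one circumvents this by passing to a fixed word representing the linear extension and counting positionally the intermediate values that separate the two blocks, matching the $\geq k$ versus $< k$ dichotomy; combined with the value order on~$\fP_n$ this yields the connectivity, and the two implications together give the statement.
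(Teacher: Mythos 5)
Your argument follows the same route as the paper's: reduce to a single merge or split of consecutive parts, establish the key biconditional (the hyperpipes $\b{a}$ and $\b{c}$ share an elbow iff there are fewer than $k$ witnesses in $\bigcup V$ between $\max(\b{a})$ and $\min(\b{c})$) via the positional-insertion count from Proposition~\ref{prop:ktwistCongruence}, and deduce the statement via fiber connectivity. You spell out the connectivity step that the paper compresses into ``the result follows immediately''; your worry that Lemma~\ref{lem:comparable} has no converse is a slight detour, since the actual mechanism --- comparing the number of already-inserted separating values with $k$ at the moment $\b{a}$ and $\b{c}$ are inserted --- is exactly what you then correctly invoke.
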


\begin{proof}
It is sufficient to prove that~$\lambda \equiv^k \lambda' \iff \surjectionPermBrick(\lambda) = \surjectionPermBrick(\lambda')$ for any two ordered partitions~$\lambda = U \sep \b{a} \sep \b{c} \sep V$ and~$\lambda' = U \sep \b{ac} \sep V$ which differ by merging two consecutive parts~$\b{a} \ll \b{c}$ (the other case is similar). As in the proof of Proposition~\ref{prop:ktwistCongruence}, the hyperpipes labeled by~$\b{a}$ and~$\b{c}$ in the insertion will share an elbow if and only if there is no~$b_1, \dots, b_k$ in~$\bigcup V$ such that~$\max(\b{a}) < b_i < \min(\b{c})$ for all~$i \in [\ell]$. The result follows immediately.
\end{proof}

\begin{theorem}
The $k$-hypertwist congruence~$\equiv^k$ is a lattice congruence of the weak order on~$\fP_n$.
\end{theorem}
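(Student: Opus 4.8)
The plan is to verify the two conditions (i) and (ii) in the definition of an order congruence for $\equiv^k$ on $\fP_n$; since $\fP_n$ is a lattice (Krob--Latapy--Novelli--Phan--Schwer), an order congruence of it is automatically a lattice congruence, so this suffices. Throughout I would use Proposition~\ref{prop:khypertwistCongruence}, which identifies the $\equiv^k$-classes with the fibers of $\surjectionPermBrick : \fP_n \to \AcyclicHyperTwists(n)$; as observed right after that proposition, the fiber of an acyclic $k$-hypertwist~$\twist[H]$ consists of all ordered partitions obtained from a linear extension of the contact graph~$\twist[H]\contact$ by merging consecutive parts that label pairwise incomparable vertices of~$\twist[H]\contact$.

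\textbf{Condition (i): the classes are intervals.} I would first record an ordered-partition analogue of the Bj\"orner--Wachs characterization of weak order intervals (Proposition~\ref{prop:WOIP}): phrased through the three-valued coinversion map~$\coinv$, a subset of~$\fP_n$ is a weak order interval exactly when it is the set of ``linear extensions'' (in the sense recalled above) of a preposet-type datum on~$[n]$ satisfying the evident relaxation of the WOIP conditions. Granting this, the interval property amounts to checking that the transitive closure~$\contactLess{\twist[H]}$ of~$\twist[H]\contact$ satisfies that relaxation, which is the hypertwist analogue of Proposition~\ref{prop:intervals}: the very same decomposition of the shape into the three regions determined by the first and last elbows of the $b$-th hyperpipe applies, so that a chain of elbows realizing $a \contactLess{\twist[H]} c$ for $a < b < c$ must meet the middle region, whence $a \contactLess{\twist[H]} b$ or $b \contactLess{\twist[H]} c$ by Lemma~\ref{lem:comparable}. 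Alternatively, the interval property can be read off the geometry of Section~\ref{sec:geometry}: the fiber of~$\twist[H]$ is the set of faces of~$\Perm$ contained in the face of~$\Brick$ indexed by~$\twist[H]$, and the faces of a polytope contained in a fixed face form an interval of its face lattice.

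\textbf{Condition (ii): the projections are order preserving.} For~$\projUp$ (the argument for~$\projDown$ is symmetric) I would reproduce the induction in the proof of Theorem~\ref{theo:ktwistLatticeCongruence}. Since it suffices to check order preservation on cover relations, take a cover $\lambda < \lambda'$ in~$\fP_n$ with~$\lambda$ in a class~$C$ and~$\lambda'$ in a class $C' \ne C$, and prove $\max(C) < \max(C')$ by induction on the length of a saturated chain from~$\lambda$ to~$\max(C)$. If $\lambda = \max(C)$ this is clear. Otherwise pick $\mu \in C$ covering~$\lambda$; writing out the local pictures of $\lambda$, $\lambda'$, $\mu$ near the parts where they differ --- these are now the merge-adjacent-comparable-parts and split-a-part cover moves of~$\fP_n$, in place of the simple transpositions of the $\fS_n$-case --- I would in each configuration exhibit $\mu' \in \fP_n$ with $\mu < \mu'$ and $\mu' \equiv^k \lambda'$ (hence $\mu' \in C'$) by transporting the $k$-hypertwist congruence witnesses. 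The key fact making this possible is that whether $U \sep \b{a} \sep \b{c} \sep V \equiv^k U \sep \b{c} \sep \b{a} \sep V$ holds depends only on~$\b{a}$, $\b{c}$, and the set of blocks occurring in~$V$ --- not on~$U$, and not on the order of the blocks of~$V$ --- so the same witnesses remain valid across a local move that does not disturb the blocks to the right of the pair, while the move keeps $\mu < \mu'$. The induction hypothesis then yields $\max(C) < \max(C')$.

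\textbf{Main obstacle.} The delicate part is Condition (ii). Relative to Theorem~\ref{theo:ktwistLatticeCongruence}, a cover relation in~$\fP_n$ can merge two adjacent comparable blocks or split a block, and such a move may interact with~$\b{a}$, $\b{c}$, and their witnesses in~$V$ in several additional ways; one has to ensure that no witness~$b_i$ is destroyed by a merge and that the inequalities $\max(\b{a}) < b_i < \min(\b{c})$ survive every move. A secondary, more routine, point is establishing the ordered-partition analogue of Proposition~\ref{prop:WOIP} used for Condition~(i); if one wishes to avoid reproving it, the geometric argument through the faces of~$\Brick$ and~$\Perm$ bypasses it altogether.
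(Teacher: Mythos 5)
Your two-step skeleton (intervals, then order-preserving projections) is exactly the paper's, and your treatment of Condition~(ii) matches: the paper also reduces to the cover-relation induction of Theorem~\ref{theo:ktwistLatticeCongruence}, notes that there are now eight local configurations instead of four (because a cover in~$\fP_n$ is a merge or a split, not a transposition), and explicitly defers the case analysis to~\cite[Lemma~107]{ChatelPilaud}, replacing the single Cambrian--Schr\"oder witness~$b$ by the witnesses~$b_1,\dots,b_k$ --- so your ``main obstacle'' diagnosis is correct and consistent with what the paper actually does.

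Where you diverge from the paper, and where there is a real gap, is Condition~(i). The paper does not invoke any ordered-partition analogue of Bj\"orner--Wachs; it argues directly that incomparable nodes of~$\twist[H]\contact$ have comparable label sets, so the contact graph admits a \emph{minimal} and a \emph{maximal} linear extension $\mu,\omega\in\fP_n$, and the fiber is then the interval $[\mu,\omega]$. Your primary route --- setting up a WOIP-type characterization on~$\fP_n$ and checking it against~$\contactLess{\twist[H]}$ --- is plausible in spirit but requires genuinely new machinery that is not in the literature the paper cites: Proposition~\ref{prop:WOIP} is about posets on~$[n]$ and permutations, whereas here the contact graph lives on blocks and the ``linear extensions'' are ordered partitions allowing merges of incomparable blocks, so the reformulation through~$\coinv$ is not the ``evident relaxation'' you claim. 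More seriously, your fallback geometric argument is flawed as stated. The fiber of~$\twist[H]$ corresponds, via normal cones, to the faces of~$\Perm$ whose braid cone lies inside~$\Cone\polar(\twist[H])$; this is an interval of the \emph{face lattice} of~$\Perm$ (refinement/containment order), but it is not, by that observation alone, an interval of the \emph{weak order} on~$\fP_n$, which is a different lattice on the same ground set defined via~$\coinv$. So the geometric shortcut does not ``bypass it altogether''; it proves a different (and weaker, for this purpose) statement. To repair Condition~(i) you would either need to carry out the WOIP-analogue in full, or --- more in line with the paper --- show directly that incomparable hyperpipes have comparable label sets, construct the greedy and anti-greedy ordered partitions~$\mu$ and~$\omega$, and verify that the fiber coincides with $[\mu,\omega]$.
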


\begin{proof}
We only sketch the proof as it is similar to that of Theorem~\ref{theo:ktwistLatticeCongruence} and of~\cite[Prop.~105]{ChatelPilaud}. There are two steps:
\begin{itemize}
\item The congruence classes of~$\equiv^k$ are intervals of the weak order on~$\fP_n$. To see it, consider a $k$-hypertwist~$\twist[H]$. Since incomparable nodes in~$\twist[H]\contact$ have comparable label sets by Proposition~\ref{prop:khypertwistCongruence}, there is a minimal~$\mu$ and a maximal~$\omega$ linear extension of~$\twist[H]\contact$. The fiber of~$\twist[H]$ under~$\surjectionPermBrick$ is then the weak order interval~$[\mu,\omega]$ of~$\fP_n$.
\item The up and down projection maps of the congruence classes are order preserving. The proof is very similar to that of Theorem~\ref{theo:ktwistLatticeCongruence}, except that there are height instead of four cases. These cases are described in details in the proof of~\cite[Lemma~107]{ChatelPilaud} for the case~$k = 1$ in the Cambrian setting. It then suffices to consider only the signature~$-^n$ and to replace the Schr\"oder Cambrian witness~$b$ by $k$-twist congruence witnesses~$b_1, \dots, b_k$. \qedhere
\end{itemize}
\end{proof}

\begin{proposition}
The map~$\surjectionPermBrick : \fP_n \to \AcyclicHyperTwists(n)$ defines an isomorphism from the lattice quotient of the weak order on~$\fP_n$ by the $k$-hypertwist congruence~$\equiv^k$ to the Schr\"oder poset on acyclic $(k,n)$-hypertwists. In particular, the Schr\"oder poset defines in fact a lattice.
\end{proposition}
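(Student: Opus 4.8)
The plan is to mirror the proof of Proposition~\ref{prop:latticeQuotient}, transferring everything from permutations and twists to ordered partitions and hypertwists. The two facts already available are: the $k$-hypertwist congruence $\equiv^k$ is a lattice congruence of the weak order on $\fP_n$ (previous theorem), and $\surjectionPermBrick : \fP_n \to \AcyclicHyperTwists(n)$ has fibers exactly the $\equiv^k$-classes (Proposition~\ref{prop:khypertwistCongruence}). Consequently the quotient lattice $\fP_n/\!\equiv^k$ is well-defined and, as a poset, carries the order induced from $\fP_n$ via representatives. It remains only to check that this induced order on the set $\AcyclicHyperTwists(n)$ of congruence classes coincides with the Schr\"oder poset defined combinatorially by the merging relations $\twist[H] < \twist[H]/_{u \to v}$ for $u \ll v$.

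First I would show that the Schr\"oder poset refines the quotient order, i.e.\ a Schr\"oder cover relation lifts to a weak order relation between representatives. Suppose $\twist[H] < \twist[H]/_{u\to v}$ with $u \to v$ an arc of $\twist[H]\contact$ and $u \ll v$. Because incomparable nodes of $\twist[H]\contact$ carry comparable label sets (Proposition~\ref{prop:khypertwistCongruence}, which makes the fibers intervals), there is a linear extension $\lambda = \cdots \sep u \sep v \sep \cdots$ of $\twist[H]\contact$ in which the parts $u$ and $v$ are consecutive, with $u$ immediately before $v$; then $\surjectionPermBrick(\lambda) = \twist[H]$. Merging $u$ and $v$ yields the ordered partition $\lambda' = \cdots \sep uv \sep \cdots$, which satisfies $\lambda < \lambda'$ in the weak order on $\fP_n$ (a cover relation of the type $\lambda_i \ll \lambda_{i+1}$ listed in the cover-relation proposition), and $\surjectionPermBrick(\lambda') = \twist[H]/_{u \to v}$ by definition of the insertion on ordered partitions. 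Hence the classes of $\twist[H]$ and $\twist[H]/_{u\to v}$ are comparable in the quotient, in the correct direction.

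Conversely I would show that the quotient order refines the Schr\"oder poset. If the class of $\twist[H]$ is below the class of $\twist[H]'$ in the quotient, pick representatives $\lambda \le \lambda'$ in $\fP_n$; since the weak order on $\fP_n$ is graded by cover relations of the two listed types, it suffices to treat a single cover $\lambda \lessdot \lambda'$. Both types are either a merge of two consecutive parts $\lambda_i \ll \lambda_{i+1}$ or a split of a part into $\mu \sep \nu$ with $\nu \ll \mu$; in either case I track what the insertion map does. Here the key observation — exactly as in the proof of Proposition~\ref{prop:latticeQuotient} — is that the two ordered partitions $\lambda, \lambda'$ differ by an operation which, after insertion, either leaves $\twist[H] = \twist[H]'$ (when the two parts fall on incomparable hyperpipes, or on already-merged ones) or else differs by precisely one merging move of the form $\twist[H] < \twist[H] /_{u \to v}$ or $\twist[H]/_{v \to u} < \twist[H]$ (when they fall on an arc). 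This is where I expect the main technical work: one must verify that a weak order cover on ordered partitions is sent by $\surjectionPermBrick$ to at most one step in the Schr\"oder poset, and in the right direction, which requires a small case analysis on whether the affected elements share an elbow in the hypertwist — the hypertwist analogue of the elbow-sharing argument in Proposition~\ref{prop:ktwistCongruenceCambrian}. Granting that, composing along the cover relations of $\lambda \le \lambda'$ produces a chain of Schr\"oder relations from $\twist[H]$ to $\twist[H]'$.

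Putting the two inclusions together gives that the quotient order on $\AcyclicHyperTwists(n)$ and the Schr\"oder poset have the same cover relations, hence are the same poset; since $\equiv^k$ is a lattice congruence of the lattice $(\fP_n,\le)$, the quotient is a lattice, and therefore so is the Schr\"oder poset. The main obstacle, as noted, is the one-step-at-a-time compatibility check between weak order covers on $\fP_n$ and merging moves on hypertwists; the rest is bookkeeping already carried out in the twist case and in~\cite[Part~3]{ChatelPilaud}, from which the argument can be cited almost verbatim after specializing to the signature $-^n$ and replacing the single Schr\"oder Cambrian witness by $k$ witnesses $b_1,\dots,b_k$.
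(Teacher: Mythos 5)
Your proof is correct and follows essentially the same route as the paper: reduce to cover relations in the weak order on $\fP_n$, track what the insertion map does to a single cover (equality or exactly one merge/unmerge of hyperpipes), and invoke the already-established lattice congruence property to conclude. The paper's own proof explicitly writes out only your second inclusion (a weak order cover maps to at most one Schr\"oder step, in the right direction), leaving the converse inclusion --- that every Schr\"oder cover $\twist[H] < \twist[H]/_{u\to v}$ is witnessed by some weak order cover --- implicit; your version makes that direction explicit via the choice of a linear extension of $\twist[H]\contact$ in which $u$ and $v$ are adjacent, which is a small but genuine improvement in completeness over the text's one-paragraph argument.
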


\begin{proof}
Consider a cover relation~$\lambda < \lambda'$ in the weak order on~$\fP_n$. Assume that~$\lambda'$ is obtained by merging two parts~$\lambda_i \ll \lambda_{i+1}$ of~$\lambda$ (the other cover relation is treated similarly). Let~$u$ and~$v$ be the nodes of~$\surjectionPermBrick(\lambda)$ containing~$\max(\lambda_i)$ and~$\min(\lambda_{i+1})$ respectively. If~$u$ and~$v$ are incomparable in~$\surjectionPermBrick(\lambda)\contact$, then~$\surjectionPermBrick(\lambda) = \surjectionPermBrick(\lambda')$. Otherwise, there is an arc from~$u$ to~$v$ in~$\surjectionPermBrick(\lambda)\contact$, and $\surjectionPermBrick(\lambda)' = \surjectionPermBrick(\lambda)/_{u \to v}$.
\end{proof}

\enlargethispage{.2cm}
\para{Acyclic partial orientations}
Finally, we recall that the faces of a graphical zonotope~$\Zono[][\graphG]$ correspond to \defn{acyclic partial orientations} of~$\graphG$, \ie to partial orientations of~$G$ where
\begin{itemize}
\item the unoriented edges form connected induced subgraphs of~$\graphG$,
\item the oriented graph obtained by contracting the unoriented edges is acyclic.
\end{itemize}
The face of~$\Zono[][\graphG]$ corresponding to a partial orientation~$\orientation$ is the convex hull of the vertices~$\b{x}(\orientation)$ of~$\Zono[][\graphG]$ given by the acyclic orientations of~$\graphG$ refining~$\orientation$. We denote by~$\AcyclicPartialOrientations(n)$ the set of all acyclic partial orientations of~$\Gkn$. We can also think of a partial orientation of~$\Gkn$ as a map~$\orientation : \set{(i,j) \in [n]^2}{i < j \le i+k} \to \{-1, 0, 1\}$ where~$\orientation(i,j) = -1$ if the edge~$\{i,j\}$ is oriented from~$j$ to~$i$, $\orientation(i,j) = 0$ if the edge remains unoriented, and~$\orientation(i,j) = 1$ if the edge is oriented from~$i$ to~$j$. We then define a lattice structure on~$\AcyclicPartialOrientations(n)$ by~$\orientation \le \orientation'$ if $\orientation(i,j) \le \orientation'(i,j)$ for all~$i < j \le i+k$.

We define the \defn{$k$-recoils scheme} of an ordered partition~$\lambda \in \fP_n$ as the acyclic partial orientation~$\surjectionPermZono(\lambda)$ defined by the restriction of the coinversions of~$\lambda$ to~$\set{(i,j) \in [n]^2}{i < j \le i+k}$, and the \defn{$k$-recoils scheme} of a $(k,n)$-hypertwist~$\twist[H]$ as the partial orientation~$\surjectionBrickZono(\twist[H])$ with an edge~$i \to j$ if there is an oriented path in the contact graph~$\twist[H]\contact$ from the node containing~$i$ to the node containing~$j$. The next statement is an immediate consequence of these definitions.

\begin{proposition}
The maps~$\surjectionPermBrick$, $\surjectionBrickZono$, and~$\surjectionPermZono$ define a commutative diagram of lattice homomorphisms:
\[
\begin{tikzpicture}
  \matrix (m) [matrix of math nodes,row sep=1.2em,column sep=5em,minimum width=2em]
  {
     \fP_n  	&									& \AcyclicPartialOrientations(n)	\\
				& \AcyclicHyperTwists(\signature) 	&									\\
  };
  \path[->>]
    (m-1-1) edge node [above] {$\surjectionPermZono$} (m-1-3)
                 edge node [below] {$\surjectionPermBrick\quad$} (m-2-2.west)
    (m-2-2.east) edge node [below] {$\surjectionBrickZono$} (m-1-3);
\end{tikzpicture}
\]
\end{proposition}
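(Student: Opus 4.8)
The plan is to mimic exactly the proof strategy used for the unsigned/triangle case in Section~\ref{subsec:canopy}, namely Proposition~\ref{prop:latticeHomomorphisms}, transposed to the Schr\"oder (ordered-partition / hypertwist / partial-orientation) setting. Since all three maps have already been shown to be surjective lattice homomorphisms individually (via the preceding propositions on $\surjectionPermBrick$, $\surjectionBrickZono$, $\surjectionPermZono$ on $\fP_n$, $\AcyclicHyperTwists(n)$, $\AcyclicPartialOrientations(n)$), the only content of the statement is the commutativity of the triangle, i.e.\ $\surjectionPermZono = \surjectionBrickZono \circ \surjectionPermBrick$ as maps $\fP_n \to \AcyclicPartialOrientations(n)$.

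First I would fix an ordered partition $\lambda \in \fP_n$ and two indices $i,j \in [n]$ with $i < j \le i+k$, and compare the value of $\surjectionPermZono(\lambda)$ on the edge $\{i,j\}$ of $\Gkn$ with the value of $\surjectionBrickZono(\surjectionPermBrick(\lambda))$ on that same edge. By the definition of $\surjectionPermZono$ on $\fP_n$, the former equals the sign of $\coinv(\lambda)(i,j)$, which is $-1$, $0$, or $1$ according as $\lambda^{-1}(i) < \lambda^{-1}(j)$, $\lambda^{-1}(i) = \lambda^{-1}(j)$, or $\lambda^{-1}(i) > \lambda^{-1}(j)$. For the latter, recall that $\surjectionPermBrick(\lambda)$ is obtained by inserting the parts $\lambda_p, \dots, \lambda_1$ from right to left, merging within each part the pipes that become connected; so $i$ is inserted strictly after $j$ when $\lambda^{-1}(i) < \lambda^{-1}(j)$, strictly before $j$ when $\lambda^{-1}(i) > \lambda^{-1}(j)$, and together with $j$ in one part when $\lambda^{-1}(i) = \lambda^{-1}(j)$. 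In the first case the argument in the proof of Proposition~\ref{prop:latticeHomomorphisms}, combined with Lemma~\ref{lem:canopy} applied to the hypertwist (its pipes still being comparable once $|i-j|\le k$), gives $i \contactLess{} j$ in $\surjectionPermBrick(\lambda)\contact$, hence an arc $i \to j$ in $\surjectionBrickZono(\surjectionPermBrick(\lambda))$; symmetrically in the second case one gets $j \to i$. In the third case $i$ and $j$ end up in the same node of $\surjectionPermBrick(\lambda)\contact$ (here one uses that the whole part containing both must be connected after insertion precisely because $|i-j|\le k$ forces a path, again by the hypertwist analogue of Lemma~\ref{lem:canopy}), and there is neither an oriented path from $i$'s node to $j$'s node nor vice versa since the contact graph is acyclic, so the edge $\{i,j\}$ is left unoriented. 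Matching these three cases with the three values of $\coinv(\lambda)(i,j)$ shows the two partial orientations agree on every edge of $\Gkn$, i.e.\ $\surjectionBrickZono \circ \surjectionPermBrick = \surjectionPermZono$.

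Having established commutativity, the remaining claims are bookkeeping: each of $\surjectionPermBrick$, $\surjectionBrickZono$, $\surjectionPermZono$ is a surjective lattice homomorphism by the results already proved earlier in this subsection (the quotient-lattice isomorphism for $\surjectionPermBrick$, and the definitions of the Schr\"oder poset and of $\le$ on $\AcyclicPartialOrientations(n)$ making $\surjectionBrickZono$ and $\surjectionPermZono$ lattice maps), so a commutative triangle of such maps is a commutative diagram of lattice homomorphisms, which is all that is asserted.

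The main obstacle I anticipate is the $\coinv = 0$ case, which has no counterpart in the unsigned picture of Section~\ref{subsec:canopy}: one must check carefully that two letters $i<j$ with $j \le i+k$ lying in a common part of $\lambda$ genuinely land in a single hyperpipe of $\surjectionPermBrick(\lambda)$ (not two incomparable ones) — this is the point where the merging step of the hypertwist insertion, together with the fact that $|i-j|\le k$ forces the two pipes to touch inside the shape, has to be invoked, essentially re-running the argument of Lemma~\ref{lem:canopy} at the level of hyperpipes. Everything else is a routine transcription of the proofs of Proposition~\ref{prop:latticeHomomorphisms} and Proposition~\ref{prop:khypertwistCongruence}.
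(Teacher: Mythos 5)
Your proof is essentially correct, and it fills in the details that the paper declares an ``immediate consequence of these definitions'' without argument; the edge-by-edge comparison in cases $\coinv(\lambda)(i,j) \in \{-1,0,1\}$ is exactly the strategy of the paper's proof of Proposition~\ref{prop:latticeHomomorphisms}, extended by the new $\coinv = 0$ case.

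One statement is looser than it should be, though the conclusion survives. You write that ``the whole part containing both must be connected after insertion''. This is stronger than true and stronger than needed: a part $\lambda_m$ may well split into several hyperpipes of $\surjectionPermBrick(\lambda)$. What one actually needs is only that $i$ and $j$ land in the \emph{same} hyperpipe, and the cleanest way to see it is via linear extensions rather than a ``hypertwist analogue of Lemma~\ref{lem:canopy}'': take any linear extension $\tau$ of $\lambda$ (restricted to the parts $\lambda_m, \dots, \lambda_p$ already inserted). Lemma~\ref{lem:canopy}, applied to this $k$-twist, gives a directed path between pipes $i$ and $j$ in its contact graph; since $\tau$ is a linear extension of that contact graph (Proposition~\ref{prop:ktwistInsertion}), every intermediate vertex of the path has $\tau$-position strictly between $\tau^{-1}(i)$ and $\tau^{-1}(j)$, and as $i,j \in \lambda_m$ those positions all lie in the $\lambda_m$-block of $\tau$, so the path stays inside $\lambda_m$. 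Hence $i$ and $j$ lie in the same connected component of $\lambda_m$ and are merged. This replaces your appeal to a hypertwist version of Lemma~\ref{lem:canopy} (which is not literally stated in the paper) by the actual Lemma~\ref{lem:canopy} plus the block structure of linear extensions, and removes the overstatement.

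Also, in the third case, the justification ``since the contact graph is acyclic'' is a slight misdirection: once $i$ and $j$ sit in the same node, the definition of $\surjectionBrickZono$ on hypertwists gives no oriented edge between them simply because there is no oriented path between two distinct nodes to speak of (acyclicity is not what's doing the work there). These are presentational points; the overall argument is sound.
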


\subsection{Algebra of acyclic hypertwists}
\label{subsec:algebraFaces}

We now define Hopf algebras whose bases are indexed by ordered partitions, acyclic $k$-hypertwists, and acyclic partial orientations of~$\Gkn$. To begin with, we reformulate as in~\cite{ChatelPilaud} the Hopf algebra of F.~Chapoton~\cite{Chapoton} indexed by the faces of the permutahedra in terms of ordered partitions instead of surjections.

We first define two restrictions on ordered partitions. Consider an ordered partition~$\mu$ of~$[n]$ into~$p$ parts. For~${I \subseteq [p]}$, we let~$n_I \eqdef |\set{j \in [n]}{\exists \, i \in I, \; j \in \mu_i}|$ and we denote by~$\mu_{|I}$ the ordered partition of~$[n_I]$ into $|I|$ parts obtained from~$\mu$ by deletion of the parts in~$[p] \ssm I$ and standardization. Similarly, for~$J \subseteq [n]$, we let~$p_J \eqdef |\set{i \in [k]}{\exists \, j \in J, \; j \in \mu_i}|$ and we denote by~$\mu^{|J}$ the ordered partition of~$[|J|]$ into~$p_J$ parts obtained from~$\mu$ by deletion of the entries in~$[n] \ssm J$ and standardization. For example, for the ordered partition~$\mu = 16 \sep 27 \sep 4 \sep 35$ we have~$\mu_{|\{2,3\}} = 13 \sep 2$ and~$\mu^{|\{1,3,5\}} = 1 \sep 23$.

The \defn{shifted shuffle product}~$\lambda \shiftedShuffle \lambda'$ and the \defn{convolution product}~$\lambda \convolution \lambda'$ of two ordered partitions~${\lambda \in \fP_n}$ and~$\lambda' \in \fP_{n'}$ are defined by:
\begin{align*}
\lambda \shiftedShuffle \lambda' & \eqdef \bigset{\mu \in \fP_{n+n'}}{\mu^{|\{1, \dots, n\}} = \lambda \text{ and } \mu^{|\{n+1, \dots, n+n'\}} = \lambda'}, \\
\text{and}\qquad \lambda \convolution \lambda' & \eqdef \bigset{\mu \in \fP_{n+n'}}{\mu_{|\{1, \dots, k\}} = \lambda \text{ and } \mu_{|\{k+1, \dots, k+k'\}} = \lambda'}.
\end{align*}
Note that if~$\lambda$ has $p$ parts and~$\lambda'$ has~$p'$ parts, then the partitions of~$\lambda \shiftedShuffle \lambda'$ have at most~$p+p'$ parts, while the partitions of~$\lambda \convolution \lambda'$ have precisely~$p+p'$ parts.

For example,
\begin{align*}
{\red 1} \sep {\red 2} \shiftedShuffle {\blue 2} \sep {\blue 13} = \{ &
	{\red 1} \sep {\red 2} \sep {\blue 4} \sep {\blue 35}, \;
	{\red 1} \sep {\red 2}{\blue 4} \sep {\blue 35}, \;
	{\red 1} \sep {\blue 4} \sep {\red 2} \sep {\blue 35}, \;
	{\red 1} \sep {\blue 4} \sep {\red 2}{\blue 35}, \;
	{\red 1} \sep {\blue 4} \sep {\blue 35} \sep {\red 2}, \;
	{\red 1}{\blue 4} \sep {\red 2} \sep {\blue 35}, \;
	{\red 1}{\blue 4} \sep {\red 2}{\blue 35}, \\[-.1cm]
&	{\red 1}{\blue 4} \sep {\blue 35} \sep {\red 2}, \;
	{\blue 4} \sep {\red 1} \sep {\red 2} \sep {\blue 35}, \;
	{\blue 4} \sep {\red 1} \sep {\red 2}{\blue 35}, \;
	{\blue 4} \sep {\red 1} \sep {\blue 35} \sep {\red 2}, \;
	{\blue 4} \sep {\red 1}{\blue 35} \sep {\red 2}, \;
	{\blue 4} \sep {\blue 35} \sep {\red 1} \sep {\red 2} \}, \\[.2cm]
{\red 1} \sep {\red 2} \convolution {\blue 2} \sep {\blue 13} = \{ &
	{\red 1} \sep {\red 2} \sep {\blue 4} \sep {\blue 35}, \;
	{\red 1} \sep {\red 3} \sep {\blue 4} \sep {\blue 25}, \;
	{\red 1} \sep {\red 4} \sep {\blue 3} \sep {\blue 25}, \;
	{\red 1} \sep {\red 5} \sep {\blue 3} \sep {\blue 24}, \;
	{\red 2} \sep {\red 3} \sep {\blue 4} \sep {\blue 15}, \\[-.1cm]
&	{\red 2} \sep {\red 4} \sep {\blue 3} \sep {\blue 15}, \;
	{\red 2} \sep {\red 5} \sep {\blue 3} \sep {\blue 14}, \;
	{\red 3} \sep {\red 4} \sep {\blue 2} \sep {\blue 15}, \;
	{\red 3} \sep {\red 5} \sep {\blue 2} \sep {\blue 14}, \;
	{\red 4} \sep {\red 5} \sep {\blue 2} \sep {\blue 13} \}.
\end{align*}

We denote by~$\OrdPart$ the Hopf algebra with basis~$(\F_\lambda)_{\lambda \in \fP}$ and whose product and coproduct are defined by
\[
\F_\lambda \product \F_{\lambda'} = \sum_{\mu \in \lambda \shiftedShuffle \lambda'} \F_\mu
\qquad\text{and}\qquad
\coproduct \F_\mu = \sum_{\mu \in \lambda \convolution \lambda'} \F_\lambda \otimes \F_{\lambda'}.
\]
This indeed defines a Hopf algebra according to the work of F.~Chapoton~\cite{Chapoton}.

We then construct two Hopf subalgebras of~$\OrdPart$ indexed by acyclic $(k,n)$-hypertwists and acyclic partial orientations of~$\Gkn$ for~$n \in \N$. We denote by~$\HyperTwist$ the vector subspace of~$\OrdPart$ generated by the elements
\[
\PTwist_{\twist[H]} \eqdef \sum_{\substack{\lambda \in \fP \\ \surjectionPermBrick(\lambda) = \twist[H]}} \F_\lambda,
\]
for all acyclic~$k$-hypertwists~$\twist[H]$.
For example, for the $(k,5)$-hypertwists of \fref{fig:insertionSchroder}, we have
\[
\PTwist_{\includegraphics[scale=.5]{exmTwist0Schroder}} = \sum_{\lambda \in \fP_5} \F_\lambda
\qquad
\PTwist\raisebox{.05cm}{$_{\hspace{-.1cm}\includegraphics[scale=.5]{exmTwist1Schroder}}$} = \!\!\!\! \begin{array}[t]{l} \phantom{+} \; \F_{1 \sep 3 \sep 5 \sep 24} + \F_{1 \sep 5 \sep 3 \sep 24} + \F_{3 \sep 1 \sep 5 \sep 24} + \F_{5 \sep 1 \sep 3 \sep 24} + \F_{3 \sep 5 \sep 1 \sep 24} + \F_{5 \sep 3\sep 1 \sep 24} \\ + \; \F_{1 \sep 35 \sep 24 \phantom{\sep}} + \F_{3 \sep 15 \sep 24 \phantom{\sep}} + \F_{5 \sep 13 \sep 24 \phantom{\sep}} + \F_{13 \sep 5 \sep 24 \phantom{\sep}} + \F_{15 \sep 3 \sep 24 \phantom{\sep}} + \F_{35 \sep 1 \sep 24 \phantom{\sep}} \\ + \; \F_{135 \sep 24 \phantom{\sep\sep}} \end{array}
\]
\[
\PTwist\raisebox{.1cm}{$_{\hspace{-.2cm}\includegraphics[scale=.5]{exmTwist2Schroder}}$} = \F_{3 \sep 1 \sep 5 \sep 24} + \F_{3 \sep 15 \sep 24} + \F_{3 \sep 5 \sep 1 \sep 24}
\qquad\qquad\text{and}\qquad\qquad
\PTwist\raisebox{.15cm}{$_{\hspace{-.3cm}\includegraphics[scale=.5]{exmTwist3Schroder}}$} = \F_{3 \sep 15 \sep 24}.
\hspace{2.5cm}
\]
Moreover, we denote by $\HyperRec$ the vector subspace of~$\OrdPart$ generated by the elements
\[
\XRec_{\orientation} \eqdef \sum_{\substack{\lambda \in \fP \\ \surjectionPermZono(\lambda) = \orientation}} \F_\lambda = \sum_{\substack{\twist[H] \in \AcyclicHyperTwists \\ \surjectionBrickZono(\twist[H]) = \orientation}} \PTwist_{\twist[H]},
\]
for all acyclic partial orientations~$\orientation$ of the graph~$\Gkn$ (for all~$n \in \N$). The proof of the following statement is again a straightforward verification.

\begin{theorem}
$\HyperTwist$ and~$\HyperRec$ are both Hopf subalgebras of~$\OrdPart$, and we have the inclusions~$\HyperRec \subset \HyperTwist \subset \OrdPart$.
\end{theorem}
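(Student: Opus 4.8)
The plan is to repeat, in the setting of ordered partitions, the argument used for Theorem~\ref{theo:twistSubalgebra}. Since $\HyperTwist = \bigoplus_{\twist[H]} \K\,\PTwist_{\twist[H]}$ and $\HyperRec = \bigoplus_{\orientation} \K\,\XRec_{\orientation}$ are by construction graded linear subspaces of $\OrdPart$, it suffices to prove that each of the two spanning families is stable under the product $\product$ and the coproduct $\coproduct$ of $\OrdPart$. By Proposition~\ref{prop:khypertwistCongruence}, $\PTwist_{\twist[H]}$ is the sum of the $\F_\lambda$ over a class of the $k$-hypertwist congruence $\equiv^k$ on $\fP_n$; likewise $\XRec_{\orientation}$ is the sum of the $\F_\lambda$ over a fiber of $\surjectionPermZono : \fP_n \to \AcyclicPartialOrientations(n)$, which (just as in Proposition~\ref{prop:khypertwistCongruence}) is a class of the analogous congruence obtained by merging or splitting two consecutive parts $\b a \ll \b c$ whenever $\min(\b c) - \max(\b a) > k$. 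Thus everything reduces to checking that these two congruences on $\fP$ are compatible with the shifted shuffle and the convolution.

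For the product, fix $\lambda, \tilde\lambda \in \fP_n$ differing by a single rewriting step, say $\lambda = U\sep\b a\sep\b c\sep V$ and $\tilde\lambda = U\sep\b{ac}\sep V$ with witnesses $b_1, \dots, b_k \in \bigcup V$ satisfying $\max(\b a) < b_i < \min(\b c)$, and fix $\lambda' \in \fP_{n'}$. One has to check that for every $\mu \in \lambda \shiftedShuffle \lambda'$ the ordered partition $\tilde\mu$ obtained from $\mu$ by the corresponding rewriting step lies in $\tilde\lambda \shiftedShuffle \lambda'$, and conversely, so that the product $\PTwist_{\twist[H]} \product \PTwist_{\twist[H]'}$ decomposes as a sum of elements $\PTwist_{\twist[H]''}$. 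The key point --- already exploited in the proofs of Theorem~\ref{theo:ktwistLatticeCongruence} and Proposition~\ref{prop:ktwistCongruenceCambrian} --- is that the applicability of the rewriting rule depends only on the relative order, within the small alphabet $[n]$, of the values lying between $\max(\b a)$ and $\min(\b c)$, and that the shifted shuffle preserves this relative order, since it only interleaves the parts and values of $\lambda'$, all of which are shifted above $n$. Hence the witnesses $b_1, \dots, b_k$ remain valid in $\mu$, no matter how its parts absorb large values. The verification for $\coproduct$ is the dual one: $\mu \in \lambda \convolution \lambda'$ keeps the parts coming from $\lambda$ and from $\lambda'$ in two consecutive blocks, so any rewriting of $\mu$ takes place inside one block and restricts to a rewriting of $\lambda$ or of $\lambda'$, and conversely; this makes $\HyperTwist$ a subcoalgebra. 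Running the same two checks with the recoil congruence on $\fP$ shows that $\HyperRec$ is likewise stable under $\product$ and $\coproduct$, hence a Hopf subalgebra of $\OrdPart$.

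The inclusions are then immediate: $\HyperTwist \subseteq \OrdPart$ by definition, and $\HyperRec \subseteq \HyperTwist$ because the identity $\XRec_{\orientation} = \sum_{\surjectionBrickZono(\twist[H]) = \orientation} \PTwist_{\twist[H]}$ recorded above (itself a consequence of the commutativity $\surjectionBrickZono \circ \surjectionPermBrick = \surjectionPermZono$ on $\fP_n$) expresses every generator of $\HyperRec$ as a sum of generators of $\HyperTwist$. Combined with the previous paragraph, these are inclusions of Hopf subalgebras.

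The one genuine subtlety, and the place where the $\OrdPart$ situation differs from the $\FQSym$ situation of Theorem~\ref{theo:twistSubalgebra}, is the bookkeeping for the shifted shuffle: because $\lambda \shiftedShuffle \lambda'$ may merge a part of $\lambda$ with a part of $\lambda'$, one must verify that the two parts $\b a$ and $\b c$ involved in the rewriting persist as (possibly enlarged) parts of $\mu$ still related by $\b a \ll \b c$ on the small values, and that this enlargement neither creates nor destroys a bridging edge of $\Gkn$ relevant to the recoil congruence. Both facts are clear since every large value exceeds $n$, but I would nevertheless spell them out exactly as in Proposition~\ref{prop:ktwistCongruenceCambrian}, by first reducing to the normalized situation where the letters of $\mu$ between $\max(\b a)$ and $\min(\b c)$ have been sorted within each part; alternatively, one may simply invoke the formalism of~\cite{Hivert-habilitation, HivertNzeutchap, Priez} after observing that $\equiv^k$ and the recoil congruence on $\fP$ are compatible with standardization and restriction to intervals, just as in the remark opening the proof of Theorem~\ref{theo:twistSubalgebra}.
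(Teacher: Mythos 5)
Your overall strategy is the right one: reduce the theorem to the compatibility of the $k$-hypertwist congruence (and the recoil congruence) on $\fP$ with the shifted shuffle and the convolution, in analogy with the proof of Theorem~\ref{theo:twistSubalgebra}; and your derivation of the inclusion $\HyperRec\subset\HyperTwist$ from the identity~$\XRec_{\orientation} = \sum_{\surjectionBrickZono(\twist[H]) = \orientation} \PTwist_{\twist[H]}$ is correct. The paper itself only says the verification is straightforward, so you are filling in content rather than diverging from a given proof. However, your product-stability argument has the directionality reversed, and this is a real gap, not just a matter of presentation.

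Concretely, you fix $\lambda\equiv^k\tilde\lambda$ in $\fP_n$ (say $\lambda=U\sep\b a\sep\b c\sep V$ and $\tilde\lambda=U\sep\b{ac}\sep V$) together with $\lambda'\in\fP_{n'}$, and you then assert that every $\mu\in\lambda\shiftedShuffle\lambda'$ can be transformed by ``the corresponding rewriting step'' into some $\tilde\mu\in\tilde\lambda\shiftedShuffle\lambda'$, ``and conversely''. This cannot be made to work: first, a shuffle may insert a part of $\lambda'$ strictly between the parts $\b a$ and $\b c$ of~$\lambda$, for example $\mu=U\sep\b a\sep\b d\sep\b c\sep V$ with $\b d$ consisting of large values, in which case $\b a$ and $\b c$ are not adjacent in $\mu$ and there is no ``corresponding rewriting step''; second, $\lambda$ and $\tilde\lambda$ have different numbers of parts, so $|\lambda\shiftedShuffle\lambda'|\neq|\tilde\lambda\shiftedShuffle\lambda'|$ in general and the claimed two-way correspondence cannot be a bijection. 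The subtlety you flag at the end (that a part of $\mu$ may absorb a part of $\lambda'$) is real but secondary; the missing issue is adjacency. The repair is to reverse the direction, exactly as in the proof of Theorem~\ref{theo:twistSubalgebra}: fix $\mu,\tilde\mu\in\fP_{n+n'}$ differing by a single rewriting step, say $\mu=U\sep\b a\sep\b c\sep V$ and $\tilde\mu=U\sep\b{ac}\sep V$ with witnesses $b_1,\dots,b_k\in\bigcup V$. Since for any $\mu$ there is a unique pair $\big(\mu^{|\{1,\dots,n\}},\,\mu^{|\{n+1,\dots,n+n'\}}\big)$ whose shifted shuffle contains $\mu$, the coefficient of $\F_\mu$ in $\PTwist_{\twist[H]}\product\PTwist_{\twist[H]'}$ is $0$ or $1$, and it suffices to show that the two restrictions of $\mu$ and of $\tilde\mu$ are $\equiv^k$-equivalent (or equal). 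Now split on the position of $n$ relative to $\max(\b a)$ and $\min(\b c)$: if $\max(\b a)\le n<\min(\b c)$ both restrictions of $\mu$ and $\tilde\mu$ coincide, since the rewrite only merges a purely small part with a purely large part; if $\max(\b c)\le n$ (resp.\ $\min(\b a)>n$) the witnesses are automatically small (resp.\ large), the rewrite descends to $\mu^{|\{1,\dots,n\}}$ (resp.\ to the large restriction) and leaves the other restriction unchanged; and when $\b a$ or $\b c$ is mixed, the restrictions are handled by the same dichotomy, since $\b a\ll\b c$ forces the witnesses to lie entirely on one side of $n$. This is the argument the product stability actually requires, and the coproduct verification you sketch (which does correctly start from the pair $\lambda,\lambda'$, since the convolution keeps their parts in two consecutive blocks) goes through as you wrote it.
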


As in Section~\ref{subsec:subalgebra}, one can describe combinatorially the product and coproduct in~$\HyperTwist_\pm$ directly in terms of $k$-hypertwists. In particular, the product~$\PTwist_{\twist[H]} \product \PTwist_{\twist[H]'}$ in~$\HyperTwist$ of two $k$-hypertwists~$\twist[H] \in \AcyclicHyperTwists(n)$ and~$\twist[H]' \in \AcyclicHyperTwists(n')$ is given by
\[
\PTwist_{\twist[H]} \product \PTwist_{\twist[H]'} = \sum_{\twist[K]} \PTwist_{\twist[K]},
\]
where the sum runs over the interval of the Schr\"oder lattice between the $(k, n+n')$-hypertwists~$\underprod{\twist[H]}{\twist[H]'}$ and~$\overprod{\twist[H]}{\twist[H]'}$, respectively obtained by inserting~$\twist[H]$ in the first rows and columns of~$\twist[H]'$, and by inserting~$\twist[H]'$ in the last rows and columns of~$\twist[H]$. We invite the reader to work out the combinatorial descriptions of the coproduct in~$\HyperTwist$ and of the product and coproduct in the dual Hopf algebra~$\HyperTwist[k*]$, which are more subtle than the product (when performing cuts, one should be careful which hyperpipes can be merge).

\section*{Acknoledgements}

I am grateful to N.~Bergeron and C.~Ceballos for sharing their preliminary results on Hopf algebras on pipe dreams, to N.~Reading for pointing out relevant references and sharing his expertise on lattice congruences, and to F.~Hivert for very interesting discussions on monoids, Hopf algebras, and dendriform structures. I also thank three anonymous referees for relevant suggestions on this paper.

\bibliographystyle{alpha}
\bibliography{brickHopfAlgebra}
\label{sec:biblio}

\end{document}